% Last update: January 7, 2018
% arXiv version 1: October 27, 2017

\documentclass{article}
%\usepackage[final]{files/showkeys-hara}

% Last update:		September 19, 2017
% Created on:		October 28, 2014

% Packages *****************************************
\usepackage{amsmath}
\usepackage{amssymb}
\usepackage{amsfonts}
\usepackage{amssymb}
\usepackage{wasysym}
\usepackage{mathrsfs}
\usepackage{amsthm}

\usepackage{mathtools}
\usepackage{enumitem}
\usepackage{mdwlist}		% suspend/resume lists
\usepackage[normalem]{ulem}		% \uline (robust underline)
\usepackage{braket}		% defines \middle similar to \left and \right
\usepackage{nicefrac}		% nice small fraction
\usepackage{comment}		% hide environment with \excludecomment
\usepackage{caption}
\usepackage{subcaption}		% subfigure
\usepackage{multirow}		% merge rows in tabular

\usepackage[pdftex=true,hyperindex=true,pdfborder={0 0 0}]{hyperref}
\usepackage[a4paper, left=2.7cm, right=2.7cm, top=3.5cm, bottom=2cm]{geometry}
\usepackage{graphicx}
\usepackage{tikz}
\usepgflibrary{arrows}

% --------------------------------------------------

% General formatting *******************************
\numberwithin{equation}{section}	% equation numbers according to section
% --------------------------------------------------

% Theorems *****************************************
\theoremstyle{plain}
\newtheorem{theorem}{Theorem}[section]
\newtheorem{lemma}[theorem]{Lemma}
\newtheorem{corollary}[theorem]{Corollary}
\newtheorem{proposition}[theorem]{Proposition}
\newtheorem{observation}[theorem]{Observation}
\newtheorem{question}[theorem]{Question}
\theoremstyle{definition}

\newtheorem{example}[theorem]{Example}
\newtheorem{conjecture}[theorem]{Conjecture}

\newenvironment{claim}[1][\unskip]{%

	\smallskip
	
	\noindent\textsl{Claim #1}:  %
}{%

}

\makeatletter
\newenvironment{argumentenv}{%
	\list{}{%
		\leftmargin 3em%
		\listparindent 1em%
		\itemindent \listparindent
		\parsep        \z@ \@plus\p@}%
		\item\relax%
}{\endlist}
\makeatother
\newenvironment{argument}{%
	\begin{argumentenv}
	\noindent\small\textsl{Argument.}%
}{%
	\end{argumentenv}
}

\newenvironment{case}[2][\unskip]{%
	
	\smallskip
	
	\noindent\textsf{Case #1}: {#2} %
}{%

}
%\newenvironment{proofpart}[1][\unskip]{%
%	
%	\smallskip
%	
%	\noindent\textsc{#1}:  %
%}{%
%
%}

			% not used
\newcommand{\exampleqed}{\ensuremath{\ocircle}\par}

% Notations ****************************************
\newcommand{\ZZ}{\mathbb{Z}}			% The set of integers
\newcommand{\NN}{\mathbb{N}}			% The set of natural numbers
\newcommand{\RR}{\mathbb{R}}			% The set of reals
			% The set of rationals
			% Lattice
\newcommand{\symb}[1]{\mathtt{#1}}		% Symbol
\newcommand{\isdef}{\triangleq}			% "Is Defined" symbol
\newcommand{\abs}[1]{%					% Absolute value
	\left\lvert#1\right\rvert%
}
\newcommand{\norm}[1]{%					% Norm
	\left\lVert#1\right\rVert%
}
				% Function composition
\renewcommand{\complement}{%				% Set complement
	\mathsf{c}%
}
\newcommand{\dd}{\mathrm{d}}			% Differential
\newcommand{\ee}{\mathrm{e}}			% Euler's constant
\newcommand{\ii}{\mathrm{i}}			% Imaginary unit
\newcommand{\smallo}{o}					% Small o
\newcommand{\bigo}{O}					% Big O
\newcommand{\xPr}{\operatorname{\mathbb{P}}}		% Probability
\newcommand{\xExp}{\operatorname{\mathbb{E}}}		% Expected value
		% Variance

%\newcommand{\pspace}[1]{\mathscr{#1}}						% (Probability) space
\newcommand{\spX}{\mathscr{X}}								% State space
\newcommand{\spT}{\mathscr{T}}								% Subset of the state space
\newcommand{\family}[1]{\mathfrak{#1}}						% Family (of sets)
\newcommand{\effR}[3][\unskip]{\mathcal{R}^{#1}(#2\leftrightarrow#3)}		% Effective resistance
\newcommand{\effC}[3][\unskip]{\mathcal{C}^{#1}(#2\leftrightarrow#3)}		% Effective conductance
							% Energy
\newcommand{\divergence}{\operatorname{\mathrm{div}}}		% Divergence
%\newcommand{\shortcircuit}{\overset{\;\!{}_\circ}{\sim}}	% short circuit equivalence
					% short circuit equivalence
%\newcommand{\thicktilde}[1]{\mathbf{\tilde{\text{$#1$}}}}	% Thick tilde
\newcommand{\link}{\sim}									% edge in transition graph
				% low height edge
\newcommand{\pathto}[1][\unskip]{\overset{#1}{\leadsto}}	% connected by a path
\newcommand{\xadd}[1][\unskip]{%							% add a particle (on part #1): \xadd or \xadd[V]
	\xrightarrow{\symb{+ #1\ifx#1\empty\else\;\fi}}%
}
\newcommand{\xremove}[1][\unskip]{%							% remove a particle (from part #1)
	\xrightarrow{\symb{- #1\ifx#1\empty\else\;\fi}}%
}
							% subscript for critical values

\newcommand{\birth}{\mathsf{b}}								% birth
\newcommand{\death}{\mathsf{d}}								% death
\newcommand{\blue}{\mathsf{b}}								% blue
\newcommand{\red}{\mathsf{r}}								% red
								% odd vertices
							% even vertices

% --------------------------------------------------

% Misc macros***************************************

% --------------------------------------------------

%\input{files/diagrams-generate}		% uncomment to modify the diagrams
% Last updated on:	October 27, 2017
% Created on:		October 27, 2017
%

\usepackage{graphicx}
\usepackage{tikzexternal}
\tikzexternalize
\tikzsetexternalprefix{figures/}	% put the images in "figures" folder

			% uncomment if the diagrams are ready

%\newcommand{\todo}[1]{{\color{blue}\textsf{#1}}}

%%%%%%%%%%%%%%%%%%%%%%%%%%%%%%%%%%

\begin{document}

\title{Metastability of hard-core dynamics on bipartite graphs}

\author{
Frank den Hollander%
\thanks{Mathematical Institute, Leiden University, Leiden, The Netherlands}
\and
Francesca~R. Nardi%
\thanks{Department of Mathematics, University of Florence, Italy}
\and
Siamak Taati%
\thanks{Department of Mathematics, University of British Columbia, Vancouver, Canada}
}

\date{\today}

\maketitle

\begin{abstract}
We study the metastable behaviour of a stochastic system of particles with hard-core 
interactions in a high-density regime. Particles sit on the vertices of a bipartite graph.
New particles appear subject to a neighbourhood exclusion constraint, while existing 
particles disappear, all according to independent Poisson clocks. We consider the 
regime in which the appearance rates are much larger than the disappearance rates,
and there is a slight imbalance between the appearance rates on the two parts of the 
graph. Starting from the configuration in which the weak part is covered with particles,
the system takes a long time before it reaches the configuration in which the strong 
part is covered with particles. We obtain a sharp asymptotic estimate for the expected 
transition time, show that the transition time is asymptotically exponentially distributed, 
and identify the size and shape of the critical droplet representing the bottleneck for the 
crossover. For various types of bipartite graphs the computations are made explicit. 
Proofs rely on potential theory for reversible Markov chains, and on isoperimetric 
results. In a follow-up paper we will use our results to study the performance of 
random-access wireless networks.

\vspace{1cm}
\medskip\noindent
\emph{Keywords:} Interacting particle systems, bipartite graphs, potential theory, 
metastability, isoperimetric problems.\\
\emph{MSC2010:} 60C05; 60K35; 60K37; 82C27.\\
\emph{Acknowledgment:} The research in this paper was supported through ERC 
Advanced Grant 267356-VARIS and NWO Gravitation Grant 024.002.003--NETWORKS.
The authors wish to thank A. van~Enter for a helpful comment.

\end{abstract}

\newpage

\renewcommand{\contentsname}{\vspace{-2em}}
{\footnotesize
\tableofcontents
}

\newpage

%%%%%%%%%% SECTION 1 %%%%%%%%%%%%%%%%%%%%%%%%

\section{Introduction and main results}
\label{sec:intro}

%%%

\subsection{Background}
\label{sec:back}

A \emph{metastable state} in a physical system is a quasi-equilibrium that persists
on a short time scale but relaxes to an equilibrium on a long time scale, called a
\emph{stable state}. Such behaviour often shows up when the system resides in
the vicinity of a configuration where its energy has a local minimum and is subjected
to a small noise: in the short run the noise is unlikely to have a significant impact on
the system, whereas in the long run the noise pulls the system away from the local
minimum and triggers a rapid transition towards a global minimum. When and how
this transition occurs depends on the depths of the energy valley around the metastable
state and the shape of the bottleneck separating the metastable state from the stable
state, called the set of \emph{critical droplets}.

Metastability for interacting particle systems on \emph{lattices} has been studied 
intensively in the past three decades. Representative papers --- dealing with Glauber, 
Kawasaki and parallel dynamics (= probabilistic cellular automata) at low temperature 
--- are \cite{CasGalOliVar84}, \cite{NevSch91}, \cite{KotOli94}, \cite{AroCer96}, 
\cite{HolOliSco00}, \cite{BovMan02}, \cite{CirNar03}, \cite{HolNarOliSco03}, 
\cite{GauOliSco05}, \cite{BovHolNar06}, \cite{CirNarSpi08},\cite{BelLan10}.
Various different approaches to metastability have been proposed, including:
\begin{enumerate}[label=(\Roman*)]
\item
The \emph{path-wise approach}, summarised in the monograph by Olivieri 
and Vares~\cite{OliVar04}, and further developed in \cite{ManNarOliSco04} ,
\cite{CirNar13}, \cite{CirNarSoh15}, \cite{FerManNarSco15}, \cite{NarZocBor15}, 
\cite{FerManNarScoSoh16}. 
\item
The \emph{potential-theoretic approach}, initiated in \cite{BovEckGayKle00}, 
\cite{BovEckGayKle01}, \cite{BovEckGayKle02} and summarised in the monograph 
by Bovier and den Hollander~\cite{BovHol15}. 
\end{enumerate}
Recently, there has been interest in metastability for interacting particle systems on 
\emph{graphs}, which is much more challenging because of lack of periodicity. See 
Dommers~\cite{Dpr}, Jovanovski~\cite{Jpr}, Dommers, den Hollander, Jovanovski 
and Nardi~\cite{DdHJNpr}, den Hollander and Jovanovski~\cite{dHJpr}, for examples. 
In these papers the focus is on Ising spins subject to a Glauber spin-flip dynamics. 
Particularly challenging are cases where the graph is \emph{random}, because the 
key quantities controlling the metastable crossover depend on the realisation of the 
graph.

In the present paper, we study the metastable behaviour of a stochastic system of
particles with \emph{hard-core interactions} in a \emph{high-density regime}. Particles
sit on the vertices of a \emph{bipartite graph}. New particles appear subject
to a neighbourhood exclusion constraint, while existing particles disappear, all
according to independent Poisson clocks. We consider the regime in which the
appearance rates are much larger than the disappearance rates, and there is a
\emph{slight imbalance} between the appearance rates on the two parts of the
graph. Starting from the configuration in which the weak part is covered with particles
(= \emph{metastable state}), the system takes a long time before it reaches the 
configuration in which the strong part is covered with particles (= \emph{stable state}).

We develop an approach for the hard-core model on general bipartite graphs that 
reduces the description of metastability to understanding the isoperimetric 
properties of the graph. The Widom-Rowlinson model on a given graph fits into 
our setting as the hard-core model on an associated bipartite graph we call the 
doubled graph. Exploiting the isoperimetric properties of the graph, we are able 
to obtain a sharp asymptotic estimate for the expected transition time, show 
that the transition time is asymptotically exponentially distributed, and identify 
the size and shape of the critical droplet. Interesting examples include the 
\emph{even torus}, the \emph{doubled torus}, the \emph{tree-like graphs} and the \emph{hypercube}. The 
isoperimetric problem we deal with is non-standard, but in some cases it can be 
reduced to certain standard edge/vertex isoperimetric problems. In the case of 
the even torus and the doubled torus, we derive complete information on the 
isoperimetric problem and hence obtain a complete description of metastability. 
In the case of the tree-like graphs and the hypercube our understanding of the 
isoperimetric problem is less complete, but we are still able to obtain some
relevant information on metastability. Proofs rely on potential theory for reversible 
Markov chains and on isoperimetric inequalities. In a follow-up paper we will use 
our results to study the performance of \emph{random-access wireless networks}
(see also~\cite{ZocBorLeeNar13}). This application is our main motivation.

Earlier work on the same model~\cite{NarZocBor15} focused on the case where 
the appearance rates are \emph{balanced}, and lead to results in the high-density 
regime for the transition time between the two \textit{stable} configurations in 
probability, in expected value and in distribution for finite lattices. The general framework 
in~\cite{NarZocBor15} was also exploited to derive results for the balanced hard-core 
model on non-bipartite graphs (e.g.\ the triangular lattice)~\cite{Zoc17a} and for 
the Widom-Rowlison model~\cite{Zoc17b}.

The remainder of the paper is organised as follows. In Section~\ref{sec:intro:model} we
define the model. In Section~\ref{sec:main} we state and discuss three metastability
theorems. In Section~\ref{sec:mc:reversible} we recall the main ingredients of potential
theory for reversible Markov chains, including the Nash-Williams inequalities for estimating
effective resistance. In Section~\ref{sec:mc:family} we develop a formulation of
metastability for a parametrized family of reversible Markov chains in an asymptotic
regime. In Sections~\ref{sec:hardcorebipartite}--\ref{sec:furtherprep} we apply the
framework of Sections~\ref{sec:mc:reversible}--\ref{sec:mc:family} to hard-core
dynamics. In Section~\ref{sec:main-theorems:proof} we give the proof of the three
metastability theorems of Section~\ref{sec:main}. Section~\ref{sec:main-examples:proof}
describes in more detail what is implied by these theorems in various concrete examples.
Section~\ref{sec:isoperimetric} is devoted to the study of certain isoperimetric problems
that arise in the identification of the critical droplet. Finally, Appendix~\ref{sec:proofspottheo}
provides the proofs of various claims made in Sections~\ref{sec:mc:reversible}--\ref{sec:isoperimetric}.
These are collected at the end in order to smoothen the presentation.

%%%

\subsection{Model}
\label{sec:intro:model}

We consider a system of particles living on a (finite, simple, undirected) connected 
graph $G=(V(G),E(G))$, where $V(G)$ is the set of \emph{vertices} and $E(G)$ is 
the set of \emph{edges} between them. We refer to vertices as \emph{sites}. Each 
site of the graph can carry 0 or 1 particle, but we impose the constraint that two 
adjacent sites cannot carry particles simultaneously. A (valid) \emph{configuration} 
of the model is thus an assignment $x\colon\,V(G)\to\{\symb{0},\symb{1}\}$ such that, 
for each pair of adjacent sites $i,j$, either $x_i=\symb{0}$ or $x_j=\symb{0}$. Alternatively, 
a valid configuration can be identified by an independent set of the graph, i.e., a subset 
$x\subseteq V(G)$ of sites having no edges between them. We will use these two 
representations interchangeably, and with some abuse of notation use the same 
symbol to denote the map $x\colon\,V(G)\to\{\symb{0},\symb{1}\}$ or the subset 
$x\subseteq V(G)$. The set of valid configurations is denoted by $\spX\subseteq
\{\symb{0},\symb{1}\}^{V(G)}$.

The configuration of the system evolves according to a continuous-time Markov chain.
Particles appear or disappear independently at each site, at fixed rates depending on 
the site and subject to the exclusion constraint. Namely, each site $k$ has two associated 
Poisson clocks $\xi^{\birth}_k$ and $\xi^{\death}_k$, signalling the (attempted) birth 
and death of particles:
\begin{description}[font=\it]
\item[Birth:] 
Clock $\xi^{\birth}_k$ has rate $\lambda_k>0$. Every time $\xi^{\birth}_k$ ticks, an 
attempt is made to place a particle at site~$k$. If one of the neighbours of site $k$ 
carries a particle, or if there is already a particle at~$k$, then the attempt fails.
\item[Death:] 
Clock $\xi^{\death}_k$ has rate $1$. Every time $\xi^{\death}_k$ ticks, an attempt 
is made to remove a particle from site $k$. If the site is already empty, then nothing 
is changed.
\end{description}
All the clocks are assumed to be independent.

The parameter $\lambda_k$ is called the \emph{activity} or \emph{fugacity} at site $k$.
We are interested in the asymptotic regime where $\lambda_k\gg 1$. It is easy to verify 
that the distribution
\begin{align}
\pi(x) &\isdef \frac{1}{Z}\prod_{k\in x}\lambda_k,
\end{align}
(where $Z$ is the appropriate normalising constant) is the unique (reversible) equilibrium 
distribution for this Markov chain. Note that when $\lambda_k\gg 1$, the distribution $\pi$ 
is mostly concentrated at configurations that are close to maximal packing.

We prefer to develop our theory in the discrete-time setting. Therefore, we simulate the 
above continuous-time Markov chain by means of a single Poisson clock $\xi$ with rate 
$\gamma\isdef\sum_{k\in V(G)}(\lambda_k+1)$ and a discrete-time Markov chain 
(independent of the clock) in the standard fashion. In this case, the discrete-time Markov 
chain becomes a Gibbs sampler for the distribution $\pi$: a transition of the discrete-time 
chain is made by first picking a random site $I$ with distribution $(i\mapsto \frac{1+\lambda_i}
{\gamma})$, and afterwards resampling the state of site $I$ according to $\pi$ conditioned 
on the rest of the current configuration, i.e., according to $(\symb{0}\mapsto\frac{1}{1+\lambda_I}, 
\symb{1}\mapsto \frac{\lambda_I}{1+\lambda_I})$ if the current configuration has no particle 
in the neighbourhood of $I$, and $(\symb{0}\mapsto 1, \symb{1}\mapsto 0)$ otherwise.
More explicitly, the transition probability from a configuration $x$ to a configuration $y\neq x$ 
(both in $\spX$) is given by
\begin{align}
K(x,y) &= \begin{cases}
\lambda_i/\gamma
& \text{if $x_i=\symb{0}$, $y_i=\symb{1}$, and $x_{V(G)\setminus\{i\}}=y_{V(G)\setminus\{i\}}$,}\\
1/\gamma
& \text{if $x_i=\symb{1}$, $y_i=\symb{0}$, and $x_{V(G)\setminus\{i\}}=y_{V(G)\setminus\{i\}}$,}\\
0
& \text{otherwise.}
\end{cases}
\end{align}
The probability $K(x,x)$ is simply chosen so as to make $K$ a stochastic matrix.

In summary, the discrete-time chain $(X(n))_{n\in\NN}$ (where $\NN\isdef\{0,1,2,\ldots\}$)
and the continuous-time chain $(\hat{X}(t))_{t\in[0,\infty)}$ are connected via the coupling 
$\hat{X}(t)\isdef X(\xi([0,t]))$, where $\xi$ is a Poisson process with rate $\gamma$ 
independent of $(X(n))_{n\in\NN}$. If $T$ is a stopping time for the discrete-time chain
and $\hat{T}$ is the corresponding stopping time for the continuous-time chain, then we have 
the relation $\xExp[T]=\gamma\xExp[\hat{T}]$.

The above process is the dynamic version of the \emph{hard-core} gas model. Throughout this 
paper, we assume that the underlying graph is \emph{bipartite}, i.e., the sites of the graph can 
be partitioned into two disjoint sets $U$ and $V$ in such a way that every edge of the graph 
has one endpoint in $U$ and the other endpoint in $V$. In the sequel, we will assume that 
$\lambda_k=\lambda$ for all $k\in U$ and $\lambda_k=\bar{\lambda}$ for all $k\in V$, where 
$\lambda,\bar{\lambda}\in\RR^+$. A simple example of a bipartite graph on which the hard-core 
dynamics exhibits very strong metastable behaviour is the \emph{complete} bipartite graph
(Fig.~\ref{fig:graph:complete-bipartite}) in which every site in $U$ is connected by an edge
to every site in $V$: starting from the configuration $u$ with particles at every site in $U$,
the system must first remove every single particle from $U$ in order to be able to place a 
particle on $V$ and eventually reach the configuration $v$ with particles at every site in $V$.
A more interesting example is an \emph{even torus} graph $\ZZ_{m}\times\ZZ_{n}$ ($m$ and 
$n$ even) with nearest-neighbour edges, in which case $U$ and $V$ can be chosen, 
respectively, to be the sets of sites with even and odd coordinates (Fig.~\ref{fig:graph:torus}).
A further class of interesting examples arises from the two-species Widom-Rowlinson model,
which has an equivalent representation in our setting.

%%%%%%%%%%%%%%%%%%%%%%%%%%%%%%%%%%%%%%%%%%%%%%
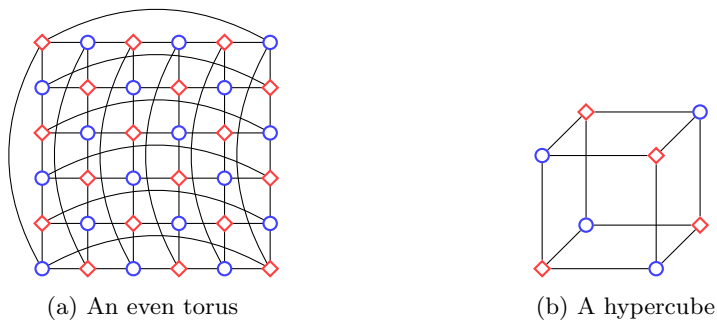
\begin{figure}[htbp]
\centering
\begin{subfigure}[b]{0.4\textwidth}
\centering
{
\tikzsetfigurename{graph_torus}
\begin{tikzpicture}[scale=0.6,baseline]
\torus
\end{tikzpicture}
}
\caption{An even torus}
\label{fig:graph:torus}
\end{subfigure}
\begin{subfigure}[b]{0.4\textwidth}
\centering
{	
\tikzsetfigurename{graph_hypercube}
\begin{tikzpicture}[scale=1.5,baseline]
\hypercube
\end{tikzpicture}
}
\caption{A hypercube}
\label{fig:graph:hypercube}
\end{subfigure}
\caption{More examples of bipartite graphs.}
\label{fig:graph:bipartite:main}
\end{figure}
%%%%%%%%%%%%%%%%%%%%%%%%%%%%%%%%%%%%%%%%%%%

The (dynamic) \emph{Widom-Rowlinson} model (see e.g.\ Lebowitz and Gallavotti~\cite{LebGal71})
is similar. In this model there are two types of particles, red and blue. Again, each site of the 
graph can be occupied by at most one particle, which can be of either type, but the exclusion 
constraint acts between opposite types only: two particles of opposite colour cannot simultaneously 
sit on two neighbouring sites. The dynamics is governed by three families of independent Poisson 
clocks:
\begin{description}[font=\it]
\item[Birth of red:] 
Clock $\xi^{\red\birth}_k$ has rate $\lambda_\red>0$. Every time $\xi^{\red\birth}_k$ ticks, an 
attempt is made to place a red particle at site $k$. If one of the neighbours of site $k$ carries a 
blue particle, or if there is already a particle on~$k$, then the attempt fails.
\item[Birth of blue:] 
Clock $\xi^{\blue\birth}_k$ has rate $\lambda_\blue>0$. Every time $\xi^{\blue\birth}_k$ ticks, 
an attempt is made to place a blue particle at site $k$. If one of the neighbours of site $k$ 
carries a red particle, or if there is already a particle on $k$, then the attempt fails.
\item[Death:] 
Clock $\xi^{\death}_k$ has rate $1$. Every time $\xi^{\death}_k$ ticks, an attempt is made to 
remove a particle from site $k$. If the site is already empty, then nothing is changed.
\end{description}

The Widom-Rowlinson model on a graph $G=(V(G),E(G))$ has a faithful representation in terms 
of the hard-core process on a bipartite graph $G^{[2]}$ obtained from $G$, which we call the 
\emph{doubled} version of $G$ (see Fig.~\ref{fig:graph:doubled-graph}). The graph $G^{[2]}$ has 
vertex set $V(G^{[2]}) \isdef V(G)\times\{\red,\blue\}$ with two parts $U^{[2]}\isdef\{(k,\red)\colon\,
k\in V(G)\}$ and $V^{[2]}\isdef\{(k,\blue)\colon\, k\in V(G)\}$, which are the coloured copies of 
$V(G)$. There is an edge between a red site $(i,\red)$ and a blue site $(j,\blue)$ if and only if 
either $i=j$ or $(i,j)$ is an edge in $E(G)$ (Fig.~\ref{fig:graph:doubled-graph}). There are 
no edges between red sites nor between blue sites. The configurations of the Widom-Rowlinson 
model on $G$ are in obvious one-to-one correspondence with the configurations of the 
hard-core model on $G^{[2]}$. Namely, a configuration $x$ of the Widom-Rowlinson model 
corresponds to a configuration $x^{[2]}$ of the hard-core model on the doubled graph where 
$x_i=\red$ if and only if $x_{(i,\red)}=\symb{1}$ and $x_i=\blue$ if and only if $x_{(i,\blue)}
=\symb{1}$. Furthermore, this correspondence is respected by the stochastic dynamics
of the two models. So in short, studying the Widom-Rowlinson model on $G$ amounts to
studying the hard-core model on the doubled graph $G^{[2]}$.

%%%%%%%%%%%%%%%%%%%%%%%%%%%%%%%%%%%%%%%%%%
\begin{figure}[htbp]
\centering
\begin{subfigure}[b]{0.3\textwidth}
\centering
{
\tikzsetfigurename{graph_sample}
\begin{tikzpicture}[scale=1.1,baseline]
\samplegraph
\end{tikzpicture}
}
\caption{A graph $G$}
\label{fig:graph:example}
\end{subfigure}
\begin{subfigure}[b]{0.3\textwidth}
\centering
{
\tikzsetfigurename{graph_sample_doubled}
\begin{tikzpicture}[scale=0.9,baseline]
\samplegraphdoubled
\end{tikzpicture}
}
\caption{The doubled graph $G^{[2]}$}
\label{fig:graph:example:doubled}
\end{subfigure}
\begin{subfigure}[b]{0.3\textwidth}
\centering
{
\tikzsetfigurename{graph_sample_doubled_unwound}
\begin{tikzpicture}[scale=0.9,baseline]
\samplegraphdoubledunwound
\end{tikzpicture}
}
\caption{A different drawing of $G^{[2]}$}
\label{fig:graph:example:doubled:unwound}
\end{subfigure}
\caption{A graph and its doubled version.}
\label{fig:graph:doubled-graph}
\end{figure}
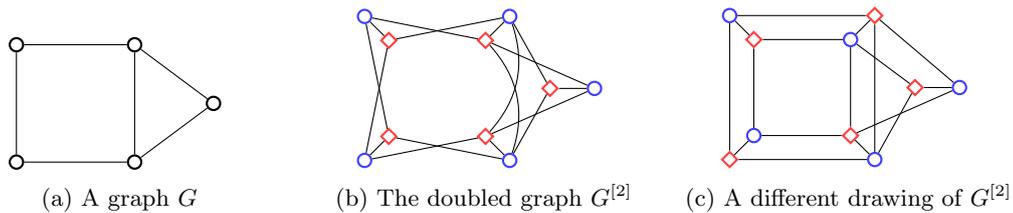
%%%%%%%%%%%%%%%%%%%%%%%%%%%%%%%%%%%%%%%%%%%

\subsection{Three metastability theorems}
\label{sec:main}

For the hard-core model on a bipartite graph $(U,V,E)$, we write $u$ for the configuration 
that has a particle at every site of $U$, and $v$ for the configuration that has a particle
at every site of $V$. For the activity parameters, we choose $\lambda_k=\lambda$ for 
$k\in U$ and $\lambda_k=\bar{\lambda}$ for $k\in V$, and we assume that
\begin{align}
\bar{\lambda}&=\varphi(\lambda)=\lambda^{1+\alpha+\smallo(1)}
\qquad\text{as $\lambda\to\infty$,}
\end{align}
for some constant $0<\alpha<1$. In other words, the activities of the sites in $V$ are slightly 
stronger than the sites in $U$. The symmetric scenario in which $\alpha=0$ is treated 
in Nardi, Zocca and Borst~\cite{NarZocBor15}.
In this paper, we focus on the case in which $\abs{U}<(1+\alpha)\abs{V}$ .  This ensures that $v$ has the largest
stationary probability among all configurations.
The opposite case can be treated similarly.

When $\lambda\to\infty$, we expect noticeable metastability when starting from $u$.
Namely, although the configuration $v$ takes up the overwhelmingly largest portion of the equilibrium 
probability mass, the process starting from $u$ remains in the vicinity of $u$ for a long time 
before the formation of a `critical droplet' and the eventual transition to $v$. The choice 
$\lambda^{1+\alpha+\smallo(1)}$ for $\varphi(\lambda)$ ensures that the size of the critical 
droplet is non-trivial (neither going to $0$ nor to $\infty$ as $\lambda\to\infty$). With this choice, 
we may think of
\begin{align}
H(x) &\isdef -\abs{x_U} - (1+\alpha)\abs{x_V}
\end{align}
(where $x_U\isdef x\cap U$ and $x_V\isdef x\cap V$)
as an appropriate notion of \emph{energy} or \emph{height} of configuration $x$, although we 
should keep in mind that the probability $\pi(x)$ and the height $H(x)$ are related only through
the asymptotic equality $\pi(x)=\frac{1}{Z}\lambda^{-H(x)+\smallo(1)}$. (In particular, note that 
the factor $\lambda^{\smallo(1)}$ is allowed to go to $\infty$ as $\lambda\to\infty$.) This 
interpretation provides the connection with the usual setting of metastability on which the 
current paper is based. As it turns out, the factor $\lambda^{\smallo(1)}$ does not alter the 
size or shape of the critical droplet, and only affects the transition time (see also Cirillo,
Nardi and Sohier~\cite{CirNarSoh15}).

On a typical transition path from $u$ to $v$, the configurations near the bottleneck (i.e., those 
representing the critical droplet) solve a (non-standard) \emph{isoperimetric problem} on the 
underlying bipartite graph. The \emph{isoperimetric cost} of a set $A\subseteq V$ is defined 
as $\Delta(A)\isdef\abs{N(A)}-\abs{A}$. The smallest possible isoperimetric cost for a set of 
cardinality $s$ is denoted by $\Delta(s)$. A set that achieves this minimum is said to be 
\emph{isoperimetrically optimal}. The isoperimetric problem associated with the graph $(U,V,E)$
asks for the optimal values $\Delta(s)$ and the optimal sets. An \emph{isoperimetric numbering} 
is a sequence $a_1,a_2,\ldots,a_n$ of distinct elements in $V$ such that for each $1\leq i\leq n$, 
the set $A_i\isdef\{a_1,a_2,\ldots,a_i\}$ is isoperimetrically optimal.

Our main results concern the hard-core model on a bipartite graph with the above choices of 
the relevant parameters, and rely on fairly general (though not necessarily easily verifiable)
\emph{hypotheses} regarding the isoperimetric properties of the underlying graph. These 
hypotheses are not the most general possible and can certainly be relaxed. Our goal is to show 
how they can be put to use in a few concrete examples: the torus $\ZZ_m\times\ZZ_n$ (where $m$ 
and $n$ are sufficiently large even numbers), the hypercube $\ZZ_2^m$, tree-like graphs and the 
doubled versions of these (see Fig.~\ref{fig:graph:bipartite:main}--\ref{fig:graph:doubled-graph}). 
In the case of the torus, where we have a rather complete understanding of the isoperimetric 
properties (via reduction to standard isoperimetric problems), we verify that all the required 
hypotheses are indeed satisfied. For the other examples, we are able to verify only some of 
the hypotheses, thereby obtaining only partial results. Complete descriptions remain contingent 
upon a better understanding of the corresponding isoperimetric problems.

Our first two theorems establish asymptotics for the mean and the distribution of the crossover 
time (i.e., the hitting time of $v$ starting from $u$). Let $s^*$ be the smallest positive integer 
maximising $g(s)\isdef\Delta(s)-\alpha(s-1)$. We call $s^*$ the \emph{critical size}. Let $\tilde{s}$ 
be the smallest integer larger than $s^*$ such that $\Delta(\tilde{s})\leq\alpha\tilde{s}$. We call 
$\tilde{s}$ the \emph{resettling size}. The required hypotheses for these two theorems are the 
following:
\begin{enumerate}[label=\fbox{H\arabic*},ref={\rm H\arabic*},series=hypo]
	\setcounter{enumi}{-1}
	\item
		\label{hypothesis:v-is-stable}
		$\abs{U}<(1+\alpha)\abs{V}$.
	\item 
		\label{hypothesis:numbering:single}
		There exists an isoperimetric numbering of length at least $\tilde{s}$.
	\item
		\label{hypothesis:numbering:all}
		For every $a\in V$, there exists an isoperimetric numbering of length at least $\tilde{s}$
		starting with $a$.
\end{enumerate}
Clearly \eqref{hypothesis:numbering:all} implies~\eqref{hypothesis:numbering:single}.
In fact, the following theorems require the stronger hypothesis~\eqref{hypothesis:numbering:all}
but we have stated~\eqref{hypothesis:numbering:single} for future reference.
The existence of the resettling size is ensured by hypothesis~\eqref{hypothesis:v-is-stable}.

Let $\hat{T}_v\isdef\{t\geq 0: X(t)=v\}$ be the first hitting time of configuration $v$.

\begin{theorem}[{\bf Mean crossover time: order of magnitude}]
\label{thm:mean-crossover:magnitude}
Suppose that conditions~\eqref{hypothesis:v-is-stable} and~\eqref{hypothesis:numbering:all} are satisfied.
Then
\begin{align}
	\xExp_u[\hat{T}_v] &\asymp
		\frac{\lambda^{\Delta(s^*)+s^*-1}}{\bar{\lambda}^{s^*-1}}
		= \lambda^{\Delta(s^*)-\alpha(s^*-1)+\smallo(1)}
	\qquad\text{as $\lambda\to\infty$,}
\end{align}
where $f(\lambda)\asymp g(\lambda)$ means that $f=\bigo(g)$ and $g=\bigo(f)$
as $\lambda\to\infty$.
\end{theorem}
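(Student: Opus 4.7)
The plan is to invoke the potential-theoretic representation of mean hitting times for reversible chains (as summarised in Section~\ref{sec:mc:reversible}),
\[
\xExp_u[\hat{T}_v] \;=\; \frac{1}{\effC{u}{v}}\sum_{x}\pi(x)\,\xPr_x\bigl(\hat{T}_u<\hat{T}_v\bigr),
\]
and reduce the problem to matching two-sided estimates on the capacity $\effC{u}{v}$. I would first show that the sum in the numerator is $\pi(u)\,\lambda^{\smallo(1)}$: the equilibrium potential $h_{u,v}\isdef\xPr_{\cdot}(\hat{T}_u<\hat{T}_v)$ is essentially the indicator of the metastable valley of $u$, and, because $s^*$ is the smallest global maximiser of $g$, every configuration in this valley has $\pi$-mass bounded by $\pi(u)$ up to sub-polynomial factors. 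The theorem thus reduces to showing $\effC{u}{v}\asymp \pi(u)\,\lambda^{\alpha(s^*-1)-\Delta(s^*)+\smallo(1)}$.

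For the lower bound on $\effC{u}{v}$ I would apply Thomson's principle with a unit flow supported on a canonical path from $u$ to $v$. By hypothesis~\eqref{hypothesis:numbering:all}, fix an isoperimetric numbering $a_1,\ldots,a_{\tilde{s}}$ of $V$. The path proceeds in stages $i=1,\ldots,\tilde{s}$: at stage $i$ it first removes the $U$-particles in $N(A_i)\setminus N(A_{i-1})$ one by one, then places a particle at $a_i$. Once the path has reached size $\tilde{s}$, the defining condition $\Delta(\tilde{s})\leq\alpha\tilde{s}$ makes the remainder downhill, so the completion from the $A_{\tilde{s}}$-saturated configuration to $v$ contributes only a polynomial factor to the total path resistance. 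The minimum edge conductance along the path is attained at the $U$-death transition just prior to the size-$s^*$ saddle and equals $\pi(u)\,\lambda^{\alpha(s^*-1)-\Delta(s^*)+\smallo(1)}$; Thomson's principle then gives the required lower bound on the capacity, and hence the desired upper bound on $\xExp_u[\hat{T}_v]$.

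For the matching upper bound on $\effC{u}{v}$ I would apply the Nash--Williams inequality with a cut $\Pi$ consisting of edges leaving the communication-height valley $\mathcal{V}_u\isdef\{z:F(z)\leq g(s^*)\}$, where $F(z)$ denotes the infimum, over paths from $u$ to $z$, of the maximum of $H$ along the path. The dominant contribution to $\sum_{(x,y)\in\Pi}c(x,y)$ should come from $U$-death transitions at near-critical configurations $x$ with $|x_V|=s^*-1$ and $x_U=(U\setminus N(A))\cup\{i\}$ for some isoperimetrically optimal $A\supseteq x_V$ of size $s^*$ and some $i\in N(A)\setminus N(x_V)$. Each such edge has conductance of order $\pi(u)\,\lambda^{\alpha(s^*-1)-\Delta(s^*)+\smallo(1)}$, and hypothesis~\eqref{hypothesis:numbering:all} (in its "every starting vertex" form) is used to bound the number of such configurations by a polynomial in $|V|$.

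The main obstacle is the cut analysis in the last step: one must verify that no other boundary edges of $\mathcal{V}_u$ contribute more than the canonical ones. In particular, $V$-death transitions from "post-saddle" configurations carry larger conductance, but the definition of $\mathcal{V}_u$ via the communication height $F$ (rather than via $H$ alone) is designed precisely to exclude such configurations from the valley. Ruling out other potential shortcut paths requires the combinatorial strength of hypothesis~\eqref{hypothesis:numbering:all}, i.e.\ the ability to restart an optimal numbering at any $a\in V$, rather than its weaker single-vertex analogue~\eqref{hypothesis:numbering:single}.
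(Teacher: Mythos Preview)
Your capacity bounds are broadly on the right track and parallel the paper's critical-resistance framework: Proposition~\ref{prop:bounds:communication-height:abstract} already gives $\effR{u}{v}\asymp\Psi(u,v)$ with constants depending only on $|\spX|$, so neither a careful flow nor a careful cut count is needed at this precision (in particular, any cut has at most $|\spX|^2$ boundary edges, a $\lambda$-free constant, so~\eqref{hypothesis:numbering:all} is not doing work there). The genuine gap is in the numerator. You assert that $\sum_x\pi(x)h_{u,v}(x)=\pi(u)\lambda^{o(1)}$ because ``every configuration in the valley of $u$ has $\pi$-mass bounded by $\pi(u)$'', justified by ``$s^*$ is the smallest global maximiser of $g$''. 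But configurations $x$ with $\Delta(x)<\alpha|x_V|$ --- for instance any near-optimal $x$ with $|x_V|\geq\tilde{s}$ --- satisfy $\pi(x)\succ\pi(u)$, and for each such $x$ you must show $h_{u,v}(x)\preceq\pi(u)/\pi(x)$. Via Proposition~\ref{prop:voltage:bounds:communication-height:abstract} this amounts to $\pi(x)\Psi(x,v)\preceq\pi(u)\Psi(u,v)$, i.e.\ the absence of \emph{traps}. Nothing about $g$ or $s^*$ by itself rules out a configuration with many $V$-particles and large $\pi$-mass that is nonetheless separated from $v$ by a barrier of order $\Psi(u,v)$.

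The paper treats absence of traps as the substantive step (Proposition~\ref{prop:hard-core:no-trap}), and this is where~\eqref{hypothesis:numbering:all} actually enters. Given $x\notin\{u,v\}$, one locates an unoccupied $j\in V$ adjacent to an unoccupied site in $U$, takes a standard path $\omega:u\pathto J(u)$ whose first $V$-particle lands at $j$ (the existence of such $\omega$ for \emph{every} $j$ is exactly~\eqref{hypothesis:numbering:all}), and sets $\sigma'(k):=x\lor\omega(k)$. The lattice identity $\pi(x\lor y)\pi(x\land y)=\pi(x)\pi(y)$ (the FKG condition~\eqref{eq:stationary:log-modular}) then yields $\pi(x)/\pi(\sigma'(k))=\pi(x\land\omega(k))/\pi(\omega(k))$, and a short case analysis on $x\land\omega(k)$ shows this is $\prec\pi(u)/\pi(\omega(k))$; hence $\pi(x)\Psi(x,J^-(x))\prec\pi(u)\Psi(u,J(u))$. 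With traps excluded, the paper bypasses the equilibrium-potential sum entirely: Corollary~\ref{cor:escape:mean:cascade} with $Z=\{v\}$ gives $\xExp_u[T_v]=\pi(u)\effR{u}{J(u)}[1+o(1)]$ directly, and then $\effR{u}{J(u)}\asymp\Psi(u,J(u))$ together with Proposition~\ref{prop:hard-core:critical-resistance:identification} finishes.
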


\begin{theorem}[{\bf Exponential law for crossover time}]
\label{thm:crossover:exponential}
Suppose that conditions~\eqref{hypothesis:v-is-stable} and~\eqref{hypothesis:numbering:all} are satisfied.
Then
\begin{align}
\lim_{\lambda\to\infty}\xPr_u\left(
\frac{\hat{T}_v}{\xExp_u[\hat{T}_v]} > t \right) 
&= \ee^{-t}
\qquad\text{uniformly in $t\in\RR^+$.}
\end{align}
\end{theorem}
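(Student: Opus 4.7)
The plan is to reduce the exponential law to two ingredients, following the potential-theoretic framework developed in Section~\ref{sec:mc:family}: a sharp control of the mean hitting time $\xExp_u[\hat{T}_v]$, obtained in parallel with Theorem~\ref{thm:mean-crossover:magnitude}, and a much shorter ``relaxation'' time scale governing the return of the process to $u$ from the metastable valley. The general principle is that, if the process starting from $u$ makes a large geometric number of short excursions in and out of a neighbourhood of $u$ before the eventual crossover to $v$, then the crossover time is asymptotically exponential.

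First, I would identify the metastable valley $\mathcal{V}(u)\subseteq\spX$ as the set of configurations connected to $u$ by a path of configurations whose energy $H$ stays strictly below the critical height $H(u)+g(s^*)$. The key estimate to be established is that there exists an exponent $\beta<\Delta(s^*)-\alpha(s^*-1)$ such that
\begin{align}
\max_{x\in\mathcal{V}(u)}\xExp_x[\hat{T}_u]=\bigo\bigl(\lambda^{\beta}\bigr)
\qquad\text{and}\qquad
\max_{x\in\mathcal{V}(u)}\xPr_x\bigl(\hat{T}_v<\hat{T}_u\bigr)=\smallo(1).
\end{align}
Both estimates are proved by constructing, for every $x\in\mathcal{V}(u)$, a path back to $u$ of polynomial length whose energies are bounded by that of $x$. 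This is where hypothesis~\eqref{hypothesis:numbering:all} enters crucially: given a partial droplet on $V$ inside $\mathcal{V}(u)$, an isoperimetric numbering starting at any prescribed site of $V$ enables us to reverse droplet growth step by step while keeping the energy below the critical threshold.

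With the return estimate in hand, I would apply the strong Markov property at the successive returns of the process to $u$. The number $N$ of excursions from $u$ before $\hat{T}_v$ is approximately geometric with success probability $p$ of order $\lambda^{-\Delta(s^*)+\alpha(s^*-1)+\smallo(1)}$, while each excursion has duration at most $\lambda^{\beta}\ll 1/p$. The standard renewal/mixing argument (see~\cite{BovHol15}) then converts this picture into convergence in distribution of $\hat{T}_v/\xExp_u[\hat{T}_v]$ to an $\mathrm{Exp}(1)$ law; uniformity in $t\in\RR^+$ follows from the corresponding uniformity built into the abstract framework of Section~\ref{sec:mc:family}, together with tightness of $\hat{T}_v/\xExp_u[\hat{T}_v]$ in $L^1$ (which itself is a byproduct of the capacity estimates underlying Theorem~\ref{thm:mean-crossover:magnitude}).

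The main obstacle will be the uniform return estimate over the whole valley $\mathcal{V}(u)$. Configurations in $\mathcal{V}(u)$ are not merely of the canonical form ``$u$ minus a few particles, plus a small partial droplet on $V$''; they may contain a partially grown droplet of arbitrary shape, possibly flanked by holes in $u$ of various sizes. The strength of~\eqref{hypothesis:numbering:all}---that an isoperimetric numbering may begin from any site in $V$---is precisely what allows us to tailor the descent path to each such $x$, but verifying that this path remains in $\mathcal{V}(u)$ throughout and has the claimed sub-critical energy will require the structural lemmas to be developed in Sections~\ref{sec:hardcorebipartite}--\ref{sec:furtherprep}.
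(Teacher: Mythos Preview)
Your high-level picture---regeneration at successive returns to $u$, a geometric number of trials with small success probability, hence an exponential law---matches the paper's strategy exactly; this is the content of Proposition~\ref{prop:exponential:renewal} and Corollary~\ref{cor:escape:exponential}. The paper's proof of Theorem~\ref{thm:crossover:exponential} is then a two-line invocation of Corollary~\ref{cor:escape:exponential} with $a=u$, $Z=\{v\}$.

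Where your proposal diverges is in what has to be verified. You aim at uniform control over a metastable valley $\mathcal{V}(u)$: a bound on $\max_{x\in\mathcal{V}(u)}\xExp_x[\hat{T}_u]$ and on $\max_{x\in\mathcal{V}(u)}\xPr_x(\hat{T}_v<\hat{T}_u)$. The paper does not introduce a valley at all. The two hypotheses of Corollary~\ref{cor:escape:exponential} are (i) $\pi(u)\Psi(u,J(u))\succ 1$, which is immediate, and (ii) the \emph{no-trap} condition $\pi(x)\Psi(x,J^-(x))\prec\pi(u)\Psi(u,J(u))$ for \emph{every} $x\notin\{u,v\}$. Condition~(ii) is what feeds into Corollary~\ref{cor:transition:avalanche} to give $\eta/M=\smallo(1)$, and that---not a return-time bound from the valley---is the quantity Proposition~\ref{prop:exponential:renewal} actually needs. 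Your estimate on failed excursions bounds $\mu$, which is in fact $\bigo(1)$ and plays no role; the delicate part is bounding the \emph{successful} excursion, which by construction leaves any valley you might define.

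The way~\eqref{hypothesis:numbering:all} enters is also different from what you describe. It is not used to reverse droplet growth back to $u$ inside a valley. Proposition~\ref{prop:hard-core:no-trap} establishes the no-trap condition by constructing, for each $x$, a path from $x$ to $J^-(x)$ (a state with \emph{larger} $\pi$, not $u$). The construction takes a standard path $\omega:u\pathto J(u)$ whose first $V$-particle can be placed at any prescribed site (this is exactly~\eqref{hypothesis:numbering:all}) and lifts it via the lattice operation $\sigma'(k)=x\lor\omega(k)$, exploiting the log-modularity~\eqref{eq:stationary:log-modular}. This sidesteps entirely the ``main obstacle'' you identify: there is no need to control arbitrary configurations in a valley, only to compare $\pi(x)r(\sigma'(k),\sigma'(k+1))$ with $\pi(u)r(\omega(k),\omega(k+1))$ term by term.
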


For the next theorem, we need a few extra definitions and hypotheses. Note that 
Theorem~\ref{thm:mean-crossover:magnitude} provides only the order of magnitude 
of the mean crossover time $\xExp_u[\hat{T}_v]$ as $\lambda\to\infty$. A more accurate asymptotics 
(the pre-factor) requires a more detailed description of the bottleneck (the critical droplets), which in 
turn requires a better understanding of the isoperimetric properties of the underlying graph. More 
specifically, we need an understanding of the evolution of the set of occupied sites in $V$ during 
the crossover from $u$ to $v$. We call a sequence of sets $A_0,A_1,\ldots,A_n\subseteq V$ a 
\emph{progression} from $A_0$ to $A_n$ if $\abs{A_i\triangle A_{i+1}}=1$ for each $0\leq i<n$.
A progression $A_0,A_1,\ldots,A_n$ is \emph{isoperimetric} if $A_i$ is isoperimetrically optimal 
for each $0\leq i\leq n$. An \emph{$\alpha$-bounded} progression is a progression $A_0,A_1,\ldots,A_n$
such that $\Delta(A_i)-\alpha\abs{A_i}\leq \Delta(s^*)-\alpha s^*$ for each $0\leq i\leq n$.

For our third theorem we need two more hypotheses:

\begin{enumerate}[resume*=hypo]
	\item 
		\label{hypothesis:uniqueness}
		The critical size $s^*$ is the unique maximiser of $g(s)\isdef\Delta(s)-\alpha(s-1)$
		in $\{0,1,\ldots,\tilde{s}\}$.
	\item
		\label{hypothesis:critical-set}
		There exist two families $\family{A},\family{B}$ of subsets of $V$ such that
		\begin{enumerate}[label=(\alph*),ref=\theenumi.\alph*]
			\item \label{hypothesis:critical-set:size}
				the elements of $\family{A}$ and $\family{B}$
				are isoperimetrically optimal with
				$\abs{A}=s^*-1$ for each $A\in\family{A}$ and
				$\abs{B}=s^*$ for each $B\in\family{B}$,
			\item \label{hypothesis:critical-set:before-A}
				for each $A\in\family{A}$, there is an isoperimetric progression
				from $\varnothing$ to $A$, consisting only of sets of size at most 
				$s^*-1$.
			\item \label{hypothesis:critical-set:after-B}
				for each $B\in\family{B}$, there is an isoperimetric progression
				from $B$ to a set of size $\tilde{s}$, consisting only of sets
				of size at least $s^*$,
			\item \label{hypothesis:critical-set:mandatory}
				for every $\alpha$-bounded progression $A_0,A_1,\ldots,A_n$
				with $A_0=\varnothing$ and $\Delta(A_n)\leq\alpha\abs{A_n}$, %$\abs{A_n}=\tilde{s}$,
				there is an index 
				$0\leq k<n$ such that $A_k\in\family{A}$ and $A_{k+1}\in\family{B}$.
		\end{enumerate}
\end{enumerate}
We interpret an element of $\family{B}$ as a \emph{critical droplet} on $V$. Given two families 
$\family{A}$ and $\family{B}$ satisfying~\eqref{hypothesis:critical-set}, we define two sets of 
configurations $Q$ and $Q^*$ as follows. The set $Q^*$ consists of configurations $y$ such 
that $y_V=A$ and $y_U=U\setminus N(B)$ for some $A\in\family{A}$ and $B\in\family{B}$
with $\abs{B\setminus A}=1$. A configuration $x$ is in $Q$ if it can be obtained from a configuration 
$y\in Q^*$ by adding a particle on $U$. We denote by $[Q,Q^*]$ the set of possible transitions 
$x\to y$ where $x\in Q$ and $y\in Q^*$. In other words, $[Q,Q^*]$ consists of pairs $(x,y)\in Q
\times Q^*$ such that $x$ and $y$ differ by a single particle. The set $[Q,Q^*]$ is an example 
of what we call a \emph{critical gate}. Observe that
\begin{align}
\label{eq:critical-gate:size}
	\abs{[Q,Q^*]} &\isdef
	\mathop{\sum_{A\in\family{A}}\sum_{B\in\family{B}}}_{\abs{B\setminus A}=1}
	\abs{N(B)\setminus N(A)}. 
\end{align}

\begin{theorem}[{\bf Critical gate}]
\label{thm:crossover:sharp}
Suppose that conditions~\eqref{hypothesis:v-is-stable}, \eqref{hypothesis:numbering:all} and~\eqref{hypothesis:uniqueness} are 
satisfied. Suppose further that there are two families $\family{A}$ and $\family{B}$ of subsets 
of $V$ satisfying~\eqref{hypothesis:critical-set}. Let $[Q,Q^*]$ be the above-mentioned set of 
transitions associated to $\family{A}$ and $\family{B}$. Then
\begin{enumerate}[label={\rm (\roman*)}]
\item 
\emph{({\bf Mean crossover time: sharp asymptotics})}
\begin{align}
\xExp_u[\hat{T}_v] 
&= \frac{1}{\abs{[Q,Q^*]}} \frac{\lambda^{\Delta(s^*)+s^*-1}}{\bar{\lambda}^{s^*-1}}
[1+\smallo(1)] \qquad\text{as $\lambda\to\infty$.}
\end{align}
\item 
\emph{({\bf Passage through the gate})}\\
With probability approaching $1$ as $\lambda\to\infty$, the random trajectory from $u$ to $v$ 
makes precisely one transition $x\to y$ from $[Q,Q^*]$, every configuration that follows the 
transition $x\to y$ has at least~$s^*$ particles on~$V$, and every configuration preceding 
$x\to y$ has at most~$s^*-1$ particles on~$V$. Moreover, the choice of the transition $x\to y$ 
is uniform among all possibilities in $[Q,Q^*]$.
\end{enumerate}
\end{theorem}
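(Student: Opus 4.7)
The plan is to derive claim~(i) from the standard potential-theoretic identity for reversible chains (recalled in Sections~\ref{sec:mc:reversible}--\ref{sec:mc:family}):
\begin{align*}
\xExp_u[\hat{T}_v] \;=\; \frac{\sum_{x\in\spX}\pi(x)\,h_{u,v}(x)}{\effC{u}{v}},
\end{align*}
where $h_{u,v}$ is the equilibrium potential with $h_{u,v}(u)=1$, $h_{u,v}(v)=0$, and $\effC{u}{v}$ is the continuous-time capacity. Claim~(ii) then follows from a closely related expected-occupation computation.

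For the numerator I would show $\sum_x\pi(x)h_{u,v}(x)=\pi(u)[1+\smallo(1)]$. The upper bound follows from $h_{u,v}\leq 1$ together with the observation that on the metastable basin (those configurations with $V$-part of size $<s^*$, on which $h_{u,v}$ is not already $\smallo(1)$) the energy $H$ is strictly minimised by $u$, so every other such configuration carries $\pi$-mass that is $\smallo(\pi(u))$; the remaining configurations either have exponentially smaller $\pi$-mass or have $h_{u,v}=\smallo(1)$, and by Hypothesis~\eqref{hypothesis:v-is-stable} the stable configuration $v$ (which carries overwhelming $\pi$-mass) is excluded because $h_{u,v}(v)=0$. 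The matching lower bound is immediate from $h_{u,v}(u)=1$.

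The main task is the sharp asymptotic
\begin{align*}
\effC{u}{v} \;=\; \abs{[Q,Q^*]}\;\pi(u)\;\frac{\bar\lambda^{\,s^*-1}}{\lambda^{\,\Delta(s^*)+s^*-1}}\,[1+\smallo(1)],
\end{align*}
proved by matching Dirichlet (upper) and flow (lower) bounds. For the upper bound I would apply the Dirichlet principle to the test function $f^{\star}(x)=1$ for every $x$ reachable from $u$ by legal transitions whose $V$-part never lands in $\family{B}$, and $f^{\star}=0$ elsewhere. By Hypothesis~\eqref{hypothesis:critical-set:mandatory} the only transitions across which $f^{\star}$ jumps are those in $[Q,Q^*]$; each contributes $\pi(x)$ (single-site death rate $=1$ in continuous time), and a direct calculation using $\abs{N(B)}=\Delta(s^*)+s^*$ yields $\pi(x)=\pi(u)\,\bar\lambda^{\,s^*-1}/\lambda^{\Delta(s^*)+s^*-1}$ for every $x\in Q$. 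For the lower bound I would construct a unit flow from $u$ to $v$ supported on $\alpha$-bounded isoperimetric progressions (whose existence is guaranteed by~\eqref{hypothesis:numbering:all}, \eqref{hypothesis:critical-set:before-A} and~\eqref{hypothesis:critical-set:after-B}), distributed uniformly across the $\abs{[Q,Q^*]}$ gate transitions, and bound its energy via the Berman--Konsowa (or Thomson) variational principle.

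The hardest step is the lower bound on capacity: one must check that every flow from $u$ to $v$ is eventually routed through $[Q,Q^*]$ and that the energy wasted on non-gate edges is $\smallo(1)$ of the gate energy. Hypothesis~\eqref{hypothesis:critical-set:mandatory} encodes the routing constraint, and Hypothesis~\eqref{hypothesis:uniqueness} rules out parallel channels of comparable capacity; combining them quantitatively requires a careful Nash--Williams-style cut argument across all $\alpha$-bounded progressions. Once~(i) is in place, claim~(ii) follows quickly from the same potential-theoretic machinery: for each edge $(x,y)\in[Q,Q^*]$, the expected number of traversals during a trajectory from $u$ to $v$ equals $\pi(x)\cdot\xExp_u[\hat{T}_v]/(\pi(u)[1+\smallo(1)])=1/\abs{[Q,Q^*]}+\smallo(1)$, so that the expected total count across $[Q,Q^*]$ is $1+\smallo(1)$. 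This forces the trajectory to cross the gate precisely once with uniform choice among the $\abs{[Q,Q^*]}$ edges, and the stated $V$-size constraints before and after crossing then follow from Hypothesis~\eqref{hypothesis:uniqueness} together with the $\alpha$-bounded character of typical transition paths.
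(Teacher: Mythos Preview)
Your overall strategy matches the paper's --- potential theory, Dirichlet upper bound, flow lower bound, Green-function computation for the gate crossing --- but two steps do not go through as written.

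First, your test function $f^\star$ does not cut at $[Q,Q^*]$. Every $y\in Q^*$ has $y_V\in\family{A}$ (size $s^*-1$), not $\family{B}$; the gate transitions $x\xremove[U]y$ in $[Q,Q^*]$ lie between two configurations \emph{both} with $V$-part in $\family{A}$, so your $f^\star$ is constant across them. If instead you cut at the $\symb{+V}$ moves landing in $\family{B}$, each such edge has conductance larger by a factor $\bar\lambda/\lambda\asymp\lambda^\alpha$ and you lose the sharp prefactor. Worse, Hypothesis~\eqref{hypothesis:critical-set:mandatory} constrains only $\alpha$-\emph{bounded} progressions; an unrestricted path from $u$ to $v$ can bypass $\family{B}$ through isoperimetrically poor configurations, so $f^\star(v)=0$ is not even guaranteed. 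The paper's fix (Section~\ref{sec:sharper}) is to define the cut $S=S(u,Q,Q^*,J(u))$ via critical resistance rather than $V$-part membership: $x\in S$ iff some path $u\pathto x$ avoids $Q^*$ with $\Psi\preceq\Psi(u,J(u))$. Proposition~\ref{prop:critical-gate:behind} then shows that every cut edge outside $[Q,Q^*]$ has resistance $\succ\Psi(u,J(u))$, hence contributes only $\smallo(c(Q,Q^*))$ to the Dirichlet form.

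Second, your numerator claim that configurations outside the basin have $h_{u,v}=\smallo(1)$ is unsupported. The dangerous configurations are those in $J(u)\setminus\{v\}$, which can satisfy $\pi(x)\succ\pi(u)$; for the sum $\sum_x\pi(x)h_{u,v}(x)$ to collapse to $\pi(u)[1+\smallo(1)]$ you would need $h_{u,v}(x)\pi(x)/\pi(u)\to 0$ for each such $x$, and you offer no argument. The paper avoids this by first establishing $\xExp_u[T_{J(u)}]=\pi(u)\effR{u}{J(u)}[1+\smallo(1)]$ (Proposition~\ref{prop:almost-escape:mean}, where the numerator is automatically $\pi(u)[1+\smallo(1)]$ because every $x\notin J(u)\cup\{u\}$ has $\pi(x)\prec\pi(u)$), and then passing from $T_{J(u)}$ to $T_v$ via the \emph{absence of traps} (Corollary~\ref{cor:escape:mean:cascade}). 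The no-trap condition is itself non-trivial: it is derived from~\eqref{hypothesis:v-is-stable} and~\eqref{hypothesis:numbering:all} in Proposition~\ref{prop:hard-core:no-trap} by exploiting the lattice order on $\spX$ and the log-modularity~\eqref{eq:stationary:log-modular} of $\pi$, and does not follow from the hypotheses you actually invoke.
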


Verifying condition~\eqref{hypothesis:critical-set} in concrete examples can be quite difficult.
However, sacrificing full generality, it is possible to give a rather explicit construction
of families $\family{A}$ and $\family{B}$ and replace~\eqref{hypothesis:critical-set}
with two other hypotheses that are more restrictive but much easier to verify.

Let $0\leq \kappa<\nicefrac{1}{\alpha}$ be an integer
(e.g., $\kappa\isdef\lceil\nicefrac{1}{\alpha}\rceil-1$)
and define
\begin{align}
	\family{A} &\isdef \{A\subseteq V: \text{$A$ is isoperimetrically optimal with $\abs{A}=s^*-1$} \} \;, \\
	\family{C} &\isdef \{C\subseteq V: \text{$C$ is isoperimetrically optimal with $\abs{A}=s^*+\kappa$} \} \;, \\
	\family{B} &\isdef
		\left\{B\subseteq V:%
			\parbox{0.55\textwidth}{%
				\centering
				there exists an isoperimetric progression $B_0,B_1,\ldots,B_n$\\
				with $B_0\in\family{A}$, $B_n\in\family{C}$ and $B_1=B$\\
				such that $s^*-1<\abs{B_i}< s^*+\kappa$ for $0<i<n$
			}
		\right\}
\end{align}
Observe that $\abs{B}=s^*$ for every $B\in\family{B}$.
Consider the following hypotheses:
\begin{enumerate}[resume*=hypo]
		\item
		\label{hypothesis:critical-set-replacement:values}
		\begin{enumerate}[label=(\alph*),ref=\theenumi.\alph*]
			\item
				\label{hypothesis:critical-set-replacement:values:A}
				$\Delta(s^*+\kappa)\geq\Delta(s^*+\kappa-1)$,
			\item
				\label{hypothesis:critical-set-replacement:values:B}
				$\Delta(s^*+i)\geq\Delta(s^*)$ for $0\leq i<\kappa$,
			\item
				\label{hypothesis:critical-set-replacement:values:C}
				$\Delta(s^*)=\Delta(s^*-1)+1$.
		\end{enumerate}
	\item
		\label{hypothesis:critical-set-replacement:existence}
		\begin{enumerate}[label=(\alph*),ref=\theenumi.\alph*]
			\item
				\label{hypothesis:critical-set-replacement:before-A}
				For each $A\in\family{A}$,
				there is an isoperimetric progression from $\varnothing$ to $A$,
				consisting only of sets of size at most $s^*-1$.
			\item
				\label{hypothesis:critical-set-replacement:after-C}
				For each $C\in\family{C}$,
				there is an isoperimetric progression from $C$ to a set of size $\tilde{s}$,
				consisting only of sets of size at least $s^*$.
		\end{enumerate}
\end{enumerate}

\begin{proposition}[{\bf Identification of critical gate}]
\label{prop:critical-gate:identification}
Suppose that conditions~\eqref{hypothesis:v-is-stable}, \eqref{hypothesis:numbering:single},
\eqref{hypothesis:uniqueness},
\eqref{hypothesis:critical-set-replacement:values}
and~\eqref{hypothesis:critical-set-replacement:existence} are satisfied.
Then the families $\family{A}$ and $\family{B}$ described above
satisfy condition~\eqref{hypothesis:critical-set}.
\end{proposition}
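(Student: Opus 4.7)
The proof reduces to verifying each of the four clauses of condition~\eqref{hypothesis:critical-set} for the families $\family{A}$ and $\family{B}$ constructed from $\family{C}$. Clause~\eqref{hypothesis:critical-set:size} is immediate from the definitions, since every $B\in\family{B}$ sits on a witnessing isoperimetric progression and is therefore isoperimetrically optimal of the prescribed size $s^*$. Clause~\eqref{hypothesis:critical-set:before-A} is literally~\eqref{hypothesis:critical-set-replacement:before-A}. For clause~\eqref{hypothesis:critical-set:after-B}, given $B\in\family{B}$ I would take its witnessing progression $B_0,B_1=B,\ldots,B_n\in\family{C}$, strip off the initial $B_0\in\family{A}$ of size $s^*-1$, and concatenate the remainder with the progression from $B_n$ to a set of size $\tilde{s}$ furnished by~\eqref{hypothesis:critical-set-replacement:after-C}; every set in the combined progression has size $\geq s^*$, as required.

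The main work lies in clause~\eqref{hypothesis:critical-set:mandatory}. The key technical tool I would establish first is a \emph{pinching lemma}: any $\alpha$-bounded set $A$ with $|A|\in\{s^*-1,s^*,\ldots,s^*+\kappa\}$ is automatically isoperimetrically optimal, and moreover $\Delta(s^*+i)=\Delta(s^*)$ for every $0\leq i\leq\kappa$. Indeed, $\alpha$-boundedness gives $\Delta(A)\leq\Delta(s^*)+\alpha(|A|-s^*)$; for $|A|\in\{s^*,\ldots,s^*+\kappa\}$ hypotheses~\eqref{hypothesis:critical-set-replacement:values:A}--\eqref{hypothesis:critical-set-replacement:values:B} yield $\Delta(|A|)\geq\Delta(s^*)$, and for $|A|=s^*-1$ hypothesis~\eqref{hypothesis:critical-set-replacement:values:C} yields $\Delta(|A|)=\Delta(s^*)-1$. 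Combined with the trivial $\Delta(A)\geq\Delta(|A|)$, the bound $\alpha\kappa<1$, and the integrality of $\Delta$, these two-sided inequalities squeeze $\Delta(A)$ to exactly $\Delta(|A|)$.

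Now fix an $\alpha$-bounded progression $A_0,\ldots,A_n$ with $A_0=\varnothing$ and $\Delta(A_n)\leq\alpha|A_n|$. A consequence of the pinching lemma, together with~\eqref{hypothesis:uniqueness} and the definition of $\tilde{s}$, is that $\tilde{s}>s^*+\kappa$, so $|A_n|\geq\tilde{s}>s^*+\kappa$ and the walk of sizes $j\mapsto|A_j|$ must visit $s^*+\kappa$. I then choose $l$ to be the \emph{first} index with $|A_l|=s^*+\kappa$ and $k<l$ to be the \emph{largest} index with $|A_k|=s^*-1$. These extremal choices force $|A_{k+1}|=s^*$ and confine the intermediate sizes $|A_{k+1}|,\ldots,|A_{l-1}|$ to $\{s^*,\ldots,s^*+\kappa-1\}$. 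Applying the pinching lemma to every set in $A_k,A_{k+1},\ldots,A_l$ shows that this subsequence is an isoperimetric progression from $A_k\in\family{A}$ through $A_{k+1}$ to $A_l\in\family{C}$, which is exactly a witness for $A_{k+1}\in\family{B}$; hence $(A_k,A_{k+1})$ is the required pair. The main obstacle will be the reachability step---showing that the progression actually visits size $s^*+\kappa$---which is where the interplay between $\kappa<1/\alpha$, the pinching lemma, and~\eqref{hypothesis:uniqueness} is essential; the combinatorial extraction of $k,l$ and the pinching applications at each intermediate size are then routine.
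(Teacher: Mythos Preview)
Your approach differs substantially from the paper's and is in many respects more transparent. The paper does not argue on progressions directly; it converts the $\alpha$-bounded progression to its associated configuration-space path (Section~\ref{sec:hard-core:progressions}), argues that this path is an optimal path $u\pathto J(u)$, and then invokes Proposition~\ref{prop:hard-core:bipartite:optimal-path:top-of-the-hill}, which in turn rests on the three technical Lemmas~\ref{lem:hard-core:bipartite:optimal-path:isoperimetric:1}--\ref{lem:hard-core:bipartite:optimal-path:isoperimetric:3} about basic configurations on optimal paths. Your pinching lemma replaces that machinery with a single integrality squeeze using only~\eqref{hypothesis:critical-set-replacement:values} and $\alpha\kappa<1$, and your extremal choice of $l$ (first visit to size $s^*+\kappa$) and $k$ (last prior visit to $s^*-1$) cleanly carves out the witnessing progression for $A_{k+1}\in\family{B}$ without ever leaving the world of subsets of~$V$. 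What you gain is directness; what the paper's detour buys is that the same optimal-path lemmas are reused elsewhere.

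The one genuine gap is exactly where you flag it, and your proposed justification does not close it. You claim $|A_n|\geq\tilde{s}>s^*+\kappa$. For the first inequality: $\Delta(A_n)\leq\alpha|A_n|$ implies $\Delta(|A_n|)\leq\alpha|A_n|$, but since $\tilde{s}$ is by definition the smallest such integer \emph{exceeding $s^*$}, this only forces $|A_n|\geq\tilde{s}$ once you already know $|A_n|>s^*$; nothing in~\eqref{hypothesis:uniqueness} or~\eqref{hypothesis:critical-set-replacement:values} excludes $\Delta(s)\leq\alpha s$ for some $1\leq s<s^*$. For the second inequality: even granting $\Delta(s^*+i)=\Delta(s^*)$ for $0\leq i\leq\kappa$, concluding $\tilde{s}>s^*+\kappa$ needs $\Delta(s^*)>\alpha(s^*+\kappa)$, i.e.\ $g(s^*)>\alpha(\kappa+1)$, which is strictly stronger than $g(s^*)>g(0)=\alpha$ from~\eqref{hypothesis:uniqueness}. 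The paper's route does not make this reachability step any more explicit---Proposition~\ref{prop:hard-core:bipartite:optimal-path:top-of-the-hill}'s proof simply selects ``the first basic configuration in $\omega$ that has $s^*+\kappa$ particles on $V$'' without arguing that one exists---so you are on equal footing with the paper here; but your one-line claim oversells what the pinching lemma together with~\eqref{hypothesis:uniqueness} actually delivers.
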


Theorems~\ref{thm:mean-crossover:magnitude}--\ref{thm:crossover:sharp} are 
proved in Section~\ref{sec:main-theorems:proof} after the necessary preparations. 
Section~\ref{sec:mc:reversible} recalls some basic facts from potential theory for 
reversible Markov chains. Section~\ref{sec:mc:family} provides a characterisation of 
metastability in terms of recurrence of metastable states and passage through 
bottlenecks. In Sections~\ref{sec:hardcorebipartite}--\ref{sec:furtherprep} and 
\ref{sec:main-examples:proof} we specialise to hard-core dynamics on bipartite 
graphs and look at both `simple examples' and `sophisticated examples', for which 
we identify $s^*$, $\Delta(s^*)$ and $[Q,Q^*]$. Section~\ref{sec:isoperimetric} is 
devoted to the isoperimetric problems associated with the `sophisticated examples' 
in Section~\ref{sec:hardcorebipartite}. Proposition~\ref{prop:critical-gate:identification} 
is proved in Appendix~\ref{apx:critical-gate:identification} using a detailed study
of typical paths near the critical droplet in Section~\ref{sec:hard-core:critical-gate}.
Appendix~\ref{sec:proofspottheo} collects the proofs of all the propositions and lemmas
appearing in Sections~\ref{sec:mc:reversible}--\ref{sec:isoperimetric}.

%%%%%% SECTION 2 %%%%%%%%%%%%%%%%%%%%%%%%%

\section{Reversible Markov chains}
\label{sec:mc:reversible}

A useful tool for studying reversible Markov chains is their analogy with electric networks 
and potential theory. This analogy has been exploited in various contexts, most notably 
for the recurrence/transience problem. The use of potential theory in the study of metastability 
is pioneered by Bovier, Eckhoff, Gayrard and Klein~\cite{BovEckGayKle01} and is developed 
in detail in the monograph by Bovier and den Hollander~\cite{BovHol15}. We start by recalling 
the relevant aspects of the connection between electric networks and reversible Markov chains, 
while fixing our notation and terminology (see Section~\ref{sec:general:review}). Estimating 
the expected hitting time of a target set reduces via the above analogy to estimating the 
effective resistance between the starting point and the target as well as the voltage at different 
points of the network. Sharp estimates for effective resistance can be obtained using the machinery 
of the Nash-Williams inequalities (see Section~\ref{sec:general:eff-resistance:sharp-bounds})
or using the variational principles of Thomson and Dirichlet. A simpler estimate for effective 
resistance, capturing its order of magnitude, is given by ``critical resistance'', which is an 
abstract variant of the more standard notion of ``communication height'' often used in 
metastability theory (see Section~\ref{sec:general:eff-resistance:communication-height}).
Critical resistance can also be used to provide rough bounds for voltage (see 
Section~\ref{sec:general:voltage:rough-bounds}).

%%%

\subsection{Connection with electric networks}
\label{sec:general:review}

In this section we fix the general notation and terminology and recall a few relevant 
facts about reversible Markov chains and their analogy with electric networks. The 
proofs and the background could be found in various sources, e.g.\ Doyle and 
Snell~\cite{DoySne84}, Levin, Peres and Wilmer~\cite{LevPerWil08}, Grimmett~\cite{Gri10}, 
Lyons and Peres~\cite{LyoPer14}, Aldous and Fill~\cite{AldFil14}, Bovier and den 
Hollander~\cite{BovHol15}.

We let $(X(n))_{n\in\NN}$ be a discrete-time Markov chain with finite state 
space $\spX$ and transition matrix $K:\spX\times\spX\to[0,1]$. We assume that $K$ 
is \emph{irreducible} and has a \emph{reversible} stationary distribution $\pi$.
We write $\xPr_x$ and $\xExp_x$ to denote probability and expectation conditioned on 
the event $X(0)=x$. The first \emph{hitting time} of a set $A\subseteq\spX$ is denoted 
by
\begin{align}
	T_A &\isdef \inf\{n\geq 0: X(n)\in A\} \;.
\end{align}
When we disregard the case $X(0)\in A$, we write
\begin{align}
	T_A^+ &\isdef \inf\{n> 0: X(n)\in A\} \;.
\end{align}
The first \emph{passage time} through a transition $x\to y$ is likewise denoted by
\begin{align}
	T_{xy} &\isdef \inf\{n>0: \text{$X(n-1)=x$ and $X(n)=y$}\} \;.
\end{align}

An analogy is made between the above reversible Markov chain and an electric network 
with nodes labelled by the elements of $\spX$ in which node $x$ is connected to node 
$y$ by a resistor with \emph{conductance} $c(x,y)\isdef \pi(x)K(x,y)=\pi(y)K(y,x)$ (and 
\emph{resistance} $r(x,y)=1/c(x,y)\in(0,\infty]$).
We write $x\link y$ when $c(x,y)>0$.
The first basic connection between the two objects is that the function
\begin{align}
h(x) &\isdef \xPr_x(T_A<T_B)
\end{align}
is the unique harmonic function with boundary conditions $h|_A\equiv 1$ and $h|_B\equiv 0$.
Therefore $\xPr_x(T_A<T_B)$ coincides with the \emph{voltage} $W_{A,B}(x)$ at node 
$x$ if all the nodes in $B$ are connected to the ground and all the nodes in $A$ are 
connected to a unit voltage source.

The \emph{effective resistance} and \emph{effective conductance} between two sets 
$A,B\subseteq\spX$ will be denoted by $\effR{A}{B}$ and $\effC{A}{B}$, respectively.
An easy consequence of the above connection is the equality
\begin{align}
\xPr_a(T_B<T^+_a) = \frac{1}{\pi(a)\effR{a}{B}}
\end{align}
for every state $a\in\spX$ and set $B\subseteq\spX$ not containing $a$.

When $T$ is a stopping time, we denote by $G_T(a,x)$ the expected number of visits 
to state $x$ if the chain is started at state $a$ and stopped at $T$, i.e.,
\begin{align}
G_T(a,x) &\isdef \xExp_a\left[\text{$\#$ of visits to $x$ before $T$}\right].
\end{align}
In case $x=a$, time $0$ is also counted. The function $G_T$ is the \emph{Green function} 
associated with $T$. The second basic connection between a reversible Markov chain
and its corresponding electric network is an electric interpretation of the Green functions 
associated to hitting times. Namely, it can be shown, for a state $a\in\spX$ and a set 
$B\subseteq\spX$ not containing $a$, that the function $h(x)\isdef G_{T_B}(a,x)/\pi(x)$ 
is harmonic with boundary conditions $h|_a\equiv\effR{a}{B}$ and $h|_B\equiv 0$. Therefore 
$G_{T_B}(a,x)/\pi(x)$ agrees with the voltage at $x$ provided all the nodes in $B$ are 
connected to the ground and $a$ is connected to a unit current source. It follows that
\begin{align}
\label{eq:green:expression}
G_{T_B}(a,x) &= \effR{a}{B}\pi(x)W_{a,B}(x),
\end{align}
where $W_{a,B}(x)=\xPr_x(T_a < T_B)$ is the voltage at $x$ when $B$ is connected to 
the ground and $a$ is connected to a unit voltage source. As an immediate corollary, we 
get the useful equality
\begin{align}
\xExp_a[T_B] &= \effR{a}{B} \sum_x \pi(x) W_{a,B}(x),
\end{align}
for every state $a\in\spX$ and set $B\subseteq\spX$ not containing $a$.

If $t$ is a non-negative constant, then by reversibility we have the general identity
\begin{align}
\pi(x) G_t(x,y) &= \pi(y) G_t(y,x).
\end{align}
This identity remains valid for Green functions associated with hitting times:
  
\begin{align}
\label{eq:green:reciprocity}
\pi(x) G_{T_Z}(x,y) &= \pi(y) G_{T_Z}(y,x)
\end{align}
for every two states $x,y\in\spX$ and every set $Z\subseteq\spX$. A similar reciprocity 
law holds for hitting order probabilities:
\begin{align}
\label{eq:hitting-order:reciprocity}
\effR{x}{Z}\xPr_y(T_x<T_Z) &= \effR{y}{Z}\xPr_x(T_y<T_Z)
\end{align}
for every two states $x,y\in\spX$ and every set $Z\subseteq\spX$.

The notion of projection for electric networks is much more relaxed than the notion of 
projection for Markov chains. Namely, identifying two nodes with the same voltage
(i.e., making a short circuit between them) we do not affect the voltage at other nodes.
As a corollary, we have that the effective resistance $\effR{A}{B}$ between two disjoint 
sets $A,B\subseteq\spX$ remains unchanged when we contract $A$ into a single node 
$a$ and $B$ into a single node $b$. This simplify some arguments.

%%%

\subsection{Sharp bounds for effective resistance}
\label{sec:general:eff-resistance:sharp-bounds}

The variational principles of Thomson and Dirichlet are the most common tools to obtain 
upper and lower bounds for effective resistance. An alternative combinatorial approach 
due to Nash-Williams often gives simple and useful estimates.

We consider a graph on the state set $\spX$ whose edges are the pairs $(x,y)$ with 
$c(x,y)>0$. Let $A,B\subseteq\spX$ be disjoint. A \emph{cut} separating $A$ from $B$ 
is a set $C\subseteq\spX$ such that $A\subseteq C\subseteq B^\complement$. Given 
a cut $C$, we write $\partial C\isdef\{(x,y): \text{$x\in C$, $y\notin C$ and $c(x,y)>0$}\}$
for the set of edges between $C$ and $C^\complement$. The simplest form of the 
Nash-Williams inequality is the intuitive inequality
\begin{align}
\label{eq:nash-williams:simplified}
\effC{A}{B} &\leq \effC{C}{C^\complement} 
\leq \abs{\partial C} \;\sup_{\mathclap{x\in C, y\notin C}}\; c(x,y)
\end{align}
for every cut $C$ separating $A$ from $B$. A dual (and equally intuitive) inequality
\begin{align}
\label{eq:nash-williams:dual:simplified}
\effR{A}{B} &\leq r(\omega)
\leq \abs{\omega} \sup_{e\in\omega} r(e)
\end{align}
holds for every path $\omega$ from $A$ to $B$. These two inequalities are special 
cases of the more general Nash-Williams inequalities, but can also be derived from 
the Dirichlet and the Thomson variational principles.

While the above upper bound for effective conductance is sufficient for our purpose, 
we need a more accurate lower bound. The following extended version of the 
(dual) Nash-Williams inequality due to Berman and Konsowa~\cite{BerKon90} 
provides a method to obtain sharp lower bounds.

\begin{proposition}[{\bf Extended dual Nash-Williams inequality}]
\label{prop:nash-williams:dual:extended}
Let $A,B\subseteq\spX$. Let $(\omega_i)_{i \in\NN}$ be an arbitrary sequence 
of simple paths from $A$ to $B$, with the property that no two paths $\omega_i$ and 
$\omega_j$ pass through a common edge in opposite directions. For each edge $e$, 
let $n(e)$ denote the number of paths $\omega_k$ that pass through $e$. Then
\begin{align}
\effC{A}{B} &\geq \sum_k \frac{1}{\sum_{e\in\omega_k} n(e)r(e)}.
\end{align}
\end{proposition}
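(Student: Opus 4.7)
The plan is to exhibit an explicit unit flow from $A$ to $B$ whose energy is at most $1/\sum_k (1/R_k)$, where $R_k \isdef \sum_{e\in\omega_k} n(e)r(e)$, and then invoke Thomson's variational principle, which gives $\effR{A}{B}\le\energy(\theta)$ for every unit flow $\theta$ from $A$ to $B$. Since $\effC{A}{B}=1/\effR{A}{B}$, any admissible flow will immediately produce a lower bound on the effective conductance.

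First I would fix nonnegative weights $(f_k)$ with $\sum_k f_k=1$, and define a flow by superposing a flow of strength $f_k$ along each oriented path $\omega_k$. Explicitly, set $\theta(e)\isdef\sum_{k:\,e\in\omega_k} f_k$ for each edge $e$, with the sign determined by the common orientation of the paths that traverse $e$. The hypothesis that no two paths share a common edge in opposite directions is essential here: it guarantees that the contributions along every edge have a consistent sign, so that $\theta$ is well-defined as an oriented flow without cancellation. Kirchhoff's node law is automatic at every internal vertex because each individual path contributes the same amount in and out at such a vertex, and the source/sink behavior at $A$ and $B$ makes $\theta$ a unit flow from $A$ to $B$.

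The core estimate is a per-edge Cauchy–Schwarz bound
\begin{align}
\theta(e)^2 \;=\; \Bigl(\sum_{k:\,e\in\omega_k} f_k\Bigr)^{2} \;\le\; n(e)\sum_{k:\,e\in\omega_k} f_k^{2}.
\end{align}
Multiplying by $r(e)$, summing over edges, and swapping the order of summation yields
\begin{align}
\energy(\theta) \;=\; \sum_e r(e)\theta(e)^2 \;\le\; \sum_k f_k^{2}\sum_{e\in\omega_k} n(e)r(e) \;=\; \sum_k f_k^{2} R_k.
\end{align}
It remains to optimize the weights. By a second Cauchy–Schwarz, or equivalently a Lagrange-multiplier computation with the constraint $\sum_k f_k=1$, the choice $f_k = (1/R_k)/\sum_j (1/R_j)$ minimizes $\sum_k f_k^2 R_k$ and achieves the value $1/\sum_k (1/R_k)$. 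Substituting into Thomson's principle gives
\begin{align}
\effC{A}{B} \;\ge\; \frac{1}{\energy(\theta)} \;\ge\; \sum_k \frac{1}{\sum_{e\in\omega_k} n(e)r(e)},
\end{align}
which is the claimed bound.

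The main technical nuisance I anticipate is the bookkeeping around orientations when several paths share an edge, which the one-sided-traversal hypothesis is designed to neutralize so that $\theta$ is an honest flow rather than an absolute value. A secondary issue is that the paths are indexed by $\NN$; since $\spX$ is finite there are only finitely many distinct simple paths available, but to allow repetitions one may truncate to the first $N$ paths, apply the argument, and pass to the limit $N\to\infty$ using monotone convergence of both sides in $N$. Optimality of the choice of $(f_k)$ and the tightness of the per-edge Cauchy–Schwarz step make clear where the bound could fail to be sharp, which is consistent with the fact that Thomson's principle becomes an equality only for the true harmonic flow.
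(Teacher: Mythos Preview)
Your argument is correct and complete, but it runs on the Thomson (flow) side of the duality, whereas the paper works on the Dirichlet (voltage) side. The paper takes the harmonic function $W=W_{A,B}$, uses the identity $\effC{A}{B}=\tfrac12\sum_{x,y}c(x,y)\bigl(\dd W(x,y)\bigr)^2$, lower-bounds this by $\sum_k\sum_{e\in\omega_k}\tfrac{1}{n(e)}c(e)\bigl(\dd W(e)\bigr)^2$ (each edge used by some path contributes exactly its full amount after the $1/n(e)$ weighting), and then applies Cauchy--Schwarz along each path together with the telescoping identity $\sum_{e\in\omega_k}\dd W(e)=1$. No test flow is built and no optimization over weights is needed. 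Your route instead constructs an explicit unit flow, bounds its energy via a per-edge Cauchy--Schwarz, and then minimizes $\sum_k f_k^2 R_k$ over the weights; this is closer to the original Berman--Konsowa presentation and makes the role of Thomson's principle transparent. Both proofs are short; the paper's saves the optimization step, while yours gives a concrete flow and clarifies why allowing repeated paths can sharpen the bound. One minor overstatement in your write-up: the orientation hypothesis is not truly essential in either argument---cancellations would only decrease your flow's energy, and $(\dd W(e))^2$ is orientation-blind in the paper's version---so the inequality survives without it.
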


The proof is similar to the proof of the standard Nash-Williams inequality, but 
for completeness, we include it in Appendix~\ref{apx:nash-williams}. We note
that the latter inequality is sharp: by allowing repetitions in the sequence 
$(\omega_i)_{i\in\NN}$ we get arbitrarily close lower 
bounds for the conductance $\effC{A}{B}$.

%%%

\subsection{Rough estimates for effective resistance}
\label{sec:general:eff-resistance:communication-height}

The order of magnitude of effective resistance is captured by the notion of ``critical resistance'',
which is much easier to evaluate. We define the \emph{critical resistance} between two sets 
$A,B\subseteq\spX$ as
\begin{align}
\Psi(A,B) &\isdef \inf_{\omega:A \pathto B} \sup_{e\in\omega}\; r(e),
\end{align}
where the infimum is taken over all paths (sequences of distinct states) connecting $A$ to $B$, 
and the supremum is over all edges (pairs of consecutive states) on the path. For a path $\omega$, 
we refer to $\Psi(\omega)\isdef \sup_{e\in\omega}\; r(e)$ as the critical resistance of $\omega$.

Critical resistance is closely related to the notion of \emph{communication height}, which is often 
used in the study of metastability in Metropolis dynamics (see Olivieri and Vares~\cite{OliVar04},
Bovier and den Hollander~\cite{BovHol15}). The two notions are connected via the (imprecise) 
correspondence $\Psi(A,B)\approx\ee^{\beta\Phi(A,B)}$, where $\Phi(A,B)$ is the communication 
height between $A$ and $B$ and $\beta$ is the inverse temperature. While somewhat less intuitive, 
the notion of critical resistance has two advantages. First, it is defined for individual Markov chains
(rather than parametric families of Markov chains), and therefore can also be used in asymptotic 
regimes other than $\beta\to\infty$, in particular, when there is no clear-cut notion of energy.
Second, while the height of a path $\omega$ is often defined as the maximum energy of a 
\emph{state} on $\omega$, the maximisation in the critical resistance is taken over \emph{pairs 
of consecutive states} on~$\omega$. As noted in Cirillo, Nardi and Sohier~\cite{CirNarSoh15}, 
this turns out to be the appropriate definition for general (non-Metropolis) Markov chains.

The effective resistance $a,b\mapsto\effR{a}{b}$ defines a metric on $\spX$. The critical resistance, 
on the other hand, defines an ultra-metric on $\spX$:
\begin{itemize}
\item 
$\Psi(x,y)\geq 0$ with equality if and only if $x=y$,
\item 
(\emph{symmetry}) $\Psi(x,y)=\Psi(y,x)$,
\item (\emph{strong triangle inequality}) 
$\Psi(x,z)\leq \max\left\{\Psi(x,y),\Psi(y,z)\right\}$.
\end{itemize}
The following proposition shows that the two metrics $a,b\mapsto\effR{a}{b}$ and $a,b\mapsto\Psi(a,b)$ 
are equivalent up to constants depending only on the graph (and not on the resistances $r$). Its proof 
can be found in Appendix~\ref{apx:critical-resistance}.

\begin{proposition}[{\bf Equivalence of metrics}]
\label{prop:bounds:communication-height:abstract}
There exist a constant $k\geq 1$ such that, for every two sets $A,B\subseteq\spX$,
\begin{align}
\label{eq:eff-resist:equiv-metric}
\frac{1}{k}\Psi(A,B) &\leq \effR{A}{B} \leq k\, \Psi(A,B).
\end{align}
The constant $k$ can be chosen to be $\abs{\spX}^2$.
\end{proposition}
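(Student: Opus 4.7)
The plan is to prove the two inequalities separately, using the two elementary Nash--Williams inequalities~\eqref{eq:nash-williams:simplified} and~\eqref{eq:nash-williams:dual:simplified} together with the ultra-metric structure of $\Psi$ already recorded just before the statement.

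For the upper bound $\effR{A}{B}\leq\abs{\spX}\,\Psi(A,B)$, I would pick a simple path $\omega^*$ from $A$ to $B$ realising the infimum in the definition of $\Psi(A,B)$, so that $\sup_{e\in\omega^*}r(e)=\Psi(A,B)$. Since $\omega^*$ has at most $\abs{\spX}$ edges, the dual Nash--Williams inequality~\eqref{eq:nash-williams:dual:simplified} gives
\[
\effR{A}{B}\;\leq\;\abs{\omega^*}\,\sup_{e\in\omega^*}r(e)\;\leq\;\abs{\spX}\,\Psi(A,B).
\]

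For the lower bound $\Psi(A,B)\leq\abs{\spX}^2\,\effR{A}{B}$, set $\rho\isdef\Psi(A,B)$ and consider the sub-level cut
\[
C\;\isdef\;\{x\in\spX:\Psi(A,x)<\rho\}.
\]
Since $\Psi(A,a)=0$ for $a\in A$, we have $A\subseteq C$; since $\Psi(A,b)\geq\rho$ for $b\in B$, we have $C\cap B=\varnothing$, so $C$ separates $A$ from $B$. The crucial step is to use the ultra-metric property to bound the conductances on $\partial C$: for any edge $(x,y)$ with $x\in C$ and $y\notin C$, the strong triangle inequality
\[
\Psi(A,y)\;\leq\;\max\{\Psi(A,x),\,\Psi(x,y)\}
\]
combined with $\Psi(A,y)\geq\rho>\Psi(A,x)$ forces $\Psi(x,y)\geq\rho$; since the one-edge path from $x$ to $y$ witnesses $\Psi(x,y)\leq r(x,y)$, we obtain $c(x,y)\leq 1/\rho$. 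Applying~\eqref{eq:nash-williams:simplified} to the cut $C$ and using the crude bound $\abs{\partial C}\leq\abs{\spX}^2$ then yields $\effC{A}{B}\leq\abs{\spX}^2/\rho$, whence $\effR{A}{B}\geq\Psi(A,B)/\abs{\spX}^2$. Combining the two bounds gives the claim with $k=\abs{\spX}^2$.

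There is no deep step in this argument, and I do not anticipate a serious obstacle. The entire strategy hinges on the ultra-metric property of $\Psi$: without the strong triangle inequality, the natural sub-level cut $C$ could have low-resistance crossing edges and the lower bound would fail. The constant could of course be tightened (for instance via $\abs{\partial C}\leq\abs{C}(\abs{\spX}-\abs{C})\leq\abs{\spX}^2/4$), but the stated value $k=\abs{\spX}^2$ already follows from the crude counting above.
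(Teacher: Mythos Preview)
Your proof is correct and follows essentially the same route as the paper's own argument: the upper bound via the dual Nash--Williams inequality~\eqref{eq:nash-williams:dual:simplified} applied to a path realising $\Psi(A,B)$, and the lower bound via the sub-level cut $C=\{x:\Psi(A,x)<\Psi(A,B)\}$ together with the strong triangle inequality to force $r(x,y)\geq\Psi(A,B)$ on every boundary edge. Your write-up is in fact a bit more explicit than the paper's in justifying the edge-resistance bound on $\partial C$.
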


To understand the geometry of $\Psi$, let us recall two basic facts. First, every triangle 
in a general ultra-metric space is isosceles, with two equal sides and a third side that is 
no larger than the other two (i.e., the three sides can be ordered as $a\leq b=c$).
Second, suppose that $\spT$ is a minimal spanning tree on $\spX$ (where edge $e$ is 
weighted by its resistance $r(e)$). Then, the $\Psi$-distance between two points $a,b\in\spX$
is simply the maximal resistance of the unique path between $a$ and $b$ on $\spT$.
In other words, every path on $\spT$ is geodesic with respect to $\Psi$.

%%%

\subsection{Rough estimates for voltage}
\label{sec:general:voltage:rough-bounds}

In order to estimate the Green function via~\eqref{eq:green:expression}, we will also need 
rough estimates for the voltage. The following proposition corresponds to Bovier and
den Hollander \cite[Lemma 7.13(iii)]{BovHol15}. Its proof can be found in 
Appendix~\ref{apx:voltage:estimate}.

\begin{proposition}[{\bf A priori estimate}]
\label{prop:voltage:bound:effective-resistance}
Let $A,B\subseteq\spX$ be two disjoint sets. For every node $x\in\spX\setminus (A\cup B)$,
\begin{align}
1 - \frac{\effR{x}{A}}{\effR{A}{B}} 
&\leq W_{A,B}(x) \leq \frac{\effR{x}{B}}{\effR{A}{B}},
\end{align}
where $W_{A,B}(x)=\xPr_x(T_a < T_B)$ is the voltage at $x$ when $B$ is connected to the 
ground and $A$ is connected to a unit voltage source.
\end{proposition}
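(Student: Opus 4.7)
The plan is to reduce the two-sided estimate to the single upper bound
$W_{A,B}(x)\leq \effR{x}{B}/\effR{A}{B}$. The matching lower bound then follows
from the identity $W_{A,B}(x)+W_{B,A}(x)=1$ (which holds because $\spX$ is finite
and irreducible, so $T_{A\cup B}<\infty$ almost surely, and $A\cap B=\varnothing$
forces $T_A\neq T_B$ almost surely) combined with the upper bound applied with
the roles of $A$ and $B$ interchanged.

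To prove the upper bound I would short-circuit the set $A$ into a single node
$a^{*}$. As recalled in Section~\ref{sec:general:review}, this contraction
preserves reversibility and leaves both $\effR{A}{B}$ and the voltage $W_{A,B}(x)$
at external nodes $x$ unchanged; in particular,
$\xPr_x(T_A<T_B)=\xPr_x(T_{a^{*}}<T_B)$ in the contracted chain. Now apply the
hitting-order reciprocity identity~\eqref{eq:hitting-order:reciprocity} in the
contracted chain with $y=a^{*}$ and $Z=B$ to obtain
\begin{equation*}
	\effR{x}{B}\,\xPr_{a^{*}}(T_x<T_B)
	\;=\;\effR{A}{B}\,W_{A,B}(x),
\end{equation*}
and, since $\xPr_{a^{*}}(T_x<T_B)\leq 1$, rearranging yields
$W_{A,B}(x)\leq \effR{x}{B}/\effR{A}{B}$.

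I do not expect a serious technical obstacle: the argument is essentially an
algebraic reshuffling of~\eqref{eq:hitting-order:reciprocity}. The only point
that requires genuine care is the invariance of $\effR{A}{B}$ and of the voltage
$W_{A,B}(\cdot)$ under the short-circuit of $A$, which ultimately comes down to
the fact that harmonic functions on $\spX$ which are constant on $A$ are in
bijection (via passage to the quotient) with harmonic functions on the contracted
state space. Once this invariance is accepted, both inequalities follow in a few
lines.
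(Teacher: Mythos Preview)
Your proof is correct and is essentially the paper's own argument: contract $A$ to a point (the paper contracts both $A$ and $B$), invoke the reciprocity identity~\eqref{eq:hitting-order:reciprocity}, bound the leftover hitting probability by~$1$, and obtain the lower bound by the symmetry $W_{A,B}+W_{B,A}=1$. One tiny point you (and the paper) gloss over: after contracting $A$, the quantity $\effR{x}{B}$ in your displayed identity is the effective resistance in the \emph{contracted} network, which can be strictly smaller than in the original one---but by Rayleigh's monotonicity this only strengthens the resulting upper bound, so no harm is done.
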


Using the inequalities between effective resistance and critical resistance 
(Proposition~\ref{prop:bounds:communication-height:abstract}), we obtain 
the following proposition as a corollary of the above two estimates.

\begin{proposition}[{\bf A priori estimate}]
\label{prop:voltage:bounds:communication-height:abstract}
There is a constant $\bar{k}\geq 1$ such that, for every two disjoint sets $A,B\subseteq\spX$
and every node $x\in\spX\setminus (A\cup B)$,
\begin{align}
1 - \bar{k}\frac{\Psi(x,A)}{\Psi(A,B)} 
&\leq W_{A,B}(x) \leq \bar{k}\frac{\Psi(x,B)}{\Psi(A,B)}.
\end{align}
The constant $\bar{k}$ can be chosen to be $\abs{\spX}^4$.
\end{proposition}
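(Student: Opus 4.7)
The plan is to derive this estimate as a direct corollary of Proposition~\ref{prop:voltage:bound:effective-resistance} combined with the equivalence of metrics established in Proposition~\ref{prop:bounds:communication-height:abstract}. Both inputs are already stated in the excerpt, so the argument is essentially a ratio estimate.

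First, I would write down the two-sided bound from Proposition~\ref{prop:voltage:bound:effective-resistance}, namely
\begin{align*}
1 - \frac{\effR{x}{A}}{\effR{A}{B}}
\;\leq\; W_{A,B}(x) \;\leq\; \frac{\effR{x}{B}}{\effR{A}{B}},
\end{align*}
for any $x\in\spX\setminus(A\cup B)$. The goal is then to replace each occurrence of effective resistance by critical resistance at the cost of a constant depending only on $\abs{\spX}$.

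Next, I would apply Proposition~\ref{prop:bounds:communication-height:abstract} with the constant $k=\abs{\spX}^2$ in both directions. For the upper bound, I use $\effR{x}{B}\leq k\,\Psi(x,B)$ in the numerator and $\effR{A}{B}\geq \frac{1}{k}\Psi(A,B)$ in the denominator, giving
\begin{align*}
\frac{\effR{x}{B}}{\effR{A}{B}}
\;\leq\; k^2\,\frac{\Psi(x,B)}{\Psi(A,B)}.
\end{align*}
For the lower bound, the same substitution (now with $A$ in place of $B$ in the first factor) yields $\effR{x}{A}/\effR{A}{B}\leq k^2\Psi(x,A)/\Psi(A,B)$, and then the inequality $W_{A,B}(x)\geq 1-\effR{x}{A}/\effR{A}{B}$ immediately gives the desired lower bound.

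Setting $\bar{k}\isdef k^2 = \abs{\spX}^4$ completes the argument. There is no real obstacle here; the only thing worth checking carefully is that Proposition~\ref{prop:bounds:communication-height:abstract} applies to the pairs $(x,A)$, $(x,B)$ and $(A,B)$ separately (it does, as it is stated for arbitrary pairs of subsets), and that the case $\Psi(A,B)=\infty$ is vacuous since irreducibility forces $\effR{A}{B}<\infty$ so the original bound trivialises on both sides. The constant $\abs{\spX}^4$ is presumably far from optimal, but it is all that is needed in the sequel.
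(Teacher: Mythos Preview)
Your proposal is correct and is exactly the argument the paper intends: the text introduces this proposition explicitly as a corollary of Propositions~\ref{prop:bounds:communication-height:abstract} and~\ref{prop:voltage:bound:effective-resistance}, and no separate proof is given in the appendix. The constant $\bar{k}=k^2=\abs{\spX}^4$ arises precisely from applying the two-sided bound of Proposition~\ref{prop:bounds:communication-height:abstract} once in the numerator and once in the denominator, just as you describe.
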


The following is a generalisation of the latter proposition. It expresses the intuition that
small distance between two nodes implies small difference between their voltages. Its 
proof can be found in Appendix~\ref{apx:voltage:estimate}.

\begin{proposition}[{\bf A priori estimate}]
\label{prop:voltage:bounds:valleys:abstract}
There is a constant $\bar{k}\geq 1$ such that, for every two disjoint sets $A,B\subseteq\spX$
and every two nodes $x,y\in\spX$,
\begin{align}
\abs{W_{A,B}(x)-W_{A,B}(y)} 
&\leq \bar{k}\frac{\Psi(x,y)}{\Psi(A,B)}.
\end{align}
The constant $\bar{k}$ can be chosen to be $\abs{\spX}^4$.
\end{proposition}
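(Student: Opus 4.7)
The plan is to argue edge-by-edge along a path from $x$ to $y$ of low critical resistance, using the classical fact that the current through any individual edge is bounded by the total source-to-sink current.

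\emph{Single-edge bound.} First I would establish that for every pair of adjacent nodes $u, v \in \spX$,
\begin{align*}
\abs{W_{A,B}(u) - W_{A,B}(v)} \le r(u,v)/\effR{A}{B}.
\end{align*}
The proof is a level-set/divergence argument. In the voltage profile with $W_{A,B}|_A \equiv 1$ and $W_{A,B}|_B \equiv 0$, the total current from $A$ to $B$ equals $1/\effR{A}{B}$. For $t \notin W_{A,B}(\spX)$ strictly between $W_{A,B}(v)$ and $W_{A,B}(u)$, the set $C_t \isdef \{w \in \spX : W_{A,B}(w) > t\}$ contains $A$ and is disjoint from $B$. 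Summing the harmonic current over $u' \in C_t$ and cancelling the contributions of edges internal to $C_t$ gives
\begin{align*}
\sum_{(u', v') \in \partial C_t} c(u',v')\bigl(W_{A,B}(u') - W_{A,B}(v')\bigr) = 1/\effR{A}{B},
\end{align*}
with each summand non-negative because $W_{A,B}(u') > t > W_{A,B}(v')$. Since $(u,v)$ lies in $\partial C_t$ (orienting from the higher potential), its current $I(u,v) = c(u,v)(W_{A,B}(u) - W_{A,B}(v))$ is dominated by this total, so multiplying by $r(u,v)$ yields the single-edge bound.

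\emph{Telescoping and conclusion.} Next I would pick a simple path $\omega = (z_0, z_1, \ldots, z_n)$ from $x$ to $y$ realising the critical resistance, i.e.\ $\sup_{0 \le i < n} r(z_i, z_{i+1}) = \Psi(x, y)$. Telescoping and applying the single-edge bound to each step gives
\begin{align*}
\abs{W_{A,B}(x) - W_{A,B}(y)} \le \frac{1}{\effR{A}{B}} \sum_{i=0}^{n-1} r(z_i, z_{i+1}) \le \frac{\abs{\omega}\,\Psi(x,y)}{\effR{A}{B}} \le \frac{\abs{\spX}\,\Psi(x,y)}{\effR{A}{B}}.
\end{align*}
Combining this with the lower bound $\effR{A}{B} \ge \Psi(A,B)/\abs{\spX}^2$ from Proposition~\ref{prop:bounds:communication-height:abstract} yields
\begin{align*}
\abs{W_{A,B}(x) - W_{A,B}(y)} \le \frac{\abs{\spX}^3\,\Psi(x,y)}{\Psi(A,B)} \le \frac{\abs{\spX}^4\,\Psi(x,y)}{\Psi(A,B)},
\end{align*}
giving the stated constant $\bar{k} = \abs{\spX}^4$.

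The only non-routine step is the single-edge bound, which rests on the classical fact that the current on any individual edge is dominated by the total source-to-sink current. Since this fact is not formally recorded earlier in Section~\ref{sec:mc:reversible}, it has to be spelled out via the level-set argument sketched above; everything else is a short telescoping and a single invocation of Proposition~\ref{prop:bounds:communication-height:abstract}.
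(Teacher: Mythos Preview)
Your proof is correct and takes a genuinely different route from the paper.

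The paper proves the proposition by reducing to Proposition~\ref{prop:voltage:bounds:communication-height:abstract} via a case analysis driven by the ultra-metric inequality $\Psi(A,B)\leq\max\{\Psi(y,A),\Psi(x,y),\Psi(x,B)\}$. Depending on which term realises the maximum, the paper conditions on the order in which the chain visits $x$, $y$, $A$, $B$ and uses the hitting-order bounds of Proposition~\ref{prop:voltage:bounds:communication-height:abstract} (which in turn rests on the reciprocity identity and Proposition~\ref{prop:voltage:bound:effective-resistance}). Your argument is more direct: you prove the single-edge bound $\abs{W_{A,B}(u)-W_{A,B}(v)}\leq r(u,v)/\effR{A}{B}$ by a level-set/flow argument (the current through any edge cannot exceed the total source-to-sink current), then telescope along an optimal $x\pathto y$ path and invoke Proposition~\ref{prop:bounds:communication-height:abstract} once. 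This bypasses the ultra-metric case split and the hitting-order machinery entirely, and in fact yields the slightly sharper constant $\abs{\spX}^{3}$. The trade-off is that the paper's route factors through Propositions~\ref{prop:voltage:bound:effective-resistance} and~\ref{prop:voltage:bounds:communication-height:abstract}, which are independently useful elsewhere, whereas your single-edge bound is a self-contained ingredient not otherwise recorded in Section~\ref{sec:mc:reversible}.
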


%%%%%%% SECTION 3 %%%%%%%%%%%%%%%%%%%%%%%%%

\section{Metastability in reversible Markov chains}
\label{sec:mc:family}

In this section we discuss the metastable behaviour of reversible Markov chains 
in a certain asymptotic regime. Our treatment is based on Bovier and den 
Hollander~\cite[Chapters 7, 8 and 16]{BovHol15}, although our exposition is 
somewhat different. In Section~\ref{sec:hardcorebipartite} we will specialize to 
hard-core dynamics.

We consider a one-parameter family of discrete-time irreducible Markov chains
$\{X_{\lambda}(t)\}_{t\in\NN}$ on a finite state space $\spX$ with transition matrix 
$K_\lambda$ and \emph{reversible} stationary distribution $\pi_\lambda$.
The parameter $\lambda$ is assumed to be a real number. For hard-core dynamics, 
$\lambda$ determines the activity parameter at each site. (For Glauber dynamics of 
the Ising model, $\lambda$ would be the inverse temperature.) For brevity, we drop 
the subscript $\lambda$ from $X_{\lambda}(t)$, $K_\lambda$ and $\pi_\lambda$. We 
focus on the asymptotic regime $\lambda\to\infty$, where metastable phenomena 
are more prominent.

We will use the following notation for asymptotics:
\begin{itemize}
\item 
$f(\lambda)\prec g(\lambda)$ if $f(\lambda)=\smallo(g(\lambda))$ as $\lambda\to\infty$,
\item 
$f(\lambda)\preceq g(\lambda)$ if $f(\lambda)=\bigo(g(\lambda))$ as $\lambda\to\infty$, and
\item 
$f(\lambda)\asymp g(\lambda)$ if $f(\lambda)\preceq g(\lambda)$ and 
$g(\lambda)\preceq f(\lambda)$ as $\lambda\to\infty$.
\end{itemize}
For simplicity, we make a \emph{smoothness} assumption. Namely, we assume that all 
the transition probabilities $K(x,y)$ for different pairs $(x,y)$ are asymptotically comparable, 
i.e., for every two pairs of states $(x,y)$ and $(x',y')$, either $K(x,y)\prec K(x',y')$ or $K(x,y)
\succeq K(x',y')$ as $\lambda\to\infty$, and for every two states $x$ and $y$, either $\pi(x)
\prec\pi(y)$ or $\pi(x)\succeq\pi(y)$ as $\lambda\to\infty$. These conditions are trivially 
satisfied for hard-core dynamics on a bipartite graph. For convenience, we also assume 
that the graph of probable transitions of $K$ remains unchanged for all sufficiently large 
$\lambda$.

In Section~\ref{sec:metastability:formulation} we characterise metastabilty in terms of 
recurrence of metastable states. In Section~\ref{sec:duration} we link the mean 
metastable transition time to the effective resistance of an associated electric network. 
In Section~\ref{sec:expescape} we explain the ubiquity of the exponential limit law for the 
metastable transition time divided by its mean. In Section~\ref{sec:asymptail} we look 
at tail probabilities of the metastable transition time. In Section~\ref{sec:sharper} we 
derive a sharp asymptotics for the effective resistance. In Section~\ref{sec:bottle} we 
look at the passage through bottlenecks.

%%%

\subsection{A characterisation of metastability}
\label{sec:metastability:formulation}

One way to formulate metastability (in the asymptotic regime $\lambda\to\infty$) is in 
terms of the recurrence behaviour of individual states. A metastable state behaves 
as a recurrent state on short time scales and as a transient state on long time scales.
Other manifestations of metastability include a short transition period on the critical 
time scale and approximate exponentiality of the distribution of the transition time.

More specifically, when $\tau=\tau(\lambda)$ is a non-negative real-valued function,
we say that a state $a\in\spX$ is \emph{transient at time scale $\tau$} (or 
\emph{$\tau$-transient}, for short) when $G_\tau(a,a)\prec\tau$ as $\lambda\to\infty$
and \emph{recurrent at time scale $\tau$} (or \emph{$\tau$-recurrent}) when $G_\tau(a,a)
\succeq\tau$ as $\lambda\to\infty$. In intuitive terms, state $a$ is $\tau$-recurrent if
the Markov chain starting from $a$ spends, on average, a non-negligible fraction of
the time interval $[0,\tau)$ at $a$, and is $\tau$-transient otherwise.

In the reversible setting, there is a more convenient way to characterise recurrence 
and transience on a time scale, namely, in terms of escape times. For $a\in\spX$, define
\begin{align}
J(a) 
&\isdef \left\{x\neq a\colon\,\text{$\pi(x)\succeq\pi(a)$ as $\lambda\to\infty$}\right\},\\
J^-(a) 
&\isdef \left\{x\colon\, \text{$\pi(x)\succ\pi(a)$ as $\lambda\to\infty$}\right\}.
\nonumber
\end{align}
Thus, $J(a)$ is the set of states whose stationary probabilities are asymptotically not 
negligible compared to $a$, and $J^-(a)$ consists of those states whose stationary 
probabilities are asymptotically larger than the stationary probability of $a$. Whether 
$a$ is $\tau$-transient or not depends on whether the chain has sufficient time to reach 
$J^-(a)$ or not: once the chain is in $J^-(a)$, it will spend only a negligible portion of its time 
in $a$. We refer to the time taken to go from $a$ to $J^-(a)$ as the \emph{escape time} from $a$. 
The proof of the following proposition is given in Appendix~\ref{apx:transience:characterization}.

\begin{proposition}[{\bf Charactersation of metastability}]
\label{prop:transience:characterization}
Suppose that $\tau=\tau(\lambda)$ is a non-negative real-valued function. For every state 
$a\in\spX$, $G_\tau(a,a)\prec\tau$ if and only if $\xExp_a[T_{J^-(a)}]\prec\tau$ as $\lambda\to\infty$.
\end{proposition}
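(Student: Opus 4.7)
The plan is to compare both sides of the equivalence to the intermediate quantity
$\mu \isdef G_{T_{J^-(a)}}(a,a)$ and to show: (i) $\xExp_a[T_{J^-(a)}] \asymp \mu$, and (ii) $G_\tau(a,a) \prec \tau$ if and only if $\mu \prec \tau$. For (i), the lower bound $\mu \leq \xExp_a[T_{J^-(a)}]$ is immediate. For the upper bound, the strong Markov property at $T_a$ gives $G_{T_{J^-(a)}}(x,a) = \xPr_x(T_a < T_{J^-(a)})\,\mu$ for every $x \in \spX \setminus J^-(a)$, while reversibility~\eqref{eq:green:reciprocity} gives $G_{T_{J^-(a)}}(a,x) = (\pi(x)/\pi(a))\,G_{T_{J^-(a)}}(x,a)$. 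Summing over such $x$ and using that $\pi(x)/\pi(a) \preceq 1$ uniformly on $\spX \setminus J^-(a)$ (by the definition of $J^-(a)$, the smoothness hypothesis, and finiteness of $\spX$) yields $\xExp_a[T_{J^-(a)}] \preceq \mu$.

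For the easy direction of (ii), assume $\mu \prec \tau$. Splitting the sum defining $G_\tau(a,a)$ at the stopping time $T_{J^-(a)}$ and applying the strong Markov property gives
\begin{align}
G_\tau(a,a) \;\leq\; \mu + \max_{y \in J^-(a)} G_\tau(y,a).
\end{align}
Reversibility of the time-$\tau$ Green function gives $\pi(y)\,G_\tau(y,a) = \pi(a)\,G_\tau(a,y) \leq \pi(a)\,\tau$; since $\pi(y) \succ \pi(a)$ for every $y \in J^-(a)$, the maximum above is $\prec \tau$, and hence $G_\tau(a,a) \prec \tau$.

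For the converse direction of (ii), by the smoothness hypothesis it suffices to show that $\mu \succeq \tau$ implies $G_\tau(a,a) \succeq \tau$. Decompose the trajectory from $a$ into successive excursions, each ending at the first return to $a$ or the first visit to $J^-(a)$; let $p = 1/\mu$ be the killing probability of a single excursion and set $\bar R^0 \isdef \xExp_a[T_a^+ \mid T_a^+ < T_{J^-(a)}]$. An excursion decomposition yields
\begin{align}
\xExp_a[T_{J^-(a)}] \;=\; (\mu-1)\,\bar R^0 + \xExp_a[T_{J^-(a)} \mid T_{J^-(a)} < T_a^+],
\end{align}
which, together with $\xExp_a[T_{J^-(a)}] \asymp \mu$ from step (i) and the trivial bound $\bar R^0 \geq 1$, forces $\bar R^0 \asymp 1$. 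Let $V_1$ be the number of visits to $a$ in $[0, \tau \wedge T_{J^-(a)})$. Since $V_1 \geq i$ iff the first $i-1$ excursions all return to $a$ and their total length is less than $\tau$, Markov's inequality applied to partial sums of i.i.d.\ returning-excursion lengths yields $\xPr_a(V_1 \geq i) \geq \tfrac{1}{2}(1-p)^{i-1}$ for every $i \leq K \isdef \lfloor \tau/(2\bar R^0)\rfloor + 1$. Summing the resulting geometric bound,
\begin{align}
G_\tau(a,a) \;\geq\; \xExp_a[V_1] \;\geq\; \tfrac{\mu}{2}\bigl(1-(1-1/\mu)^{K}\bigr) \;\succeq\; \tau,
\end{align}
where the last step uses $1-e^{-x} \geq \min(x/2,\,1/2)$ together with $K \asymp \tau$, treated separately in the subcases $K \preceq \mu$ and $K \succeq \mu$.

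The main obstacle is the converse direction: one needs the quantitative bound $\bar R^0 \asymp 1$ on the mean length of a returning excursion, which is exactly what the reversibility-based estimate in step (i) provides. Once this is in place, the renewal-style counting of returns to $a$ produces the required lower bound on $G_\tau(a,a)$.
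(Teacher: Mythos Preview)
Your proof is correct; step~(i) and the easy direction of step~(ii) coincide with the paper's argument. The converse direction is where you diverge. The paper splits $\mu=G_{T_{J^-(a)}}(a,a)$ at time $\tau$: using the Markov property and the bound $G_{T_{J^-(a)}}(b,a)\leq\mu$ for $b\notin J^-(a)$, it obtains the self-referential inequality $\mu\leq G_\tau(a,a)+|A|\,\xPr_a(T_{J^-(a)}>\tau)\,\mu$; the tail probability is then controlled via $\tau\,\xPr_a(T_{J^-(a)}>\tau)\leq\sum_{k<\tau}\xPr_a(T_{J^-(a)}>k)\preceq G_\tau(a,a)$ (the last step again by reversibility and $\pi(b)\preceq\pi(a)$ on $A$). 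Under $G_\tau(a,a)\prec\tau$ this gives $\xPr_a(T_{J^-(a)}>\tau)\prec 1$, and solving yields $\mu\preceq G_\tau(a,a)\prec\tau$ directly. Your renewal route instead lower-bounds $G_\tau(a,a)$ by counting returns, with the key input $\bar R^0\asymp 1$ being exactly step~(i) read through the excursion decomposition. Both approaches work; the paper's is somewhat shorter as it avoids the explicit excursion bookkeeping. One small remark: your appeal to the smoothness hypothesis to reduce the contrapositive to ``$\mu\succeq\tau\Rightarrow G_\tau(a,a)\succeq\tau$'' is not quite justified, since $\tau$ is arbitrary and need not be comparable to $\mu$. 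However, your computation actually delivers the pointwise bound $G_\tau(a,a)\geq c\min(\tau,\mu)$ for all large $\lambda$ (when $\mu<2$, use $G_\tau(a,a)\geq 1$ trivially; otherwise $\bar R^0\leq 2C$ and your geometric estimate applies), and this pointwise inequality is exactly what the contrapositive needs: if $G_\tau(a,a)/\tau\to 0$ then $\min(1,\mu/\tau)\to 0$, hence $\mu\prec\tau$.
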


It follows from Proposition~\ref{prop:transience:characterization} that $\tau$-transience is 
monotone in $\tau$: if a state is transient at a time scale $\tau$, then it is also transient at 
any time scale $\tau'\succeq\tau$. In particular, the recurrence behaviour of every state $a$ 
undergoes a transition at the time scale $\tau_a\isdef\xExp_a[T_{J^-(a)}]$: the state $a$ is 
recurrent at any time scale $\tau\preceq\tau_a$ (a short time scale) and transient at any 
time scale $\tau\succ\tau_a$ (a long time scale). We call this the \emph{metastability transition} 
of state $a$. We refer to a state $a$ as a \emph{metastable} state when its metastability transition 
is non-trivial, i.e., when $J^-(a)\neq\varnothing$ and $\tau_a\to\infty$ as $\lambda\to\infty$.
Note that if $J^-(a)$ is empty, then the critical scale $\tau_a$ is $\infty$ ($a$ is recurrent at 
any scale). Hence, in this case we call $a$ a \emph{stable} state.

Our main objective is to derive a sharp asymptotics for the mean and the distribution of the 
escape time $\tau_a=\xExp_a[T_{J^-(a)}]$, and to provide some information (albeit partial) about 
the typical escape trajectories. In case of the hard-core dynamics on a bipartite graph (satisfying 
certain conditions) we will provide such a description for the state in which the weak part of 
the graph $U$ is covered with particles. This state turns out to be the ``most stable'' metastable 
state, i.e., the metastable state with the largest metastability scale. The transition from this 
metastable state to the stable state requires the formation of \emph{critical droplets} whose 
size and shape are characterised by the solutions of an isoperimetric problem.

%%%

\subsection{Mean escape time and transition duration}
\label{sec:duration}

The proofs of the following two propositions are given in Appendix~\ref{apx:escape:mean}.
The mean escape time from a metastable state has the following rough asymptotics in terms 
of critical resistance.

\begin{proposition}[{\bf Mean escape time: rough estimate}]
\label{prop:escape:mean}
For every $a\in\spX$, $\xExp_a[T_{J^-(a)}] \asymp \pi(a)\allowbreak\Psi(a,J^-(a))$ as $\lambda\to\infty$.
\end{proposition}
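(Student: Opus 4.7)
The plan is to combine the potential-theoretic identity
\begin{align*}
\xExp_a[T_{J^-(a)}] &= \effR{a}{J^-(a)} \sum_{x\in\spX} \pi(x)\, W_{a,J^-(a)}(x)
\end{align*}
recalled in Section~\ref{sec:general:review} with the two-sided comparison $\effR{a}{J^-(a)}\asymp\Psi(a,J^-(a))$ given by Proposition~\ref{prop:bounds:communication-height:abstract}. Since that proposition supplies constants depending only on $\abs{\spX}$ (and hence independent of $\lambda$), the task reduces to showing that
\begin{align*}
\sum_{x\in\spX} \pi(x)\, W_{a,J^-(a)}(x) &\asymp \pi(a) \quad \text{as $\lambda\to\infty$.}
\end{align*}

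For the lower bound, I would simply observe that $W_{a,J^-(a)}(a)=1$, so the sum is at least $\pi(a)$. For the upper bound, I would split the sum by whether $x\in J^-(a)$ or not. For $x\in J^-(a)$ the boundary condition gives $W_{a,J^-(a)}(x)=0$, so these terms contribute nothing. For the remaining $x$, the definition of $J^-(a)$ says $\pi(x)\not\succ \pi(a)$, and the smoothness assumption imposed in Section~\ref{sec:mc:family} — that any two stationary probabilities are asymptotically comparable — then forces $\pi(x)\preceq\pi(a)$. Using $W_{a,J^-(a)}(x)\leq 1$ and the fact that $\abs{\spX}$ is a constant (independent of $\lambda$), one obtains
\begin{align*}
\sum_{x\notin J^-(a)} \pi(x)\, W_{a,J^-(a)}(x) &\leq \sum_{x\notin J^-(a)} \pi(x) \leq \abs{\spX}\cdot O(\pi(a)) \asymp \pi(a),
\end{align*}
which completes the upper bound.

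No step is really hard, but the one worth flagging is the upper bound on the sum. It relies essentially on the smoothness hypothesis: without asymptotic comparability of the $\pi(x)$, there could be states $x$ outside $J^-(a)$ whose stationary mass is not dominated by $\pi(a)$, and then only the trivial estimate $\sum_x \pi(x)=1$ would be available, which would lose a factor of $\pi(a)$ and prevent the matching upper bound. Everything else is a direct quotation of machinery already set up earlier in the excerpt (the Green-function identity for mean hitting times, the boundary values of voltage, and the metric equivalence between $\effR{\cdot}{\cdot}$ and $\Psi$).
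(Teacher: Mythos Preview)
Your proof is correct and essentially identical to the paper's: both use the identity $\xExp_a[T_{J^-(a)}]=\effR{a}{J^-(a)}\sum_x\pi(x)W_{a,J^-(a)}(x)$, bound the sum two-sidedly by $\pi(a)$ via the boundary values and the smoothness trichotomy, and then invoke Proposition~\ref{prop:bounds:communication-height:abstract} to replace $\effR{a}{J^-(a)}$ by $\Psi(a,J^-(a))$. The paper phrases the same computation in Green-function language and separates the complement of $J^-(a)$ into the subcases $x=a$, $x\in J(a)\setminus J^-(a)$, and $x\notin J(a)\cup\{a\}$, but that finer split is only needed for the sharper Proposition~\ref{prop:almost-escape:mean} and makes no difference at the $\asymp$ level here.
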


\noindent
(We use the convention $\Psi(x,\varnothing)\isdef\infty$.)
This estimate gives the order of the magnitude of the mean escape time, but fails to provide 
the pre-factor. On the other hand, replacing $J^-(a)$ with $J(a)$, we have the following sharp 
estimate for the mean passage time from $a$ to $J(a)$ in terms of effective resistance.

\begin{proposition}[{\bf Link between mean escape time and effective resistance}] 
\label{prop:almost-escape:mean}
For every $a\in\spX$, $\xExp_a[T_{J(a)}]=\pi(a)\effR{a}{J(a)}[1+\smallo(1)]$ as 
$\lambda\to\infty$.
\end{proposition}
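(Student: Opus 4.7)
The plan is to invoke the general identity
\[
\xExp_a[T_B] \;=\; \effR{a}{B}\sum_{x}\pi(x)\,W_{a,B}(x)
\]
from Section~\ref{sec:general:review} (with $B=J(a)$) and show that the voltage sum collapses asymptotically to $\pi(a)$. Concretely, splitting the sum by whether $x=a$, $x\in J(a)$, or $x\notin J(a)\cup\{a\}$, the first piece contributes exactly $\pi(a)\cdot 1$ because $W_{a,B}(a)=1$, the second piece contributes $0$ because $W_{a,B}\equiv 0$ on $B$, and it remains to control the third piece.

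For the third piece, the key point is the smoothness assumption: for every $x\neq a$, either $\pi(x)\succeq\pi(a)$ or $\pi(x)\prec\pi(a)$. By definition of $J(a)$, the states $x\notin J(a)\cup\{a\}$ are precisely those with $\pi(x)\prec\pi(a)$. Since $W_{a,B}(x)\in[0,1]$ and $\abs{\spX}$ is a fixed finite constant independent of $\lambda$, we obtain
\[
\sum_{x\notin J(a)\cup\{a\}}\pi(x)\,W_{a,J(a)}(x)\;\leq\;\sum_{x\notin J(a)\cup\{a\}}\pi(x)\;=\;\smallo(\pi(a))
\qquad\text{as $\lambda\to\infty$.}
\]
Putting the three contributions together gives $\sum_{x}\pi(x)\,W_{a,J(a)}(x)=\pi(a)[1+\smallo(1)]$, whence the claim follows after multiplying by $\effR{a}{J(a)}$.

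There is essentially no hard step here — the proof is a one-line application of the effective-resistance formula combined with the (trivial) observation that the voltage is at most one and the remaining states have asymptotically negligible stationary mass. The only thing that might look subtle is that one has to check that the $\smallo(1)$ term does not blow up when multiplied by $\effR{a}{J(a)}$, but this is automatic because the $\smallo(1)$ error is purely multiplicative: the identity is an exact equality, and the only approximation is in replacing the voltage sum by $\pi(a)$, which introduces a relative error $\smallo(1)$ regardless of the size of $\effR{a}{J(a)}$. Thus the main (and only) ingredient is the smoothness hypothesis, which ensures the clean dichotomy $\pi(x)\succeq\pi(a)$ vs.\ $\pi(x)\prec\pi(a)$ used above.
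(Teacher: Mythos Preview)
Your proof is correct and is essentially the same as the paper's. The paper works with the Green function formula $G_{T_{J(a)}}(a,x)=\effR{a}{J(a)}\pi(x)W_{a,J(a)}(x)$ and sums over $x$, which is exactly your identity $\xExp_a[T_{J(a)}]=\effR{a}{J(a)}\sum_x\pi(x)W_{a,J(a)}(x)$; both then split into the same three cases ($x=a$, $x\in J(a)$, $x\notin J(a)\cup\{a\}$) and use $\pi(x)\prec\pi(a)$ with $W\leq 1$ to kill the last piece.
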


In conjunction with a good estimate on effective resistance, the above two propositions can 
often be used to give a sharp asymptotic estimate (with a precise pre-factor) for the escape 
time from a metastable state. Indeed, suppose we know that, for every $x\in J(a)
\setminus J^-(a)$, the critical resistance $\Psi(x,J^-(x))$ is asymptotically smaller than 
the critical resistance $\Psi(a,J(a))$. Then Propositions~\ref{prop:almost-escape:mean} and
\ref{prop:escape:mean} immediately give $\xExp_a[T_{J^-(a)}]=\xExp_a[T_{J(a)}][1+\smallo(1)]$. 

We state this observation as the following corollary, which is proved in Appendix~\ref{apx:escape:mean}.
We say that a set $Z\subseteq\spX$ is \emph{upward closed} if $y\in Z$ whenever $\pi(y)
\succeq\pi(x)$ for some $x\in Z$. In the following we may for instance set $Z=J^-(a)$ or 
$Z=\{v\}$, where $v$ is the unique stable state.

\begin{corollary}[{\bf Mean escape time: sharp estimate}]
\label{cor:escape:mean:cascade}
Let $a\in\spX$, and let $Z\subseteq J(a)$ be a non-empty upward closed set. Suppose that 
$\pi(x)\Psi(x,J^-(x))\prec\pi(a)\Psi(a,J(a))$ for every $x\in J(a)\setminus Z$. Then $\xExp_a[T_Z]
=\pi(a)\effR{a}{J(a)}[1+\smallo(1)]$ as $\lambda\to\infty$.
\end{corollary}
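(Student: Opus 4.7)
The plan is to prove the two sides of the estimate separately. For the lower bound, since $Z \subseteq J(a)$ forces $T_{J(a)} \leq T_Z$ pathwise, Proposition~\ref{prop:almost-escape:mean} immediately yields $\xExp_a[T_Z] \geq \xExp_a[T_{J(a)}] = \pi(a)\effR{a}{J(a)}[1+\smallo(1)]$.

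For the matching upper bound, I would invoke the strong Markov property at the stopping time $T_{J(a)}$ to decompose
\begin{align*}
\xExp_a[T_Z] \;=\; \xExp_a[T_{J(a)}] + \xExp_a\!\left[\xExp_{X(T_{J(a)})}[T_Z]\right]
\end{align*}
and show that the second summand is $\smallo\bigl(\pi(a)\effR{a}{J(a)}\bigr)$. This reduces to uniformly bounding $\xExp_x[T_Z]$ for $x \in J(a)\setminus Z$.

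To obtain such a bound I would induct on the asymptotic $\pi$-level of $x$. By the smoothness assumption the relation $\succeq$ is a total preorder on $\spX$, so $\spX$ partitions into finitely many equivalence classes $L_0 \succ L_1 \succ \cdots \succ L_N$ ordered by decreasing asymptotic stationary mass. Because $Z$ is upward closed and non-empty, it must coincide with the union of the top few classes, $Z = L_0 \cup \cdots \cup L_k$. Hence for $x \in L_j \cap J(a)$ with $j > k$, one has $J^-(x) = L_0 \cup \cdots \cup L_{j-1}$, and applying the strong Markov property at $T_{J^-(x)}$ gives
\begin{align*}
\xExp_x[T_Z] \;\leq\; \xExp_x[T_{J^-(x)}] + \max_{y \in J^-(x)\setminus Z}\xExp_y[T_Z].
\end{align*}
By Proposition~\ref{prop:escape:mean}, Proposition~\ref{prop:bounds:communication-height:abstract} and the hypothesis,
\begin{align*}
\xExp_x[T_{J^-(x)}] \;\asymp\; \pi(x)\Psi(x,J^-(x)) \;\prec\; \pi(a)\Psi(a,J(a)) \;\asymp\; \pi(a)\effR{a}{J(a)}.
\end{align*}
Any $y \in J^-(x)\setminus Z$ still lies in $J(a)$ (because $\pi(y) \succ \pi(x) \succeq \pi(a)$ forces $y \in J^-(a) \subseteq J(a)$) but in a strictly higher $\pi$-class $L_i$ with $k < i < j$, so the induction hypothesis applies to it. Since there are at most $N - k$ such classes to traverse, the recursion terminates after boundedly many steps and $\xExp_x[T_Z] = \smallo\bigl(\pi(a)\effR{a}{J(a)}\bigr)$ uniformly in $x \in J(a)\setminus Z$. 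Substituting back into the strong Markov decomposition completes the proof.

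The main (though mild) obstacle is checking that upward-closedness of $Z$ and the smoothness assumption together guarantee that every intermediate state produced by the recursion still belongs to $J(a)\setminus Z$, so that the quantitative hypothesis $\pi(x)\Psi(x,J^-(x)) \prec \pi(a)\Psi(a,J(a))$ remains available at every step of the induction. The bookkeeping is routine once one recognises that the upward-closed hypothesis is exactly what is needed to make $Z$ a union of top $\pi$-levels.
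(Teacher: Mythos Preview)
Your argument is correct and follows essentially the same cascade strategy as the paper: split off the main contribution $\xExp_a[T_{J(a)}]$ via Proposition~\ref{prop:almost-escape:mean}, then use the strong Markov property together with Proposition~\ref{prop:escape:mean} and the no-trap hypothesis to step through the finitely many $\pi$-levels between $J(a)$ and $Z$. The paper organises the cascade by iterating target sets $Z_0=J(a),\,Z_{k+1}=J^-(Z_k)$ and bounding $\xExp_a[T_{Z_{k+1}}]-\xExp_a[T_{Z_k}]$, whereas you fix the target $Z$ and induct on the level of the current state~$x$; these are two bookkeepings of the same idea.
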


A typical aspect of metastability is the relatively short duration of the transition on the critical 
time scale: while the system spends a long time before leaving a metastable state and moving 
to a more stable state, the actual transition occurs on a relatively shorter time scale. To formulate 
this, let $T^{(k)}_a\isdef\inf\{t>T^{(k-1)}\colon\, X(t)=a\}$ with $T_a^{(0)}=0$ be the $k$-th return 
time of state $a$. Given $Z\not\ni a$, define $N_Z\isdef\sup\{n>0\colon\,T^{(n)}_a<T_Z\}$.
The difference $T_Z-T^{(N_Z)}_a$ is the duration of the transition from $a$ to~$Z$. Note that, 
by the Markov property and time-homogeneity, $\xPr_a(T_Z-T^{(N_Z)}_a\in\cdot) =\xPr_a(T_Z
\in\cdot\,|\,T_Z<T^+_a)$. The following corollary is proved in Appendix~\ref{apx:avalanche}.

\begin{corollary}[{\bf Rapid transition}]
\label{cor:transition:avalanche}
Let $a\in\spX$, and let $Z\subseteq J(a)$ be a non-empty upward closed set. Suppose that 
$\pi(x)\Psi(x,J^-(x))\prec\pi(a)\Psi(a,J(a))$ for every $x\in J(a)\setminus Z$. Then $\xExp_a
[T_Z\,|\,T_Z<T^+_a]\prec\xExp_a[T_Z]$ as $\lambda\to\infty$.
\end{corollary}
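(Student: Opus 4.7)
The plan is to obtain a pointwise upper bound on $\xExp_a[T_Z\,;\,T_Z<T_a^+]$ via a one-step Markov decomposition and then divide by $p \isdef \xPr_a(T_Z<T_a^+)$. Starting from $a$, the event $\{T_Z<T_a^+\}$ forces $X(1)\neq a$; conditioning on $X(1)=y$ and applying the strong Markov property at time $1$ gives
\begin{align*}
\xExp_a[T_Z\,;\,T_Z<T_a^+] &= p + \sum_{y\notin\{a\}\cup Z} K(a,y)\,\xExp_y[T_Z\,;\,T_Z<T_a].
\end{align*}

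The key observation is that on $\{T_Z<T_a\}$ we have $T_Z = T_{\{a\}\cup Z}$, so $\xExp_y[T_Z\,;\,T_Z<T_a]\leq\xExp_y[T_{\{a\}\cup Z}]$ for each admissible $y$. Another one-step decomposition at $a$ gives the identity $\sum_y K(a,y)\,\xExp_y[T_{\{a\}\cup Z}] = \xExp_a[T_a^+\wedge T_Z]-1$. Combining and dividing by $p$, I obtain the clean bound
\begin{align*}
\xExp_a[T_Z\mid T_Z<T_a^+] &\leq 1 + \frac{\xExp_a[T_a^+\wedge T_Z]-1}{p}.
\end{align*}

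The hard part is to show that the excess mean excursion length $\xExp_a[T_a^+\wedge T_Z]-1$ is $\smallo(1)$. For this I would combine the renewal identity $\xExp_a[T_Z]=\xExp_a[T_a^+\wedge T_Z]/p$ together with $p=1/(\pi(a)\effR{a}{Z})$ and the sharp asymptotics $\xExp_a[T_Z]=\pi(a)\effR{a}{J(a)}[1+\smallo(1)]$ supplied by Corollary~\ref{cor:escape:mean:cascade} (whose hypotheses are precisely those of the present corollary) to derive
\begin{align*}
\xExp_a[T_a^+\wedge T_Z] = \frac{\effR{a}{J(a)}}{\effR{a}{Z}}\,[1+\smallo(1)].
\end{align*}
The inclusion $Z\subseteq J(a)$ forces $\effR{a}{J(a)}\leq\effR{a}{Z}$, so this ratio is at most $1$; combined with the trivial lower bound $\xExp_a[T_a^+\wedge T_Z]\geq 1$, this sandwiches the quantity between $1$ and $1+\smallo(1)$, yielding $\xExp_a[T_a^+\wedge T_Z]=1+\smallo(1)$ as desired.

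Substituting back and using $1/p = \pi(a)\effR{a}{Z}\asymp\pi(a)\effR{a}{J(a)}\asymp\xExp_a[T_Z]$, I conclude $\xExp_a[T_Z\mid T_Z<T_a^+]\leq 1+\smallo(\xExp_a[T_Z])$, which gives the claim $\xExp_a[T_Z\mid T_Z<T_a^+]\prec\xExp_a[T_Z]$ in the non-trivial metastable regime $\xExp_a[T_Z]\to\infty$ (outside which the assertion is vacuous, since $\xExp_a[T_Z\mid T_Z<T_a^+]\geq 1$). The main obstacle is thus the production of the \emph{sharp} estimate $\xExp_a[T_a^+\wedge T_Z]=1+\smallo(1)$ — the order-of-magnitude bound from Proposition~\ref{prop:escape:mean} alone would not suffice, and it is exactly here that the hypothesis $\pi(x)\Psi(x,J^-(x))\prec\pi(a)\Psi(a,J(a))$ (fed through Corollary~\ref{cor:escape:mean:cascade}) is used.
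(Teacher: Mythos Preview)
Your proof is correct and is essentially the paper's argument in a different guise. The paper uses the last-excursion identity $\xExp_a[T_Z\mid T_Z<T_a^+]=\xExp_a[T_Z-T_a^{(N_Z)}]$ and bounds this by $\xExp_a[T_Z]-G_{T_Z}(a,a)+\bigo(1)$, then invokes $G_{T_Z}(a,a)=\pi(a)\effR{a}{Z}$ together with Corollary~\ref{cor:escape:mean:cascade}. But by the renewal identities $G_{T_Z}(a,a)=1/p$ and $\xExp_a[T_Z]=\xExp_a[T_a^+\wedge T_Z]/p$, one has $\xExp_a[T_Z]-G_{T_Z}(a,a)=(\xExp_a[T_a^+\wedge T_Z]-1)/p$, which is exactly your bound; the two proofs coincide at this point and finish identically via the comparison $\effR{a}{J(a)}\leq\effR{a}{Z}$.
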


%%%

\subsection{Exponential law for escape times}
\label{sec:expescape}

If $a$ is a metastable state (i.e., $J^-(a)\neq\varnothing$ and $\pi(a)\Psi(a,J^-(a))$ is large),
then it can take a long time for the chain to pass from $a$ to $J^-(a)$. Starting from $a$, the 
chain is much more likely to return back to $a$ quickly than to pass through the bottleneck 
between $a$ and $J^-(a)$. Each time the chain returns to $a$, the process starts afresh.
The transition thus requires many repeated trials, each with a small success probability.

The hitting time of a rare event in a regenerative process approximately follows an exponential 
law (Keilson~\cite[Section 8]{Kei79}). The following proposition formulates a version of this 
phenomenon. See Appendix~\ref{apx:exponential-law} for its proof.

\begin{proposition}[{\bf Exponential law for regenerative processes}]
\label{prop:exponential:renewal}
Let $\delta T$ be a positive random variable with finite mean and $B$ a Bernoulli random 
variable with success probability $\varepsilon>0$. Let $(\delta T_k, B_k)_{k\in\ZZ^+}$ be a 
sequence of independent copies of the pair $(\delta T,B)$. Define the associated renewal 
process by $T_0\isdef 0$ and $T_k\isdef T_{k-1}+\delta T_k$ for $k\geq 1$. Set $N\isdef
\inf\{k\colon\, B_k=\symb{1}\}$, $\mu\isdef\xExp[\delta T\,|\,B=\symb{0}]$,
$\eta\isdef\xExp[\delta T\,|\,B=\symb{1}]$
and $M\isdef\xExp[T_N]$. Take $\varepsilon$, $M$, $\mu$ and $\eta$ to be functions of a 
parameter $\lambda\in\RR$. Then
\begin{align}
\label{eq:thm:exponential}
\lim_{\lambda\to\infty} \xPr\left(\frac{T_N}{\xExp[T_N]} > t\right)
&= \ee^{-t} \qquad\text{uniformly in $t\in\RR^+$,}
\end{align}
provided $\varepsilon=\smallo(1)$ and $\varepsilon\frac{\eta}{\mu}=\smallo(1)$ (or equivalently, 
$\varepsilon=\smallo(1)$ and $\frac{\eta}{M}=\smallo(1)$) as $\lambda\to\infty$.
\end{proposition}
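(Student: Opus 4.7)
The plan is to prove the convergence in distribution $T_N/M \Rightarrow \mathrm{Exp}(1)$ via the continuity theorem for Laplace transforms, and then invoke Pólya's uniform convergence theorem to upgrade the pointwise tail convergence to uniform convergence in $t$.

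I would first compute the Laplace transform of $T_N$ in closed form. Conditioning on $\{N = n\}$, which has probability $\varepsilon(1-\varepsilon)^{n-1}$, the increments $\delta T_1,\ldots,\delta T_{n-1}$ are iid with distribution $\delta T \mid B = 0$ and independent of $\delta T_n$ which has distribution $\delta T \mid B = 1$. Writing $\phi_j(s) \isdef \xExp[\ee^{-s\delta T}\mid B = j]$ for $j \in \{0,1\}$ and summing the resulting geometric series yields
\begin{equation*}
\xExp\bigl[\ee^{-s T_N}\bigr] \;=\; \frac{\varepsilon\,\phi_1(s)}{1 - (1-\varepsilon)\phi_0(s)}.
\end{equation*}

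I would then evaluate at $s = \theta/M$ for fixed $\theta > 0$ and let $\lambda \to \infty$. For the numerator, $0 \leq 1 - \phi_1(\theta/M) \leq \theta\eta/M \to 0$ by the hypothesis $\eta/M = \smallo(1)$. For the denominator, the identity $M = (1-\varepsilon)\mu/\varepsilon + \eta$ together with the hypotheses gives $\mu/M = \varepsilon(1+\smallo(1))$, and combining this with the expansion $\phi_0(s) = 1 - s\mu + \smallo(s)$ as $s \to 0$ produces $1 - (1-\varepsilon)\phi_0(\theta/M) = \varepsilon(1+\theta) + \smallo(\varepsilon)$. The ratio therefore converges to $1/(1+\theta)$, which is the Laplace transform of $\mathrm{Exp}(1)$, so by the continuity theorem $T_N/M \Rightarrow \mathrm{Exp}(1)$. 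Since the limiting CDF $t \mapsto 1 - \ee^{-t}$ is continuous on $\RR$, Pólya's theorem promotes pointwise convergence of the CDFs to uniform convergence on $\RR$, which is equivalent to the claimed uniform tail convergence on $\RR^+$.

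The main obstacle is justifying the expansion $\phi_0(\theta/M) = 1 - \theta\varepsilon + \smallo(\varepsilon)$ \emph{uniformly} in $\lambda$. Although $(1-\phi_0(s))/s \nearrow \mu$ as $s \searrow 0$ for each fixed distribution by monotone convergence, the law of $\delta T \mid B = 0$ implicitly depends on $\lambda$ through the parameters of the model, so the expansion at $s = \theta/M \to 0$ is not automatic. The cleanest way around this is to write the shortfall as $\mu - (1-\phi_0(s))/s = \xExp[\delta T \cdot h(s\delta T)\mid B = 0]$ with $h(u) \isdef 1 - (1-\ee^{-u})/u$, which satisfies $h(0) = 0$ and $0 \leq h(u) \leq 1$, and then apply dominated convergence together with a mild uniform-integrability property of $\delta T\mid B = 0$ normalised by $\mu$; in the intended applications to hard-core dynamics this regularity follows from the combinatorial structure of the underlying chain and will be verified separately in the sections where the proposition is applied.
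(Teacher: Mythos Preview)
Your approach is essentially the paper's: the paper computes the characteristic function of $T_N$ as the same geometric series, evaluates it at $\theta/M$, expands the two conditional transforms, and concludes via convergence of characteristic functions plus continuity of the exponential law (which is your P\'olya step). Switching to Laplace transforms is a cosmetic change; the bound $0\le 1-\phi_1(\theta/M)\le\theta\eta/M$ and the identity $M=(1-\varepsilon)\mu/\varepsilon+\eta$ appear in the paper's argument as well, only phrased for $H(\theta/M)\to 1$ and $\varepsilon M\to 1$.

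On the uniformity issue you flag: you are more careful than the paper. The paper normalises to $\mu=1$, writes the Taylor expansion $G(\theta)=1+\ii\theta+\smallo(\theta)$ as $\theta\to 0$, and then substitutes $\theta/M$ to assert $G(\theta/M)=1+\ii\varepsilon\theta+\smallo(\varepsilon)$ as $\lambda\to\infty$, without addressing that the $\smallo(\theta)$ term comes from a distribution that itself varies with $\lambda$. So the concern you raise is a genuine gap in the paper's proof too, not a defect specific to your route. Your proposed fix---showing that $\xExp[\delta T\cdot h(s\,\delta T)\mid B=\symb{0}]/\mu\to 0$ via a uniform-integrability property of the normalised increments---is the honest way to close it; deferring the verification to the concrete applications is reasonable, since as an abstract statement the proposition needs exactly such an extra hypothesis for the expansion to hold.
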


An immediate consequence is the approximate exponential distribution for the escape time from a 
metastable state, stated in the following corollary. See Appendix~\ref{apx:exponential-law} 
for its proof.

\begin{corollary}[{\bf Exponential escape time}]
\label{cor:escape:exponential}
Let $a\in\spX$, and let $Z\subseteq J(a)$ be a non-empty upward closed set (see Sec.~\ref{sec:duration}).
Suppose that 
$\pi(a)\Psi(a,J(a))\succ 1$, and $\pi(x)\Psi(x,J^-(x))\prec\pi(a)\Psi(a,J(a))$ for every $x\in J(a)
\setminus Z$. Then 
\begin{align}
\lim_{\lambda\to\infty} \xPr_a\left(\frac{T_Z}{\xExp_a[T_Z]} > t\right) 
&= \ee^{-t} \qquad\text{uniformly in $t\in\RR^+$.}
\end{align}
A similar statement holds for the continuous-time version of the process $\hat{X}(t)\isdef 
X\big(\xi([0,t])\big)$ constructed via an independent Poisson process $\xi$ with rate $\gamma$.
\end{corollary}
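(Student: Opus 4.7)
The plan is to realise $T_Z$ as the success time of a geometric trial of excursions from $a$, and then invoke Proposition~\ref{prop:exponential:renewal}. More precisely, by the strong Markov property applied at successive returns to $a$, the trajectory starting at $a$ decomposes into i.i.d.\ ``attempts'': each attempt is either a failed excursion (returns to $a$ before hitting $Z$, indicator $B_k=\symb{0}$) or a successful one (hits $Z$ before returning to $a$, indicator $B_k=\symb{1}$). Let $\delta T_k$ denote the duration of the $k$-th attempt. Then in the notation of Proposition~\ref{prop:exponential:renewal} we have $T_N = T_Z$, $\varepsilon = \xPr_a(T_Z<T_a^+)$, $\mu = \xExp_a[T_a^+ \mid T_a^+ < T_Z]$, $\eta = \xExp_a[T_Z \mid T_Z < T_a^+]$, and $M = \xExp_a[T_Z]$.

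It then suffices to verify the two hypotheses $\varepsilon = \smallo(1)$ and $\eta/M = \smallo(1)$ as $\lambda\to\infty$. For the first, observe that $Z\subseteq J(a)$ gives $\{T_Z<T_a^+\}\subseteq\{T_{J(a)}<T_a^+\}$, so that
\begin{align}
\varepsilon \;\leq\; \xPr_a(T_{J(a)}<T_a^+) \;=\; \frac{1}{\pi(a)\effR{a}{J(a)}} \;\asymp\; \frac{1}{\pi(a)\Psi(a,J(a))} \;\prec\; 1,
\end{align}
by Proposition~\ref{prop:bounds:communication-height:abstract} and the standing assumption $\pi(a)\Psi(a,J(a))\succ 1$. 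For the second, note that $\eta$ is precisely the mean duration of a successful transition, i.e.\ $\xExp_a[T_Z\mid T_Z<T_a^+]$. The ``rapid transition'' Corollary~\ref{cor:transition:avalanche}, whose hypotheses coincide with ours, yields $\eta\prec\xExp_a[T_Z]=M$, as desired. Proposition~\ref{prop:exponential:renewal} then produces the exponential limit law for $T_Z/\xExp_a[T_Z]$, uniformly in $t\in\RR^+$.

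To transfer the result to the continuous-time process $\hat{X}(t) = X(\xi([0,t]))$, recall that $\hat{T}_Z$ equals the time of the $T_Z$-th jump of the independent Poisson process~$\xi$ with rate $\gamma$, i.e.\ a sum of $T_Z$ independent $\mathrm{Exp}(\gamma)$ random variables. Since $T_Z\to\infty$ in probability (as $\varepsilon\to 0$ forces a diverging number of returns to $a$), the strong law and a standard concentration estimate for Poisson sums give $\hat{T}_Z = T_Z/\gamma\cdot(1+\smallo_{\xPr}(1))$, and $\xExp_a[\hat{T}_Z]=\xExp_a[T_Z]/\gamma$. Dividing through, the Poisson fluctuations are swamped by the exponential tail of $T_Z/\xExp_a[T_Z]$, so the same exponential limit law transfers, uniformly in $t$.

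I do not expect a genuine obstacle: every ingredient has been prepared earlier in this section. The most delicate point is matching the two hypotheses of Proposition~\ref{prop:exponential:renewal} to those of this corollary, which amounts to recognising that the condition $\pi(x)\Psi(x,J^-(x))\prec \pi(a)\Psi(a,J(a))$ for $x\in J(a)\setminus Z$ is exactly what powers Corollary~\ref{cor:transition:avalanche}, while $\pi(a)\Psi(a,J(a))\succ 1$ drives the success probability $\varepsilon$ to zero via the reversible identity for $\xPr_a(T_{J(a)}<T_a^+)$.
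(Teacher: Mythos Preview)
Your proof is correct and, for the discrete-time statement, essentially identical to the paper's: both decompose $T_Z$ into i.i.d.\ excursions from $a$, verify $\varepsilon=\smallo(1)$ via the reversible identity $\xPr_a(T_{\bullet}<T_a^+)=1/(\pi(a)\effR{a}{\bullet})$ together with $\pi(a)\Psi(a,J(a))\succ 1$, and verify $\eta/M=\smallo(1)$ via Corollary~\ref{cor:transition:avalanche}, then invoke Proposition~\ref{prop:exponential:renewal}.

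For the continuous-time statement you take a slightly different route. You argue via Slutsky: $\hat{T}_Z=(T_Z/\gamma)(1+\smallo_{\xPr}(1))$ because $T_Z\to\infty$ in probability and a Poisson sum concentrates around its mean, so the exponential limit transfers. The paper instead observes that the \emph{continuous-time} excursions themselves form an i.i.d.\ renewal sequence with $\varepsilon'=\varepsilon$, $M'=M/\gamma$, $\eta'=\eta/\gamma$, and re-applies Proposition~\ref{prop:exponential:renewal} directly. The paper's route is marginally cleaner (no concentration estimate needed), but your argument is perfectly valid and arguably more transparent about the mechanism linking the two time scales.
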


%%%

\subsection{Asymptotics for tail probabilities}
\label{sec:asymptail}

In the previous section, we saw that the tail probability of the escape time from a metastable 
state is asymptotically exponentially small, namely, $\xPr_a\big(T_Z>t\xExp_a[T_Z]\big)
=\ee^{-t}[1+\smallo(1)]$ as $\lambda\to\infty$. In this section, we derive similar exponential 
upper bounds for the tail probabilities and conditional tail probabilities of more general hitting 
times using rougher but more flexible regeneration arguments. Such exponential upper bounds 
are one of the ingredients of the path-wise approach to metastability (see e.g.\ the paper by 
Manzo, Nardi, Olivieri and Scoppola~\cite{ManNarOliSco04}). The material of this section is 
not used in the rest of the current paper but will be needed in our follow-up paper.

Recall from Proposition~\ref{prop:escape:mean} that $\xExp_a[T_{J^-(a)}]\asymp\pi(a)\Psi(a,J^-(a))$
for each $a\in\spX$. For $A\subseteq\spX$, define
\begin{align}
	\Gamma(A) &\isdef
		\sup_{x\in A} \pi(x)\Psi\big(x,J^-(x)\big)\;.
\end{align}
and note that $\sup_{x\in A}\xExp_x[T_{A^\complement}]\preceq\Gamma(A)$ as $\lambda\to\infty$. 
(Recall the convention $\Psi(x,\varnothing)\isdef\infty$.) The following proposition is a variant of 
Theorem~3.1 in~\cite{ManNarOliSco04}. Its proof can be found in Appendix~\ref{apx:exponential-law}.

\begin{proposition}[{\bf Tail probabilities of exit time}]
\label{prop:SES:exit}
	Let $A\subseteq\spX$ be an arbitrary non-empty set of states. There is a constant $\alpha<1$ 
	such that, for every function $\rho=\rho(\lambda)\succ 1$,
	\begin{align}
		\sup_{x\in A} \xPr_x\big(T_{\partial A}>\rho\Gamma(A)\big)
			&\preceq \alpha^\rho	\qquad\text{as $\lambda\to\infty$.}
	\end{align}
\end{proposition}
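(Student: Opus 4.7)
The plan is to combine Markov's inequality with the strong Markov property to iterate a one-step tail bound that holds uniformly over the starting point. The key input is the bound $\sup_{x\in A}\xExp_x[T_{\partial A}]\preceq\Gamma(A)$ noted right before the statement, which is in turn a consequence of Proposition~\ref{prop:escape:mean} together with the observation that, for every $x\in A$, iterating passages to states of strictly larger $\pi$-value eventually forces the chain out of $A$ (since $\spX$ is finite and the relation ``$\pi(y)\succ\pi(x)$'' is asymptotically a partial order).

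First I would fix, by the above bound, a constant $C>0$ (independent of $\lambda$) such that for all sufficiently large $\lambda$,
\begin{align*}
\sup_{x\in A}\xExp_x[T_{\partial A}] \;\leq\; C\,\Gamma(A).
\end{align*}
Applying Markov's inequality then yields the uniform one-step bound
\begin{align*}
\sup_{y\in A}\xPr_y\!\bigl(T_{\partial A}>2C\,\Gamma(A)\bigr) \;\leq\; \tfrac{1}{2}.
\end{align*}

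Next I would iterate. By the strong Markov property at the deterministic times $2Ck\,\Gamma(A)$, conditioning on the event that the chain has not yet exited $A$ (in which case the current state is still in $A$), we obtain
\begin{align*}
\xPr_x\!\bigl(T_{\partial A}>2Cn\,\Gamma(A)\bigr)
\;\leq\; \Bigl(\sup_{y\in A}\xPr_y\!\bigl(T_{\partial A}>2C\,\Gamma(A)\bigr)\Bigr)^{n}
\;\leq\; 2^{-n}
\end{align*}
for every positive integer $n$ and every $x\in A$. For a general scale $\rho=\rho(\lambda)\succ 1$, choose $n\isdef\lfloor\rho/(2C)\rfloor$, so that $2Cn\leq\rho$ and $n\geq \rho/(2C)-1$. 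This gives
\begin{align*}
\xPr_x\!\bigl(T_{\partial A}>\rho\,\Gamma(A)\bigr)
\;\leq\; 2^{-n}
\;\leq\; 2\cdot 2^{-\rho/(2C)}
\;=\; 2\,\alpha^{\rho},
\qquad \alpha\isdef 2^{-1/(2C)}\in(0,1),
\end{align*}
and the multiplicative constant $2$ is absorbed into the $\preceq$ relation.

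The only subtlety — and what I expect to be the main obstacle — is verifying the uniform mean-exit estimate $\sup_{x\in A}\xExp_x[T_{\partial A}]\preceq\Gamma(A)$ with a constant $C$ that does not depend on $\lambda$. This is cleanest by induction on the $\pi$-ordering of states in $A$: starting from $x\in A$, by Proposition~\ref{prop:escape:mean} the chain reaches $J^-(x)\cup\partial A$ in mean time $\preceq\pi(x)\Psi(x,J^-(x))\leq\Gamma(A)$; on the event that it lands in $J^-(x)\cap A$ rather than $\partial A$, apply the inductive hypothesis at the new, strictly $\pi$-heavier state, noting that $\spX$ is finite so the induction terminates after a bounded number of steps (bounded by $\abs{\spX}$), yielding a total mean exit time $\preceq\abs{\spX}\,\Gamma(A)$. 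Once this is in hand, the rest of the argument is essentially mechanical.
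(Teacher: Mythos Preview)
Your proof is correct and follows essentially the same approach as the paper: a uniform Markov-inequality bound $\sup_{y\in A}\xPr_y(T_{\partial A}>m\Gamma(A))\leq\beta/m$ iterated in blocks via the Markov property, then choosing the block length in terms of $\rho$. The paper leaves the block size $m$ free and optimises at the end (taking $m=\beta\ee$ to get $\alpha=\ee^{-1/(\beta\ee)}$), whereas you fix $m=2C$ from the start; your added sketch of the inductive argument for $\sup_{x\in A}\xExp_x[T_{\partial A}]\preceq\Gamma(A)$ is also correct and merely spells out what the paper states without proof just before the proposition.
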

\noindent 
Examples of useful choices for $\rho$ are $\rho\isdef\lambda^\delta$ (for a small constant $\delta>0$)
and $\rho\isdef\log\lambda$.

The above proposition can be used to bound the tail and expected value of the exit time of a set $A$
conditioned on hitting a certain subset of $\partial A$ upon exit. Set
\begin{align}
	\kappa=\kappa(\lambda) &\isdef
		\min\{K(a,b): a,b\in\spX, K(a,b)>0\} \;.
\end{align}

\begin{proposition}[{\bf Tail probabilities of conditional exit time}]
\label{prop:SES:exit:tail:conditional}
	Let $A\subseteq\spX$ be an arbitrary set of states. Consider an arbitrary partitioning 
	of $\partial A$ into two non-empty sets $B_1$ and $B_2$. There is a constant 
	$\alpha<1$ \textup{(}the one in Proposition~\ref{prop:SES:exit}\textup{)} such that, 
	for every function $\rho=\rho(\lambda)\succ 1$,
	\begin{align}
		\sup_{x\in A}
			\xPr_x\big(T_{B_1}>\rho\Gamma(A) \,\big|\, T_{B_1}<T_{B_2}\big)
			&\preceq \alpha^\rho\kappa^{-\abs{A}}
			\qquad\text{as $\lambda\to\infty$.}
	\end{align}
\end{proposition}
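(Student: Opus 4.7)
The plan is to reduce this to Proposition~\ref{prop:SES:exit} by using Bayes' rule, and to obtain a lower bound on the denominator by a crude path-counting argument. Write
\begin{align}
\xPr_x\big(T_{B_1}>\rho\Gamma(A) \,\big|\, T_{B_1}<T_{B_2}\big)
&= \frac{\xPr_x\big(T_{B_1}>\rho\Gamma(A),\, T_{B_1}<T_{B_2}\big)}{\xPr_x\big(T_{B_1}<T_{B_2}\big)}.
\end{align}
On the event $\{T_{B_1}<T_{B_2}\}$ we have $T_{\partial A}=T_{B_1}$, so the numerator is bounded above by $\xPr_x(T_{\partial A}>\rho\Gamma(A))$. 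By Proposition~\ref{prop:SES:exit}, this is at most $\alpha^\rho$ up to a constant factor, uniformly in $x\in A$.

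For the denominator, I would exhibit an explicit path from $x$ to $B_1$ that stays in $A$ until the last step. Since $\partial A=B_1\cup B_2$, any simple trajectory from $x\in A$ to $B_1$ avoiding $B_2$ must remain inside $A$ up to its last vertex, hence has length at most $\abs{A}$. Assuming $\xPr_x(T_{B_1}<T_{B_2})>0$ for every $x\in A$ (otherwise the conditional probability is undefined or trivially zero and may be excluded from the supremum), at least one such simple path of length $\ell\leq\abs{A}$ exists. Each of the $\ell$ transitions along this path has probability at least $\kappa$, so the probability that the chain follows this particular path is at least $\kappa^{\abs{A}}$, giving $\xPr_x(T_{B_1}<T_{B_2})\geq\kappa^{\abs{A}}$.

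Combining the two bounds yields
\begin{align}
\xPr_x\big(T_{B_1}>\rho\Gamma(A) \,\big|\, T_{B_1}<T_{B_2}\big)
&\preceq \frac{\alpha^\rho}{\kappa^{\abs{A}}} = \alpha^\rho\kappa^{-\abs{A}},
\end{align}
uniformly in $x\in A$, which is the claimed estimate. The main obstacle is essentially bookkeeping: one must verify that the simple path built from an arbitrary realizing trajectory really does lie in $A$ until its terminal step (which follows from $\partial A=B_1\cup B_2$ and avoidance of $B_2$), and handle the possibility that $\xPr_x(T_{B_1}<T_{B_2})=0$ for some $x\in A$ by restricting the supremum to those $x$ from which $B_1$ is reachable without hitting $B_2$.
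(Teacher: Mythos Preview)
Your proof is correct and follows essentially the same approach as the paper: Bayes' rule, bound the numerator by $\xPr_x(T_{\partial A}>\rho\Gamma(A))\preceq\alpha^\rho$ via Proposition~\ref{prop:SES:exit}, and lower-bound the denominator by $\kappa^{\abs{A}}$ using a simple path in $A$ from $x$ to $B_1$. Your additional remarks on the existence of such a path and the degenerate case $\xPr_x(T_{B_1}<T_{B_2})=0$ are reasonable housekeeping that the paper leaves implicit.
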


\begin{proposition}[{\bf Conditional mean exit time}]
\label{prop:SES:exit:mean:conditional}
	Let $A\subseteq\spX$ be an arbitrary set of states. Consider an arbitrary partitioning 
	of $\partial A$ into two non-empty sets $B_1$ and $B_2$. There is a constant $\alpha<1$
	\textup{(}the one in Proposition~\ref{prop:SES:exit}\textup{)} such that, for every function 
	$\rho=\rho(\lambda) \succ 1$,
	\begin{align}
		\sup_{x\in A}
			\xExp_x\big(T_{B_1} \,\big|\, T_{B_1}<T_{B_2}\big)
			&\preceq \rho\Gamma(A)
			\qquad\text{as $\lambda\to\infty$,}
	\end{align}
	provided $\alpha^\rho\,\kappa^{-\abs{A}}\to 0$
	as $\lambda\to\infty$.
\end{proposition}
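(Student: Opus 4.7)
The plan is to integrate the conditional tail bound from Proposition~\ref{prop:SES:exit:tail:conditional} using a block decomposition of the time axis. Setting $M \isdef \rho\Gamma(A)$ and $\tau \isdef T_{\partial A}$, I first write the conditional mean as a sum of conditional tail probabilities and group them in blocks of length~$M$:
\[
	\xExp_x\big[T_{B_1} \,\big|\, T_{B_1} < T_{B_2}\big]
		= \sum_{t \geq 0} \xPr_x\big(T_{B_1} > t \,\big|\, T_{B_1} < T_{B_2}\big)
		\leq M \sum_{k \geq 0} \xPr_x\big(T_{B_1} > kM \,\big|\, T_{B_1} < T_{B_2}\big),
\]
so it suffices to show that the sum on the right is $\bigo(1)$ uniformly in $x\in A$.

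For the numerator of each conditional probability I would iterate the single-block estimate of Proposition~\ref{prop:SES:exit}. Using that $\{T_{B_1}>kM,\,T_{B_1}<T_{B_2}\}\subseteq\{\tau>kM\}$ and applying the Markov property at the deterministic times $M,2M,\ldots,(k-1)M$ (each of which forces $X_{jM}\in A$ on the event $\{\tau>jM\}$), I obtain
\[
	\xPr_x\big(T_{B_1} > kM,\; T_{B_1} < T_{B_2}\big)
		\leq \xPr_x(\tau > kM)
		\leq \Bigl(\sup_{y\in A}\xPr_y(\tau>M)\Bigr)^k
		\preceq \alpha^{k\rho}.
\]
For the denominator I would reuse the elementary lower bound $\xPr_x(T_{B_1}<T_{B_2})\geq\kappa^{\abs{A}}$ (valid whenever the left side is positive, by exhibiting a path from $x$ to $B_1$ of length at most $\abs{A}$ avoiding $B_2$, along which every transition has probability at least~$\kappa$); this is precisely the ingredient already producing the $\kappa^{-\abs{A}}$ factor in Proposition~\ref{prop:SES:exit:tail:conditional}. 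Combining,
\[
	\xPr_x\big(T_{B_1} > kM \,\big|\, T_{B_1} < T_{B_2}\big)
		\preceq \alpha^{k\rho}\,\kappa^{-\abs{A}}.
\]

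Summing the resulting geometric series and invoking the hypothesis $\alpha^\rho\kappa^{-\abs{A}}=\smallo(1)$ (which in particular forces $\alpha^\rho\to 0$, since $\kappa\leq 1$ and hence $\alpha^\rho\leq\alpha^\rho\kappa^{-\abs{A}}\to 0$) produces
\[
	\sum_{k\geq 0}\xPr_x\big(T_{B_1} > kM \,\big|\, T_{B_1} < T_{B_2}\big)
		\preceq 1+\kappa^{-\abs{A}}\frac{\alpha^\rho}{1-\alpha^\rho}
		= 1+\smallo(1),
\]
so that $\xExp_x[T_{B_1}\mid T_{B_1}<T_{B_2}] \preceq M = \rho\Gamma(A)$, uniformly in $x\in A$. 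The main obstacle is bookkeeping rather than conceptual: since both $M$ and $\kappa$ depend on $\lambda$, I need the Markov iteration and the path-based lower bound to be uniform in $x\in A$, and I need to check that the $\kappa^{-\abs{A}}$ factor is absorbed using exactly the stated assumption $\alpha^\rho\kappa^{-\abs{A}}=\smallo(1)$ and nothing stronger; no new ingredients beyond the two preceding propositions are needed.
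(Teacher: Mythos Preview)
Your proof is correct and follows essentially the same block-decomposition and geometric-sum strategy as the paper. The only minor difference is that the paper iterates the conditional tail bound of Proposition~\ref{prop:SES:exit:tail:conditional} directly (obtaining $(\alpha^\rho\kappa^{-\abs{A}})^i$ for the $i$th block), whereas you bound numerator and denominator separately via Proposition~\ref{prop:SES:exit} and the path lower bound (obtaining the slightly sharper $\alpha^{i\rho}\kappa^{-\abs{A}}$); both routes yield the same conclusion under the stated hypothesis.
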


\noindent The proofs can be found in Appendix~\ref{apx:exponential-law}.

%%%

\subsection{Sharp asymptotics for effective resistance}
\label{sec:sharper}

As we saw earlier, a sharp estimate on the mean escape time requires a sharp estimate on 
effective resistance. Sharp asymptotics for effective resistance between two sets can be 
obtained through a detailed understanding of the bottleneck between them. The bottleneck 
between two sets is often described by a notion of \emph{critical gate}, which pinpoints
the critical transitions in a typical passage from one set to another. The notion of critical 
gate used below is not as general as it seems. For instance, it is not directly applicable 
to Glauber dynamics for the Ising model, but it suffices for our hard-core model. 

Let $A,B\subseteq\spX$ be two disjoint non-empty sets. We call a pair of disjoint sets 
$Q,Q^*\subseteq\spX$ a \emph{critical pair} between $A$ and $B$ when (see 
Fig.~\ref{fig:critical-gate})
\begin{enumerate}[label=\alph*)]
\item 
$r(x,y)\asymp \Psi(A,B)$ for every $x\in Q$ and $y\in Q^*$ with $x\link y$,
\item 
$\Psi(A,x)\prec\Psi(A,B)$ for every $x\in Q$,
\item 
$\Psi(y,B)\prec\Psi(A,B)$ for every $y\in Q^*$,
\item 
every optimal path from $A$ to $B$ passes through a transition $x\to y$ with $x\in Q$ 
and $y\in Q^*$.
\end{enumerate}
By an \emph{optimal path} from $A$ to $B$, we mean a path whose critical resistance is 
of the same order as $\Psi(A,B)$, i.e., a path $\omega\colon\,A\pathto B$ with $r(\omega)
\asymp\sup_{e\in\omega}r(e)\asymp\Psi(A,B)$ as $\lambda\to\infty$. Observe that an 
optimal path $A\pathto Q$ does not pass through $Q^*$, and an optimal path $Q^*\pathto B$ 
does not pass through $Q$. If $(Q,Q^*)$ is a critical pair between $A$ and $B$, then we 
call the set
\begin{align}
[Q,Q^*] &\isdef \{(x,y)\colon \text{$x\in Q$, $y\in Q^*$ and $x\link y$}\}
\end{align}
of probable transitions between $Q$ and $Q^*$ the \emph{critical gate} between $A$ and $B$.

%%%%%%%%%%%%%%%%%%%%%%%%%%%%%%%%%%%
\begin{figure}[htbp]
\centering
{\tikzsetfigurename{critical_gate}
\begin{tikzpicture}[scale=1,baseline,>=stealth,shorten >=1]
\criticalgate
\end{tikzpicture}
}
\caption{A critical gate $[Q,Q^*]$ between $A$ and $B$.}
\label{fig:critical-gate}
\end{figure}
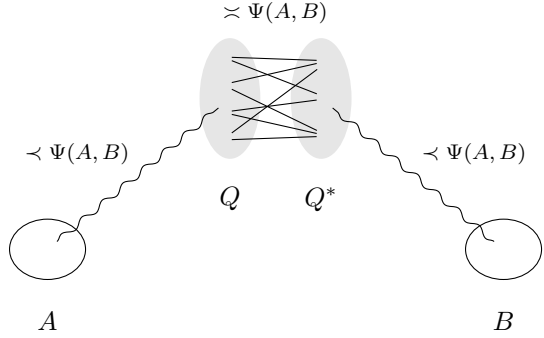
%%%%%%%%%%%%%%%%%%%%%%%%%%%%%%%%%%%%

Given a critical gate $[Q,Q^*]$ between $A$ and $B$, we define a set
\begin{align}
\label{eq:critical-gate:behind}
S(A,Q,Q^*,B) &\isdef \left\{x\in\spX\colon\,
\parbox[c]{20em}{there exists a path $\omega:A\pathto x$ not passing $Q^*$ \\
such that $\Psi(\omega)\preceq\Psi(A,B)$}\right\},
\end{align}
which we think of as the set of states ``behind the critical gate''. We have used the notation 
$\Psi(\omega)\isdef\sup_{e\in\omega}r(e)$ for the critical resistance of the path $\omega$.
Note that $\Psi(\omega)\asymp r(\omega)$. The following proposition is proved in 
Appendix~\ref{apx:critical-gate}.

\begin{proposition}[{\bf Characterisation of critical gate}]
\label{prop:critical-gate:behind}
Let $[Q,Q^*]$ be a critical gate between two disjoint non-empty sets $A,B\subseteq\spX$
and $S\isdef S(A,Q,Q^*,B)$. If $(x,y)\in S\times S^\complement$ and $x\link y$, then either 
$r(x,y)\succ\Psi(A,B)$ or $(x,y)\in Q\times Q^*$.
\end{proposition}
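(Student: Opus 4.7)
The plan is to assume the two hypotheses $(x,y)\in S\times S^\complement$ with $x\link y$ and $r(x,y)\preceq\Psi(A,B)$, and then derive $y\in Q^*$ first and $x\in Q$ second. Throughout, the key structural facts from the definition of critical pair I will lean on are: (a) every gate edge has resistance $\asymp\Psi(A,B)$ (a \emph{lower} bound on the cost of a gate crossing), (c) every $y'\in Q^*$ admits a cheap path to $B$, and (d) every optimal $A\pathto B$ path must cross the gate at least once.

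\textbf{Stage 1 ($y\in Q^*$).} Since $x\in S$, fix a witness path $\omega_1\colon A\pathto x$ that avoids $Q^*$ with $\Psi(\omega_1)\preceq\Psi(A,B)$. Every state visited by $\omega_1$ also lies in $S$ (take the corresponding prefix as a witness), so $y$ does not appear on $\omega_1$. If $y\notin Q^*$, then appending the edge $(x,y)$ to $\omega_1$ yields a simple path from $A$ to $y$ still avoiding $Q^*$, with critical resistance $\max\{\Psi(\omega_1),r(x,y)\}\preceq\Psi(A,B)$, forcing $y\in S$ and contradicting the hypothesis. Hence $y\in Q^*$.

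\textbf{Stage 2 ($x\in Q$).} Using property (c), choose $\omega_3\colon y\pathto B$ with $\Psi(\omega_3)\prec\Psi(A,B)$. Concatenating $\omega_1$, the edge $(x,y)$ and $\omega_3$ and deleting any repeated vertices yields a simple path $\omega\colon A\pathto B$ with
\begin{align*}
\Psi(\omega)\;\leq\;\max\{\Psi(\omega_1),r(x,y),\Psi(\omega_3)\}\;\preceq\;\Psi(A,B),
\end{align*}
so $\omega$ is optimal. By property (d), $\omega$ contains some transition $(x',y')\in Q\times Q^*$. Every edge of $\omega$ comes from one of the three pieces of the concatenation, but no edge of $\omega_1$ can be a gate edge because $\omega_1$ avoids $Q^*$, and no edge of $\omega_3$ can be a gate edge either, since property (a) would then force $\Psi(\omega_3)\succeq\Psi(A,B)$, contradicting the choice of $\omega_3$. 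Therefore $(x',y')=(x,y)$, giving $x\in Q$ (and recovering $y\in Q^*$).

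\textbf{Main obstacle.} The argument is essentially a matter of unpacking the definitions; the genuinely important conceptual point is using property (a) as a matching \emph{lower} bound on the resistance of any gate edge, since this is what forbids cheap off-canonical gate crossings from appearing along the cheap tail $\omega_3$ and thereby pins the gate transition of the optimal path $\omega$ down to the prescribed edge $(x,y)$. The only minor technicality is the shortcut-to-simple-path step, but it is harmless: the bullet-by-bullet case analysis shows that whichever edges survive in $\omega$, a $Q\to Q^*$ transition demanded by (d) can only be $(x,y)$ itself.
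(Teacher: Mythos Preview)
Your proof is correct and follows essentially the same two-stage route as the paper: first conclude $y\in Q^*$ by extending the witness path for $x\in S$, then concatenate with a cheap path $y\pathto B$ to obtain an optimal $A\pathto B$ path and locate the mandatory gate transition. The only minor difference is in Stage~2: the paper rules out a gate crossing in the tail $\omega_3$ by invoking the earlier observation that an optimal path $Q^*\pathto B$ never visits $Q$ (which ultimately rests on property~(b)), whereas you use property~(a) directly to argue that any gate edge in $\omega_3$ would be too expensive. Both arguments are valid and equally short; your explicit handling of the shortcut-to-simple-path step is, if anything, slightly more careful than the paper's.
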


In general, a critical gate between two sets $A$ and $B$ (as defined above) may or may not 
exist.  Even when it exists, identifying a critical gate may require painstaking combinatorial 
analysis. However, once available, a critical gate provides a sharp estimate on the effective 
resistance between $A$ and $B$. The following proposition is proved in Appendix~\ref{apx:critical-gate}.

\begin{proposition}[{\bf Effective resistance: sharp estimate using critical gate}]
\label{prop:effective-conductance:critical-gate}
Suppose that $(Q,Q^*)$ is a critical pair between $A$ and $B$.  Then
\begin{align}
	\effC{A}{B} &= c(Q,Q^*)\, [1+\smallo(1)] \asymp \frac{1}{\Psi(A,B)}
		\qquad\text{as $\lambda\to\infty$,}
\end{align}
where as usual, $c(Q,Q^*) \isdef \displaystyle{\mathop{\sum_{x\in Q}
\sum_{y\in Q^*}}_{x\link y} c(x,y)}$.
\end{proposition}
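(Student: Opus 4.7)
The plan is to establish matching upper and lower bounds on $\effC{A}{B}$ that both equal $c(Q,Q^*)[1+\smallo(1)]$; the order-of-magnitude statement $\effC{A}{B}\asymp 1/\Psi(A,B)$ then follows immediately from property (a) of the critical pair, which forces $c(x,y)\asymp 1/\Psi(A,B)$ for each of the $\bigo(1)$ edges in the gate $[Q,Q^*]$, so that $c(Q,Q^*)\asymp 1/\Psi(A,B)$.

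For the upper bound, I would apply the simplified Nash-Williams cut inequality~\eqref{eq:nash-williams:simplified} to the cut $S\isdef S(A,Q,Q^*,B)$ of states behind the gate. First one checks that $S$ actually separates $A$ from $B$: the inclusion $A\subseteq S$ is immediate from the definition; property (b) plus the disjointness of $Q$ and $Q^*$ gives $Q\subseteq S$, while property (c) gives $Q^*\cap S=\varnothing$; finally, any $b\in B\cap S$ would supply an optimal $A$-to-$B$ path avoiding $Q^*$, contradicting property (d). Proposition~\ref{prop:critical-gate:behind} then tells us that every edge in $\partial S$ is either an edge of $[Q,Q^*]$ or has resistance $\succ\Psi(A,B)$, hence conductance $\prec 1/\Psi(A,B)\asymp c(Q,Q^*)$. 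Summing conductances across the cut yields
\begin{align*}
\effC{A}{B} \;\leq\; \sum_{(x,y)\in\partial S} c(x,y) \;=\; c(Q,Q^*) + \sum_{(x,y)\in\partial S\setminus[Q,Q^*]} c(x,y) \;=\; c(Q,Q^*)[1+\smallo(1)],
\end{align*}
since the extra sum runs over boundedly many edges each contributing $\smallo(c(Q,Q^*))$.

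For the lower bound, I plan to build a unit flow $\theta$ from $A$ to $B$ and invoke Thomson's principle $\effR{A}{B}\leq\energy(\theta)$. On each gate edge set $\theta(x,y)\isdef c(x,y)/c(Q,Q^*)$, so that the total current through the gate is one. The incoming current at each $x\in Q$ is then routed from $A$ along a path with critical resistance $\prec\Psi(A,B)$ (available by (b)), and the outgoing current at each $y\in Q^*$ is routed to $B$ along a path with critical resistance $\prec\Psi(A,B)$ (available by (c)). The energy contribution of the gate edges is
\begin{align*}
\sum_{(x,y)\in[Q,Q^*]} r(x,y)\,\theta(x,y)^2 \;=\; \frac{1}{c(Q,Q^*)^2}\sum_{(x,y)\in[Q,Q^*]} c(x,y) \;=\; \frac{1}{c(Q,Q^*)},
\end{align*}
while every non-gate edge carrying flow has $r(e)\prec\Psi(A,B)\asymp 1/c(Q,Q^*)$ and sustains current of order $\bigo(1)$, so its total energy contribution is $\prec 1/c(Q,Q^*)$. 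Hence $\energy(\theta)=[1+\smallo(1)]/c(Q,Q^*)$, which gives $\effC{A}{B}\geq c(Q,Q^*)[1-\smallo(1)]$ and matches the upper bound.

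The main technical obstacle I foresee is bookkeeping the off-gate flow uniformly in $\lambda$: the paths promised by (b) and (c) may overlap and carry superimposed currents, so I need a consistent choice of routes that keeps the per-edge current bounded by a constant. A clean fix is to fix once and for all a spanning tree of the low-resistance subgraph and route every upstream and downstream current along that tree, which automatically bounds each off-gate edge's current by $\abs{[Q,Q^*]}=\bigo(1)$. Equivalently, the same lower bound can be derived from Proposition~\ref{prop:nash-williams:dual:extended} by picking one simple path $\omega_{x,y}$ per gate edge through this common tree; the shared tree structure guarantees that no two of these paths traverse a common edge in opposite directions, so the hypothesis of the extended dual Nash-Williams inequality is fulfilled.
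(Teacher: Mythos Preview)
Your proposal is correct and very close to the paper's argument.  The upper bound is handled identically: the paper takes the cut $S\isdef S(A,Q,Q^*,B)$, invokes Proposition~\ref{prop:critical-gate:behind}, and bounds $\effC{A}{B}\leq c(S,S^\complement)=c(Q,Q^*)+\smallo(1/\Psi(A,B))$ exactly as you describe.

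For the lower bound there is a small difference in presentation.  The paper does not pass through Thomson's principle or a spanning tree; it simply picks, for each gate edge $(x,y)\in[Q,Q^*]$, an optimal path $\omega_{x,y}\colon A\pathto x\to y\pathto B$ whose upstream and downstream segments are themselves optimal (hence of critical resistance $\prec\Psi(A,B)$), and applies Proposition~\ref{prop:nash-williams:dual:extended} directly to this finite family.  Since each $\omega_{x,y}$ has a \emph{unique} edge of maximal-order resistance---namely the gate edge $x\to y$---one gets $\sum_{e\in\omega_{x,y}} n(e)r(e)=r(x,y)+\smallo(\Psi(A,B))$ and the bound $\effC{A}{B}\geq\sum_{(x,y)} c(x,y)[1+\smallo(1)]$ falls out immediately.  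The paper does not explicitly check the ``no common edge in opposite directions'' hypothesis of Proposition~\ref{prop:nash-williams:dual:extended}; it relies implicitly on the fact that there are only $\bigo(1)$ paths, each of bounded length, so all off-gate contributions to $\sum_e n(e)r(e)$ are of lower order regardless of how they overlap.  Your tree-routing suggestion is a tidy way to make this rigorous on the nose, and your Thomson-flow variant is an equally clean alternative; both buy a bit more explicitness at essentially no extra cost.
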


%%%

\subsection{Passage through the bottleneck}
\label{sec:bottle}

Let $a$ be an arbitrary state and $B$ a set not containing $a$. If a critical gate between 
$a$ and $B$ exists, then the passage from $a$ to $B$ is almost surely through the critical 
gate. The following proposition is proved in Appendix~\ref{apx:critical-gate}.

\begin{proposition}[{\bf Critical gate is bottleneck}]
\label{prop:critical-gate:abstract:choice}
Suppose that $(Q,Q^*)$ is a critical pair between $a$ and $B$, and $S\isdef S(a,Q,Q^*,B)$ 
the set of states behind the critical gate. As $\lambda\to\infty$,
\begin{enumerate}[label={\rm (\roman*)}]
	\item 
		$\xPr_a(T_{xy}\leq T_B)=\smallo(1)$ for
		$(x,y)\in (S\times S^\complement) \setminus (Q\times Q^*)$ with $x\link y$,
	\item 
		$\xPr_a(T_{yx}\leq T_B)=\smallo(1)$ for $(x,y)\in S\times S^\complement$ with $x\link y$,
	\item 
		$\xPr_a(T_{xy}\leq T_B)=\dfrac{c(x,y)}{c(Q,Q^*)}[1+\smallo(1)]$ for $(x,y)\in Q\times Q^*$ 
		with $x\link y$.
\end{enumerate}
\end{proposition}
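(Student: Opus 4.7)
The central device will be to bound $\xPr_a(T_{xy}\le T_B)$ by the expected number $N_{xy}(T_B)$ of $x\to y$ transitions made strictly before time $T_B$. Since $\{T_{xy}\le T_B\}\subseteq\{N_{xy}(T_B)\ge 1\}$, Markov's inequality gives $\xPr_a(T_{xy}\le T_B)\le\xExp_a[N_{xy}(T_B)]$, while summing the one-step Markov property yields
\begin{align*}
\xExp_a[N_{xy}(T_B)] \;=\; G_{T_B}(a,x)\,K(x,y) \;=\; \effR{a}{B}\,W_{a,B}(x)\,c(x,y),
\end{align*}
where the last equality combines~\eqref{eq:green:expression} with $c(x,y)=\pi(x)K(x,y)$. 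Throughout the argument, $\effR{a}{B}\asymp\Psi(a,B)\asymp 1/c(Q,Q^*)$ by Propositions~\ref{prop:bounds:communication-height:abstract} and~\ref{prop:effective-conductance:critical-gate}.

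For part~(i), Proposition~\ref{prop:critical-gate:behind} forces $r(x,y)\succ\Psi(a,B)$ for $(x,y)\in(S\times S^\complement)\setminus(Q\times Q^*)$, hence $c(x,y)\prec 1/\Psi(a,B)$; together with $W_{a,B}(x)\le 1$ this yields $\xExp_a[N_{xy}(T_B)]=\smallo(1)$. Part~(ii) will be handled by applying the same identity to $N_{yx}(T_B)$, which by $c(y,x)=c(x,y)$ equals $\effR{a}{B}\,W_{a,B}(y)\,c(x,y)$. If $(x,y)\notin Q\times Q^*$, the edge-resistance bound from~(i) suffices. If $(x,y)\in Q\times Q^*$, then $y\in Q^*$; property~(c) of the critical pair and Proposition~\ref{prop:voltage:bounds:communication-height:abstract} give $W_{a,B}(y)\preceq\Psi(y,B)/\Psi(a,B)=\smallo(1)$, which combined with $c(x,y)\asymp 1/\Psi(a,B)$ is again $\smallo(1)$. (The subcase $y\in B$ is vacuous as $T_{yx}>T_B$ almost surely.)

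Part~(iii) requires matching upper and lower bounds with the same prefactor. The upper bound $\xExp_a[N_{xy}(T_B)]=[1+\smallo(1)]\,c(x,y)/c(Q,Q^*)$ follows from the sharp asymptotic $\effR{a}{B}=[1+\smallo(1)]/c(Q,Q^*)$ of Proposition~\ref{prop:effective-conductance:critical-gate} and $W_{a,B}(x)=1-\smallo(1)$ (from property~(b), $\Psi(x,a)\prec\Psi(a,B)$, via Proposition~\ref{prop:voltage:bounds:communication-height:abstract}). For the lower bound I shall apply the strong Markov property at $T_{xy}$ to write
\begin{align*}
\xExp_a[N_{xy}(T_B)] \;=\; \xPr_a(T_{xy}\le T_B)\,\bigl(1+\xExp_y[N_{xy}(T_B)]\bigr),
\end{align*}
so that it suffices to show $\xExp_y[N_{xy}(T_B)]=\smallo(1)$. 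Writing $\xExp_y[N_{xy}(T_B)]=G_{T_B}(y,x)K(x,y)$, the reciprocity identity~\eqref{eq:green:reciprocity} together with~\eqref{eq:green:expression} produces $G_{T_B}(y,x)=\pi(x)\effR{x}{B}W_{x,B}(y)$. The ultra-metric property of $\Psi$ together with $\Psi(x,a)\prec\Psi(a,B)$ forces $\Psi(x,B)\asymp\Psi(a,B)$ and hence $\effR{x}{B}\asymp 1/c(Q,Q^*)$; then Proposition~\ref{prop:voltage:bounds:communication-height:abstract} and property~(c) give $W_{x,B}(y)\preceq\Psi(y,B)/\Psi(x,B)=\smallo(1)$. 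Assembling, $\xExp_y[N_{xy}(T_B)]\asymp[c(x,y)/c(Q,Q^*)]\cdot\smallo(1)=\smallo(1)$, and dividing closes~(iii).

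The principal obstacle is the ``at most one crossing'' statement implicit in~(iii): one must show that, once the chain steps from $x\in Q$ to $y\in Q^*$, it reaches $B$ well before it can return to $x$. This is precisely what property~(c) of the critical pair provides, as it feeds into the a priori voltage estimate to yield the crucial decay $W_{x,B}(y)=\smallo(1)$ and force the denominator $1+\xExp_y[N_{xy}(T_B)]$ to be $1+\smallo(1)$.
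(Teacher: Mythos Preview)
Your proof is correct; parts~(i) and~(ii) are essentially identical to the paper's argument, using the same identity $\xExp_a[N_{xy}(T_B)]=G_{T_B}(a,x)K(x,y)=\effR{a}{B}\,W_{a,B}(x)\,c(x,y)$ together with Proposition~\ref{prop:critical-gate:behind} and the voltage bound of Proposition~\ref{prop:voltage:bounds:communication-height:abstract}.

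The genuine difference is in the lower bound of~(iii). The paper does \emph{not} use your strong-Markov decomposition. Instead, having established the upper bound $\xPr_a(T_{xy}\le T_B)\le[1+\smallo(1)]\,c(x,y)/c(Q,Q^*)$ for each gate edge, it observes that the chain must leave $S$ on its way to $B$, so
\[
1\;\le\;\sum_{\bar x\in S,\ \bar y\in S^\complement}\xPr_a(T_{\bar x\bar y}\le T_B)
\;\le\; \smallo(1)\;+\;\sum_{(\bar x,\bar y)\in[Q,Q^*]}\frac{c(\bar x,\bar y)}{c(Q,Q^*)}[1+\smallo(1)]
\;=\;1+\smallo(1),
\]
which forces every individual upper bound to be tight. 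Your route instead shows directly, for each fixed gate edge, that $\xExp_y[N_{xy}(T_B)]=\effR{x}{B}\,W_{x,B}(y)\,c(x,y)=\smallo(1)$ via reciprocity, the ultra-metric identity $\Psi(x,B)\asymp\Psi(a,B)$, and $W_{x,B}(y)=\smallo(1)$. This is a bit more work (you need $\effR{x}{B}$ and $W_{x,B}$ in addition to the quantities anchored at $a$), but it yields the extra information that each gate edge is crossed at most once with high probability, without appealing to the other gate edges. The paper's summation trick is slicker but less local.

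One small wording issue: you say $N_{xy}(T_B)$ counts transitions ``strictly before $T_B$'' and then assert $\{T_{xy}\le T_B\}\subseteq\{N_{xy}(T_B)\ge 1\}$. These are consistent only when $y\notin B$ (otherwise $T_{xy}=T_B$ is possible with no transition strictly before $T_B$). Since you already isolate the case $y\in B$ in~(ii), and since in~(iii) the identity $\xExp_a[N_{xy}(T_B)]=\xPr_a(T_{xy}\le T_B)\bigl(1+\xExp_y[N_{xy}(T_B)]\bigr)$ with $\xExp_y[N_{xy}(T_B)]=0$ for $y\in B$ still gives the right answer, this is harmless---but it would be cleaner to count transitions at times $n\le T_B$ throughout.
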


%%%%%%%%%%%%% SECTION 4 %%%%%%%%%%%%%%%%%%%%%%%

\section{Hard-core dynamics on bipartite graphs}
\label{sec:hardcorebipartite}

In this section, we apply the results in Sections~\ref{sec:mc:reversible}--\ref{sec:mc:family} 
to describe the metastable behaviour of the hard-core process on bipartite graphs. We use 
the setting of Section~\ref{sec:intro}.

After some preparatory observations (Section~\ref{sec:preparatory}), we start by listing a few 
`simple examples' for which the above tasks can be carried out via simple inspection 
(Section~\ref{sec:hard-core:examples}). For more `sophisticated examples' the problem of 
identifying the critical resistance and the critical gate lead to a (non-standard) combinatorial 
isoperimetric problem (Section~\ref{sec:hard-core:isoperimetric}). One advantage of working with 
bipartite graphs is a natural ordering on the configuration space (Section~\ref{sec:hard-core:ordering}). 
We exploit this ordering to identify the critical resistance (Section~\ref{sec:hard-core:progressions}) 
and to prove the absence of trap states (Section~\ref{sec:hard-core:no-trap}) under certain 
assumptions on the solutions of the isoperimetric problem. After that we are ready to give the 
proof of Theorems~\ref{thm:mean-crossover:magnitude}--\ref{thm:crossover:sharp} 
(Section~\ref{sec:main-theorems:proof}). The identification of the critical gate requires a 
detailed combinatorial analysis of the configurations close to the critical droplet 
(Section~\ref{sec:hard-core:critical-gate}). We illustrate the results with four more 
`sophisticated examples', the hard-core model and the Widom-Rowlinson model on a torus, 
on a hypercube and on tree-like graphs (Section~\ref{sec:main-examples:proof}).

%%%

\subsection{Preparatory observations}
\label{sec:preparatory}

Recall that the underlying bipartite graph has two parts $U$ and $V$. Particles are added 
to or removed from each site independently with constant rates and subject to the exclusion 
constraints prescribed by the graph. The rates of adding particles to empty sites in $U$ and 
$V$ are $\lambda$ and $\bar{\lambda}$, respectively, and the rate of removing a particle 
from a site is $1$. We assume that $\bar{\lambda}=\varphi(\lambda)=\lambda^{1+\alpha
+\smallo(1)}$ as $\lambda\to\infty$, where $0<\alpha<1$. We write $u$ and $v$ to denote
the fully-packed configurations with particles at every site of $U$ and $V$, respectively.

We let $K$ be the transition kernel of the discrete-time version of the Markov chain, and 
$\gamma=(1+\lambda)\abs{U} + (1+\bar{\lambda})\abs{V}$ the Poisson rate for the 
continuous-time Markov chain. The stationary distribution of the Markov chain is 
\begin{align}
\pi(x) &= \frac{1}{Z}\lambda^{\abs{x_U}}\bar{\lambda}^{\abs{x_V}},
\end{align}
where $x_U=x\cap U$ and $x_V=x\cap V$ are the restrictions of the configuration $x$
to $U$ and $V$, respectively, and $Z$ is the normalising constant. This has the asymptotic 
form
\begin{align}
\pi(x) &= \frac{1}{Z}\lambda^{-H(x)+\smallo(1)}\qquad\text{as $\lambda\to\infty$,}
\end{align}
where $H(x)\isdef-\abs{x_U}-(1+\alpha)\abs{x_V}$ is the height or energy of configuration $x$.
The conductance between two configurations $x,y\in\spX$ is given by
\begin{align}
\label{eq:hard-core:conductance}
c(x,y) 
&= \frac{1}{\gamma}\max\{\pi(x),\pi(y)\} 
= \frac{1}{\gamma Z}\lambda^{-\min\{H(x),H(y)\}+\smallo(1)}
\end{align}
when $x$ and $y$ differ at a single site, and $0$ otherwise.

A transition between two distinct configurations $x$ to $y$ occurs by adding or removing 
a particle. We denote a transition corresponding to adding a particle by $x\xadd[V]y$ or 
$x\xadd[U]y$, depending on whether the particle is added to $V$ or to $U$. If we do not 
want to emphasise where the new particle is placed, then we simply write $x\xadd y$.
Transitions corresponding to removing a particle are denoted accordingly by $x\xremove[V]y$, 
$x\xremove[U]y$ or $x\xremove y$.

In the asymptotic regime $\lambda\to\infty$, the configuration $v$ is a stable state, in the 
sense that it is recurrent on any time scale (see Section~\ref{sec:metastability:formulation}),
as long as $\abs{U}<(1+\alpha)\abs{V}$. Once the chain reaches the state $v$, it spends 
an overwhelming portion of its time at $v$. In particular, all the other states are transient 
on every time scale larger than $\sup_{x\neq v}\xExp_x[T_v]$. Among the other states, 
we expect $u$ to be the most stable. Our aim is to describe the transition from $u$ to $v$,
at least for some characteristic choices of the underlying graph. To this end, we
\begin{enumerate}[label=(\roman*)]
\item
identify $\Psi\big(u,J(u)\big)$, the critical resistance between $u$ and $J(u)$,
\item 
verify that the Markov chain has no \emph{trap} state, i.e., every configuration $x\notin\{u,v\}$ 
satisfies $\pi(x)\Psi\big(x,J^-(x)\big)\prec\pi(u)\Psi\big(u,J(u)\big)$ as $\lambda\to\infty$,
\item 
identify a critical gate between $u$ and $J(u)$.
\end{enumerate}
Item (ii), together with Corollary~\ref{cor:escape:exponential}, shows the exponentiality of the 
distribution of the transition time from $u$ to $v$ on the time scale $\pi(u)\Psi(u,v)$. 
Items (i--iii), together with Corollary~\ref{cor:escape:mean:cascade} and 
Propositions~\ref{prop:effective-conductance:critical-gate}--\ref{prop:critical-gate:abstract:choice},
lead to a sharp asymptotic estimate for the expected transition time and the identification 
of the shape of the critical droplets.

%%%

\subsection{Simple examples}
\label{sec:hard-core:examples}

\begin{example}[{\bf Complete bipartite graph}]
\label{exp:hard-core:complete-bipartite}
The most pronounced example of metastability of the hard-core process occurs when 
the underlying graph is a \emph{complete} bipartite graph $K_{m,n}$, i.e., $\abs{U}=m$ 
and $\abs{V}=n$, and every site in $U$ is connected by an edge to every site in $V$ 
(Fig.~\ref{fig:graph:complete-bipartite}). The configuration space is $\spX=2^U\cup 2^V$.
We assume that $m\leq(1+\alpha)n$ to make sure that the configuration $v$ is a stable 
state, in particular, $v\in J(u)$. Note that every path from $u$ to $v$ has a transition 
from a configuration with a single particle on $U$ and no particle on $V$ to the empty 
configuration $\varnothing$. Such a transition has the largest resistance $\frac{\gamma}
{\lambda\pi(\varnothing)}=\gamma Z\lambda^{-1}$. Therefore the critical resistance between 
$u$ and $v$ is $\Psi(u,v)=\frac{\gamma}{\lambda\pi(\varnothing)}$. On the other hand, 
from any other configuration $x\notin\{u,v\}$ it is possible to add a new particle, which 
means that $\Psi(x,J^-(x))\preceq\frac{\gamma}{\lambda\pi(x)}$. Therefore
\begin{align}
\pi(x)\Psi(x,J^-(x))\preceq\gamma\lambda^{-1} 
&\prec \gamma\lambda^{-1}\frac{\pi(u)}{\pi(\varnothing)} = \pi(u)\Psi(u,v),
\end{align}
i.e., the chain has no trap. In particular,
\begin{align}
\xExp_u[T_v] &= \pi(u)\effR{u}{v} [1 + \smallo(1)] \qquad\text{as $\lambda\to\infty$}
\end{align}
(Corollary~\ref{cor:escape:mean:cascade}) with an asymptotic exponential law for $T_v$ and 
its continuous-time version $\hat{T}_V$ (Corollary~\ref{cor:escape:exponential}), and rapid 
transition from $u$ to $v$ (Corollary~\ref{cor:transition:avalanche}).
	
The effective resistance can now be accurately estimated by identifying the critical gate 
between $u$ and $v$, but for the sake of exposition, let us estimate it by direct calculation. 
This is possible because of the high degree of symmetry in the graph. Let $W$ be the 
voltage when $u$ is connected to a unit voltage source and $v$ is connected to the ground.
By symmetry, all the configurations with $i\neq 0$ particles on $U$ have the same voltage.
Therefore, by the short-circuit principle, we can identify them with a single node, which we 
call $\tbinom{U}{i}$. Similarly, we can contract all the configurations with $j\neq 0$ particles 
on $V$ with a single node $\tbinom{V}{j}$. We then obtain a new network with nodes
\begin{align}
\left\{\tbinom{U}{m}, \tbinom{U}{l-1}, \ldots, \tbinom{U}{1}, \varnothing,
\tbinom{V}{1}, \tbinom{V}{2}, \ldots, \tbinom{V}{n}\right\},
\end{align}
where $\tbinom{U}{i}$ is connected to $\tbinom{U}{i-1}$ by a resistor with conductance
\begin{align}
c^*(\tbinom{U}{i},\tbinom{U}{i-1}) 
&= \mathop{\sum_{x\in \tbinom{U}{i}}\sum_{y\in\tbinom{U}{i-1}}}_{y\link x} c(x,y)
= i\,\binom{m}{i} \frac{\lambda^i}{Z\,\gamma},
\end{align}
and, similarly, $\tbinom{V}{j}$ is connected to $\tbinom{V}{j-1}$ by a resistor with conductance
\begin{align}
c^*(\tbinom{V}{j},\tbinom{V}{j-1}) 
&= j\,\binom{n}{j} \frac{\bar{\lambda}^j}{Z\,\gamma}.
\end{align}
We now have, by the series law, 	
\begin{align}
\effR{u}{v} = \effR[*]{\tbinom{U}{l}}{\tbinom{V}{m}} 
&= \sum_{i=1}^m \frac{Z\,\gamma}{i\,\binom{m}{i}\lambda^i} 
+ \sum_{j=1}^n \frac{Z\,\gamma}{j\,\binom{n}{j}\bar{\lambda}^j}.
\end{align}
As $\lambda\to\infty$, the dominant term is $i=1$ (corresponding to removal of the last particle 
from $U$).  Hence,
\begin{align}
\effR{u}{v} &= \frac{Z\,\gamma}{m\,\lambda}[1+\smallo(1)].
\end{align}
	
Alternatively, it is easy to see that if we let $Q$ be the set of all configurations that have a 
single particle on $U$ and $Q^*\isdef\{\varnothing\}$, then $[Q,Q^*]$ is a critical gate between 
$u$ and $v$, and we obtain (Proposition~\ref{prop:effective-conductance:critical-gate}) that
\begin{align}
\effC{u}{v} &= c(Q,Q^*)[1+\smallo(1)] = m \frac{\lambda}{\gamma\, Z}[1+\smallo(1)].
\end{align}

In conclusion,	
\begin{align}
\xExp_u[T_v] &= \frac{1}{m}\gamma\,\lambda^{m-1} [1+\smallo(1)]
\qquad\text{as $\lambda\to\infty$,}
\end{align}
for the hitting time in the discrete-time setting and $\xExp_u[\hat{T}_v] = \frac{1}{m}\lambda^{m-1}
[1+\smallo(1)]$ for the hitting time in the continuous-time setting. Furthermore, we know that the 
trajectory from $u$ to $v$ almost surely involves a transition through exactly one of the $m$ 
transitions $Q\to Q^*$, each occurring with probability $1/m$ 
(Proposition~\ref{prop:critical-gate:abstract:choice}).
\hfill\exampleqed
\end{example}

%%%%%%%%%%%%%%%%%%%%%%%%%%%%%%%%%%%%%
\begin{figure}[htbp]
\centering
\begin{subfigure}[b]{0.3\textwidth}
\centering
{
\tikzsetfigurename{graph_completebipartite}
\begin{tikzpicture}[scale=0.8,baseline]
\bipartitecomplete
\end{tikzpicture}
}
\caption{A complete bipartite graph}
\label{fig:graph:complete-bipartite}
\end{subfigure}
\begin{subfigure}[b]{0.3\textwidth}
\centering
{
\tikzsetfigurename{graph_evencycle}
\begin{tikzpicture}[scale=1,baseline] 
\evencycle
\end{tikzpicture}
}
\caption{An even cycle}
\label{fig:graph:even-cycle}
\end{subfigure}
\begin{minipage}[b]{0.3\textwidth}
\centering
\begin{subfigure}[b]{\textwidth}
\centering
{
\tikzsetfigurename{graph_oddpath}
\begin{tikzpicture}[scale=0.7,baseline]
\oddpath
\end{tikzpicture}
}
\caption{A path with odd length}
\label{fig:graph:odd-path}
\end{subfigure}
\smallskip

\begin{subfigure}[b]{\textwidth}
\centering
{
\tikzsetfigurename{graph_evenpath}
\begin{tikzpicture}[scale=0.6,baseline] 
\evenpath
\end{tikzpicture}
}
\caption{A path with even length}
\label{fig:graph:even-path}
\end{subfigure}
\end{minipage}
\caption{Some examples of bipartite graphs.}
\label{fig:graph:bipartite:examples}
\end{figure}
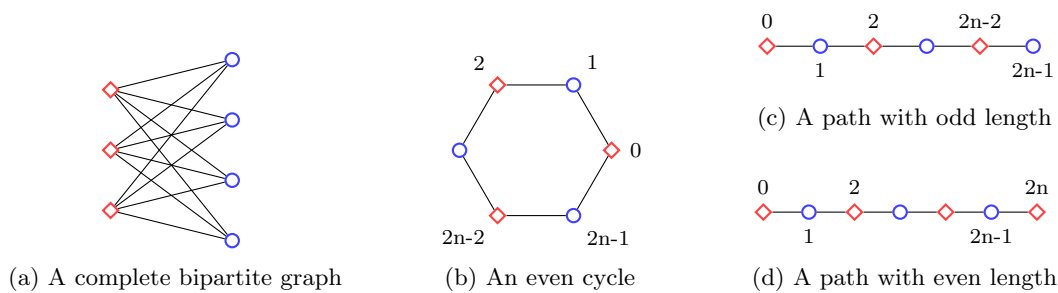
%%%%%%%%%%%%%%%%%%%%%%%%%%%%%%%%%%%%%%%%%%%%%

\begin{example}[{\bf Even cycle}]
\label{exp:hard-core:cycle}
Suppose that the underlying graph is an even cycle $\ZZ_{2n}$ (Fig.~\ref{fig:graph:even-cycle})
with $U=\{0,2,\ldots,2n-2\}$ and $V=\{1,3,\ldots,2n-1\}$. The critical transition when going from 
$u$ to $v$ in an optimal path is between a configuration with a single particle missing from a 
site in $U$ and a configuration with two particles missing from two consecutive sites in $U$.
After that, the Markov chain can go ``downhill'' by adding a particle to the freed site in $V$ and 
continue alternating between moves $\symb{-U}$ and $\symb{+V}$ until the stable configuration 
$v$ is reached. Thus, if $Q$ is the set of configurations with a particle missing from a single site 
in $U$ and $Q^*$ is the set of configurations with particles missing from two consecutive sites 
in $U$, the critical gate is $[Q,Q^*]$.  Assuming that there is no trap state (i.e., $\pi(x)\Psi(x,J^-(x))
\prec\pi(u)\Psi(u,v)$ for all $x\in J(u)\setminus\{v\}$), we find
\begin{align}
\effR{u}{v} &= \frac{1}{2n} \gamma Z \lambda^{-(n-1)}[1+\smallo(1)]
\qquad\text{as $\lambda\to\infty$,}
\end{align}
which gives
\begin{align}
\xExp_u[T_v] 
&= \frac{1}{2n}\gamma\lambda[1+\smallo(1)],\\
\xExp_u[\hat{T}_v] 
&= \frac{1}{2n}\lambda[1+\smallo(1)],
\nonumber
\end{align}
in the discrete-time and continuous-time setting, respectively. The hitting times $T_v$ and 
$\hat{T}_v$ are again asymptotically exponentially distributed, and the Markov chain undergoes 
a rapid transition when going from $u$ to $v$. Furthermore, the chain goes almost surely 
through exactly one of the critical transitions $Q\to Q^*$ when going from $u$ to $v$, each 
chosen with probability $\frac{1}{2n}$.
	
To see that the chain has no trap, we note that any configuration in $J(u)$ must have at least 
one particle on $V$. Thus from a configuration $x\in J(u)\setminus\{v\}$,
it is either possible to add a new particle on 
$V$ or first remove a particle from $U$ and then add a new particle on $V$, so that $\pi(x)
\Psi(x,J^-(x))\preceq\gamma$ as $\lambda\to\infty$.
\hfill\exampleqed
\end{example}

%\begin{example}[{\bf Path with odd length}]
%\label{exp:hard-core:path:odd-length}
%Consider a path with odd length (Fig.~\ref{fig:graph:odd-path}), and let $U=\{0,2,\ldots,2n-2\}$ 
%and $V=\{1,3,\ldots,2n-1\}$. Any optimal path between $u$ and $v$ involves the unique 
%critical transition from configuration $q\isdef u$ to the configuration $q^*$ obtained by 
%removing a particle from site $2n-2$. After that, the chain can continue ``downhill'' by
%alternating between moves $\symb{-U}$ and $\symb{+V}$ until it hits the stable state $v$.
%The fact that there is no trap is easy to verify. Straightforward calculation leads to
%\begin{align}
%\effR{u}{v} &= \gamma Z \lambda^{-n}[1+\smallo(1)]
%\end{align}
%and
%\begin{align}
%\xExp_u[T_v] 
%&= \gamma [1+\smallo(1)],\nonumber\\
%\xExp_u[\hat{T}_v] 
%&= 1 + \smallo(1),
%\end{align}
%in the discrete-time and continuous-time setting, respectively. Despite the lack of a real 
%metastable phenomenon (especially in the continuous-time setting, where the metastable 
%transition occurs on time scale $1$), the scaled hitting times $T_v$ and $\hat{T}_v$ both 
%converge to exponential random variables. This is because when $\gamma$ is large, the 
%hitting time $\hat{T}_v$ is close to the hitting time $\hat{T}_{q^*}$, which is essentially the 
%same as the first time the death Poisson clock $\xi^{\death}_{2n-2}$ ticks.
%\hfill\exampleqed
%\end{example}

\begin{example}[{\bf Path with odd length}]
\label{exp:hard-core:path:odd-length}
Consider a path with odd length (Fig.~\ref{fig:graph:odd-path}), and let $U=\{0,2,\ldots,2n-2\}$ 
and $V=\{1,3,\ldots,2n-1\}$. Despite its simplicity, this example illustrates a phenomenon
that is not present in the other examples considered in this paper.
Namely, in this example the condition of absence of traps is not satisfied.
As a result, the scaled crossover time from~$u$ to~$v$ does not converge to an exponential random variable
but to the sum of $n$ independent exponential random variables.

Indeed, consider the continuous-time process and assume that $\lambda$ is very large.
Starting from $u$, it takes a rate~$1$ exponential time for each particle on $U$ to be removed.
Once a particle is removed, it is quickly replaced by another particle in a time that is $\smallo(1)$
so that at an overwhelming majority of the times the system is at a maximally packed configuration.
If the particle is removed from any site other than $2n-2$, the new particle arrives necessarily
at the same position, while if the particle is removed from site $2n-2$, the replacing particle
arrives with probability $1-\smallo(1)$ at site $2n-1$.
In the next stage, after a time with approximate exponential distribution,
a particle is removed from site $2n-4$ and is replaced with a particle at site $2n-3$.
In the same fashion, after $n$ such replacements, the Markov chain arrives at configuration $v$.
Thus, in the limit $\lambda\to\infty$, the crossover time $\hat{T}_v$ starting from $u$
becomes a sum of $n$ independent exponential random variables each with rate $1$.

Let us sketch how this can be made precise using the machinery of
the previous sections.
%Let $q\isdef u$ and denote by $q^*$ the configuration obtained from $q$
%by removing a particle from site~$2n-2$.
For $k\in\{0,\ldots,n-1\}$, let $q_k$ denote the configuration with particles
on $\{2i: i<2(n-k)\}\cup\{2i+1: i\geq 2(n-k)\}$, and let $q^*_k$ be the configuration
obtained from $q_k$ by removing a particle from $2(n-k-1)$.
Observe that $q_0=u$ and set $q_n\isdef v$.
One can verify that $\Psi\big(q_k,J(q_k)\big)=r(q_k,q^*_k)=\gamma/\pi(q_k)$
and that $(\{q_k\},\{q^*_k\})$ is a critical pair between $q_k$ and $J(q_k)$.
Therefore, Corollary~\ref{cor:escape:mean:cascade} and Proposition~\ref{prop:effective-conductance:critical-gate}
imply that $\xExp_{q_k}[T_{J(q_k)}]=\gamma[1+\smallo(1)]$ and Corollary~\ref{cor:escape:exponential}
shows that starting from $q_k$, the hitting time $T_{J(q_k)}/\gamma$ is asymptotically exponentially distributed
with rate~$1$.
Proposition~\ref{prop:critical-gate:abstract:choice} and the fact that
$K\big(q^*_k,q_{k+1}\big)=\bar{\lambda}/\gamma=1-\smallo(1)$ imply that
$\xPr_{q_k}(T_{J(q_k)}=T_{q_{k+1}})=1-\smallo(1)$.
It follows that as $\lambda\to\infty$, the scaled crossover time $T_{q_n}/\gamma$
converges in distribution to a sum of $n$ independent exponential random variables with rate~$1$
corresponding to the segments $T_{q_{k+1}}-T_{q_k}$.
\hfill\exampleqed
\end{example}

\begin{example}[{\bf Path with even length and even endpoints}]
\label{exp:hard-core:path:even-length}
The hard-core process on a path with \emph{even} length (Fig.~\ref{fig:graph:even-path})
has quite a different behaviour. Let $U=\{0,2,\ldots,2n\}$ and $V=\{1,3,\ldots,2n-1\}$, so 
both endpoints of the path belong to $U$. In this case, the trajectory from $u$ to $v$ is 
closer to the hard-core model on an even cycle (Example~\ref{exp:hard-core:cycle}).
We similarly find that
\begin{align}
\xExp_u[\hat{T}_v] 
&= \frac{1}{2n}\lambda[1+\smallo(1)]
\qquad\text{as $\lambda\to\infty$,}
\end{align}
with an asymptotic exponential law for $\hat{T}_v$.
\hfill\exampleqed
\end{example}

\begin{example}[{\bf Even cyclic ladder}]
\label{exp:hard-core:ladder}
Let the underlying graph be the cyclic ladder $\ZZ_{2n}\times\ZZ_2$ 
(Fig.~\ref{fig:graph:cyclic-ladder}) with $U\isdef\{(i,j)\colon\,i+j=0\pmod{2}\}$ and 
$V\isdef\{(i,j)\colon\, i+j=1\pmod{2}\}$. Every site in the graph has three neighbours.
Let $Q$ be the set of configurations that are obtained from $u$ by removing two 
particles from the neighbourhood of a site $k\in V$, and $Q^*$ the set of configurations 
that are obtained from $u$ by removing three particles from the neighbourhood of a 
site $k\in V$. We may verify that $[Q,Q^*]$ is a critical gate, and that the Markov chain 
has no trap. There are $6n$ possible transitions $Q\to Q^*$, each having resistance 
$\gamma Z \lambda^{-(n-2)}$. It follows that state $u$ undergoes a metastability 
transition with
\begin{align}
\xExp_u[\hat{T}_v] 
&= \frac{1}{6n}\lambda^2[1+\smallo(1)]
\qquad\text{as $\lambda\to\infty$,}
\end{align}
and from $u$ the distribution of $\hat{T}_v/\xExp_u[\hat{T}_v]$ converges to an exponential 
random variable with unit rate. Furthermore, the transition occurs within a relatively 
shorter period and goes (almost surely) through exactly one of the moves $Q\to Q^*$,
each with probability $\frac{1}{6n}$.
\hfill\exampleqed
\end{example}

%%%%%%%%%%%%%%%%%%%%%%%%%%%%%%%%%%%%%%%
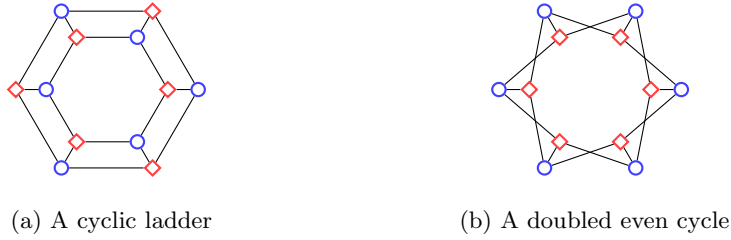
\begin{figure}[htbp]
\centering
\begin{subfigure}[b]{0.4\textwidth}
\centering
{
\tikzsetfigurename{graph_cyclicladder}
\begin{tikzpicture}[scale=0.8,baseline]
\cyclicladder
\end{tikzpicture}
}
\caption{A cyclic ladder}
\label{fig:graph:cyclic-ladder}
\end{subfigure}
\begin{subfigure}[b]{0.4\textwidth}
\centering
{
\tikzsetfigurename{graph_evencycle}
\begin{tikzpicture}[scale=0.8,baseline]
\doubledevencycle
\end{tikzpicture}
}
\caption{A doubled even cycle}
\label{fig:graph:even-cycle:doubled}
\end{subfigure}
\caption{A doubled even cycle is isomorphic to a cyclic ladder.}
\label{fig:graph:bipartite:ladder}
\end{figure}
%%%%%%%%%%%%%%%%%%%%%%%%%%%%%%%%%%%%%%%%

\begin{example}[{\bf Widom-Rowlinson on an even cycle}]
\label{exp:widom-rowlinson:cycle}
As discussed earlier, the Widom-Rowlinson model on a graph is equivalent to the 
hard-core model on the doubled version of that graph. This example reduces to 
Example~\ref{exp:hard-core:ladder} after we note that the doubled graph of a cycle 
$\ZZ_{2n}$ is isomorphic to a cyclic ladder (Fig.~\ref{fig:graph:bipartite:ladder}).
\hfill\exampleqed
\end{example}

Note that in each of the above examples, the expected transition time $\xExp_u[\hat{T}_v]$
and the critical gate are independent of the parameter $\alpha$. This is not consistent with 
the physical intuition of a critical droplet as a point of balance between the cost of removing 
particles from $U$ and the gain of placing particles on $V$. Such physical intuition becomes
the key to identifying the critical gate when the underlying graph has a more geometric 
structure. We will keep as our guiding example an even torus $\ZZ_m\times\ZZ_n$.

%%%

\subsection{Sophisticated examples}
\label{sec:hard-core:isoperimetric}

The problem of identifying the critical gate between $u$ and $v$ (or $u$ and $J(u)$)
gives rise to a combinatorial isoperimetric problem. The reason for the appearance of 
an isoperimetric problem can be intuitively understood as follows. When $\lambda$ is 
large, the Markov chain tends to remain at configurations of particles that are close to 
maximal packing arrangements. Whenever one or more particles disappear from the 
graph, other particles quickly replace them, though potentially on different sites. Since 
the disappearance of particles is a much slower process, the typical trajectories tend 
to go through configurations that require the removal of the least possible number of 
particles. The system thus tends to make the transition from $u$ to $v$ by growing a 
droplet of closely-packed particles on $V$ in such a way as to require the removal 
of less particles from $U$. In particular, near the bottleneck between $u$ and $v$ (i.e., 
close to the largest necessary deviation), the system typically goes through maximal 
packing configurations that are as efficient as possible, playing the role of \emph{critical 
droplet}. Near the bottleneck, the system solves the optimisation problem of maximal 
packing with a constraint on the number of particles on $V$, i.e., the size of the critical 
droplet.

Let us therefore define
\begin{align}
\Delta(x) &\isdef \abs{U\setminus x_U} - \abs{x_V} = \abs{U} - \abs{x}
&& \text{for $x\in\spX$,}\nonumber\\
\Delta(A) &\isdef \abs{N(A)}-\abs{A}
&& \text{for $A\subseteq V$,}\nonumber\\
\Delta(s) &\isdef \inf\{\Delta(A)\colon\, \text{$A\subseteq V$ and $\abs{A}=s$}\}\nonumber\\
&= \inf\{\Delta(x)\colon\,\text{$x\in\spX$ and $\abs{x_V}=s$}\}
&& \text{for $s\in\NN$.}
\end{align}
Note that the stationary probability of a configuration $x\in\spX$ with $s\isdef \abs{x_V}$ 
can be written as
\begin{align}
\pi(x) &= \pi(u)\frac{\bar{\lambda}^{\abs{x_V}}}{\lambda^{\abs{U\setminus x_U}}}
= \pi(u)\bar{\lambda}^s\lambda^{-s-\Delta(x)},
\end{align}
which is bounded from above by
\begin{align}
\pi(u)\bar{\lambda}^s\lambda^{-s-\Delta(s)} 
&= \pi(u)\lambda^{- \Delta(s) + \alpha s + \smallo(1)}
\end{align}
as $\lambda\to\infty$. We call $\Delta(A)$ and $\Delta(x)$ the \emph{isoperimetric cost} of 
$A$ and $x$. The \emph{(bipartite) isoperimetric problem} asks for the sets $A$ of fixed 
cardinality that minimise the cost $\Delta(A)$. We say that $A$ is (isoperimetrically) 
\emph{optimal} if $\Delta(A)=\Delta(\abs{A})$. More generally, we say that $A$ is 
\emph{$\varepsilon$-optimal} when $\Delta(A)\leq\Delta(\abs{A})+\varepsilon$. Similarly, 
we call a configuration $x$ \emph{$\varepsilon$-optimal} when $\Delta(x)\leq\Delta(\abs{x_V})
+\varepsilon$.

Let us also introduce some terminology to describe evolutions of subsets of $V$. A sequence 
of subsets $A_0,A_1,\ldots,A_n\subseteq V$ is called a \emph{progression} from $A_0$ to 
$A_n$ if $\abs{A_i\triangle A_{i+1}}=1$ for each $0\leq i<n$. A progression $A_0,A_1,\ldots,A_n$ 
is \emph{nested} if $A_0\subseteq A_1\subseteq\cdots\subseteq A_n$ and \emph{isoperimetric} 
if $A_i$ is isoperimetrically optimal for each $0\leq i\leq n$. A nested isoperimetric progression 
from $A_0=\varnothing$ to $A_n$ is associated with a sequence $a_1,a_2,\ldots,a_n$ of distinct 
elements in $V$ with $A_k\isdef\{a_1,a_2,\ldots,a_k\}$. We call such a sequence an \emph{isoperimetric 
numbering} of (some) elements of $V$.

The relevance of the isoperimetric problem will be further clarified in the following sections.
For now, we mention four non-trivial examples of graphs for which we know (partial) solutions 
for the isoperimetric problem.

\begin{example}[{\bf Even torus}]
\label{exp:isoperimetric:torus}
Rather than the isoperimetric problem on the torus $\ZZ_m\times\ZZ_n$, we describe the 
solutions of the isoperimetric problem on the infinite lattice $\ZZ\times\ZZ$. These solutions 
would be valid for the torus as long as the sets that we are considering are small enough that 
they cannot wrap around the torus. The solutions are obtained via reduction to the standard 
edge isoperimetric problem whose solutions are well known~\cite{HarHar76,AloCer96}.
The argument for the reduction is given in Section~\ref{sec:isoperimetric:torus}.
	
The lattice $\ZZ\times\ZZ$ with the nearest neighbour edges is bipartite with $U=\{(a,b)\colon\, 
a+b=0\pmod{2}\}$ and $V=\{(a,b)\colon\, a+b=1\pmod{2}\}$. The isoperimetric function 
$s\mapsto\Delta(s)$ on $\ZZ\times\ZZ$ is given by
\begin{align}
\label{eq:hard-core:isoperimetric:case-1}
\Delta(\ell^2 + i) &= 2(\ell+1)
&& \text{for $\ell>0$ and $0< i\leq \ell$,}\\
\label{eq:hard-core:isoperimetric:case-2}
\Delta(\ell(\ell + 1) + j) &= 2(\ell+1) + 1
&& \text{for $\ell\geq 0$ and $0< j\leq \ell+1$,}
\end{align}
and $\Delta(0)=0$,
which can also be written in a concise algebraic form
\begin{align}
\Delta(s) &= \left\lceil 2\sqrt{s}\right\rceil+1
\end{align}
for $s>0$.
The optimal sets $A$ realising $\Delta(\abs{A})$ are the following:
\begin{itemize}
\item 
A set $A\subseteq V$ with $\abs{A}=\ell^2$ is optimal if and only if it consists of a tilted 
square of size $\ell$ (see Fig.~\ref{fig:isoperimetric:lattice:square},
Eq.~\eqref{eq:hard-core:isoperimetric:case-2} and Sec.~\ref{sec:isoperimetric:torus}).
\item 
A set $A\subseteq V$ with $\abs{A}=\ell^2 + i$ with $0< i\leq \ell$ is optimal if and only if 
it consists of a tilted square of size $\ell$ plus a row of $i$ elements along one of the four 
sides of the square (see Fig.~\ref{fig:isoperimetric:lattice:square-plus}, 
Eq.~\eqref{eq:hard-core:isoperimetric:case-1} and Sec.~\ref{sec:isoperimetric:torus}).
\item 
A set $A\subseteq V$ with $\abs{A}=\ell(\ell+1)+j$ with $0< j\leq \ell$ is optimal if and only if 
it consists of a tilted $\ell\times(\ell+1)$ rectangle plus a row of $j$ elements along one of 
the four sides of the rectangle (see Fig.~\ref{fig:isoperimetric:lattice:quasi-square}, 
Eq.~\eqref{eq:hard-core:isoperimetric:case-2} and Sec.~\ref{sec:isoperimetric:torus}).
\end{itemize}

We point out that some of the optimal sets described above can be generated by suitable 
isoperimetric numberings. Indeed, if we number the elements of $V$ in an spiral fashion 
as in Fig.~\ref{fig:isoperimetric:lattice:numbering}, then every initial segment of this numbering 
is an optimal set. Note, however, that some optimal sets will not be captured by such a 
numbering. For instance, the example in Fig.~\ref{fig:isoperimetric:lattice:stuck} cannot be 
extended to an optimal set one element larger.
\hfill\exampleqed
\end{example}

%%%%%%%%%%%%%%%%%%%%%%%%%%%%%%%%%%%%%%%%%%
\begin{figure}[htbp]
\centering
\begin{subfigure}[b]{0.3\textwidth}
\centering
{
\tikzsetfigurename{lattice_isoperimetric_A}
\begin{tikzpicture}[baseline,scale=0.3,>=stealth,shorten >=1]
\DrawLatticeIsoperimetricA
\end{tikzpicture}
}
\caption{$\abs{A}=\ell^2$.}
\label{fig:isoperimetric:lattice:square}
\end{subfigure}
\begin{subfigure}[b]{0.3\textwidth}
\centering
{
\tikzsetfigurename{lattice_isoperimetric_B}
\begin{tikzpicture}[baseline,scale=0.3,>=stealth,shorten >=1]
\DrawLatticeIsoperimetricB
\end{tikzpicture}
}
\caption{$\abs{A}=\ell^2+i$.}
\label{fig:isoperimetric:lattice:square-plus}
\end{subfigure}
\begin{subfigure}[b]{0.3\textwidth}
\centering
{
\tikzsetfigurename{lattice_isoperimetric_C}
\begin{tikzpicture}[baseline,scale=0.3,>=stealth,shorten >=1]
\DrawLatticeIsoperimetricC
\end{tikzpicture}
}
\caption{$\abs{A}=\ell(\ell+1)+j$.}
\label{fig:isoperimetric:lattice:quasi-square}
\end{subfigure}
\caption{Solutions of the bipartite isoperimetric problem on the lattice/torus.}
\label{fig:isoperimetric:lattice}
\end{figure}
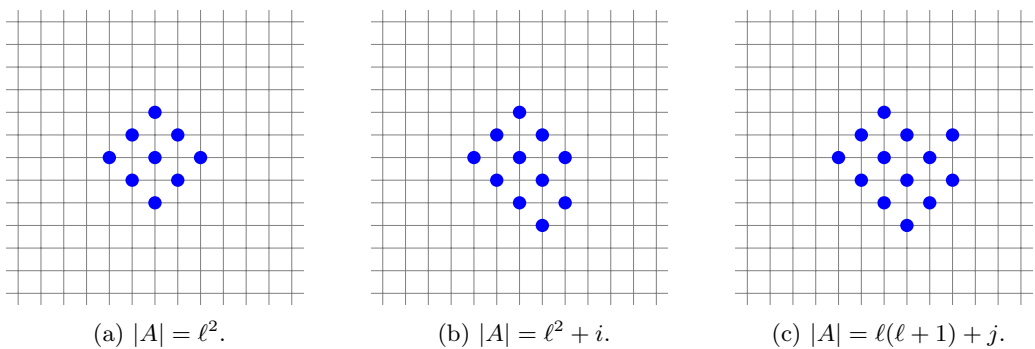
%%%%%%%%%%%%%%%%%%%%%%%%%%%%%%%%%%%%%%%

%%%%%%%%%%%%%%%%%%%%%%%%%%%%%%%%%%%%%%%
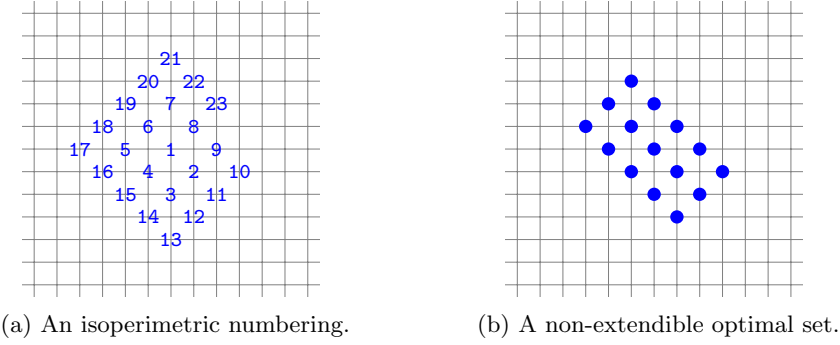
\begin{figure}[htbp]
\centering
\begin{subfigure}[b]{0.40\textwidth}
\centering
{
\tikzsetfigurename{lattice_isoperimetric_numbering}
\begin{tikzpicture}[baseline,scale=0.3,>=stealth,shorten >=1]
\DrawLatticeIsoperimetricNumbering
\end{tikzpicture}
}
\caption{An isoperimetric numbering.}
\label{fig:isoperimetric:lattice:numbering}
\end{subfigure}
\begin{subfigure}[b]{0.40\textwidth}
\centering
{
\tikzsetfigurename{lattice_isoperimetric_stuck}
\begin{tikzpicture}[baseline,scale=0.3,>=stealth,shorten >=1]
\DrawLatticeIsoperimetricStuck
\end{tikzpicture}
}
\caption{A non-extendible optimal set.}
\label{fig:isoperimetric:lattice:stuck}
\end{subfigure}
\caption{The bipartite isoperimetric problem on the lattice/torus via isoperimetric numberings.}
\label{fig:isoperimetric:lattice*}
\end{figure}
%%%%%%%%%%%%%%%%%%%%%%%%%%%%%%%%%%%%%%%%%%%%

\begin{example}[{\bf Doubled torus}]
\label{exp:isoperimetric:doubled-torus}	
As in the previous example, we concentrate on the infinite lattice $\ZZ\times\ZZ$ rather 
than the torus $\ZZ_m\times\ZZ_n$. The solutions for small cardinalities will coincide up 
to translations.
	
Consider the doubled lattice, which is a bipartite graph with parts $U\isdef \ZZ\times\ZZ
\times\{\red\}$ and $V\isdef \ZZ\times\ZZ\times\{\blue\}$. Note that the set of neighbours 
of a set $A\times\{\blue\}\subseteq V$ is $\big(A\cup N(A)\big)\times\{\red\}$, where 
$N(A)$ denotes the neighbourhood of $A$ in the original lattice. In particular, the bipartite 
isoperimetric cost of a set $A\times\{\blue\}$ is simply $\abs{N(A)\setminus A}$, which is 
the size of the \emph{vertex boundary} of $A$ in $\ZZ\times\ZZ$.  This is indeed the case 
for every doubled graph (Observation~\ref{obs:isoperimetric:doubled}). It follows that the 
bipartite isoperimetric problem on the doubled lattice is equivalent to the \emph{vertex 
isoperimetric problem} on the lattice.
	
The vertex isoperimetric problem on the lattice has been addressed by Wang and 
Wang~\cite{WanWan77}, who found optimal sets of every cardinality. Their solutions 
are given by an isoperimetric numbering that identifies an infinite nested family of 
optimal sets. Fig.~\ref{fig:isoperimetric:doubled-lattice:numbering} illustrates an 
isoperimetric numbering similar to but somewhat different from that of Wang and Wang.
	
The isoperimetric function $s\mapsto\Delta(s)$ on the doubled lattice can now be given by
\begin{align}
\label{eq:widom-rowlinson:isoperimetric}
\Delta(\ell^2+(\ell-1)^2+i) &= \begin{cases}
4\ell		        & \text{if $i=0$,} \\
4\ell+1		& \text{if $1\leq i<\ell$,} \\
4\ell+2		& \text{if $\ell\leq i<2\ell$,} \\
4\ell+3		& \text{if $2\ell\leq i<3\ell$,} \\
4\ell+4		& \text{if $3\ell\leq i<4\ell$.}
\end{cases}
\end{align}
and $\Delta(0)=0$.
Note that every positive integer can be written in a unique way as $\ell^2+(\ell-1)^2 + i$ with 
$\ell>0$ and $0\leq i<4\ell$.

Characterising all the optimal sets is more complicated. Vainsencher and Bruckstein~\cite{VaiBru08} 
have obtained a characterisation of the optimal sets with certain cardinalities, namely, those with 
$i\in\{0,\ell-1,2\ell-1,3\ell-1\}$ in~\eqref{eq:widom-rowlinson:isoperimetric}. A characterisation of the 
optimal sets of other cardinalities is still missing. See Section~\ref{sec:isoperimetric:doubled-torus}
for further details and some conjectures.
\hfill\exampleqed
\end{example}

%%%%%%%%%%%%%%%%%%%%%%%%%%%%%%%%%%%%%%%%%%%%
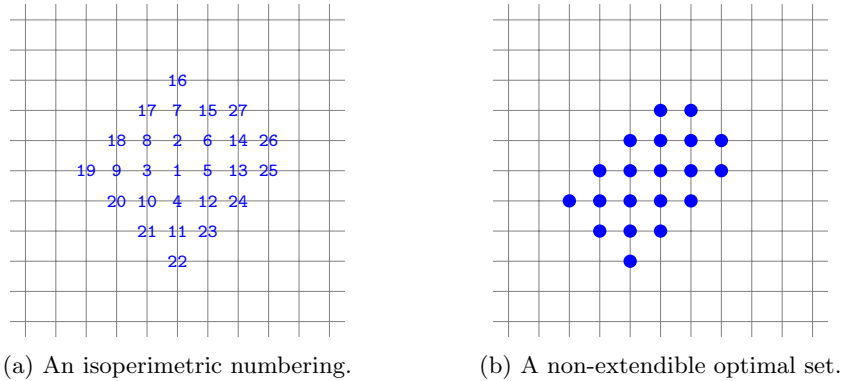
\begin{figure}[htbp]
	\centering
	\begin{subfigure}[b]{0.40\textwidth}
		\centering
		{%
		\tikzsetfigurename{doubled_lattice_isoperimetric_numbering}
		\begin{tikzpicture}[baseline,scale=0.4,>=stealth,shorten >=1]
			\DrawDoubledLatticeIsoperimetricNumbering
		\end{tikzpicture}
		}%
		\caption{An isoperimetric numbering.}
		\label{fig:isoperimetric:doubled-lattice:numbering}
	\end{subfigure}
	\begin{subfigure}[b]{0.40\textwidth}
		\centering
		{%
		\tikzsetfigurename{doubled_lattice_isoperimetric_stuck}
		\begin{tikzpicture}[baseline,scale=0.4,>=stealth,shorten >=1]
			\DrawDoubledLatticeIsoperimetricStuck
		\end{tikzpicture}
		}%
		\caption{A non-extendible optimal set.}
		\label{fig:isoperimetric:doubled-lattice:stuck}
	\end{subfigure}
	\caption{The isoperimetric problem on the doubled lattice/torus via isoperimetric numberings.}
	\label{fig:isoperimetric:doubled-lattice}
\end{figure}
%%%%%%%%%%%%%%%%%%%%%%%%%%%%%%%%%%%%%%%%%%%

\begin{example}[{\bf Tree-like regular graphs and their doubled graphs}]
\label{exp:isoperimetric:tree-like}	
Consider a $d$-regular graph $G$ in which every cycle has length at least $\ell$, where 
$d\geq 2$ and $\ell$ is large. Such a graph locally looks like a tree, in particular, every 
ball of radius $r<\ell/2$ in $G$ induces a tree.
	
First, suppose that $G$ is bipartite with two parts $U$ and $V$. If $G$ were an infinite 
$d$-regular tree, then every non-empty finite set $A\subseteq V$ would satisfy
$\abs{N(A)}\geq (d-1)\abs{A}+1$ with equality if and only if $A\cup N(A)$ is connected.
This follows by 
induction or by a double counting argument. The same holds for a finite tree-like regular 
graph as long as $\abs{A}<\ell/2$. In particular, $\Delta(s)=(d-2)s+1$ for $0<s<\ell/2$. Any 
sequence $a_1,a_2,\ldots,a_m$ with $m<\ell/2$, satisfying $N(a_i)\cap N(\{a_1,\ldots,a_{i-1}\})
\neq\varnothing$ for $1<i\leq m$, would make an isoperimetric numbering.
	
Next, let us consider the isoperimetric problem on the doubled graph $G^{[2]}$ with $U\isdef 
V(G)\times\{\red\}$ and $V\isdef V(G)\times\{\blue\}$. In this case, we can easily verify that 
every $\varnothing\neq\bar{A}\isdef A\times\{\blue\}\subseteq V$ with $\abs{A}<\ell-1$ satisfies
$\abs{N^{[2]}(\bar{A})}-\abs{\bar{A}}=\abs{N(A)\setminus A}\geq(d-2)\abs{A}+2$ with equality 
if and only if $A$ is connected in~$G$. In particular, $\Delta(s)=(d-2)s+2$ for $0<s<\ell-1$. An 
isoperimetric numbering of length $\ell-2$ is obtained by any sequence $(a_1,\blue),(a_2,\blue),
\ldots,(a_{\ell-2},\blue)\in V$ satisfying the condition that $a_i$ is connected to $\{a_1,\ldots,
a_{i-1}\}$ for each $1<i\leq\ell-2$.
\hfill\exampleqed
\end{example}

\begin{example}[{\bf Hypercube and doubled hypercube}]
\label{exp:isoperimetric:hypercube}
The \emph{$d$-dimensional hypercube} is a graph $H_d$ whose vertices are the binary 
words $w\in\{\symb{0},\symb{1}\}^d$ and in which two vertices $a$ and $b$ are connected 
by an edge if they disagree at exactly one coordinate, i.e., if their Hamming distance is $1$.
The bipartite isoperimetric problem on the doubled graph $H_d^{[2]}$ is equivalent to the 
vertex isoperimetric problem on $H_d$ (Observation~\ref{obs:isoperimetric:doubled}).
	
The hypercube $H_d$ itself is bipartite with $U\isdef\{w\colon\,\norm{w}=0\pmod{2}\}$ and 
$V\isdef\{w: \norm{w}=1\pmod{2}\}$, where $\norm{w}$ denotes the number of $\symb{1}$s 
in $w$. It is interesting to note that the doubled hypercube $H_d^{[2]}$ is isomorphic to the 
$(d+1)$-dimensional hypercube $H_{d+1}$ (Observation~\ref{obs:bipartite:doubled}).
Therefore, the solution of the vertex isoperimetric problem on hypercubes of arbitrary dimension 
also solves the bipartite isoperimetric problem on hypercubes. If $A\subseteq V(H_d)$ is an 
optimal set for the vertex isoperimetric problem on $H_d$, then the set $\hat{A}\isdef\{wa\colon\, 
\text{$w\in A$ and $\norm{wa}=1\pmod{2}$}\}$ is optimal for the bipartite isoperimetric problem 
on $H_{d+1}$ and vice versa.
	
For the vertex isoperimetric problem on $H_d$, Harper~\cite{Har66} provided an isoperimetric 
numbering of the entire graph (see also~Bezrukov~\cite{Bez94}, Harper~\cite{Har04}). This 
numbering is obtained by ordering the elements of $\{\symb{0},\symb{1}\}^d$ first according 
to the number of $\symb{1}$s, and then according to the reverse lexicographic order among the words 
with the same number of $\symb{1}$s. More specifically, the vertices of $H_d$ are numbered 
according to the total order $\unlhd$, where $w\unlhd w'$ when $\norm{w}<\norm{w'}$, or when
$\norm{w}=\norm{w'}$ and there is a $k\in\{1,2,\ldots,d\}$ such that $w_i=w'_i$ for $i<k$ 
and $w_k=\symb{1}$ and $w'_k=\symb{0}$. Bezurukov~\cite{Bez89} has obtained a characterisation 
of the optimal sets of some but not all cardinalities.
	
For every $0\leq r\leq d$, the \emph{Hamming balls}
\begin{align}
B^{(d)}_r(w) 
&\isdef \{w'\colon\, \text{$w$ and $w'$ disagree on at most $r$ coordinates}\}
\end{align}
around vertices $w\in\{\symb{0},\symb{1}\}^d$ are the optimal sets of cardinality $\sum_{i=0}^r
\binom{d}{i}$. In particular, we have $\Delta_{d+1}\big(\sum_{i=0}^r\binom {d}{i}\big)=\binom{d}{r+1}$,
where $\Delta_{d+1}$ denotes the bipartite isoperimetric cost in $H_{d+1}$, or equivalently, 
the vertex isoperimetric cost in $H_d$. In Section~\ref{sec:isoperimetric:hypercube}, we will 
derive a recursive expression for the value of $\Delta_{d+1}(s)$ for general $s$.
\hfill\exampleqed
\end{example}

%%%%%%%%%% SECTION 5 %%%%%%%%%%%%%%%%%%%%%%%%%%

\section{Further preparation for sophisticated examples}
\label{sec:furtherprep}

Before we proceed with the `sophisticated examples' of 
Section~\ref{sec:hard-core:isoperimetric}, we need some further preparation.
%collected in Sections~\ref{sec:hard-core:ordering}--\ref{sec:hard-core:critical-gate}.  

%%%

\subsection{Ordering and correlations}
\label{sec:hard-core:ordering}

An advantage of working with bipartite graphs is that the space of valid hard-core configurations 
on a bipartite graph admits a natural partial ordering. The transition kernel of the hard-core 
process is monotone with respect to this ordering and its unique stationary distribution is positively 
associated. Furthermore, two hard-core processes whose parameters satisfy appropriate 
inequalities can be coupled in such a way as to ensure that one always dominates the other.
This ordering has earlier been exploited in the equilibrium setting by van den Berg and 
Steif~\cite{BerSte94}.

For two configurations $x,y\in\spX$, we write $x\sqsubseteq y$ if $x_U\supseteq y_U$ and 
$x_V\subseteq y_V$. The relation $\sqsubseteq$ is a partial order and turns $\spX$ into a 
lattice. The supremum $x\lor y$ and infimum $x\land y$ of two configurations $x,y\in\spX$
are given by
\begin{alignat}{3}
(x\lor y)_V &\isdef x_V \cup y_V
&& \qquad \text{and} \qquad & (x\lor y)_U &\isdef x_U \cap y_U, \nonumber\\
(x\land y)_V &\isdef x_V \cap y_V
&& \qquad \text{and} \qquad & (x\land y)_U &\isdef x_U \cup y_U.
\end{alignat}

For every two finite sets $A,B$ we clearly have
\begin{align}
\abs{A\cup B} + \abs{A\cap B} &= \abs{A} + \abs{B}.
\end{align}
It follows that the stationary distribution of the hard-core process satisfies
\begin{align}
\label{eq:stationary:log-modular}
\pi(x\lor y)\pi(x\land y) &= \pi(x)\pi(y) \qquad\text{for all $x,y\in\spX$.}
\end{align}
By the theorem of Fortuin, Kasteleyn and Ginibre (see e.g.\ Grimmett~\cite[Section 4.2]{Gri10}),
the above condition guarantees that $\pi$ is positively associated, i.e., $\pi(A\cap B)\geq\pi(A)
\pi(B)$ for every two increasing events $A,B\subseteq\spX$. We will, however, use the condition in 
\eqref{eq:stationary:log-modular} directly.

The monotonicity of the transition kernel $K$ can be seen via a direct coupling: given two 
configurations $x,x'\in\spX$ where $x\sqsubseteq x'$, it is easy (e.g.\ via the construction 
described in Section~\ref{sec:intro:model}) to construct two copies of the Markov chain 
$\{X(n)\}_{n\in\NN}$ and $\{X'(n)\}_{n\in\NN}$ with $X(0)=x$ and $X'(0)=x'$ such that almost 
surely $X(n)\sqsubseteq X'(n)$ for all $n\in\NN$. 

Let us mention an extension of the latter observation that we will need in a follow-up paper.
Let $(\lambda_1,\bar{\lambda}_1)$ and $(\lambda_2,\bar{\lambda}_2)$ be two choices for 
the activity parameters of the sites in $U$ and $V$, and assume that $\lambda_1\geq\lambda_2$ 
and $\bar{\lambda}_1\leq\bar{\lambda}_2$. Given $x^{(1)},x^{(2)}\in\spX$ satisfying $x^{(1)}
\sqsubseteq x^{(2)}$, we can construct a coupling $\{(\hat{X}^{(1)}(t),\hat{X}^{(2)}(t))\}_{t\in
[0,\infty)}$ of the continuous-time hard-core processes with parameters $(\lambda_1,
\bar{\lambda}_1)$ and $(\lambda_2,\bar{\lambda}_2)$, respectively, in such a way that 
almost surely $\hat{X}^{(1)}(t)\sqsubseteq\hat{X}^{(2)}(t)$ for all $t\in[0,\infty)$. Namely, we 
use the same clocks $\xi^{\death}_k$ for the death of particles in both systems and we 
couple the birth clocks $\xi^{\birth,1}_k$ and $\xi^{\birth,2}_k$ used for $\hat{X}^{(1)}$ 
and $\hat{X}^{(2)}$ such that $\xi^{\birth,1}_k\supseteq\xi^{\birth,2}_k$ for $k\in U$ and 
$\xi^{\birth,1}_k\subseteq\xi^{\birth,2}_k$ for $k\in V$.

%%%

\subsection{Paths and progressions}
\label{sec:hard-core:progressions}

Heuristically, we expect the transition from $u$ to $v$ to happen through the formation 
and growth of a droplet of particles on $V$. Such a growth process can be described 
by a progression from $\varnothing$ to $V$.

Progressions correspond to paths in the configuration space $\spX$ in a natural way.
First, if $\omega\isdef\omega(0)\to\omega(1)\to\cdots\to\omega(n)$ is a path in $\spX$,
then the sequence $A_0,A_1,\ldots,A_m$ obtained after removing repetitions from 
$\omega_V(0),\omega_V(1),\ldots,\omega_V(n)$ is a progression. We call this progression 
the \emph{trace} of $\omega$ on~$V$. Conversely, given a progression $A_0,A_1,\ldots,A_m$, 
we can construct a path $\omega$ in the following fashion (see Fig.~\ref{fig:progression:path}).
The path $\omega$ consists of segments corresponding to transitions $A_{i-1}\to A_i$ for 
$i=1,2,\ldots,m$. At the beginning of the segment corresponding to $A_{i-1}\to A_i$,
the path is at the configuration with particles on $A_{i-1}$ and $U\setminus N(A_{i-1})$.
If $A_{i-1}\subsetneq A_i$, the path then proceeds by removing particles one by one
from the neighbours of the unique site $a_i\in A_i\setminus A_{i-1}$ and then placing a 
particle at $a_i$.  If $A_{i-1}\supsetneq A_i$, the path $\omega$ does the reverse: it first 
removes the particle that is on the unique site $a_i\in A_{i-1}\setminus A_i$ and then 
places particles on the neighbours of $a_i$, one after another. Observe that the trace of 
the path $\omega$ thus obtained is precisely the progression $A_0,A_1,\ldots,A_m$.
In particular, there are indices $0= k_0<k_1<\cdots<k_m= n$ such that $\omega_V(k_i)
=A_i$ and $\omega_U(k_i)=U\setminus N(A_i)$. We call the sequence $\omega(k_0),
\omega(k_1),\ldots,\omega(k_m)$ the \emph{backbone} of~$\omega$.

%%%%%%%%%%%%%%%%%%%%%%%%%%%%%%%%%%%%%%%%%%
\begin{figure}[htbp]
\centering
	{
	\tikzsetfigurename{path_assoc_to_progression}
	\begin{tikzpicture}[baseline,scale=0.5,>=stealth,shorten >=1]
		\PathAssocToProgression
		\draw[->] (-2,2) -- node[left] {\footnotesize $-\log\pi$} (-2,3.4);
		\begin{scope}[opacity=0]
		\draw[->] (22,2) -- node[right] {\footnotesize $-\log\pi$} (22,3.4);
		\end{scope}
	\end{tikzpicture}
	}
\caption{%
	The path associated to a typical progression.
	The configurations in the backbone are marked with squares.
}
\label{fig:progression:path}
\end{figure}
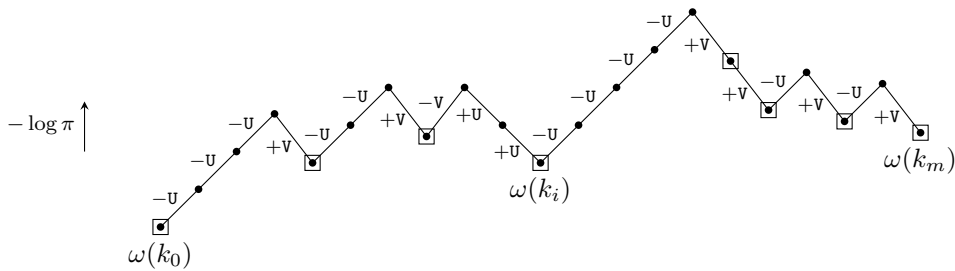
%%%%%%%%%%%%%%%%%%%%%%%%%%%%%%%%%%%%%%%%%%%

The path associated to a progression is \emph{locally optimal}, in the sense that the critical 
resistance of the segment $\omega(k_{i-1})\to\omega(k_{i-1}+1)\to\cdots\to\omega(k_i)$
corresponding to $A_{i-1}\to A_i$ achieves $\Psi\big(\omega(k_{i-1}),\omega(k_i)\big)$.
When the progression is isoperimetric, the critical resistance of the associated path has 
a sharp upper bound in terms of the isoperimetric function $\Delta(s)$.

\begin{lemma}[{\bf Critical resistance of an isoperimetric progression}]
\label{lem:hard-core:progression:critical-resistance}
	Let $A_0,A_1,\ldots,A_m$ be an isoperimetric progression,
	and set $s_{\min}\isdef\min\{\abs{A_i}: 0\leq i\leq m\}$ and
	$s_{\max}\isdef\max\{\abs{A_i}: 0\leq i\leq m\}$. The critical 
	resistance of the associated path $\omega$ satisfies
	\begin{align}
	\label{eq:progression:path:upper-bound}
		\Psi(\omega) &\leq
			\frac{\gamma}{\pi(u)}\frac{\lambda^{\Delta(s^\dagger)+s^\dagger-1}}{\bar{\lambda}^{s^\dagger-1}}
			= \frac{\gamma}{\pi(u)} \lambda^{\Delta(s^\dagger) - \alpha(s^\dagger-1) + \smallo(1)}
			\qquad\text{as $\lambda\to\infty$,}
	\end{align}
	where $s^\dagger$ is a maximiser of the function $g(s)\isdef\Delta(s)-\alpha(s-1)$
	over the set $\{s_{\min}+1,s_{\min}+2,\ldots,s_{\max}\}$. Furthermore, the equality 
	holds provided the progression is nested and $N(A_1)\not\subseteq N(A_0)$.
\end{lemma}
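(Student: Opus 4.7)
My plan is to analyze the path $\omega$ segment by segment, where each segment corresponds to a single progression step $A_{i-1} \to A_i$ and is delimited by consecutive backbone configurations. For each segment I would bound its critical resistance in terms of $\Delta(s_i^*)$, with $s_i^* := \max\{|A_{i-1}|, |A_i|\}$, and then take the maximum over all $i$.

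Concretely, I would start by fixing a segment with, say, $A_i = A_{i-1} \cup \{a_i\}$ (the reverse case being analogous by reversibility). By the construction of $\omega$, the path first removes particles from $N(a_i) \setminus N(A_{i-1})$ one at a time and then places a particle at $a_i$. Using $\pi(x) = \pi(u) \bar{\lambda}^{|x_V|} \lambda^{-|U \setminus x_U|}$ together with $r(x,y) = \gamma/\max\{\pi(x), \pi(y)\}$ for a neighbouring pair, the $j$th removal on $U$ carries resistance
\begin{align*}
	r_j = \frac{\gamma}{\pi(u)} \bar{\lambda}^{-(s_i^* - 1)} \lambda^{|N(A_{i-1})| + j - 1},
\end{align*}
maximized at $j = |N(A_i)| - |N(A_{i-1})|$, whereas the subsequent placement of $a_i$ on $V$ contributes a resistance smaller by a factor $\lambda/\bar{\lambda} = \lambda^{-\alpha + o(1)} \prec 1$. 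Hence, provided at least one $U$-removal actually occurs in the segment (i.e.\ $N(A_i) \not\subseteq N(A_{i-1})$), the segment's critical resistance equals
\begin{align*}
	R_i = \frac{\gamma}{\pi(u)} \bar{\lambda}^{-(s_i^* - 1)} \lambda^{\Delta(s_i^*) + s_i^* - 1},
\end{align*}
where I invoke the isoperimetric optimality of $A_i$ to substitute $|N(A_i)| = \Delta(s_i^*) + s_i^*$; otherwise the segment's critical resistance is strictly smaller. Taking the maximum over $i$ and noting that each $s_i^*$ lies in $\{s_{\min}+1, \ldots, s_{\max}\}$, the bound $\Psi(\omega) \leq R_{s^\dagger}$ (in the asymptotic sense implicit in the $o(1)$) follows from rewriting $R_i = (\gamma/\pi(u))\,\lambda^{g(s_i^*) + o(1)}$ and using the maximality of $s^\dagger$.

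The main subtlety concerns the equality claim. Under the nested hypothesis, $s_i^* = |A_i|$ takes every value in $\{s_{\min}+1, \ldots, s_{\max}\}$, so some segment $i^*$ attains $s_{i^*}^* = s^\dagger$; to conclude $\Psi(\omega) = R_{s^\dagger}$, I would need to verify that this segment involves at least one $U$-removal, i.e.\ $N(A_{i^*}) \not\subseteq N(A_{i^*-1})$. For $s^\dagger > s_{\min}+1$, this is automatic: the maximality of $g$ at $s^\dagger$ gives $\Delta(s^\dagger) - \Delta(s^\dagger - 1) \geq \alpha > 0$, and integrality of $\Delta$ then forces $\Delta(s^\dagger) \geq \Delta(s^\dagger - 1) + 1$, whence $|N(A_{i^*})| - |N(A_{i^*-1})| = \Delta(s^\dagger) - \Delta(s^\dagger-1) + 1 \geq 2$. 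The boundary case $s^\dagger = s_{\min}+1$ makes the first segment critical and is covered precisely by the hypothesis $N(A_1) \not\subseteq N(A_0)$. This edge-case bookkeeping is the principal obstacle, though a minor one once the per-segment resistance analysis is in place.
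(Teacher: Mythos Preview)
Your proposal is correct and follows essentially the same route as the paper: decompose $\omega$ into segments corresponding to single progression steps, compute the segment critical resistance by distinguishing whether any $U$-removal is needed (i.e., whether $N(A_i)\not\subseteq N(A_{i-1})$), and then maximize over segments. Your treatment of the equality case is in fact more explicit than the paper's --- you spell out why the segment with $|A_i|=s^\dagger$ must involve a $U$-removal via the integrality argument $\Delta(s^\dagger)\geq\Delta(s^\dagger-1)+1$, whereas the paper leaves the corresponding observation unjustified (and contains a sign typo in the hypothesis).
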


\noindent
See Appendix~\ref{apx:standard-path} for the proof.

We say that a path $\omega=\omega(0)\to\omega(1)\to\cdots\to\omega(n)$ is \emph{monotone} 
when $\omega(i)\sqsubseteq\omega(i+1)$ for each~$i$, in other words, when $\omega$ consists 
only of transitions of the type $\symb{-U}$ (i.e., removing of a particle from $U$) and $\symb{+V}$ 
(i.e., adding a particle to $V$). Observe that the trace of a monotone path is a nested progression.
Conversely, the path associated to a nested progression is monotone. We call the path associated 
to a nested isoperimetric progression a \emph{standard} path. Clearly, the configurations in the 
backbone of a standard path are isoperimetrically optimal. Moreover, every configuration $x$ on 
a standard path that is not part of the backbone satisfies $\Delta(s)\leq\Delta(x)\leq\Delta(s+1)+1$ 
where $s\isdef\abs{x_V}$. An argument for the following lemma can be found in 
Appendix~\ref{apx:standard-path}.

\begin{lemma}[{\bf Optimality of standard paths}]
\label{lem:hard-core:standard-path:optimality}
	Every standard path is optimal.
\end{lemma}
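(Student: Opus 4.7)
The plan is to combine the critical-resistance upper bound from Lemma~\ref{lem:hard-core:progression:critical-resistance} with a matching lower bound on $\Psi(\omega(0),\omega(n))$ obtained by a bottleneck argument at the $\abs{V}$-size level crossings. Let $\omega$ be the standard path associated with a nested isoperimetric progression $A_0\subsetneq A_1\subsetneq\cdots\subsetneq A_m$ and let $s^\dagger$ be a maximiser of $g(s)=\Delta(s)-\alpha(s-1)$ over $\{\abs{A_0}+1,\ldots,\abs{A_m}\}$. Lemma~\ref{lem:hard-core:progression:critical-resistance} immediately yields
\begin{align*}
\Psi(\omega) &\leq \frac{\gamma}{\pi(u)}\,\bar{\lambda}^{\,1-s^\dagger}\lambda^{\,s^\dagger-1+\Delta(s^\dagger)},
\end{align*}
so it suffices to show that every path $\omega'\colon\omega(0)\leadsto\omega(n)$ contains an edge of at least this resistance.

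Since the $\abs{V}$-size along $\omega'$ changes by $\pm 1$ per step and $\abs{A_0}<s^\dagger\leq\abs{A_m}$, an intermediate-value argument forces $\omega'$ to contain a $V$-addition $x\xadd[V] y$ with $\abs{x_V}=s^\dagger-1$ and $\abs{y_V}=s^\dagger$; take the \emph{first} such transition along $\omega'$. The crucial geometric observation is that $\Delta(x)\geq\Delta(s^\dagger)+1$: since $y_V=x_V\cup\{a\}$ is a valid $V$-set of cardinality $s^\dagger$ we have $\abs{N(y_V)}\geq\Delta(s^\dagger)+s^\dagger$, and $x_U=y_U$ must be disjoint from $N(y_V)$, so $\abs{x_U}\leq\abs{U}-\Delta(s^\dagger)-s^\dagger$ and $\Delta(x)=\abs{U}-\abs{x_U}-(s^\dagger-1)\geq\Delta(s^\dagger)+1$. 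Provided $x\neq\omega(0)$, I consider the predecessor $x''$ of $x$ on $\omega'$; by the choice of ``first'', the transition $x''\to x$ is not a $V$-removal (otherwise $x''$ would already carry $\abs{V}$-size $s^\dagger$). In each of the remaining cases one verifies $r(x'',x)\geq\frac{\gamma}{\pi(u)}\bar{\lambda}^{\,1-s^\dagger}\lambda^{\,s^\dagger-1+\Delta(s^\dagger)}$: the extremal case is a $U$-removal, for which $\abs{x''_V}=s^\dagger-1$ and $\Delta(x'')=\Delta(x)-1\geq\Delta(s^\dagger)$ give $\pi(x'')\leq\pi(u)\,\bar{\lambda}^{\,s^\dagger-1}\lambda^{-(s^\dagger-1)-\Delta(s^\dagger)}$ and $r(x'',x)=\gamma/\pi(x'')$ exactly matches the target; the other two cases ($U$-addition or $V$-addition into $x$) force $\pi(x)$ itself to be smaller by an additional factor of $\lambda^{-1}$ and so yield a resistance larger by a factor of $\lambda$.

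Combining these two steps produces $\Psi(\omega(0),\omega(n))\asymp\Psi(\omega)$, and since $r(\omega)\asymp\sup_{e\in\omega}r(e)$ holds trivially (the path length is bounded by $\abs{\spX}$, independent of $\lambda$), this is exactly the definition of optimality. The hard part will be the degenerate case in which the pre-addition $x$ coincides with the starting configuration $\omega(0)$; this can happen only when $\Delta(\abs{A_0})\geq\Delta(s^\dagger)+1$, and in that regime the first segment $A_0\to A_1$ of the standard path already satisfies $d_1=\abs{N(\{a_1\})\setminus N(A_0)}=0$, so the segment-critical resistance of the standard path itself drops by the factor $\bar{\lambda}/\lambda=\lambda^{\alpha+\smallo(1)}$ and is matched instead by the resistance of the $V$-addition edge $x\xadd[V] y$ itself, namely $\gamma/\pi(y)$ with $\Delta(y)\geq\Delta(s^\dagger)$, restoring the equivalence.
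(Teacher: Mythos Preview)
Your core argument---examine the first $V$-addition crossing a fixed level and bound the resistance of that edge or its predecessor---is exactly the mechanism the paper uses. The difference is that the paper runs this crossing argument at \emph{every} level $s=\lvert A_i\rvert$ and shows $\Psi(\sigma)\succeq\Psi(\omega^{(i)})$ segment by segment, whereas you try to shortcut by working only at the single maximising level $s^\dagger$. That shortcut is what produces the gap in your degenerate case.

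Concretely: when $x=\omega(0)$ (so $s^\dagger=\lvert A_1\rvert$ and, as you correctly deduce, $N(A_1)\subseteq N(A_0)$), the bound you extract from the edge $x\xadd[V]y$ is only $\gamma/\pi(y)\asymp\Psi(\omega^{(1)})$, the \emph{first}-segment resistance. You then assert that ``the segment-critical resistance of the standard path itself drops by the factor $\bar{\lambda}/\lambda$'', implicitly claiming $\Psi(\omega)=\Psi(\omega^{(1)})$. That equality is not justified: $\Psi(\omega)=\max_i\Psi(\omega^{(i)})$, and a later segment $i\geq 2$ can easily have $g(\lvert A_i\rvert)$ strictly between $g(s^\dagger)-\alpha$ and $g(s^\dagger)$ (take $\Delta(\lvert A_i\rvert)=\Delta(s^\dagger)+1$ with $\lvert A_i\rvert-s^\dagger$ just above $1/\alpha$), in which case $\Psi(\omega^{(i)})$ lies strictly between your degenerate lower bound and the target. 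Your bound $\Psi(\omega')\geq\Psi(\omega^{(1)})$ is then too weak to conclude $\Psi(\omega')\succeq\Psi(\omega)$.

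The fix is precisely what the paper does: run the crossing argument at each $s=\lvert A_i\rvert$ rather than only at $s^\dagger$. When segment $i$ is of ``Case~1'' type ($N(A_i)\subseteq N(A_{i-1})$), the crossing edge $\sigma(\ell-1)\to\sigma(\ell)$ alone already gives $r\geq\gamma/\pi(\omega(k_i))=\Psi(\omega^{(i)})$, so no predecessor is needed. When segment $i$ is ``Case~2'', the predecessor argument applies; the paper observes that $\ell\geq 2$ is then automatic (for $i\geq 2$ trivially, and for $i=1$ because a valid $+V$ move out of $\omega(0)$ would produce a set strictly better than the isoperimetrically optimal $A_1$). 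Taking the maximum over $i$ gives $\Psi(\sigma)\succeq\Psi(\omega)$ directly, without ever needing to know which segment dominates.
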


Assuming the existence of sufficiently long isoperimetric numberings,
Lemmas~\ref{lem:hard-core:progression:critical-resistance}
and~\ref{lem:hard-core:standard-path:optimality} can be combined
to identify the critical resistance between $u$ and $J(u)$.

\begin{proposition}[{\bf Identification of the critical resistance}]
\label{prop:hard-core:critical-resistance:identification}
	Let $\tilde{s}>0$ be an integer such that $\Delta(\tilde{s})\leq\alpha\tilde{s}$,
	and let $s^*$ be a maximiser of the function $g(s)\isdef \Delta(s)-\alpha(s-1)$
	over the set $\{1,\ldots,\tilde{s}\}$. Suppose that an isoperimetric numbering 
	of at least $\tilde{s}$ vertices in $V$ exists. Then the critical resistance between 
	$u$ and $J(u)$ is given by
	\begin{align}
		\Psi\big(u,J(u)\big) 
		&= \frac{\gamma}{\pi(u)}
		\frac{\lambda^{\Delta(s^*)+s^*-1}}{\bar{\lambda}^{s^*-1}}
		= \frac{\gamma}{\pi(u)} \lambda^{\Delta(s^*) - \alpha(s^*-1) + \smallo(1)}
		\qquad\text{as $\lambda\to\infty$.}
	\end{align}
\end{proposition}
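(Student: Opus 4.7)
My plan is to combine matching upper and lower bounds on $\Psi(u,J(u))$. The upper bound comes from an explicit path built from the hypothesised isoperimetric numbering; the lower bound comes from a case analysis showing that every path from $u$ to $J(u)$ must cross an edge of the required resistance.

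For the upper bound, let $a_1,\ldots,a_{\tilde{s}}$ be the isoperimetric numbering, set $A_i\isdef\{a_1,\ldots,a_i\}$, and consider the standard path $\omega^*$ associated to the nested isoperimetric progression $\varnothing=A_0\subset\cdots\subset A_{\tilde{s}}$. Its endpoint $y$ is the backbone configuration with $y_V=A_{\tilde{s}}$ and $y_U=U\setminus N(A_{\tilde{s}})$. From $\pi(y)=\pi(u)\lambda^{\alpha\tilde{s}-\Delta(\tilde{s})+\smallo(1)}$ together with $\Delta(\tilde{s})\leq\alpha\tilde{s}$ one obtains $\pi(y)\succeq\pi(u)$, so $y\in J(u)$. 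Since the progression is nested and $N(A_1)\not\subseteq N(A_0)=\varnothing$, the equality case of Lemma~\ref{lem:hard-core:progression:critical-resistance}, applied with $s_{\min}=0$ and $s_{\max}=\tilde{s}$, yields
\begin{align*}
\Psi(u,J(u)) \;\leq\; \Psi(u,y) \;\leq\; \Psi(\omega^*)
\;=\; \frac{\gamma}{\pi(u)}\,\frac{\lambda^{\Delta(s^*)+s^*-1}}{\bar{\lambda}^{s^*-1}}.
\end{align*}

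For the lower bound, I take an arbitrary path $\omega=\omega(0)\to\cdots\to\omega(n)$ from $u$ to some $z\in J(u)$. The condition $\pi(z)\succeq\pi(u)$ translates into $\Delta(z)\leq\alpha\abs{z_V}+\smallo(1)$, and since the regime of interest is the metastable one, $g(s^*)>\alpha$; this forces $\abs{z_V}>s^*$. Hence there is a first index $i$ with $\abs{\omega_V(i)}=s^*$. One has $i\geq 2$, because no $\symb{+V}$ move is available from the fully-packed $u$, so the path first makes some $\symb{-U}$ moves. The transition $\omega(i-1)\to\omega(i)$ is necessarily of type $\symb{+V}$, whence $\Delta(\omega(i-1))=\Delta(\omega(i))+1\geq\Delta(s^*)+1$ and $\abs{\omega_V(i-1)}=s^*-1$. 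I claim that the edge $(\omega(i-2),\omega(i-1))$ has resistance at least $(\gamma/\pi(u))\lambda^{\Delta(s^*)-\alpha(s^*-1)+\smallo(1)}$, by cases on its type:
\begin{itemize}
\item A $\symb{-U}$ move makes $\omega(i-2)$ the heavier endpoint with $\Delta(\omega(i-2))=\Delta(\omega(i-1))-1\geq\Delta(s^*)$ and $\abs{V}=s^*-1$, so $\pi(\omega(i-2))\leq\pi(u)\lambda^{-g(s^*)+\smallo(1)}$.
\item A $\symb{+U}$ or $\symb{+V}$ move keeps $\omega(i-1)$ as the heavier endpoint, and the bound $\Delta(\omega(i-1))\geq\Delta(s^*)+1$ gives the even stronger estimate $\pi(\omega(i-1))\leq\pi(u)\lambda^{-g(s^*)-1+\smallo(1)}$.
\item A $\symb{-V}$ move is excluded by the minimality of $i$, since it would force $\abs{\omega_V(i-2)}=s^*$.
\end{itemize}
In every case, the heavier endpoint of this edge has stationary weight at most $\pi(u)\lambda^{-g(s^*)+\smallo(1)}$, so the edge contributes a resistance at least $(\gamma/\pi(u))\lambda^{g(s^*)+\smallo(1)}$, and hence $\Psi(\omega)$ is at least this. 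Taking the infimum over paths and over $z\in J(u)$ matches the upper bound.

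The technical heart is the lower bound. The naive observation that the single $\symb{+V}$ edge into $\abs{V}=s^*$ has resistance at least $(\gamma/\pi(u))\lambda^{g(s^*)-\alpha+\smallo(1)}$ falls short by a factor of $\lambda^{\alpha}$, because the heavier endpoint of that edge sits on $V$ and thereby enjoys the activity $\bar{\lambda}$ rather than $\lambda$. Shifting attention to the edge immediately \emph{preceding} the $\symb{+V}$ step recovers the correct exponent: there either both endpoints have $\abs{V}=s^*-1$ (and the $\alpha$-bonus is absent) or the heavier endpoint carries $\Delta\geq\Delta(s^*)+1$ (which already buys back the missing $\lambda$). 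Verifying that these cases exhaust all possibilities, and in particular that the implicit hypothesis $g(s^*)>\alpha$ forces every $z\in J(u)$ to force a passage through $\abs{V}=s^*$, is where the main combinatorial work lies; it also aligns exactly with the backbone of the standard path $\omega^*$, whose bottleneck edge is of precisely this preceding $\symb{-U}$ type.
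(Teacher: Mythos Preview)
Your approach is essentially the paper's: the upper bound via Lemma~\ref{lem:hard-core:progression:critical-resistance} is identical, and your lower-bound case analysis on the edge $(\omega(i-2),\omega(i-1))$ preceding the first $\symb{+V}$ step into $\abs{V}=s^*$ is exactly the argument inside the proof of Lemma~\ref{lem:hard-core:standard-path:optimality}, which the paper simply cites.

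One step is under-justified. From $z\in J(u)$ you correctly deduce $\Delta(z)\leq\alpha\abs{z_V}$, hence $\Delta(\abs{z_V})\leq\alpha\abs{z_V}$, i.e.\ $g(\abs{z_V})\leq\alpha$. But the inference ``$g(s^*)>\alpha$ forces $\abs{z_V}>s^*$'' does not follow: $g(\abs{z_V})<g(s^*)$ only tells you $\abs{z_V}\neq s^*$, not that $\abs{z_V}$ lies to the right of $s^*$. What you actually need is $\Delta(s)>\alpha s$ for every $1\leq s<s^*$; that is, no integer smaller than $s^*$ already satisfies the resettling condition. Under the paper's intended convention (where $\tilde{s}$ is the \emph{smallest} positive integer with $\Delta(\tilde{s})\leq\alpha\tilde{s}$, and $s^*\leq\tilde{s}$), this is automatic; under the proposition's literal hypotheses (arbitrary $\tilde{s}$ with $\Delta(\tilde{s})\leq\alpha\tilde{s}$), it is not, and indeed one can cook up graphs with a degree-one vertex in $V$ where $\Delta(1)=0\leq\alpha$ yet $s^*=2$, giving a $z\in J(u)$ with $\abs{z_V}=1$ that bypasses your bottleneck entirely. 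The paper's own two-line proof glosses over the same point; your write-up would be tightened by replacing the appeal to $g(s^*)>\alpha$ with the observation that $\abs{z_V}$ itself satisfies the resettling inequality, hence $\abs{z_V}\geq s^*$ under the intended choice of $\tilde{s}$.
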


%%%

\subsection{Absence of traps}
\label{sec:hard-core:no-trap}

In this section we provide a general condition for the absence of traps (i.e., $\pi(x)\Psi(x,J^-(x))
\prec\pi(u)\Psi(u,J(u))$ for every $x\in\spX\setminus\{u,v\}$). The argument provided in 
Appendix~\ref{apx:no-trap} is an adaptation of the one for Glauber dynamics of the Ising 
model (see Bovier and den Hollander~\cite[Section 17.3.1]{BovHol15}), and crucially relies 
on the presence of a partial ordering on the configuration space with respect to which the 
stationary distribution satisfies the FKG condition~\eqref{eq:stationary:log-modular}.
Although the following proposition does not cover all the possible cases, it is simple and 
requires only a simple assumption.

\begin{proposition}[{\bf Absence of traps}]
\label{prop:hard-core:no-trap}
Assume that $\abs{U}<(1+\alpha)\abs{V}$.
Suppose further that, for every $j\in V$, there is a standard path $\omega:u\pathto J(u)$
such that the first particle that $\omega$ places on $V$ is at~$j$. Then every configuration 
$x\notin\{u,v\}$ satisfies $\pi(x)\Psi\big(x,J^-(x)\big)\prec\pi(u)\Psi\big(u,J(u)\big)$ as 
$\lambda\to\infty$.
\end{proposition}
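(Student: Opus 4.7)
The plan is to construct, for each $x\notin\{u,v\}$, an explicit low-resistance path from $x$ to $J^-(x)$, splitting by whether $x$ is a maximal independent set of $G$.

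If $x$ is not maximal, I would pick any site $k$ such that $y\isdef x\cup\{k\}$ is a valid configuration. Then $\pi(y)\succeq\lambda\pi(x)\succ\pi(x)$, so $y\in J^-(x)$, and the one-edge path gives $\pi(x)\Psi(x,J^-(x))\leq\pi(x)\,r(x,y)\preceq\gamma/\lambda$, which is $\prec\pi(u)\Psi(u,J(u))\asymp\gamma\lambda^{g(s^*)+\smallo(1)}$ by Proposition~\ref{prop:hard-core:critical-resistance:identification}.

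If $x$ is maximal with $x\notin\{u,v\}$, then the bipartite structure and maximality force $\varnothing\subsetneq x_V\subsetneq V$ and $x_U=U\setminus N(x_V)$. Using connectedness of $G$, I would choose $j\in V\setminus x_V$ with $N(j)\cap N(x_V)\neq\varnothing$: any $G$-path from $x_V$ to $V\setminus x_V$ must cross two consecutive $V$-vertices sharing a $U$-neighbour, and $j$ can be taken as the second of this pair. The hypothesis then supplies a standard path $\omega^*:u\pathto J(u)$ starting with $j$, with isoperimetric progression $\varnothing=A_0\subsetneq\{j\}=A_1\subsetneq\cdots\subsetneq A_{\tilde s}$. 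From $x$ I build the shifted path $\omega$ that follows the same progression: in step~$i$, remove the $U$-particles in $N(a_i)$ still present and then add~$a_i$. The backbone configurations $x^{(i)}$ with $x^{(i)}_V=x_V\cup A_i$ and $x^{(i)}_U=U\setminus N(x_V\cup A_i)$ are valid and maximal.

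Set $\ell_i\isdef\abs{N(a_i)\cap x^{(i-1)}_U}$ and $m_i\isdef\abs{N(a_i)\setminus N(A_{i-1})}$. One checks $\ell_k\leq m_k$ for all $k$, with \emph{strict} inequality at $k=1$ because $N(j)\cap N(x_V)\neq\varnothing$ forces at least one neighbour of $j$ to lie outside $x_U$. Since $H(x^{(i)})-H(x)=\sum_{k\leq i}\ell_k-(1+\alpha)i$ whereas $H(y^*_i)-H(u)=\Delta(i)-\alpha i$, I obtain $H(x^{(i)})-H(x)\leq H(y^*_i)-H(u)-1$ for every $i\geq 1$, so the peak of $H$ along $\omega$ exceeds $H(x)$ by at most $g(s^*)-\alpha-1$. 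A segment-by-segment analysis of edge resistances, in the spirit of Lemma~\ref{lem:hard-core:progression:critical-resistance}, then bounds the max-min of $H$ over edges of $\omega$ by $H(x)+g(s^*)-1$, yielding
\begin{align*}
\pi(x)\Psi(\omega)\preceq\gamma\lambda^{g(s^*)-1+\smallo(1)}\prec\gamma\lambda^{g(s^*)+\smallo(1)}\asymp\pi(u)\Psi(u,J(u)).
\end{align*}
That the endpoint lies in $J^-(x)$ follows from $H(x^{(\tilde s)})-H(x)\leq\Delta(\tilde s)-\alpha\tilde s-1\leq -1$, using $\Delta(\tilde s)\leq\alpha\tilde s$ from the definition of $\tilde s$. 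The main obstacle is securing the strict saving of~$1$ in the exponent, which turns the naive bound $\preceq$ into the required $\prec$; this is precisely where the careful choice of $j$ close to $x_V$, made available by the connectedness of~$G$, is essential.
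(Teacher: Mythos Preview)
Your shifted path is the paper's construction: the paper sets $\sigma'(k)\isdef x\lor\omega^*(k)$ along the entire standard path and then removes repetitions, and restricted to the backbone this gives exactly your $x^{(i)}=x\lor y^*_i$. So the approach is right; the gap is in the bookkeeping. Your height formula $H(x^{(i)})-H(x)=\sum_{k\leq i}\ell_k-(1+\alpha)i$ tacitly assumes $A_i\cap x_V=\varnothing$, but nothing in your choice of $j$ prevents later vertices $a_2,a_3,\ldots$ of the isoperimetric progression from lying in $x_V$. When overlap occurs, the correct $V$-increment is $(1+\alpha)\abs{A_i\setminus x_V}$, not $(1+\alpha)i$. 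Using the modularity of $H$ one has $[H(x^{(i)})-H(x)]-[H(y^*_i)-H(u)]=H(u)-H(x\land y^*_i)$, and
\[
H(x\land y^*_i)-H(u)=\abs{N(x_V)\cap N(A_i)}-(1+\alpha)\abs{x_V\cap A_i}
\]
is in general only guaranteed to be strictly positive (bounding the first term below by $\abs{N(x_V\cap A_i)}$ and using $\Delta(p)>\alpha p$ for $0<p<\tilde s$), not $\geq 1$. Your claimed uniform saving of $-1$ in the exponent, and hence the bound $\pi(x)\Psi(\omega)\preceq\gamma\lambda^{g(s^*)-1+\smallo(1)}$, is therefore unjustified.

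The paper avoids any uniform quantitative gap. Working with the full path $\sigma'(k)=x\lor\omega^*(k)$, it invokes the log-modular identity $\pi(x\lor\omega^*(k))\,\pi(x\land\omega^*(k))=\pi(x)\,\pi(\omega^*(k))$ from Section~\ref{sec:hard-core:ordering} to reduce the edge-wise inequality $\pi(x)\,r(\sigma'(k),\sigma'(k+1))\prec\pi(u)\,r(\omega^*(k),\omega^*(k+1))$ to the single claim $\pi(x\land\omega^*(k))\prec\pi(u)$. That claim is then checked by a short case analysis on $s\isdef\abs{(x\land\omega^*(k))_V}$: if $s=0$, the unoccupied $U$-site $i$ (arranged to be the first particle $\omega^*$ removes) forces a missing $U$-particle in $x\land\omega^*(k)$; if $0<s<\abs{\omega^*_V(m)}$, one compares with the isoperimetrically optimal backbone configuration having $s$ particles on $V$, which lies outside $J(u)$. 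This delivers the needed strict $\prec$ without ever facing the overlap difficulty. (Your maximal/non-maximal split is correct but not needed --- the paper's choice of adjacent empty sites $i\in U$, $j\in V$ works for every $x\notin\{u,v\}$.)
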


The hypothesis of Proposition~\ref{prop:hard-core:no-trap} can be rewritten
in terms of isoperimetric numberings, hence providing an isoperimetric criterion
for the absence of traps.

\begin{corollary}[{\bf Absence of traps via isoperimetric numberings}]
\label{prop:hard-core:no-trap:isoperimetric}
%	Let $\tilde{s}>0$ be an integer such that $\Delta(\tilde{s})\leq\alpha\tilde{s}$.
%	Suppose that, for every $j\in V$, there is an isoperimetric numbering of at least 
%	$\tilde{s}$ vertices in $V$ starting with $j$.
	Suppose that hypotheses~\eqref{hypothesis:v-is-stable} and~\eqref{hypothesis:numbering:all} are satisfied
	\textup{(}see Sec.~\ref{sec:main}\textup{)}.
	Then every configuration $x\notin\{u,v\}$ 
	satisfies $\pi(x)\Psi\big(x,J^-(x)\big)\prec\pi(u)\Psi\big(u,J(u)\big)$ as $\lambda\to\infty$.
\end{corollary}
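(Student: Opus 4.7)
The plan is to deduce this corollary directly from Proposition~\ref{prop:hard-core:no-trap} by showing that its hypothesis, namely the existence of a standard path $\omega\colon u\pathto J(u)$ whose first particle on $V$ is placed at a prescribed site $j$, is an easy consequence of~\eqref{hypothesis:v-is-stable} and~\eqref{hypothesis:numbering:all}. The real content lies in the proposition; here there is only a translation between the isoperimetric-numbering language of~\eqref{hypothesis:numbering:all} and the standard-path language required by the proposition.

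I would fix an arbitrary $j\in V$ and invoke~\eqref{hypothesis:numbering:all} to obtain an isoperimetric numbering $a_1,a_2,\ldots,a_n$ of length $n\geq\tilde{s}$ with $a_1=j$ (the existence of $\tilde{s}$ being guaranteed by~\eqref{hypothesis:v-is-stable}, as noted just below the list of hypotheses in Section~\ref{sec:main}). Writing $A_i\isdef\{a_1,\ldots,a_i\}$ yields a nested isoperimetric progression $\varnothing=A_0\subsetneq A_1\subsetneq\cdots\subsetneq A_n$, whose associated path $\omega$ starting from $u$ is by definition a standard path (Section~\ref{sec:hard-core:progressions}).

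Next I would verify the two properties of $\omega$ demanded by Proposition~\ref{prop:hard-core:no-trap}. First, the construction of the path associated to a progression recalled in Section~\ref{sec:hard-core:progressions} shows that the segment $A_0\to A_1$ first empties $N(a_1)$ and then adds a particle at $a_1=j$, so that $j$ is indeed the first site of $V$ occupied along $\omega$. Second, I need that $\omega$ reaches $J(u)$. Using the identity $\pi(x)=\pi(u)\bar{\lambda}^{\abs{x_V}}\lambda^{-\abs{x_V}-\Delta(x)}$ from Section~\ref{sec:hard-core:progressions}, the backbone configuration $x^{(\tilde{s})}$ corresponding to $A_{\tilde{s}}$ satisfies
\begin{align*}
	\frac{\pi(x^{(\tilde{s})})}{\pi(u)}
	&= \lambda^{\alpha\tilde{s}-\Delta(\tilde{s})+\smallo(1)}
	\succeq 1 \qquad\text{as $\lambda\to\infty$,}
\end{align*}
where the last step uses the defining property $\Delta(\tilde{s})\leq\alpha\tilde{s}$ of the resettling size. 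Hence $x^{(\tilde{s})}\in J(u)$, and $\omega$ indeed reaches $J(u)$. Since $j\in V$ was arbitrary, Proposition~\ref{prop:hard-core:no-trap} applies and yields the claim.

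I do not foresee any real obstacle: the heavy lifting has already been done inside Proposition~\ref{prop:hard-core:no-trap}. The only place where one must be mildly careful is in the second verification above, where one has to unpack the definition of $J(u)$ in terms of asymptotic stationary probabilities and match it against the defining inequality of $\tilde{s}$; once this bookkeeping is done, the corollary follows mechanically.
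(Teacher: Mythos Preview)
Your proposal is correct and is precisely the (implicit) argument the paper has in mind: the corollary is stated immediately after Proposition~\ref{prop:hard-core:no-trap} with the remark that its hypothesis ``can be rewritten in terms of isoperimetric numberings,'' and your proof is exactly this rewriting. The only spot to be slightly careful is the step $\lambda^{\alpha\tilde{s}-\Delta(\tilde{s})+\smallo(1)}\succeq 1$, which uses $\Delta(\tilde{s})\leq\alpha\tilde{s}$; when equality holds the $\smallo(1)$ could in principle spoil the $\succeq$, but this edge case is not addressed in the paper either and does not affect the overall approach.
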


%%%

\subsection{Critical gate and progressions}
\label{sec:hard-core:critical-gate:progression}

Once we establish the absence of traps, we can use Corollary~\ref{cor:escape:mean:cascade}
to write the mean crossover time $\xExp_u[T_v]$ in terms of the effective resistance
$\effR{u}{J(u)}$.  As we saw in Proposition~\ref{prop:effective-conductance:critical-gate},
a sharp estimate for the effective resistance~$\effR{u}{J(u)}$ can be obtained
if we are able to identify the critical gate between $u$ and $J(u)$.

The purpose of hypothesis~\eqref{hypothesis:critical-set} in Section~\ref{sec:main}
was to describe the critical gate between $u$ and $J(u)$ in terms of
the isoperimetric properties of the underlying graph.
The following proposition clarifies this connection and is verified
in Appendix~\ref{apx:critical-gate:identification}.

\begin{proposition}[{\bf Critical gate in terms of progressions}]
\label{prop:critical-gate:progressions}
	Suppose that hypotheses~\eqref{hypothesis:numbering:single},
	\eqref{hypothesis:uniqueness}
	and~\eqref{hypothesis:critical-set} \textup{(}see Sec.~\ref{sec:main}\textup{)}
	are satisfied, and let $Q$ and $Q^*$ be the described sets of configurations.
	Then, the pair $(Q,Q^*)$ is a critical pair \textup{(}in the sense of 
	Sec.~\ref{sec:sharper}\textup{)} between $u$ and $J(u)$.
\end{proposition}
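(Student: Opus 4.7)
The plan is to check the four defining properties of a critical pair from Section~\ref{sec:sharper}, with $A=\{u\}$, $B=J(u)$, and the pair $(Q,Q^*)$ built from $\family{A},\family{B}$. Proposition~\ref{prop:hard-core:critical-resistance:identification} already identifies $\Psi(u,J(u))=(\gamma/\pi(u))\lambda^{g(s^*)+\smallo(1)}$, where $g(s)\isdef\Delta(s)-\alpha(s-1)$. Property~(a) is a direct computation: for $(x,y)\in[Q,Q^*]$ we have $x=y\cup\{k\}$ for some $k\in N(B)\setminus N(A)$, with $|x_V|=s^*-1$ and $|x_U|=|U|-|N(B)|+1$, and a one-line substitution in $r(x,y)=\gamma/\pi(x)$ yields $r(x,y)=(\gamma/\pi(u))\lambda^{g(s^*)+\smallo(1)}\asymp\Psi(u,J(u))$.

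For property~(b), fix $x\in Q$ with $V$-part $A\in\family{A}$. Hypothesis~\eqref{hypothesis:critical-set:before-A} supplies an isoperimetric progression from $\varnothing$ to $A$ of sizes at most $s^*-1$. Its associated standard path ends at the maximally packed configuration $\tilde{y}$ with $V$-part $A$, and Lemma~\ref{lem:hard-core:progression:critical-resistance} bounds its critical resistance by $(\gamma/\pi(u))\lambda^{g(s^\dagger)+\smallo(1)}$ for some $s^\dagger\in\{1,\ldots,s^*-1\}$; the uniqueness hypothesis~\eqref{hypothesis:uniqueness} then gives $g(s^\dagger)<g(s^*)$, so this is $\prec\Psi(u,J(u))$. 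The remaining $\symb{-U}$ moves taking $\tilde{y}$ to $x$ each have resistance $\gamma/\pi(\cdot)$ at a configuration with strictly more $U$-particles than $x$, hence smaller than $r(x,y)\asymp\Psi(u,J(u))$ by at least a factor of $\lambda$. The ultrametric inequality for $\Psi$ now yields $\Psi(u,x)\prec\Psi(u,J(u))$. Property~(c) is verified symmetrically: given $y\in Q^*$, the $\symb{+V}$ move adding $b\in B\setminus A$ has resistance $(\gamma/\pi(u))\lambda^{g(s^*)-\alpha+\smallo(1)}\prec\Psi(u,J(u))$; from the resulting maximally packed configuration with $V$-part $B$, the standard path associated to the isoperimetric progression provided by~\eqref{hypothesis:critical-set:after-B} reaches a configuration with $V$-part of size $\tilde{s}$ which lies in $J(u)$ by the defining inequality $\Delta(\tilde{s})\leq\alpha\tilde{s}$, and whose critical resistance is controlled by Lemma~\ref{lem:hard-core:progression:critical-resistance} and~\eqref{hypothesis:uniqueness}.

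The main obstacle is property~(d): every optimal path $\omega\colon u\pathto J(u)$ must cross a transition in $[Q,Q^*]$. The approach is to work with the trace $A_0=\varnothing,A_1,\ldots,A_m$ of $\omega$ on $V$. Since $\omega$ terminates in $J(u)$ the endpoint satisfies $\Delta(A_m)\leq\alpha|A_m|$, and each $\symb{+V}$ step $A_{j-1}\subsetneq A_j$ of the trace forces the constraint that its associated transition in $\omega$ has resistance at least $(\gamma/\pi(u))\lambda^{\Delta(A_j)-\alpha|A_j|+\smallo(1)}$. The delicate step, and the one that I expect to rely on the detailed combinatorial analysis foreshadowed in Section~\ref{sec:hard-core:critical-gate}, is to promote the resulting approximate inequality into the exact $\alpha$-bounded condition required by~\eqref{hypothesis:critical-set:mandatory}; the key inputs are that at levels where $\Delta(A_j)-\alpha|A_j|$ is close to the boundary the path cannot deviate from maximal packing without losing optimality, and that the uniqueness hypothesis~\eqref{hypothesis:uniqueness} rules out ties. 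Once the trace is shown to be $\alpha$-bounded,~\eqref{hypothesis:critical-set:mandatory} supplies an index $k$ with $A_k\in\family{A}$ and $A_{k+1}\in\family{B}$. At the corresponding $\symb{+V}$ step the $U$-part of $\omega$ must be exactly $U\setminus N(A_{k+1})$, since each missing $U$-particle would multiply the resistance of this step by $\lambda$ and break optimality; this places the configuration immediately before the $\symb{+V}$ move into $Q^*$. The predecessor of this configuration along $\omega$ cannot be obtained by an incoming $\symb{+U}$ move (whose resistance would be $\gamma/\pi(y)\asymp\lambda\,\Psi(u,J(u))$, exceeding $\Psi(u,J(u))$ by a factor of $\lambda$), so it must be a $\symb{-U}$ move, which forces the predecessor into $Q$ and exhibits the desired edge of $[Q,Q^*]$ on $\omega$.
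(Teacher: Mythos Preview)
Your outline for conditions~(a), (b) and~(c) matches the paper's proof essentially step for step; the only cosmetic difference is that the paper organises the extension from the backbone endpoint to $x$ (respectively, from $y$ to the backbone) slightly differently, but the resistance bookkeeping is the same.

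For condition~(d) your plan is correct in spirit but you overestimate the difficulty of the ``delicate step''. Promoting the bound $\Delta(A_j)-\alpha\abs{A_j}\leq g(s^*)$ (which you get from the $\symb{+V}$ transition alone, and which is indeed $\alpha$ short of the $\alpha$-bounded condition) to the required $\Delta(A_j)-\alpha(\abs{A_j}-1)\leq g(s^*)$ does \emph{not} need the machinery of Section~\ref{sec:hard-core:critical-gate}. The paper closes this gap with exactly the same two-step trick already used in the proof of Lemma~\ref{lem:hard-core:standard-path:optimality}: take the first $\omega(k)$ with $\omega_V(k)=A_j$ and look at the \emph{preceding} edge $\omega(k-2)\to\omega(k-1)$. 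In both possible cases (an $\xadd$ move, giving $r=\gamma/\pi(\omega(k-1))$, or an $\xremove$ move, giving $r=\gamma/\pi(\omega(k-2))$) the relevant configuration has $\abs{A_j}-1$ particles on $V$, so the exponent becomes $\Delta(A_j)-\alpha(\abs{A_j}-1)$ and the optimality of $\omega$ yields the $\alpha$-bounded inequality directly. No appeal to near-maximal packing or to~\eqref{hypothesis:uniqueness} beyond its role in excluding $i=0$ is needed here.

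One small omission in your last paragraph: when you argue that the predecessor of $y\in Q^*$ cannot arise from an incoming $\symb{+U}$ move, the same resistance bound $\gamma/\pi(y)\asymp\lambda\,\Psi\big(u,J(u)\big)$ simultaneously rules out an incoming $\symb{+V}$ move (both are ``$\xadd$'' moves landing at $y$), so you should phrase it as excluding Case~1 rather than singling out $\symb{+U}$.
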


%%%

\subsection{Optimal paths close to the bottleneck}
\label{sec:hard-core:critical-gate}

In order to identify the critical gate between $u$ and $J(u)$, we need an understanding
of the optimal paths from $u$ to $J(u)$ at and around the bottleneck. In this section,
we demonstrate that the configurations close to the bottleneck in every such optimal path
have to be almost isoperimetrically optimal. We state the lemmas in general setting, but 
the reader should keep the even torus (Example~\ref{exp:isoperimetric:torus}) as a guiding 
example.

We assume that there is a standard path between $u$ and $J(u)$, and we let $s^*$ be as 
in Proposition~\ref{prop:hard-core:critical-resistance:identification}. We use the shorthand
\begin{equation}
\dd\Delta(s) \isdef \Delta(s) - \Delta(s^*),
\qquad \dd s \isdef s - s^*,
\end{equation}
for $s\in\NN$. We verify that, near the bottleneck, every basic step of an optimal path is 
through an isoperimetrically optimal configuration.

Let $\omega=\omega(0)\to\omega(1)\to\cdots\to\omega(n)$ be a path on the configuration 
space. We call $\omega(k)$ a \emph{basic step} of $\omega$ when $\omega(k-1)$ or when
$\omega(k+1)$ has less particles than $\omega(k)$. Note that if $\omega(k-1)$ has 
less particles than $\omega(k)$, then we get $r\big(\omega(k-1),\omega(k)\big) 
= \frac{\gamma}{\pi(\omega(k))}$, and similarly, if $\omega(k+1)$ has less particles than 
$\omega(k)$, then $r\big(\omega(k),\omega(k+1)\big) = \frac{\gamma}{\pi(\omega(k))}$.
Therefore, in either case, the critical resistance of $\omega$ satisfies $\Psi(\omega)
\geq\frac{\gamma}{\pi(\omega(k))}$.

The following three lemmas indicate the isoperimetric optimality of basic configurations
in an optimal path $u\pathto J(u)$ when it passes the bottleneck. The proofs can be 
found in Appendix~\ref{apx:bottleneck}.

\begin{lemma}[{\bf Optimality close the bottleneck}]
\label{lem:hard-core:bipartite:optimal-path:isoperimetric:1}
Let $\omega\colon\,u\pathto J(u)$ be an arbitrary optimal path, and let $x$ be a basic 
configuration in $\omega$ with $s$ particles on $V$. Suppose that $\dd \Delta(s)+\varepsilon\geq
\alpha(\dd s+1)$ for some $\varepsilon\geq 0$. Then $x$ is isoperimetrically $\varepsilon$-optimal.
In particular, $x$ is optimal when $s< s^* + \nicefrac{1}{\alpha}-1$ and $\Delta(s)
\geq\Delta(s^*)$.
\end{lemma}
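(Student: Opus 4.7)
The idea is to extract an isoperimetric constraint on $x$ by comparing the local resistance at $x$ along $\omega$ with the global critical resistance $\Psi\big(u,J(u)\big)$ identified in Proposition~\ref{prop:hard-core:critical-resistance:identification}. First I would observe that at any basic configuration $x$, one of the two edges of $\omega$ incident to $x$ corresponds to adding a particle (since either $\omega(k-1)$ or $\omega(k+1)$ has one fewer particle than $x$), and such an edge has resistance $\gamma/\pi(x)$; consequently $\Psi(\omega) \geq \gamma/\pi(x)$.

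Next I would write $\pi(x) = \pi(u)\bar\lambda^s\lambda^{-s-\Delta(x)}$ with $s=\abs{x_V}$ and use $\bar\lambda=\lambda^{1+\alpha+\smallo(1)}$ to obtain
\begin{align*}
	\frac{\gamma}{\pi(x)} &= \frac{\gamma}{\pi(u)}\lambda^{\Delta(x)-\alpha s+\smallo(1)},
	&
	\Psi\big(u,J(u)\big) &= \frac{\gamma}{\pi(u)}\lambda^{\Delta(s^*)-\alpha(s^*-1)+\smallo(1)}.
\end{align*}
Optimality of $\omega$ means $\Psi(\omega) = \bigo(\Psi(u,J(u)))$, so the first expression is bounded by a constant times the second. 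Comparing exponents of $\lambda$ and noting that the coefficient of $\log\lambda$ in the ratio is a fixed $\lambda$-independent real, this coefficient must be non-positive; hence the clean deterministic inequality
\[
	\Delta(x)-\Delta(s^*) \;\leq\; \alpha(s-s^*+1) \;=\; \alpha(\dd s+1).
\]

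Subtracting $\dd\Delta(s)=\Delta(s)-\Delta(s^*)$ from both sides and invoking the hypothesis $\dd\Delta(s)+\varepsilon\geq\alpha(\dd s+1)$ gives
\[
	\Delta(x)-\Delta(s) \;=\; [\Delta(x)-\Delta(s^*)]-\dd\Delta(s) \;\leq\; \alpha(\dd s+1)-\dd\Delta(s) \;\leq\; \varepsilon,
\]
which is precisely $\varepsilon$-optimality of~$x$. For the ``in particular'' clause I would set $\varepsilon=0$ and exploit integrality: the assumption $s<s^*+\nicefrac{1}{\alpha}-1$ gives $\alpha(\dd s+1)<1$, while $\Delta(s)\geq\Delta(s^*)$ gives $\dd\Delta(s)\geq 0$, so $\Delta(x)-\Delta(s)\leq\alpha(\dd s+1)-\dd\Delta(s) < 1$; being a non-negative integer, this must vanish.

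The only subtle point — not really an obstacle — is ensuring that the $\smallo(1)$ in the exponent comparison does not contaminate the final deterministic inequality $\Delta(x)-\Delta(s^*)\leq\alpha(\dd s+1)$. This is safe because both sides are fixed constants in $\lambda$, so any strict violation would drive the exponent of $\lambda$ in the ratio $\Psi(\omega)/\Psi(u,J(u))$ to $+\infty$, contradicting the $\bigo$ bound coming from optimality of $\omega$.
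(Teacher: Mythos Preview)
Your proof is correct and follows essentially the same approach as the paper: bound $\Psi(\omega)\geq\gamma/\pi(x)$ at a basic configuration, expand $\pi(x)$ in terms of $\Delta(x)$ and $s$, compare the exponent of $\lambda$ with that of $\Psi\big(u,J(u)\big)$, and deduce $\Delta(x)-\Delta(s)\leq\varepsilon$ using the hypothesis. The only cosmetic difference is that you first isolate the clean intermediate inequality $\Delta(x)-\Delta(s^*)\leq\alpha(\dd s+1)$ and then invoke the hypothesis, whereas the paper substitutes the hypothesis directly into the exponent; your explicit handling of the $\smallo(1)$ term and the integrality argument for the ``in particular'' clause are also spelled out more fully than in the paper, but the substance is identical.
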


\begin{lemma}[{\bf Optimality close the bottleneck}]
\label{lem:hard-core:bipartite:optimal-path:isoperimetric:2}
Let $\omega\colon\,u\pathto J(u)$ be an arbitrary optimal path, and let $x$ be the first configuration 
in $\omega$ that has $s+1$ particles on $V$. Suppose that $\Delta(s+1)\geq\Delta(s)$ and
$\dd \Delta(s)+\varepsilon\geq\alpha(\dd s+1)$ for some $\varepsilon\geq 0$. Then $x$ is 
isoperimetrically $\varepsilon$-optimal. In particular, $x$ is optimal when $s< s^* 
+ \nicefrac{1}{\alpha}-1$ and $\Delta(s+1)\geq\Delta(s)\geq\Delta(s^*)$.
\end{lemma}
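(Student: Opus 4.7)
The plan is to mirror the argument for Lemma~\ref{lem:hard-core:bipartite:optimal-path:isoperimetric:1}, adding one extra contradiction step to close the off-by-one gap in the ``in particular'' clause. Let $x'$ denote the configuration immediately preceding $x$ in $\omega$. Since $x$ is the first configuration in $\omega$ with $s+1$ particles on $V$, the transition $x'\to x$ must be a $+V$ move, so $|x'_V|=s$ and $\pi(x)/\pi(x')=\bar{\lambda}>1$. Hence $r(x',x)=\gamma/\pi(x)$, and $x$ is automatically a basic step, which gives $\Psi(\omega)\geq\gamma/\pi(x)$. Optimality of $\omega$ combined with Proposition~\ref{prop:hard-core:critical-resistance:identification} yields $\Psi(\omega)\preceq(\gamma/\pi(u))\lambda^{\Delta(s^*)-\alpha(s^*-1)+o(1)}$, while a direct computation gives $\pi(u)/\pi(x)=\lambda^{\Delta(x)-\alpha(s+1)+o(1)}$. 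Combining these,
\[
\Delta(x)\leq \Delta(s^*)+\alpha(\dd s+2)+o(1).
\]

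Substituting the hypothesis $\alpha(\dd s+1)\leq \dd\Delta(s)+\varepsilon$ and using $\Delta(s+1)\geq\Delta(s)$, this rearranges to $\Delta(x)-\Delta(s+1)\leq\Delta(x)-\Delta(s)\leq\varepsilon+\alpha+o(1)$. Since $\Delta(x)-\Delta(s+1)$ is a non-negative integer and $\alpha<1$, for all sufficiently large $\lambda$ the bound sharpens to $\Delta(x)-\Delta(s+1)\leq\varepsilon$, which is exactly the $\varepsilon$-optimality of $x$.

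For the ``in particular'' clause ($\varepsilon=0$, $\Delta(s+1)\geq\Delta(s)\geq\Delta(s^*)$, $s<s^*+1/\alpha-1$), the above derivation only closes to $\Delta(x)-\Delta(s+1)\leq 1$, so I proceed by contradiction. Minimality of $s^*$ combined with $\Delta(s)\geq\Delta(s^*)$ forces $s\geq s^*$, and $g(s+1),g(s)\leq g(s^*)$ combined with $\alpha(\dd s+1)<1$ and integrality force $\Delta(s+1)=\Delta(s)=\Delta(s^*)$. Assume $\Delta(x)\geq\Delta(s^*)+1$. If $x'$ were a basic step, Lemma~\ref{lem:hard-core:bipartite:optimal-path:isoperimetric:1} in its ``in particular'' form (applicable since $|x'_V|=s$, $\Delta(s)\geq\Delta(s^*)$, $s<s^*+1/\alpha-1$) would give $\Delta(x')=\Delta(s^*)$, clashing with $\Delta(x')=\Delta(x)+1\geq\Delta(s^*)+2$. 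Hence $x'$ is not basic; its predecessor $y$ must therefore have strictly more particles than $x'$, and the $-V$ option is excluded (it would produce an earlier configuration with $s+1$ particles on $V$), leaving $y\xremove[U]x'$. Then $|y_V|=s$, the successor $x'$ of $y$ has fewer particles so $y$ is basic, and $\Delta(y)=\Delta(x')-1=\Delta(x)$. A second application of Lemma~\ref{lem:hard-core:bipartite:optimal-path:isoperimetric:1} to $y$ forces $\Delta(y)=\Delta(s^*)$, again contradicting $\Delta(x)\geq\Delta(s^*)+1$. Thus $\Delta(x)=\Delta(s+1)$.

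The main obstacle is precisely this residual $+\alpha$ gap in the direct bound: the integer argument alone is too weak by essentially one unit, and the cleanest remedy is to back up along the $-U$ segment that necessarily precedes $x$ in an optimal path and re-apply the previous lemma at the neighbouring basic configuration.
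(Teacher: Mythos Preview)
Your integrality step in the first paragraph is the gap: from $\Delta(x)-\Delta(s+1)\leq\varepsilon+\alpha$ with the left-hand side a non-negative integer and $\alpha<1$, you cannot conclude $\Delta(x)-\Delta(s+1)\leq\varepsilon$ unless $\varepsilon$ is itself an integer. Take $\varepsilon=0.5$ and $\alpha=0.7$: then $\varepsilon+\alpha=1.2$ and the integer $1$ satisfies $1\leq 1.2$ but not $1\leq 0.5$. The lemma is stated for arbitrary real $\varepsilon\geq 0$, so the general statement is left unproved.

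The irony is that your second paragraph already contains the fix. The two-case split you run for the ``in particular'' clause --- either $x'$ is basic, or its predecessor $y$ satisfies $y\xremove[U]x'$ and is basic --- is exactly the paper's argument, and it works verbatim under the general hypothesis $\dd\Delta(s)+\varepsilon\geq\alpha(\dd s+1)$. In the first case Lemma~\ref{lem:hard-core:bipartite:optimal-path:isoperimetric:1} applied to $x'$ gives $\Delta(x')\leq\Delta(s)+\varepsilon$, so $\Delta(x)=\Delta(x')-1\leq\Delta(s+1)+\varepsilon-1$. In the second case the same lemma applied to $y$ gives $\Delta(y)\leq\Delta(s)+\varepsilon$, so $\Delta(x)=\Delta(y)\leq\Delta(s+1)+\varepsilon$. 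Either way you get $\varepsilon$-optimality directly, with no stray $+\alpha$. Running this for general $\varepsilon$ makes the direct bound in your first paragraph, the contradiction scaffolding, the $\Delta(s+1)=\Delta(s)=\Delta(s^*)$ preamble, and the separate treatment of the ``in particular'' clause all unnecessary.
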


Let $t^*\isdef \abs{U}-s^*-\Delta(s^*)$ denote the number of particles on $U$ in an isoperimetrically 
optimal configuration that has $s^*$ particles on $V$.

\begin{lemma}[{\bf Optimality close the bottleneck}]
\label{lem:hard-core:bipartite:optimal-path:isoperimetric:3}
Let $\omega\colon\,u\pathto J(u)$ be an arbitrary optimal path and assume that $s^*\geq 2$
and $\Delta(s^*)=\Delta(s^*-1)+\delta$ for some $\delta\geq 0$. Let $\omega(q)$ be a basic configuration in 
$\omega$ with at least $s^*$ particles on $V$. Let $\omega(p)$ \textup{(}with $p<q$\textup{)} be the last basic 
configuration before $\omega(q)$ with less than $s^*-1$ particles on $V$. Then the next basic 
configuration after $\omega(p)$ has $s^*-1$ particles on $V$ and at least $t^*+2$ particles on 
$U$. In particular, it is isoperimetrically $(\delta-1)$-optimal.
\end{lemma}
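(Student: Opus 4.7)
The plan is to first pin down $|\omega(p')_V|$ from the basic-step structure, and then prove the $|U|$-bound by a contradiction argument combining Lemma~\ref{lem:hard-core:bipartite:optimal-path:isoperimetric:1} with the defining extremality of $\omega(p)$. The key structural remark is that two consecutive basic configurations along $\omega$ are either immediate neighbours on the path, or are separated by exactly one non-basic step (a strict local minimum in particle count, since two such minima cannot be adjacent); in either case their $V$-sizes differ by at most~$1$. Combining $|\omega(p)_V|\leq s^*-2$ with the extremality of $\omega(p)$ (every basic configuration strictly between $\omega(p)$ and $\omega(q)$ has $|\omega_V|\geq s^*-1$) pins down $|\omega(p')_V|=s^*-1$.

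For the $U$-count, Lemma~\ref{lem:hard-core:bipartite:optimal-path:isoperimetric:1} applied to the basic configuration $\omega(p')$ with $s=s^*-1$ and $\varepsilon=\delta$ yields $\Delta(\omega(p'))\leq\Delta(s^*)$, so $|\omega(p')_U|\geq t^*+1$. I would then suppose, for contradiction, that equality holds, and examine $\omega(p'-1)$, which differs from $\omega(p')$ by exactly one particle. The analysis splits into scenarios according to whether $\omega(p'-1)$ has more or fewer particles than $\omega(p')$, and in the latter case according to whether the added particle went to~$U$ or to~$V$. In each branch, either $\omega(p'-1)$ is itself basic --- in which case the ``first basic after $\omega(p)$'' property forces $\omega(p'-1)=\omega(p)$ --- or $\omega(p'-1)$ is non-basic, so $\omega(p'-2)$ exists, is basic, and must likewise coincide with $\omega(p)$.

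A short enumeration shows that every resulting profile is ruled out by one of three observations: (i)~the profile forces $|\omega(p)_V|\geq s^*-1$, contradicting the hypothesis on $\omega(p)$, or else places a basic configuration strictly between $\omega(p)$ and $\omega(p')$, contradicting the `first basic' property; (ii)~the profile forces $\Delta$ below the isoperimetric minimum at some $V$-size, violating the isoperimetric inequality; or (iii)~the profile yields $|\omega(p)_V|=s^*-2$ with $\Delta(\omega(p))\geq\Delta(s^*)$, to which Lemma~\ref{lem:hard-core:bipartite:optimal-path:isoperimetric:1} is re-applied at $s=s^*-2$ with $\varepsilon=\Delta(s^*)-\Delta(s^*-2)-\alpha$. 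This choice of $\varepsilon$ is admissible because the maximality of $g$ at $s^*$ gives $\Delta(s^*)-\Delta(s^*-2)\geq 2\alpha$, and the resulting bound $\Delta(\omega(p))\leq\Delta(s^*)-\alpha$ sharpens by integrality of $\Delta$ to $\Delta(\omega(p))\leq\Delta(s^*)-1$, contradicting $\Delta(\omega(p))\geq\Delta(s^*)$. Having ruled out $\Delta(\omega(p'))=\Delta(s^*)$, I conclude $\Delta(\omega(p'))\leq\Delta(s^*)-1=\Delta(s^*-1)+(\delta-1)$, which gives both $|\omega(p')_U|\geq t^*+2$ and the asserted $(\delta-1)$-optimality. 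The main obstacle will be the bookkeeping in case~(iii): the contradiction emerges only when integrality is combined with the strict inequality $\Delta(s^*)-\Delta(s^*-2)>\alpha$, which forces one to use the general form of Lemma~\ref{lem:hard-core:bipartite:optimal-path:isoperimetric:1} rather than its convenient integer specialisation.
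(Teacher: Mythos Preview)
Your argument is correct, but it takes a noticeably longer route than the paper's. The paper never applies Lemma~\ref{lem:hard-core:bipartite:optimal-path:isoperimetric:1} to $\omega(p')$ at all; instead, it goes straight to $\omega(p)$. Writing $t=t^*+\dd t$ for $|\omega(p')_U|$, the two-case structure you identified gives $|\omega(p)_U|\leq t+1$ and $|\omega(p)_V|=s^*-2$ in one stroke, so
\[
\Delta(\omega(p))\geq |U|-(t+1)-(s^*-2)=\Delta(s^*)-\dd t+1.
\]
Plugging this into the basic-step bound $\Psi(\omega)\geq\gamma/\pi(\omega(p))$ and comparing exponents with $\Psi(u,J(u))$ yields $1+\alpha-\dd t\leq 0$, hence $\dd t\geq 2$ by integrality. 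That is the whole argument: no contradiction hypothesis, no case enumeration on $\omega(p'-1)$, no second invocation of Lemma~\ref{lem:hard-core:bipartite:optimal-path:isoperimetric:1}.

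Your approach works because your step~(iii) is, unwound, exactly this same resistance bound at $\omega(p)$; you just reach it after first extracting the weaker bound $\dd t\geq 1$ from $\omega(p')$ and then splitting on $\dd t=1$. The extra machinery buys nothing here, and it introduces a minor wrinkle you should be aware of: your justification ``maximality of $g$ at $s^*$ gives $\Delta(s^*)-\Delta(s^*-2)\geq 2\alpha$'' uses $g(s^*)\geq g(s^*-2)$, which requires $s^*-2\geq 1$. When $s^*=2$ one instead needs $\Delta(2)\geq\alpha$, which follows from $g(2)\geq g(1)=\Delta(1)\geq 0$; this still makes $\varepsilon\geq 0$, so the proof survives, but the stated justification doesn't cover it. Also, your mechanism~(ii) never actually fires in the enumeration---every branch falls under~(i) or~(iii). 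None of this is fatal, but the paper's direct computation at $\omega(p)$ sidesteps all of it.
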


The next proposition combines the above three lemmas to describe an isoperimetric constraint
on the optimal paths $u\pathto J(u)$, which in some cases will help us identify the critical 
gate. See Fig.~\ref{fig:optimal-path:top-of-the-hill} for an illustration.

%%%%%%%%%%%%%%%%%%%%%%%%%%%%%%%%%%%%%%%%%%
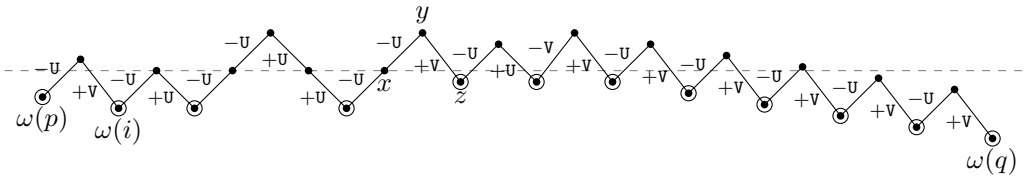
\begin{figure}[htbp]
\centering
{
\tikzsetfigurename{optimal_path_top}
\begin{tikzpicture}[baseline=(x.base),scale=0.5,>=stealth,shorten >=1]
\OptimalPathTopOfTheHill
\end{tikzpicture}
}
\caption{An example of an optimal path near the bottleneck.
In this example, $\abs{z_V}=s^*$ and $\Delta(s^*)=\Delta(s^*-1)+1$.
The circled configurations are isoperimetrically optimal.
There are no basic configurations beyond the dashed line.		
}
\label{fig:optimal-path:top-of-the-hill}
\end{figure}
%%%%%%%%%%%%%%%%%%%%%%%%%%%%%%%%%%%%%%%%%%%

\begin{proposition}[{\bf Constraint on optimal paths}]
\label{prop:hard-core:bipartite:optimal-path:top-of-the-hill}
Assume that hypotheses~\eqref{hypothesis:numbering:single} and~\eqref{hypothesis:uniqueness}
are satisfied.
Let $\kappa$ be an integer satisfying $0\leq \kappa<\nicefrac{1}{\alpha}$.
\textup{(}For instance, we can take $\kappa\isdef\lceil\nicefrac{1}{\alpha}\rceil-1$.\textup{)}
Suppose that $\Delta(s^*+\kappa)\geq\Delta(s^*+\kappa-1)$, $\Delta(s^*+i)\geq\Delta(s^*)$ 
for $0\leq i<\kappa$, and $\Delta(s^*)=\Delta(s^*-1)+\delta$ for some $\delta\geq 1$. Then 
every optimal path $u\pathto J(u)$ contains a segment $x\xremove[U]y\xadd[V]z$ with the 
following properties:
\begin{enumerate}[label={\rm (\alph*)}]
\item 
$z$ is an isoperimetrically optimal configuration with $\abs{z_V}=s^*$,
\item 
$x$ is an isoperimetrically $\delta$-optimal configuration and
$A\isdef x_V$ is an isoperimetrically $(\delta-1)$-optimal set,
\item
there is an isoperimetric progression $B_0,B_1,\ldots,B_\ell$ with $B_0=z_V$ and
$\abs{B_\ell}=s^*+\kappa$ such that $\abs{B_i}\geq s^*$ for all $i$.
\end{enumerate}
\end{proposition}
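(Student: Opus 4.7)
The plan is to examine an arbitrary optimal path $\omega\colon u\pathto J(u)$ and locate the required segment near the point where the number of particles on $V$ first reaches $s^*$. The guiding idea is that the $+V$ step increasing $\abs{\omega(k)_V}$ from $s^*-1$ to $s^*$ for the first time must play the role of $y\xadd[V]z$ and, by path optimality, must be preceded by the $-U$ transition $x\xremove[U]y$. The proof combines Lemmas~\ref{lem:hard-core:bipartite:optimal-path:isoperimetric:1}--\ref{lem:hard-core:bipartite:optimal-path:isoperimetric:3} with hypothesis~\eqref{hypothesis:uniqueness}.

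First I would let $q_0$ be the smallest index with $\abs{\omega(q_0)_V}=s^*$; this exists because $\omega$ ends in $J(u)$, and by minimality of $q_0$ the step $\omega(q_0-1)\to\omega(q_0)$ must be $+V$, so $\omega(q_0)$ is a basic configuration. Lemma~\ref{lem:hard-core:bipartite:optimal-path:isoperimetric:1} applied with $s=s^*$ and $\varepsilon=\alpha<1$, combined with integrality of $\Delta$, implies $z\isdef\omega(q_0)$ is isoperimetrically optimal, giving~(a). Next I claim $\omega(q_0-1)$ cannot be basic: otherwise Lemma~\ref{lem:hard-core:bipartite:optimal-path:isoperimetric:1} with $s=s^*-1$, $\varepsilon=\delta$ would force $\Delta(\omega(q_0-1))\leq\Delta(s^*)$, while the direct computation $\omega(q_0-1)_U=z_U=U\setminus N(z_V)$ (from optimality of $z$) gives $\Delta(\omega(q_0-1))=\Delta(s^*)+1$. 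Hence $\omega(q_0-1)$ is a strict local minimum of particle count, so $\omega(q_0-2)\to\omega(q_0-1)$ is a $-$ step, and it must be $-U$ (a $-V$ step would yield $\abs{\omega(q_0-2)_V}=s^*$, contradicting minimality of $q_0$). Setting $x\isdef\omega(q_0-2)$ and $y\isdef\omega(q_0-1)$, direct computation yields $\Delta(x)=\Delta(s^*)=\Delta(s^*-1)+\delta$, so $x$ is $\delta$-optimal. For the $(\delta-1)$-optimality of $A\isdef x_V$ I would invoke Lemma~\ref{lem:hard-core:bipartite:optimal-path:isoperimetric:3} with $q=q_0$ and $p$ the last basic index preceding $q_0$ with fewer than $s^*-1$ particles on $V$ (existence guaranteed because $u$ itself qualifies, since $s^*\geq 2$): the lemma produces a basic configuration in $\omega$ with $s^*-1$ particles on $V$ and at least $t^*+2$ particles on $U$, whose $V$-part is $(\delta-1)$-optimal, and selecting (among all candidate segments of the required form in $\omega$) the one whose $x_V$ has minimum isoperimetric cost yields~(b).

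For~(c), I would continue along $\omega$ past $z$. Using~\eqref{hypothesis:uniqueness} together with $\kappa<1/\alpha$ and the assumed $\Delta(s^*+i)\geq\Delta(s^*)$, one derives $\Delta(s^*+i)=\Delta(s^*)$ for all $0\leq i\leq\kappa$ (from $g(s^*+i)<g(s^*)$ one gets $\Delta(s^*+i)<\Delta(s^*)+\alpha i<\Delta(s^*)+1$, hence $\Delta(s^*+i)\leq\Delta(s^*)$; the reverse inequality is hypothesised), and consequently $s^*+\kappa\leq\tilde{s}$, so $\omega$ must attain $\abs{\omega(k)_V}=s^*+\kappa$ at some index. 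For $i=1,\ldots,\kappa$ let $q_i$ be the first index with $\abs{\omega(q_i)_V}=s^*+i$; iterated application of Lemma~\ref{lem:hard-core:bipartite:optimal-path:isoperimetric:2} with $s=s^*+i-1$, $\varepsilon=\alpha i<1$ (using $\Delta(s^*+i)=\Delta(s^*+i-1)$) shows that each $\omega(q_i)$ is isoperimetrically optimal. A genuine progression $B_0\isdef z_V, B_1,\ldots,B_\ell$ of optimal sets with $\abs{B_\ell}=s^*+\kappa$ is then assembled by interpolating between the $V$-parts of consecutive $\omega(q_{i-1})$ and $\omega(q_i)$ via the $\pm V$ steps of $\omega$ on each sub-segment, appealing to Lemma~\ref{lem:hard-core:bipartite:optimal-path:isoperimetric:1} to certify that every basic configuration on the sub-path is optimal and hence so is each intermediate $V$-set. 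The main obstacle is precisely this interpolation: the optimal path may oscillate between $q_{i-1}$ and $q_i$, and successive first-visit $V$-sets may differ by more than one element, so the delicate combinatorial step is to leverage the iterated optimality from Lemma~\ref{lem:hard-core:bipartite:optimal-path:isoperimetric:1} together with the hypotheses on $\Delta$ to produce a one-element-at-a-time chain that stays at size $\geq s^*$.
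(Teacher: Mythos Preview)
Your approach has a genuine structural gap that you yourself flag at the end but do not close. By taking $q_0$ to be the \emph{first} index with $\abs{\omega_V}=s^*$, you lose control of the forward segment: nothing prevents the path from dipping back to $\abs{\omega_V}\leq s^*-1$ before ever reaching $s^*+\kappa$, so the $V$-parts you collect between $q_0$ and $q_\kappa$ need not form a progression with $\abs{B_i}\geq s^*$. Your proposed interpolation via Lemma~\ref{lem:hard-core:bipartite:optimal-path:isoperimetric:1} does not resolve this, because that lemma only tells you the basic configurations along the way are isoperimetrically optimal; it says nothing about their $V$-count staying above $s^*$. The same first-visit choice also breaks part~(b): Lemma~\ref{lem:hard-core:bipartite:optimal-path:isoperimetric:3} produces a $(\delta-1)$-optimal configuration at the first basic step after $\omega(p)$, but there is no reason its $V$-part should coincide with your $x_V=\omega(q_0-2)_V$, and the sentence about ``selecting the candidate segment with minimum isoperimetric cost'' does not constitute an argument.

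The paper's fix is to reverse the order of choice. First pick $q$ as the first \emph{basic} configuration with $s^*+\kappa$ particles on $V$, then $p$ as the \emph{last} basic configuration before $q$ with $s^*-2$ particles on $V$, and finally $r$ as the \emph{last} configuration before $q$ with $s^*-1$ on $V$. Setting $x=\omega(r-1)$, $y=\omega(r)$, $z=\omega(r+1)$, the choice of $r$ guarantees that on the segment from $r+1$ to $q$ no configuration has $s^*-1$ particles on $V$, and the choice of $p$ guarantees no basic configuration there has $\leq s^*-2$; hence every basic configuration on that segment has $\abs{\cdot}_V\geq s^*$, and their $V$-parts give the progression for~(c) directly. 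For~(b), the choice of $p$ feeds straight into Lemma~\ref{lem:hard-core:bipartite:optimal-path:isoperimetric:3}: the first basic configuration $\omega(t)$ after $\omega(p)$ is $(\delta-1)$-optimal, and the paper argues that every configuration between $t$ and $r$ has exactly $s^*-1$ particles on $V$, so $\omega_V(t)=\omega_V(r)=x_V$. Finally, the paper handles the case $s^*=1$ separately; your proposal silently assumes $s^*\geq 2$.
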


\noindent
See Appendix~\ref{apx:bottleneck} for the proof.

As we saw in Proposition~\ref{prop:critical-gate:progressions}, finding two families 
$\family{A},\family{B}$ satisfying hypothesis~\eqref{hypothesis:critical-set} of 
Section~\ref{sec:main} allows us to identify the critical gate between $u$ and $J(u)$.
With the help of Proposition~\ref{prop:hard-core:bipartite:optimal-path:top-of-the-hill},
we can replace hypothesis~\eqref{hypothesis:critical-set} with
hypotheses~\eqref{hypothesis:critical-set-replacement:values}
and~\eqref{hypothesis:critical-set-replacement:existence}
and prove Proposition~\ref{prop:critical-gate:identification}.
See Appendix~\ref{apx:critical-gate:identification} for
the proof of Proposition~\ref{prop:critical-gate:identification}.

%%%%%%%% SECTION 6 %%%%%%%%%%%%%%%%%%%% 

\section{Proof of the three metastability theorems}
\label{sec:main-theorems:proof}

%%%

\subsection{Mean crossover time: order of magnitude}

\begin{proof}[Proof of Theorem~\ref{thm:mean-crossover:magnitude}]
	As discussed in Section~\ref{sec:intro:model}, we chose to work with the discrete-time 
	version of the Markov chain, so we estimate $\xExp_u[T_v]$ and use the relation 
	$\xExp_u[T_v]=\gamma\xExp_u[\hat{T}_v]$. We apply Corollary~\ref{cor:escape:mean:cascade} 
	with $a\isdef u$ and $Z\isdef\{v\}$ to get
	\begin{align}
		\xExp_u[T_v] &= \pi(u)\effR{u}{J(u)}[1+\smallo(1)] \qquad\text{as $\lambda\to\infty$.}
	\end{align}
	The assumption of absence of traps used in Corollary~\ref{cor:escape:mean:cascade}
	follows from Corollary~\ref{prop:hard-core:no-trap:isoperimetric}
	and hypotheses~\eqref{hypothesis:v-is-stable} and~\eqref{hypothesis:numbering:all}.
	From Proposition~\ref{prop:bounds:communication-height:abstract},
	we know that $\effR{u}{J(u)}\asymp\Psi\big(u,J(u)\big)$ as $\lambda\to\infty$.
	Proposition~\ref{prop:hard-core:critical-resistance:identification}
	together with~\eqref{hypothesis:numbering:all} gives
	\begin{align}
		\Psi\big(u,J(u)\big) 
		&= \frac{\gamma}{\pi(u)}
		\frac{\lambda^{\Delta(s^*)+s^*-1}}{\bar{\lambda}^{s^*-1}}
		= \frac{\gamma}{\pi(u)} \lambda^{\Delta(s^*) - \alpha(s^*-1) + \smallo(1)}
		\qquad\text{as $\lambda\to\infty$.}
	\end{align}
	The claim follows.
\end{proof}

%%%

\subsection{Exponential law for crossover time}

\begin{proof}[Proof of Theorem~\ref{thm:crossover:exponential}]
	Apply Corollary~\ref{cor:escape:exponential} with $a\isdef u$ and $Z\isdef\{v\}$.
	The assumption of absence of traps used in Corollary~\ref{cor:escape:exponential}
	follows from Corollary~\ref{prop:hard-core:no-trap:isoperimetric}
	and hypotheses~\eqref{hypothesis:v-is-stable} and~\eqref{hypothesis:numbering:all}.
	To see that the other assumption $\pi(u)\Psi\big(u,J(u)\big)\succ 1$ holds,
	recall that the underlying graph is assumed to be connected.
	Therefore, the first move of every path $\omega:u\pathto J(u)$
	is of the type $\symb{-U}$ (i.e., removing a particle from $U$) and
	\begin{align}
		\Psi\big(u,J(u)\big) &\succeq r\big(u,\omega(1)\big) = \frac{\gamma}{\pi(u)}
			\qquad\text{as $\lambda\to\infty$,}
	\end{align}
	using~\eqref{eq:hard-core:conductance}.
	Hence, $\pi(u)\Psi\big(u,J(u)\big)\succeq\gamma\succ 1$.
\end{proof}

%%%

\subsection{Critical gate}

\begin{proof}[Proof of Theorem~\ref{thm:crossover:sharp}]\leavevmode
	\begin{enumerate}[label={\rm (\roman*)}]
		\item As in the proof of Theorem~\ref{thm:mean-crossover:magnitude},
			we estimate $\xExp_u[T_v]$ and use the relation $\xExp_u[T_v]=\gamma
			\xExp_u[\hat{T}_v]$ to get a corresponding estimate for $\xExp_u[\hat{T}_v]$.
			Hypotheses~\eqref{hypothesis:v-is-stable} and~\eqref{hypothesis:numbering:all}
			imply the absence of traps
			via Corollary~\ref{prop:hard-core:no-trap:isoperimetric}, so we can apply 
			Corollary~\ref{cor:escape:mean:cascade} with $a\isdef u$ and $Z\isdef\{v\}$,
			to get
			\begin{align}
				\label{eq:thm:crossover:sharp:proof:I}
				\xExp_u[T_v] &= \pi(u)\effR{u}{J(u)}[1+\smallo(1)] \qquad\text{as $\lambda\to\infty$.}
			\end{align}
			To estimate $\effR{u}{J(u)}$, we identify a critical gate between $\{u\}$ and $J(u)$ 
			and apply Proposition~\ref{prop:effective-conductance:critical-gate}. Since 
			conditions~\eqref{hypothesis:numbering:single}, \eqref{hypothesis:uniqueness}
			and~\eqref{hypothesis:critical-set} are satisfied,
			Proposition~\ref{prop:critical-gate:progressions} implies that
			the sets $Q$ and $Q^*$ form a critical pair between $\{u\}$ and $J(u)$.
			Therefore
			\begin{align}
				\label{eq:thm:crossover:sharp:proof:II}
				\effR{u}{J(u)} &= \frac{1+\smallo(1)}{c(Q,Q^*)},
			\end{align}
			where $c(Q,Q^*)\isdef
			\displaystyle{\mathop{\sum_{x\in Q}\sum_{y\in Q^*}}_{x\link y} c(x,y)}$.
			On the other hand, whenever $x\in Q$ and $y\in Q^*$ and $x\link y$,
			the configuration $y$ is obtained from $x$ by removing a particle from $U$,
			and furthermore, $y_V=A$ and $y_U=U\setminus N(B)$ for some $A\in\family{A}$ 
			and $B\in\family{B}$ with $\abs{B\setminus A}=1$. Therefore, $\abs{x_V}=\abs{A}=s^*-1$
			and $\abs{x_U}=\abs{U\setminus N(B)}+1=\abs{U}-\abs{N(B)}+1=\abs{U}-s^*-\Delta(s^*)+1$.
			Therefore
			\begin{align}
				\label{eq:thm:crossover:sharp:proof:III}
				c(x,y) &= \frac{1}{\gamma}\pi(x) =
					\frac{1}{\gamma}\pi(u)
						\frac{\bar{\lambda}^{\abs{x_V}}}{\lambda^{\abs{U\setminus x_U}}}
				=
					\frac{1}{\gamma}\pi(u)
						\frac{\bar{\lambda}^{s^*-1}}{\lambda^{s^*+\Delta(s^*)-1}} \;.
			\end{align}
			Combining~\eqref{eq:thm:crossover:sharp:proof:I},
			\eqref{eq:thm:crossover:sharp:proof:II} and~\eqref{eq:thm:crossover:sharp:proof:III},
			the result follows.
		\item We apply Proposition~\ref{prop:critical-gate:abstract:choice} with $a\isdef u$ and
			$B\isdef\{v\}$. From Proposition~\ref{prop:critical-gate:progressions}
			and using~\eqref{hypothesis:numbering:single}, \eqref{hypothesis:uniqueness}
			and~\eqref{hypothesis:critical-set},
			we know that $(Q,Q^*)$ is a critical pair between $\{u\}$ and $J(u)$.
			Corollary~\ref{prop:hard-core:no-trap:isoperimetric} and 
			hypotheses~\eqref{hypothesis:v-is-stable} and~\eqref{hypothesis:numbering:all}
			imply the absence of traps.
			Observe that in absence of traps, a critical pair between $\{u\}$ and $J(u)$ is 
			also a critical pair between $\{u\}$ and $\{v\}$. The result now follows after we 
			observe from~\eqref{eq:thm:crossover:sharp:proof:III} that for all pairs $x\in Q$ 
			and $y\in Q^*$ with $x\link y$, the conductance $c(x,y)$ has the same value.
			\qedhere
	\end{enumerate}
\end{proof}

%%%%%%%%% SECTION 7 %%%%%%%%%%%%%%%%%%%%%%%% 

\section{Sophisticated examples: the isoperimetric problem}
\label{sec:isoperimetric}

The bipartite isoperimetric problem introduced in Section~\ref{sec:hard-core:isoperimetric}
belongs to a general class of combinatorial isoperimetric problems. An isoperimetric 
problem on a graph asks for a set of vertices with a given cardinality that has the smallest 
boundary. Depending on how we measure the size of the boundary of a set (called the 
\emph{isoperimetric cost}), we get various versions of the isoperimetric problem. In this 
section, we study the bipartite isoperimetric problem for the examples of graphs considered in 
Section~\ref{sec:hard-core:isoperimetric} by reducing the problem to classical isoperimetric 
problems for which more information is available. In Section~\ref{sec:isoperimetric:edge}, 
we derive the solutions of the bipartite isoperimetric problem on the torus by reducing it to 
the edge isoperimetric problem. In Section~\ref{sec:isoperimetric:vertex}, we study cases 
in which the bipartite isoperimetric problem can be reduced to the vertex isoperimetric problem.

%%%

\subsection{Reduction to edge isoperimetry}
\label{sec:isoperimetric:edge}

\subsubsection{Even torus}
\label{sec:isoperimetric:torus}

The aim of this section is to derive the solutions of the bipartite isoperimetric problem
on an even torus, which are described in Example~\ref{exp:isoperimetric:torus}.
For simplicity, we first consider the bipartite isoperimetric problem on the infinite lattice 
$\ZZ\times\ZZ$. We follow the approach of den Hollander, Nardi and Troiani~\cite{HolNarTro11} 
to reduce the problem to the standard edge isoperimetric problem on the lattice. The 
edge isoperimetric problem on the two-dimensional square lattice was solved by Harary 
and Harborth~\cite{HarHar76}, and later independently (and more completely) by Alonso 
and Cerf~\cite{AloCer96}.

Let us start by recalling the edge isoperimetric problem on graphs. Consider a locally finite 
graph~$G$. The \emph{edge boundary} of a set $A\subseteq V(G)$, denoted by $\partial A$,
is the set of edges between $A$ and its complement. The \emph{edge isoperimetric problem} 
on $G$ is the isoperimetric problem in which $\abs{\partial A}$ is counted as the the isoperimetric 
cost of $A$.

Now, let $G$ be bipartite with parts $U$ and $V$, and assume that $G$ is $r$-regular.
For a finite set $A\subseteq V$, we get the identity
\begin{align}
r\abs{N(A)} &= r\abs{A} + \abs{\partial \big(A\cup N(A)\big)}
\end{align}
by counting the edges incident to $N(A)$ in two ways. As a result, we get the 
following convenient representation of the bipartite isoperimetric cost (see 
Fig.~\ref{fig:isoperimetric:lattice:example}).

\begin{observation}[{\bf Isoperimetric cost in regular graphs}]
\label{obs:isoperimetric:regular}
Let $G=(U,V,E)$ be an $r$-regular bipartite graph. Then $\Delta(A)=\frac{1}{r}
\abs{\partial \big(A\cup N(A)\big)}$ for every $A\subseteq V$. In words, the bipartite 
isoperimetric cost of $A$ is the same as the edge isoperimetric cost of $A\cup N(A)$
up to a constant factor.
\end{observation}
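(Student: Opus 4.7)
The plan is to verify the identity by double counting the edges incident to $N(A)$ in the bipartite graph $G$.

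First I would observe that since $A\subseteq V$ and $G$ is bipartite, $N(A)\subseteq U$, and consequently $A\cap N(A)=\varnothing$. Because $G$ is $r$-regular, the total number of edges incident to $N(A)$ equals $r\abs{N(A)}$.

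Next, I would partition the edges incident to $N(A)$ according to the location of their other endpoint. An edge from $N(A)$ can only land in $V$ (by bipartiteness), so it ends either in $A$ or in $V\setminus A$. Every edge incident to $A$ must end in $U$, and by definition of the neighbourhood this $U$-endpoint lies in $N(A)$; since each vertex of $A$ has degree $r$, this yields $r\abs{A}$ edges of the first type. The second type consists precisely of edges between $N(A)$ and $V\setminus A$.

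The main (and only) step that requires care is to identify the second type with the edge boundary $\partial\big(A\cup N(A)\big)$. I would argue that an edge in $\partial\big(A\cup N(A)\big)$ must have its $V$-endpoint in $A$ and its $U$-endpoint outside $N(A)$, or its $U$-endpoint in $N(A)$ and its $V$-endpoint outside $A$. The first possibility is impossible by the definition of $N(A)$, so $\partial\big(A\cup N(A)\big)$ coincides with the set of edges between $N(A)$ and $V\setminus A$. Putting the two counts together gives
\begin{align*}
r\abs{N(A)} &= r\abs{A}+\abs{\partial\big(A\cup N(A)\big)},
\end{align*}
and dividing by $r$ yields $\Delta(A)=\abs{N(A)}-\abs{A}=\tfrac{1}{r}\abs{\partial\big(A\cup N(A)\big)}$, as claimed. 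There is no real obstacle here; the entire argument is a careful bookkeeping of edges, and the observation essentially rewrites the sentence that precedes its statement.
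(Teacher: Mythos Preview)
Your proof is correct and follows exactly the same approach as the paper: the paper derives the identity $r\abs{N(A)}=r\abs{A}+\abs{\partial(A\cup N(A))}$ by counting the edges incident to $N(A)$ in two ways, and your argument simply spells out this double count in detail, including the verification that the boundary edges are precisely those between $N(A)$ and $V\setminus A$.
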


%%%%%%%%%%%%%%%%%%%%%%%%%%%%%%%%%%%%%%%%%
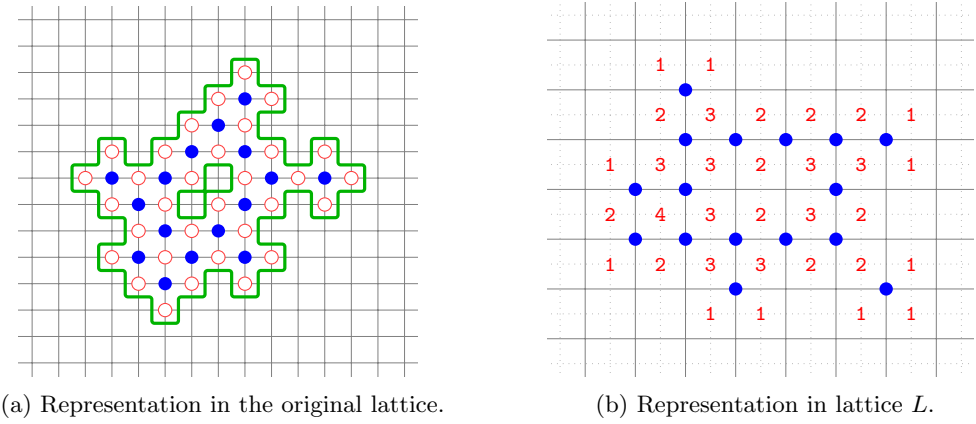
\begin{figure}[htbp]
\centering
\begin{subfigure}[b]{0.45\textwidth}
\centering
{
\tikzsetfigurename{lattice_isoperimetric_example}
\begin{tikzpicture}[baseline,scale=0.35,>=stealth,shorten >=1]
\DrawLatticeIsoperimetricExample
\end{tikzpicture}
}
\caption{Representation in the original lattice.}
\label{fig:isoperimetric:lattice:example}
\end{subfigure}
\begin{subfigure}[b]{0.45\textwidth}
\centering
{
\tikzsetfigurename{lattice_isoperimetric_example_rotated}
\begin{tikzpicture}[baseline,scale=0.33,>=stealth,shorten >=1]
\DrawLatticeIsoperimetricExampleRotated
\end{tikzpicture}
}
\caption{Representation in lattice $L$.}
\label{fig:isoperimetric:lattice:example:rotated}
\end{subfigure}
\caption{An example of a set $A\subseteq V$. The elements of $A$ are represented 
as solid blue circles, and the elements of $N(A)$ as red circles. The isoperimetric cost 
$\Delta(A)$ is the total length of the green contours, which are the dual representation 
of $\partial\big(A\cup N(A)\big)$. Number $k$ represents an element of $N_k(A)$.
}
\label{fig:isoperimetric:lattice:example-and-representation}
\end{figure}
%%%%%%%%%%%%%%%%%%%%%%%%%%%%%%%%%%%%%%%

Let us next return to the infinite lattice $\ZZ\times\ZZ$, which is $4$-regular and bipartite, 
with parts $U=\{(a,b)\colon\, a+b=0\pmod{2}\}$ and $V=\{(a,b)\colon\, a+b=1\pmod{2}\}$.
According to Observation~\ref{obs:isoperimetric:regular}, minimising the isoperimetric cost 
$\Delta(A)$ for $A\subseteq V$ amounts to minimising the size of the edge boundary 
$\partial\big(A\cup N(A)\big)$. Let us partition $N(A)$ into four sets $N_1(A)$, $N_2(A)$, 
$N_3(A)$, $N_4(A)$, where $N_k(A)$ consists of those elements in $N(A)$ that have 
precisely $k$ neighbours in $A$. Clearly,
\begin{align}
\label{eq:isoperimetric:lattice:1}
\abs{\partial\big(A\cup N(A)\big)} 
&=3\abs{N_1(A)} + 2\abs{N_2(A)} + \abs{N_3(A)}.
\end{align}

Let us next consider the graph $L$ obtained from the odd sites $V$ by putting an 
edge between $(a,b)$ and $(a',b')$ if and only if $\abs{a'-a}=\abs{b'-b}=1$ (see 
Fig.~\ref{fig:isoperimetric:lattice:example:rotated}). Observe that $L$ is isomorphic to 
the original lattice $\ZZ\times\ZZ$. Divide the set $N_2(A)$ further into two sets
$N_{\symb{1100}}(A)$ and $N_{\symb{1010}}(A)$, according to whether the two
neighbours in $A$ are connected by an edge of $L$ or not, i.e.,
\begin{align}
N_{\symb{1100}} 
&\isdef \left\{ p\in N_2(A)\colon\, \text{$N(p)\cap A=\{i,j\}$ and $(i,j)\in E(L)$} \right\},\nonumber\\
N_{\symb{1010}} 
&\isdef \left\{ p\in N_2(A)\colon\, \text{$N(p)\cap A=\{i,j\}$ and $(i,j)\notin E(L)$} \right\}.
\end{align}
Denoting the edge boundary of $A\subseteq V=V(L)$ in $L$ by $\partial_L A$, we have the identity
\begin{align}
\label{eq:isoperimetric:lattice:2}
2\abs{\partial_L A} 
&= 2\abs{N_1(A)} + 2\abs{N_{\symb{1100}}(A)} + 4\abs{N_{\symb{1010}}(A)} + 2\abs{N_3(A)},
\end{align}
which is obtained by counting, in two different ways, the number of triangles $(e,e',e'')$,
where $e\in\partial_L A$, $e'\in\partial\big(A\cup N(A)\big)$ and $e''\in\partial A$.

Combining\eqref{eq:isoperimetric:lattice:1} and \eqref{eq:isoperimetric:lattice:2}, we get
\begin{align}
\label{eq:isoperimetric:lattice:3}
\Delta(A) 
&= \frac{1}{4}\abs{\partial\big(A\cup N(A)\big)} = \frac{1}{2}\abs{\partial_L A}
+ \frac{1}{4}\Big( \abs{N_1(A)} - 2\abs{N_{\symb{1010}}(A)} - \abs{N_3(A)} \Big)
\end{align}
for every finite $A\subseteq V$.

It can be verified by direct inspection that every non-empty set $A\subseteq V$ that is optimal with 
respect to the edge boundary in $L$ satisfies $\abs{N_1(A)} - 2\abs{N_{\symb{1010}}(A)} 
- \abs{N_3(A)} = 4$. We claim that the same equality holds when $A$ is optimal with respect 
to the bipartite isoperimetric cost $\Delta$.

\begin{lemma}[{\bf Optimality}]
\label{lem:isoperimetric:lattice}
Let $A\subseteq V$ be a non-empty finite set that is optimal with respect to the bipartite isoperimetric 
cost~$\Delta$. Then $N_{\symb{1010}}(A)=\varnothing$ and $\abs{N_1(A)} - \abs{N_3(A)} = 4$.
\end{lemma}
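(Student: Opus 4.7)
The plan is to combine the decomposition~\eqref{eq:isoperimetric:lattice:3} with a discrete Gauss--Bonnet argument for the polyomino $\bar{A} := A\cup N(A)$, viewed as a union of unit cells in $\RR^2$. Let $A^*\subseteq V$ satisfy $|A^*|=|A|$ and minimize the edge boundary in $L$; by the remark preceding the lemma, $\Delta(A^*) = \tfrac{1}{2}|\partial_L A^*| + 1$. Optimality of $A$ for $\Delta$ gives $\Delta(A)\le\Delta(A^*)$, and optimality of $A^*$ in $L$ gives $|\partial_L A|\ge|\partial_L A^*|$. Substituting both into~\eqref{eq:isoperimetric:lattice:3} and rearranging yields
\[
|N_1(A)|-2|N_{\symb{1010}}(A)|-|N_3(A)|\le 4,
\]
with equality exactly when $A$ is itself $L$-edge-optimal.

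The next step is a discrete Gauss--Bonnet identity
\[
|N_1(A)|-2|N_{\symb{1010}}(A)|-|N_3(A)|=4\,\chi(\bar{A}),
\]
where $\chi(\bar A)=\#\mathrm{components}-\#\mathrm{holes}$ is the Euler characteristic of $\bar A$ as a planar polyomino (with the convention that each \emph{pinch}, i.e.\ dual grid point where two diagonal cells of $\bar A$ touch, separates the cells into distinct components). The identity is proved by counting at each dual grid point the number of convex corners $C$, concave corners $K$, and pinches $X$ of $\bar A$. The bipartite structure makes the bookkeeping tractable: around each dual grid point two diagonal cells are $V$-cells and the other two are $U$-cells, and a direct case analysis shows $K=|\partial_L A|$ and $C+2X=2|N_1(A)|+|N_{\symb{1100}}(A)|$. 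Combining with the planar Gauss--Bonnet formula $C-K+2X=4\chi(\bar A)$ and the expansion of $|\partial_L A|$ in~\eqref{eq:isoperimetric:lattice:2} then gives the claim; in particular, the bound of the previous paragraph becomes $\chi(\bar A)\le 1$.

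Finally, I would show $\chi(\bar A)=1$ and $|N_{\symb{1010}}(A)|=0$ by perturbation together with the known classification of $L$-edge-optimal sets. For connectedness: if $\bar A=\bar A_1\sqcup\bar A_2$ with $A=A_1\sqcup A_2$ both nonempty, a small translation bringing $A_2$ close to $A_1$ (e.g.\ so that a vertex of $A_2$ lies diagonally next to some vertex of $A_1$) produces a set of cardinality $|A|$ for which the two neighbourhoods share at least one $U$-cell, and therefore has strictly smaller $\Delta$, contradicting optimality. For absence of holes: if $p\in V\setminus A$ has all four $U$-neighbours already in $N(A)$, then $A':=A\cup\{p\}$ satisfies $\Delta(A')=\Delta(A)-1$, contradicting the monotonicity $\Delta(|A|+1)\ge\Delta(|A|)$, which itself follows by taking any minimizer $B$ of size $|A|+1$ and removing the unique $B$-neighbour of the $U$-cell immediately above the topmost $V$-site of $B$ (this $U$-cell being forced into $N_1(B)$). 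Thus $\chi(\bar A)=1$, the bound of the first step is tight, and $A$ is $L$-edge-optimal. The Alonso--Cerf theorem then identifies $A$, up to translation and reflection, as an $\ell\times\ell$ or $\ell\times(\ell+1)$ rectangle possibly augmented by one additional row along a side, and direct inspection of these shapes, interpreted back in $V$, shows that no $U$-cell can have two diagonally placed $A$-neighbours, giving $|N_{\symb{1010}}(A)|=0$; together with the identity this yields $|N_1(A)|-|N_3(A)|=4$. The main obstacle will be proving the Gauss--Bonnet identity rigorously, as the case analysis for pinches must be carried out with the right sign convention; everything else is a routine combination of the resulting identity, optimality, and the Alonso--Cerf classification.
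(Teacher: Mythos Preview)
Your approach is correct in outline but genuinely different from the paper's, and there is one gap worth flagging.

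\textbf{Comparison.} The paper never invokes a Gauss--Bonnet identity or the Alonso--Cerf classification. Instead it argues directly and geometrically: first $A\cup N(A)$ is connected (same shifting move you use); then $A$ is \emph{convex} in $L$, by a filling procedure $B_0=A\subsetneq B_1\subsetneq\cdots\subsetneq\overline{A}$ to the enclosing $L$-rectangle in which each step satisfies $\Delta(B_{t-1})\ge\Delta(B_t)$, with strict inequality at some step precisely when $A$ is not convex (optimality then forces convexity); next $A$ is connected in $L$ (another shifting move); and finally, for a convex $L$-connected set the contour $c(A)$ around $A$ is a single simple cycle in the dual lattice, and a direct labelling of its vertices shows $N_{\symb{1010}}(A)=\varnothing$ and $|N_1|-|N_3|=4$. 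So the paper gets both conclusions simultaneously from convexity and is self-contained. Your route---the identity $|N_1|-2|N_{\symb{1010}}|-|N_3|=4\chi(\bar A)$ together with $\chi(\bar A)=1$ and then the Alonso--Cerf list---is more conceptual (and your verification $K=|\partial_L A|$, $C+2X=2|N_1|+|N_{\symb{1100}}|$ is correct), but it trades the elementary convexity step for a dependence on the full edge-isoperimetric classification just to extract $N_{\symb{1010}}=\varnothing$ at the end.

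\textbf{A gap.} Your ``no holes'' step only treats a hole consisting of a single $V$-cell $p$ with all four $U$-neighbours in $N(A)$. A general hole $H$ need not contain such a $p$: one checks that every $U$-cell in $H$ has all four $V$-neighbours in $H$, so the only cells of $H$ touching $\bar A$ are $V$-cells, but those $V$-cells may still have one $U$-neighbour inside $H$. The fix is immediate with your monotonicity lemma: set $A'=A\cup H_V$; then $N(A')=N(A)\cup H_U$ and $\Delta(A')=\Delta(A)+|H_U|-|H_V|$. Counting the $4|H_U|$ edges out of $H_U$ (all landing in $H_V$, since $H$ is finite) gives $|H_U|<|H_V|$, so $\Delta(A')<\Delta(A)=\Delta(|A|)$ with $|A'|>|A|$, contradicting the monotonicity of $s\mapsto\Delta(s)$ that you proved. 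With this amendment your argument goes through; the remaining work, as you say, is a clean statement and proof of the discrete Gauss--Bonnet identity with the correct pinch convention (note that only $U$--$U$ pinches can occur in $\bar A$, since a $V$--$V$ pinch would force the adjacent $U$-cells into $N(A)$).
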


\noindent
A proof of the above lemma can be found in Appendix~\ref{apx:isoperimetric}.

In conclusion, we have the equality
\begin{align}
\label{eq:isoperimetric:lattice:4}
\Delta(A) &= \frac{1}{2}\abs{\partial_L A} + 1
\end{align}
for every non-empty $A\subseteq V$ that is optimal either with respect to the edge boundary in $L$
or with respect to the bipartite isoperimetric cost $\Delta$.
It follows that the solutions of 
the bipartite isoperimetric problem on the lattice $\ZZ\times\ZZ$ coincide with the solutions 
of the edge isoperimetric problem on the lattice $L$.
The edge boundary of an optimal set with $s$ vertices has size $2\lceil 2\sqrt{s}\rceil$ and
the optimal sets in $L$ are those described in Example~\ref{exp:isoperimetric:torus}.
Thus, $\Delta(s)=\lceil 2\sqrt{s}\rceil+1$ for $s>0$ and the optimal sets with respect to $\Delta$ 
are as described in Example~\ref{exp:isoperimetric:torus}.

Finally, we argue that the solutions of the bipartite isoperimetric problem on an even torus 
$\ZZ_m\times\ZZ_n$ are the same (modulo translations) as the solutions for the infinite 
lattice $\ZZ\times\ZZ$ as long as the size of the set is small compared to $m$ and $n$. To 
see why, it is enough to note that if $A$ has less than $\frac{1}{4}\min\{m,n\}$ vertices, then 
it cannot ``sense'' the distinction between $\ZZ_m\times\ZZ_n$ and $\ZZ\times\ZZ$. More 
precisely, let $A$ be an optimal set in $\ZZ_m\times\ZZ_n$ with $\abs{A}<\frac{1}{4}\min\{m,n\}$.
Then, the pre-image of $A$ under the canonical projection from $\ZZ\times\ZZ$ to $\ZZ_m
\times\ZZ_n$ can be partitioned into countably many sets $A'_i$ (for $i\in\ZZ\times\ZZ$) 
such that each $A'_i$ is a translated copy of $A$ and the sets $A'_i\cup N(A'_i)$ are disjoint.
In particular, that $\abs{A'_i}=\abs{A}$ and $\Delta(A'_i)=\Delta(A)$. Conversely, if $A'$ is an 
optimal set in $\ZZ\times\ZZ$ with $\abs{A}<\frac{1}{4}\min\{m,n\}$, then $A'\cup N(A')$ is 
connected (see the proof of Lemma~\ref{lem:isoperimetric:lattice}). It is easy to see that the 
canonical projection of $\ZZ\times\ZZ$ onto $\ZZ_m\times\ZZ_n$ maps every connected set 
with less than $\min\{m,n\}$ elements injectively. In particular, if $A$ denotes the projection 
of $A'$, then $A$ is simply a translated copy of $A$ and we have $\abs{A}=\abs{A'}$ and 
$\Delta(A)=\Delta(A')$.

%%%

\subsection{Reduction to vertex isoperimetry}
\label{sec:isoperimetric:vertex}

In the \emph{vertex isoperimetric problem}, the size of the boundary of a set $A$ is 
measured as $\abs{N(A)\setminus A}$. The bipartite isoperimetric problem on a 
doubled graph $G^{[2]}$ is equivalent to the vertex isoperimetric problem on the 
original graph $G$.

\begin{observation}[{\bf Reduction to vertex isoperimetry}]
\label{obs:isoperimetric:doubled}
Let $G$ be a locally finite graph and let $G^{[2]}$ be its doubled version. Let $U\isdef 
V(G)\times\{\red\}$ and $V\isdef V(G)\times\{\blue\}$ be the two parts of $G^{[2]}$. Then 
$\Delta(A\times\{\blue\})=\abs{N_G(A)\setminus A}$ for every $A\subseteq V(G)$, i.e., 
the bipartite isoperimetric cost of $A\times\{\blue\}$ in $G^{[2]}$ coincides with the vertex 
isoperimetric cost of $A$ in $G$. 
\end{observation}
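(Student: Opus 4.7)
The plan is to unfold the definition of $G^{[2]}$ and directly compute the neighbourhood of $A\times\{\blue\}$ inside $G^{[2]}$. Recall that edges of $G^{[2]}$ go only between $U$ and $V$, and $(i,\red)$ is joined to $(j,\blue)$ precisely when either $i=j$ or $(i,j)\in E(G)$. So the first step is to characterise when a red vertex $(i,\red)$ lies in $N_{G^{[2]}}(A\times\{\blue\})$: it does iff there exists $j\in A$ with $i=j$ or $(i,j)\in E(G)$, which is equivalent to $i\in A\cup N_G(A)$.

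Having established this, I would write
\begin{align}
N_{G^{[2]}}\big(A\times\{\blue\}\big) &= \big(A\cup N_G(A)\big)\times\{\red\} \;,
\end{align}
and then compute
\begin{align}
\Delta\big(A\times\{\blue\}\big)
&= \big|N_{G^{[2]}}(A\times\{\blue\})\big| - \big|A\times\{\blue\}\big| \\
&= \big|A\cup N_G(A)\big| - |A|
= \big|N_G(A)\setminus A\big| \;,
\end{align}
where the last equality uses that $A$ and $N_G(A)\setminus A$ are disjoint with union $A\cup N_G(A)$.

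There is essentially no obstacle here: the statement is a direct unfolding of the definition of the doubled graph, combined with the elementary set-theoretic identity $|A\cup B|-|A|=|B\setminus A|$. The only point worth emphasising for the reader is that the ``diagonal'' edges $(i,\red)\sim(i,\blue)$ built into the construction of $G^{[2]}$ are exactly what forces the neighbourhood of $A\times\{\blue\}$ to contain $A\times\{\red\}$ itself, and hence what turns the bipartite isoperimetric cost into the \emph{vertex} (rather than edge) boundary of $A$ in the original graph $G$.
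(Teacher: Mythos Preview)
Your proof is correct and matches the paper's approach exactly. The paper treats this as a self-evident observation and does not give a separate formal proof; the computation $N_{G^{[2]}}(A\times\{\blue\})=(A\cup N_G(A))\times\{\red\}$ followed by $|A\cup N_G(A)|-|A|=|N_G(A)\setminus A|$ is precisely what is sketched in Example~\ref{exp:isoperimetric:doubled-torus} in the special case of the lattice.
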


\begin{observation}[{\bf Doubled version of bipartite graphs}]
\label{obs:bipartite:doubled}
The doubled version of a bipartite graph $G$ is isomorphic to the Cartesian product 
$G\times\ZZ_2$, where $\ZZ_2$ is the graph with two vertices and an edge between them.
\end{observation}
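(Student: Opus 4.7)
The plan is to exhibit an explicit graph isomorphism $\phi \colon V(G^{[2]}) \to V(G) \times V(\ZZ_2)$ that exploits the bipartite decomposition of $G$. Writing $V(G) = P \cup Q$ with $P$ and $Q$ the two parts of $G$, I would define $\phi(i,\red) \isdef (i,0)$ and $\phi(i,\blue) \isdef (i,1)$ for $i \in P$, and $\phi(i,\red) \isdef (i,1)$ and $\phi(i,\blue) \isdef (i,0)$ for $i \in Q$. This map is evidently a bijection, so only preservation and reflection of adjacency need to be checked.

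Recall that in the Cartesian product $G \times \ZZ_2$, two vertices $(i,a)$ and $(j,b)$ are adjacent iff either $i = j$ and $a \neq b$ (a \emph{fibre} edge), or $(i,j) \in E(G)$ and $a = b$ (a \emph{horizontal} edge). In the doubled graph $G^{[2]}$, $(i,\red)$ and $(j,\blue)$ are adjacent iff $i = j$ (a \emph{diagonal} edge) or $(i,j) \in E(G)$ (a \emph{crossing} edge), and there are no edges inside $U^{[2]}$ or inside $V^{[2]}$. Under $\phi$, every diagonal edge $(i,\red) \sim (i,\blue)$ maps to the pair $\{(i,0),(i,1)\}$, which is precisely the fibre edge over $i$. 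A crossing edge $(i,\red) \sim (j,\blue)$ with $(i,j) \in E(G)$ involves, by bipartiteness of $G$, one endpoint in $P$ and one in $Q$; for instance, if $i \in P$ and $j \in Q$, then $\phi$ sends this pair to $(i,0) \sim (j,0)$, which is a horizontal edge of $G \times \ZZ_2$ because the colour-flip between $P$ and $Q$ synchronises the second coordinates of the two endpoints.

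Conversely, every edge of $G \times \ZZ_2$ is hit exactly once: each fibre edge $(i,0) \sim (i,1)$ pulls back to the diagonal edge at vertex $i$, and each horizontal edge $(i,a) \sim (j,a)$ with $(i,j) \in E(G)$ (and say $i \in P$, $j \in Q$) pulls back to $(i,\red) \sim (j,\blue)$ if $a = 0$ or to $(i,\blue) \sim (j,\red)$ if $a = 1$. Hence $\phi$ both preserves and reflects adjacency, so it is a graph isomorphism.

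The only subtle point is the bipartite-dependent colour-flip in the definition of $\phi$: without bipartiteness, a horizontal edge of $G \times \ZZ_2$ joining two vertices that lie in the same part of $G$ would have no well-defined pre-image in $G^{[2]}$. Since the hypothesis rules this out, I do not expect any real obstacle, and the proof reduces to the straightforward case analysis sketched above.
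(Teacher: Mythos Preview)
Your proof is correct. The paper states this as an observation without proof, so there is nothing to compare against; your explicit isomorphism with the colour-flip on one side of the bipartition is exactly the natural argument, and your remark that the flip is precisely what fails in the non-bipartite case matches the paper's comment about the ``M\"obius twist'' along odd cycles.
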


\noindent
The doubled version of a non-bipartite graph is similar, except that it has a ``M\"obius twist'' 
along each odd cycle (see Fig.~\ref{fig:graph:doubled-graph} and Fig.~\ref{fig:graph:bipartite:ladder}).

\subsubsection{Doubled torus}
\label{sec:isoperimetric:doubled-torus}

According to Observation~\ref{obs:isoperimetric:doubled}, the bipartite isoperimetric problem 
on a doubled torus is equivalent to the vertex isoperimetric problem on a torus. Since we will 
be concerned only with sets that are small in comparison with the dimensions of the torus, 
we may consider the infinite lattice $\ZZ\times\ZZ$ instead.  As mentioned in 
Example~\ref{exp:isoperimetric:doubled-torus}, Wang and Wang~\cite{WanWan77} have 
produced an isoperimetric numbering for the vertex isoperimetric problem on $\ZZ\times\ZZ$.
Vainsencher and Bruckstein~\cite{VaiBru08} have provided a characterisation of the optimal 
sets of certain \emph{critical} cardinalities. A complete characterisation of the optimal sets for 
the remaining cardinalities is beyond the scope of this paper. In this section, we propose a 
conjecture that, if true, will allow us to obtain sharp asymptotics for the metastable transition
in the Widom-Rowlinson model on a torus.

Every positive integer $s$ has a unique representation $s=\ell^2+(\ell-1)^2+r$
where $\ell> 0$ and $0\leq r<4\ell$. Note that $4\ell=(\ell-1) + \ell + \ell + (\ell+1)$.
We call a number $s=\ell^2+(\ell-1)^2+r$ \emph{critical} if $r\in\{0,\ell-1,2\ell-1,3\ell-1\}$.
Observe from~\eqref{eq:widom-rowlinson:isoperimetric} that $\Delta(s)$ is non-decreasing 
with $\Delta(s+1)>\Delta(s)$ if and only if $s$ is a critical cardinality. It follows that an optimal 
set $A$ has a critical cardinality if and only if it is also \emph{co-optimal}, meaning that
it has maximum cardinality among all sets $B$ with $\Delta(B)=\Delta(A)$.
A set that is both optimal and co-optimal is called \emph{Pareto optimal}.

For $A\subseteq\ZZ\times\ZZ$ and $k\geq 0$, let $N^k(A)$ denote the set of sites within 
graph distance $k$ from $A$, i.e., the ball of radius $k$ around $A$.
Vainsencher and Bruckstein~\cite{VaiBru08} have shown that a non-empty set is Pareto 
optimal if and only if it has the form $N^k(S)$ for $k\geq 0$ and a set $S$ that is obtained 
by translation and rotation from one of the basic forms in Fig.~\ref{fig:isoperimetric:doubled-torus:seed}.
We call the set $S$ the \emph{seed} of $N^k(S)$.

\begin{figure}[htbp]
	\centering
	\begin{subfigure}[b]{\textwidth}
		\centering
		\begin{tabular}{ccccc}
			\begin{minipage}[b]{0.15\textwidth}
				\centering 
				{
				\tikzsetfigurename{doubled_lattice_isoperimetric_seed_A}
				\begin{tikzpicture}[baseline,scale=0.35,>=stealth,shorten >=1]
					\DrawDoubledLatticeIsoperimetricSeedA
				\end{tikzpicture}
				}
			\end{minipage} &
			\begin{minipage}[b]{0.15\textwidth}
				\centering 
				{
				\tikzsetfigurename{doubled_lattice_isoperimetric_seed_E}
				\begin{tikzpicture}[baseline,scale=0.35,>=stealth,shorten >=1]
					\DrawDoubledLatticeIsoperimetricSeedE
				\end{tikzpicture}
				}
			\end{minipage} &
			\begin{minipage}[b]{0.15\textwidth}
				\centering 
				{
				\tikzsetfigurename{doubled_lattice_isoperimetric_seed_B}
				\begin{tikzpicture}[baseline,scale=0.35,>=stealth,shorten >=1]
					\DrawDoubledLatticeIsoperimetricSeedB
				\end{tikzpicture}
				}
			\end{minipage} &
			\begin{minipage}[b]{0.15\textwidth}
				\centering 
				{
				\tikzsetfigurename{doubled_lattice_isoperimetric_seed_C}
				\begin{tikzpicture}[baseline,scale=0.35,>=stealth,shorten >=1]
					\DrawDoubledLatticeIsoperimetricSeedC
				\end{tikzpicture}
				}
			\end{minipage} &
			\begin{minipage}[b]{0.15\textwidth}
				\centering 
				{
				\tikzsetfigurename{doubled_lattice_isoperimetric_seed_D}
				\begin{tikzpicture}[baseline,scale=0.35,>=stealth,shorten >=1]
					\DrawDoubledLatticeIsoperimetricSeedD
				\end{tikzpicture}
				}
			\end{minipage}
			\medskip \\
			I & II & IIIa & IIIb & IV
		\end{tabular}
		\caption{The seeds generating the Pareto optimal sets (up to rotations and translations).}
		\label{fig:isoperimetric:doubled-torus:seed}
	\end{subfigure}
	
	\bigskip
	
	\begin{subfigure}[b]{\textwidth}
		\centering
		\begin{tabular}{ccccc}
			\begin{minipage}[b]{0.15\textwidth}
				\centering 
				{
				\tikzsetfigurename{doubled_lattice_isoperimetric_seed_A_example}
				\begin{tikzpicture}[baseline,scale=0.2,>=stealth,shorten >=1]
					\draw (-0.5,-0.5) rectangle (0.5,0.5);
					\DrawDoubledLatticeIsoperimetricSeedAExample
				\end{tikzpicture}
				}
			\end{minipage} &
			\begin{minipage}[b]{0.15\textwidth}
				\centering 
				{
				\tikzsetfigurename{doubled_lattice_isoperimetric_seed_E_example}
				\begin{tikzpicture}[baseline,scale=0.2,>=stealth,shorten >=1]
					\draw (-0.5,-0.5) rectangle (0.5,0.5);
					\draw (-0.5,0.5) rectangle (0.5,1.5);
					\draw (0.5,-0.5) rectangle (1.5,0.5);
					\draw (-0.5,-1.5) rectangle (0.5,-0.5);
					\draw (-1.5,-0.5) rectangle (-0.5,0.5);
					\draw (0.5,0.5) rectangle (1.5,1.5);
					\DrawDoubledLatticeIsoperimetricSeedEExample
				\end{tikzpicture}
				}
			\end{minipage} &
			\begin{minipage}[b]{0.15\textwidth}
				\centering 
				{
				\tikzsetfigurename{doubled_lattice_isoperimetric_seed_B_example}
				\begin{tikzpicture}[baseline,scale=0.2,>=stealth,shorten >=1]
					\draw (-0.5,-0.5) rectangle (0.5,0.5);
					\draw (-0.5,0.5) rectangle (0.5,1.5);
					\DrawDoubledLatticeIsoperimetricSeedBExample
				\end{tikzpicture}
				}
			\end{minipage} &
			\begin{minipage}[b]{0.15\textwidth}
				\centering 
				{
				\tikzsetfigurename{doubled_lattice_isoperimetric_seed_C_example}
				\begin{tikzpicture}[baseline,scale=0.2,>=stealth,shorten >=1]
					\draw (-0.5,-0.5) rectangle (0.5,0.5);
					\draw (0.5,0.5) rectangle (1.5,1.5);
					\DrawDoubledLatticeIsoperimetricSeedCExample
				\end{tikzpicture}
				}
			\end{minipage} &
			\begin{minipage}[b]{0.15\textwidth}
				\centering 
				{
				\tikzsetfigurename{doubled_lattice_isoperimetric_seed_D_example}
				\begin{tikzpicture}[baseline,scale=0.2,>=stealth,shorten >=1]
					\draw (-0.5,-0.5) rectangle (0.5,0.5);
					\draw (-0.5,0.5) rectangle (0.5,1.5);
					\draw (0.5,-0.5) rectangle (1.5,0.5);
					\DrawDoubledLatticeIsoperimetricSeedDExample
				\end{tikzpicture}
				}
			\end{minipage}%
			\medskip \\
			$N^{\ell-1}(S)$ & $N^{\ell-2}(S)$ & $N^{\ell-1}(S)$ & $N^{\ell-1}(S)$ & $N^{\ell-1}(S)$
			\\[-1ex]
			& & \multicolumn{2}{c}{\upbracefill} & \\
			{\scriptsize $\mathclap{\ell^2 + (\ell-1)^2}$} &
			{\scriptsize $\mathclap{\ell^2 + (\ell-1)^2 + \ell-1}$} &
			\multicolumn{2}{c}{\scriptsize $\mathclap{\ell^2 + (\ell-1)^2 + 2\ell-1}$} &
			{\scriptsize $\mathclap{\ell^2 + (\ell-1)^2 + 3\ell-1}$}
		\end{tabular}
		\caption{Examples of sets generated from the seeds and their cardinalities.}
		\label{fig:isoperimetric:doubled-torus:inflated}
	\end{subfigure}
	\caption{%
		Every Pareto optimal set (i.e., an optimal set with a crtical cardinality)
		on the lattice is generated by a seed.
	}
	\label{fig:isoperimetric:doubled-torus:pareto}
\end{figure}
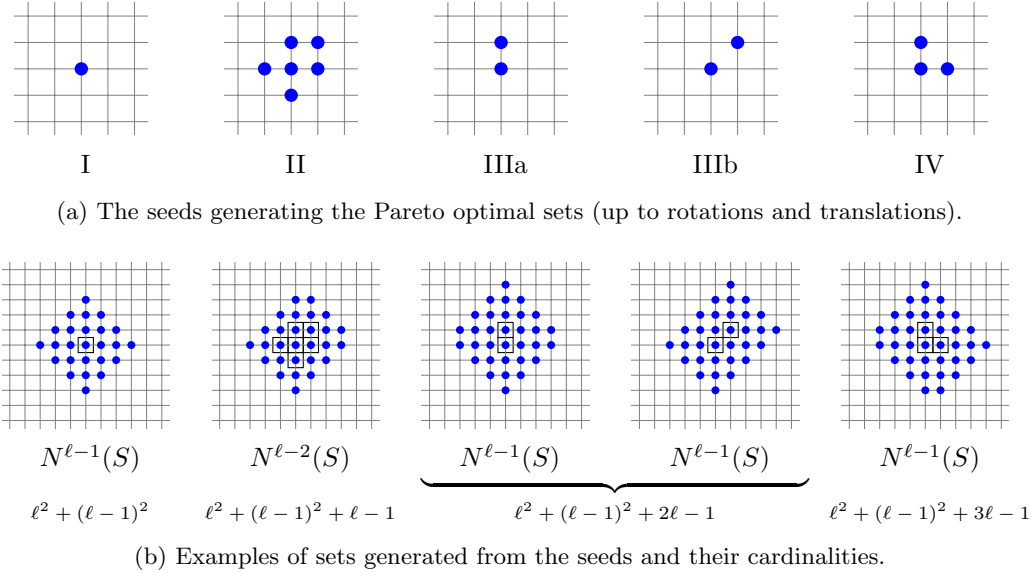

Pareto optimal sets of consecutive types can be connected via nested isoperimetric progressions.

\pagebreak[2]

\begin{observation}[{\bf Existence of connecting progressions}]\leavevmode
\label{obs:isoperimetric:doubled-torus:progressions:existence}
	\begin{enumerate}[label={\rm (\alph*)}]
		\item Let $S$ and $S'$ be seeds of type I and II
			of Fig.~\ref{fig:isoperimetric:doubled-torus:seed}, respectively,
			and suppose that $N(S)\subseteq S'$.  Then, for every $\ell\geq 2$,
			there is a nested isoperimetric progression from $N^{\ell-1}(S)$
			to~$N^{\ell-2}(S')$.
		\item Let $S$ and $S'$ be seeds of type II and III
			of Fig.~\ref{fig:isoperimetric:doubled-torus:seed}, respectively,
			and suppose that $S\subseteq N(S')$.  Then, for every $\ell\geq 2$,
			there is a nested isoperimetric progression from $N^{\ell-2}(S)$
			to~$N^{\ell-1}(S')$.
		\item Let $S$ and $S'$ be seeds of type III and IV
			of Fig.~\ref{fig:isoperimetric:doubled-torus:seed}, respectively,
			and suppose that $S\subseteq S'$.  Then, for every $\ell\geq 1$,
			there is a nested isoperimetric progression from $N^{\ell-1}(S)$
			to~$N^{\ell-1}(S')$.
		\item Let $S$ and $S'$ be seeds of type IV and I
			of Fig.~\ref{fig:isoperimetric:doubled-torus:seed}, respectively,
			and suppose that $S\subseteq N(S')$.  Then, for every $\ell\geq 1$,
			there is a nested isoperimetric progression from $N^{\ell-1}(S)$
			to~$N^{\ell}(S')$.
	\end{enumerate}	
\end{observation}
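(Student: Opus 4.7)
The plan is to construct each of the four nested isoperimetric progressions explicitly, by adding vertices to the initial Pareto optimal set one at a time in a prescribed geometric order, and then to verify that each intermediate set is isoperimetrically optimal by comparing its vertex boundary with the formula~\eqref{eq:widom-rowlinson:isoperimetric}. The key input is the geometric description of $N^k(S)$ for each seed type $S$ from Fig.~\ref{fig:isoperimetric:doubled-torus:seed}: these are tilted ``diamond'' shapes, possibly with one or two protruding vertices forming a short appendage, whose vertex boundary is easy to describe.

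In every case (a)--(d), the symmetric difference between the initial set and the final set consists of a ``staircase'' of vertices sitting along one flat side of the diamond. The common strategy is to add these staircase vertices one by one, starting from the tip of the side and moving inward. When the $i$-th staircase vertex is added, exactly one of its two lattice neighbours outside the current set is new, so $|N(A_i)\setminus A_i|$ either stays constant or increases by~$1$. A direct comparison with the jumps of the function $s\mapsto\Delta(s)$ read off from~\eqref{eq:widom-rowlinson:isoperimetric} then shows that each intermediate boundary size equals $\Delta(|A_i|)$, which forces $A_i$ to be isoperimetrically optimal.

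Concretely, case (a) starts from the tilted square $N^{\ell-1}(S)$ of side~$\ell-1$ with $\ell^2+(\ell-1)^2$ sites and adds a new row of length $\ell-1$ along one of its sides to reach $N^{\ell-2}(S')$; case (d) is analogous in the opposite orientation, attaching a row of length $\ell+1$ that completes the next larger tilted square~$N^{\ell}(S')$. Cases (b) and (c) implement intermediate transitions in which the same side is extended by exactly $\ell$ further vertices, passing through the two types IIIa and IIIb at the midpoint. In every case, the hypothesis on the relative position of the seeds (e.g.\ $N(S)\subseteq S'$ in (a)) guarantees that the constructed intermediate sets are in fact subsets of the final set, so that the progression is nested.

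The main obstacle is the case-by-case verification that adding the next staircase vertex produces exactly the required jump in the vertex boundary. In particular, one has to distinguish vertices added at the tip of a staircase (where a new exterior neighbour appears) from vertices added in the middle of a staircase (where the two lattice neighbours outside the current set contribute differently to the new boundary), and keep track of when the appendage of a type~II or type~IV seed is being absorbed. This is a routine but slightly tedious geometric bookkeeping, most conveniently organised by drawing the four transitions explicitly on the lattice and tabulating, for each intermediate cardinality $s=\ell^2+(\ell-1)^2+r$ with $0\leq r<4\ell$, the residue $r$ and the corresponding value of $\Delta(s)$ from~\eqref{eq:widom-rowlinson:isoperimetric}.
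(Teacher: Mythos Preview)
The paper does not give a proof of this statement: it is recorded as an ``Observation'' and left to the reader, with no argument in the appendix. Your approach---build the nested progression explicitly by attaching a row of sites along one side of the diamond and check optimality of each intermediate set against the formula~\eqref{eq:widom-rowlinson:isoperimetric}---is exactly the intended verification and is correct in outline.

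A couple of points where your sketch could be tightened. First, the phrase ``exactly one of its two lattice neighbours outside the current set is new'' is not quite right: a site in $\ZZ\times\ZZ$ has four neighbours, and when you add the first site of a new row (the one adjacent to a corner of the diamond) three of its neighbours lie outside the current set and all three are new to the boundary, while the added site itself leaves the boundary; this accounts for the jump $\Delta(s)\to\Delta(s)+1$ at the start of each row. For subsequent sites along the same row, two neighbours lie outside the current set, one of which is already in the boundary and one of which is new, so $\Delta$ stays constant. You should distinguish these two situations rather than collapse them into a single statement. Second, in case~(b) the starting set $N^{\ell-2}(S')$ for a type~II seed is not a pure diamond but a diamond with a single protruding corner site, so the row you attach has to be placed on the side containing that protrusion; the hypothesis $S\subseteq N(S')$ is what pins down which side this is, and you should say so explicitly rather than leave it implicit in ``guarantees that the constructed intermediate sets are in fact subsets of the final set.'' With these clarifications the bookkeeping goes through.
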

As an immediate consequence, we find that Pareto optimal sets are achieved
via isoperimetric numberings.
\begin{observation}[{\bf Pareto optimal sets via optimal numberings}]
\label{obs:isoperimetric:doubled-torus:numberings}
	Every Pareto optimal set is of the form $A=\{a_1,a_2,\ldots,a_n\}$
	for some unbounded isoperimetric numbering $a_1,a_2,\ldots$.
\end{observation}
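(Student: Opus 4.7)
The plan is to apply Observation~\ref{obs:isoperimetric:doubled-torus:progressions:existence} repeatedly to chain nested isoperimetric progressions between Pareto optimal sets of consecutive types. Note that traversing the cycle I $\to$ II $\to$ III $\to$ IV $\to$ I (with the radius parameter $\ell$ increasing by one after each full turn) visits every critical cardinality exactly once. Hence, if we can produce a single infinite nested isoperimetric progression $\varnothing=P_0\subsetneq P_1\subsetneq P_2\subsetneq\cdots$ that passes through a Pareto optimal representative of every critical cardinality, then the induced sequence $a_1,a_2,\ldots$ of vertices added along the way is an unbounded isoperimetric numbering whose initial segments realise every Pareto optimal cardinality. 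A rotation/translation argument then puts the given $A$ at its appropriate initial segment.

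First I would handle the backward step by induction on $\abs{A}$. Given a Pareto optimal $A=N^k(S)$ with $S$ of some type in Fig.~\ref{fig:isoperimetric:doubled-torus:seed}, select a compatible seed $S''$ of the preceding type in the cycle, so that the relevant hypothesis of Observation~\ref{obs:isoperimetric:doubled-torus:progressions:existence} is satisfied: namely $N(S'')\subseteq S$ in case~(a), $S''\subseteq N(S)$ in cases~(b) and~(d), or $S''\subseteq S$ in case~(c). The corresponding inflated set $P\isdef N^{k'}(S'')$ is then Pareto optimal of the critical cardinality immediately below $\abs{A}$, and Observation~\ref{obs:isoperimetric:doubled-torus:progressions:existence} furnishes a nested isoperimetric progression from $P$ to $A$. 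By the inductive hypothesis, $P$ itself is the initial segment of an isoperimetric numbering; concatenating that numbering with the vertices added along $P\to A$ produces an isoperimetric numbering whose length-$\abs{A}$ initial segment is $A$.

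For the forward step I would run the same scheme in the opposite direction: pick a compatible successor seed $S'$ of the next type in the cycle, invoke Observation~\ref{obs:isoperimetric:doubled-torus:progressions:existence} to obtain a nested isoperimetric progression from $A$ to the corresponding larger Pareto optimal set $A'$, append the new vertices to the numbering, and iterate. Each iteration strictly enlarges the set, so the numbering produced is unbounded.

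The hard part is the combinatorial bookkeeping of \emph{seed compatibility}: at every step one must exhibit predecessor/successor seeds of the required type satisfying the inclusion hypotheses of Observation~\ref{obs:isoperimetric:doubled-torus:progressions:existence}. Since there are only finitely many seed shapes up to translation and rotation, this reduces to a finite case check on Fig.~\ref{fig:isoperimetric:doubled-torus:seed}. For example, the single-vertex type-I seed $S_{\textup{I}}$ satisfies $N(S_{\textup{I}})=S_{\textup{II}}$ for the plus-shaped type-II seed; any type-II plus contains two adjacent arms forming a type-III seed $S_{\textup{III}}$ with $S_{\textup{II}}\subseteq N(S_{\textup{III}})$; a type-III pair sits inside a type-IV triple; and a type-IV triple sits inside the neighbourhood of an appropriately placed larger type-I vertex. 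Once compatibility is established for each of the four transitions in the cycle, the two inductive arguments combine to deliver the desired unbounded isoperimetric numbering containing $A$ as an initial segment.
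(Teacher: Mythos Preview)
Your argument is essentially the elaboration the paper leaves implicit: the paper states this observation as ``an immediate consequence'' of Observation~\ref{obs:isoperimetric:doubled-torus:progressions:existence} and gives no further proof, and your backward/forward chaining of the nested progressions from that observation is exactly how one cashes out that immediacy.

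One remark: the rotation/translation shortcut in your first paragraph does not quite work as stated, because Pareto optimal sets of a given critical cardinality need not all lie in a single isometry orbit (types~IIIa and~IIIb have the same cardinality but are not congruent). Fortunately your actual argument in the subsequent paragraphs does not use this; you build the numbering tailored to the specific~$A$ by induction, choosing at each step a compatible predecessor/successor seed. That inductive construction is correct, and the seed-compatibility case check you sketch goes through (e.g., every type~II seed contains a plus $N(S_{\mathrm{I}})$; every $N(S_{\mathrm{III}})$ contains a type~II seed; every type~IV seed contains a type~III pair; every type~IV seed sits in $N(S_{\mathrm{I}})$ for a suitable centre). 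So the proof is fine once you drop the opening shortcut and rely solely on the per-$A$ induction.
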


In order to identify the critical gate for the Widom-Rowlinson model
on a torus, we will also need some information about all isoperimetric progressions
connecting Pareto optimal sets of consecutive types.
This requires a better understanding of the optimal sets with non-critical cardinalities,
which we do not have.
Nonetheless, we make the following conjecture.

\begin{conjecture}[{\bf Property of connecting progressions}]\leavevmode
\label{conj:isoperimetric:doubled-torus:progressions:property}
	\begin{enumerate}[label={\rm (\alph*)}]
		\item Let $B_0,B_1,\ldots,B_n$ be an isoperimetric progression
			with $\abs{B_0}=\ell^2+(\ell-1)^2+\ell-1$ and $\abs{B_n}=\ell^2+(\ell-1)^2+2\ell-1$
			and $\abs{B_0}<\abs{B_i}<\abs{B_n}$ for $0<i<n$.
			Let $S_0$ be the seed of $B_0$ and $S_n$ the seed of $B_n$,
			so that $B_0=N^{\ell-2}(S_0)$ and $B_n=N^{\ell-1}(S_n)$.
			Then, $S_0\subseteq N(S_n)$ and $B_0\subseteq B_1\subseteq B_n$.
		\item Let $B_0,B_1,\ldots,B_n$ be an isoperimetric progression
			with $\abs{B_0}=\ell^2+(\ell-1)^2+3\ell-1$ and $\abs{B_n}=(\ell+1)^2+\ell^2$
			and $\abs{B_0}<\abs{B_i}<\abs{B_n}$ for $0<i<n$.
			Let $S_0$ be the seed of $B_0$ and $S_n$ the seed of $B_n$,
			so that $B_0=N^{\ell-1}(S_0)$ and $B_n=N^{\ell}(S_n)$.
			Then, $S_0\subseteq N(S_n)$ and $B_0\subseteq B_1\subseteq B_n$.
	\end{enumerate}
\end{conjecture}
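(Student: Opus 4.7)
The plan is to leverage the Vainsencher--Bruckstein~\cite{VaiBru08} seed characterisation of Pareto optimal sets. First note that $\abs{B_1}=\abs{B_0}+1$ is automatic: a progression changes cardinality by exactly one at each step, and the hypothesis $\abs{B_0}<\abs{B_1}$ forces $B_1=B_0\cup\{x\}$ for some $x\notin B_0$. This already gives $B_0\subseteq B_1$, the first inclusion in both (a) and (b). What remains is to identify the admissible additions $x$, to show $x\in B_n$ for the specific endpoint $B_n$ of the given progression, and to verify the seed inclusion $S_0\subseteq N(S_n)$.

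For the classification of admissible $x$, I would use the elementary identity
\[
\Delta(B_0\cup\{x\})-\Delta(B_0) = \abs{N(x)\setminus(B_0\cup N(B_0)\cup\{x\})} - \mathbf{1}_{\{x\in N(B_0)\setminus B_0\}}.
\]
In case (a), the left-hand side must equal $\Delta(\abs{B_0}+1)-\Delta(\abs{B_0})=(4\ell+2)-(4\ell+1)=+1$ by~\eqref{eq:widom-rowlinson:isoperimetric}; in case (b), $\Delta$ jumps from $4\ell+3$ to $4\ell+4$, again by $+1$. Hence the admissible $x$ are those exhibiting a prescribed incidence with the outer boundary of $B_0=N^{\ell-2}(S_0)$ (resp.\ $B_0=N^{\ell-1}(S_0)$). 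Since every vertex achieving the required balance must lie at graph distance $\ell-1$ (resp.\ $\ell$) from $S_0$, the search is confined to a thin annulus around $B_0$, and a case-by-case geometric check should show that the admissible $x$ lie exactly along the four straight sides of the inflated diamond. Each such side extends $S_0$ into a unique type III (resp.\ type I) seed $S$ satisfying $S_0\subseteq N(S)$ and $B_0\cup\{x\}\subseteq N^{\ell-1}(S)$ (resp.\ $\subseteq N^{\ell}(S)$).

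To finish, I would appeal to the given endpoint $B_n$. By the classification, $B_1$ is contained in $N^{\ell-1}(S)$ (resp.\ $N^{\ell}(S)$) for some admissible seed $S$ of the next type. Because distinct admissible seeds of the next type produce Pareto optimal sets of the target cardinality that already disagree within the annulus $N(B_0)\setminus B_0$, and because $B_n$ is itself a Pareto optimal set of exactly the target cardinality reached by the progression, the seed $S_n$ of $B_n$ must coincide with the seed $S$ attached to the initial step. This yields $B_1\subseteq B_n$ and $S_0\subseteq N(S_n)$ simultaneously.

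The main obstacle is the second step: ruling out pathological admissible $x$. Without a complete characterisation of optimal sets at non-critical cardinalities (the open problem flagged in Example~\ref{exp:isoperimetric:doubled-torus}), every candidate position of $x$ must be checked directly against the balance formula, and one must also argue that no ``corner'' addition or isolated extension can produce an optimal $B_1$ that fails to sit inside a type III (resp.\ type I) Pareto optimal set. The finiteness of the relevant annulus makes this plausible, but the combinatorial bookkeeping near corners of the inflated diamond is delicate and, absent a deeper structural result on the non-critical levels, is precisely what prevents us from upgrading the statement from a conjecture to a theorem.
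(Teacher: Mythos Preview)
The paper does not prove this statement: it is explicitly presented as a \emph{conjecture}. Immediately before stating it, the authors write that pinning down the critical gate for the Widom--Rowlinson model ``requires a better understanding of the optimal sets with non-critical cardinalities, which we do not have.'' The conjecture is then used in Section~\ref{sec:doubled-torus:metastability} as an \emph{assumption} under which the sharp asymptotics and critical-droplet description are derived.

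Your proposal is therefore not to be compared against a proof in the paper --- there is none. What you have written is a plausible proof \emph{strategy}, and you have correctly identified, in your final paragraph, precisely the obstruction the authors flag: without a characterisation of optimal sets at non-critical cardinalities, one cannot rule out that the first step $B_0\to B_1$ of the progression lands outside every candidate Pareto set $N^{\ell-1}(S)$ (resp.\ $N^{\ell}(S)$), nor can one propagate the constraint forward to force $S_n$ to coincide with the seed selected at the first step. Your balance formula and the annulus localisation are sound first moves, but the ``case-by-case geometric check'' you invoke is exactly the missing structural input. In short, your proposal does not close the gap --- and you say so yourself --- which is consistent with the paper's position that the statement remains open.
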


%%%%

\subsubsection{Hypercube}
\label{sec:isoperimetric:hypercube}
According to Observations~\ref{obs:isoperimetric:doubled} and~\ref{obs:bipartite:doubled},
the bipartite isoperimetric problem on the $(d+1)$-dimensional hypercube $H_{d+1}$ is equivalent
to the vertex isoperimetric problem on the $d$-dimensional hypercube $H_d$. In this section, 
we present a recursive expression for the vertex isoperimetric function on the hypercube.

As mentioned in Example~\ref{exp:isoperimetric:hypercube}, from Harper's isoperimetric 
numbering~\cite{Har66}, we can immediately see that $\Delta_{d+1}\big(\sum_{i=0}^{r-1}
\binom {d}{i}\big)=\binom{d}{r}$ for $0<r\leq d$.  (Recall: $\Delta_{d+1}$ is the vertex 
isoperimetric function of the $d$-dimensional hypercube $H_d$.)  More generally, we can 
use the numbering to obtain a recursive expression for $\Delta_{d+1}$.

\begin{proposition}[{\bf Isoperimetric function of the hypercube}]
\label{prop:isoperimetric:hypercube:recursion}
	For $0\leq k\leq\binom{d}{r}$, we can write
	\begin{align}
		\Delta_{d+1}\left(\sum_{i=0}^{r-1}\binom {d}{i}+k\right)
			&= \binom{d}{r}+\psi_d(r,k)-k \;,
	\end{align}
	where $\psi_d(r,k)$ satisfies the recursion
	\begin{align}
		\label{eq:isoperimetric:hypercube:recursion}
		\psi_d(r,k) &=
			\begin{cases}
				\psi_{d-1}(r-1,k) & \text{if $0<r<d$ and $0< k\leq\binom{d-1}{r-1}$,} \\[1ex]
				\tbinom{d-1}{r} + \psi_{d-1}\big(r,k-\binom{d-1}{r-1}\big)
					& \text{if $0<r<d$ and $\binom{d-1}{r-1}< k\leq\binom{d}{r}$,} \\[1ex]
				0 & \text{otherwise.}
			\end{cases}
	\end{align}
	\end{proposition}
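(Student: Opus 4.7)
The plan is to invoke Harper's theorem~\cite{Har66}, which states that the initial segments of the ordering $\unlhd$ solve the vertex isoperimetric problem on $H_d$. Consequently $\Delta_{d+1}(s)$ equals the vertex boundary of the initial segment $A_s$ of cardinality $s$. Writing $s=\sum_{i=0}^{r-1}\binom{d}{i}+k$ with $0\leq k\leq\binom{d}{r}$, the segment $A_s$ consists of all vertices of weight strictly less than $r$ together with the first $k$ weight-$r$ vertices in the $\unlhd$-order.

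To obtain the main identity, I would split the vertex boundary $N(A_s)\setminus A_s$ into two disjoint parts. Every weight-$r$ vertex outside $A_s$ lies in the boundary since each has a weight-$(r-1)$ neighbour inside $A_s$, contributing $\binom{d}{r}-k$. The remaining boundary vertices have weight $r+1$ and constitute the upper shadow of the first $k$ weight-$r$ vertices; defining $\psi_d(r,k)$ to be the size of this upper shadow yields $\Delta_{d+1}(s)=\binom{d}{r}+\psi_d(r,k)-k$ at once.

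The recursion for $\psi_d(r,k)$ is then derived by partitioning the weight-$r$ vertices of $H_d$ according to their first coordinate. Direct inspection of $\unlhd$ shows that the $\binom{d-1}{r-1}$ leading-$\symb{1}$ weight-$r$ vertices come first, in the order induced by $\unlhd$ on $H_{d-1}$ at weight $r-1$; they are followed by the $\binom{d-1}{r}$ leading-$\symb{0}$ weight-$r$ vertices, similarly ordered by $\unlhd$ on $H_{d-1}$ at weight $r$. A leading-$\symb{1}$ vertex $\symb{1}u$ has weight-$(r+1)$ neighbours in $H_d$ precisely at $\symb{1}u'$ with $u'$ an upper neighbour of $u$ in $H_{d-1}$; a leading-$\symb{0}$ vertex $\symb{0}v$ has weight-$(r+1)$ neighbours at $\symb{1}v$ and at $\symb{0}v'$ with $v'$ an upper neighbour of $v$ in $H_{d-1}$. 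Substituting into the definition of $\psi_d(r,k)$ yields $\psi_d(r,k)=\psi_{d-1}(r-1,k)$ when only leading-$\symb{1}$ weight-$r$ vertices are selected, and $\psi_d(r,k)=\binom{d-1}{r}+\psi_{d-1}\big(r,k-\binom{d-1}{r-1}\big)$ once all leading-$\symb{1}$ weight-$r$ vertices plus some leading-$\symb{0}$ ones appear.

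The delicate point is the second case: I must check that, with all $\binom{d-1}{r-1}$ leading-$\symb{1}$ weight-$r$ vertices present, the entire set of $\binom{d-1}{r}$ leading-$\symb{1}$ weight-$(r+1)$ vertices is swept into the upper shadow (producing the flat term $\binom{d-1}{r}$), and that the extra shadow contributed by the selected leading-$\symb{0}$ vertices consists solely of leading-$\symb{0}$ weight-$(r+1)$ vertices, so that there is no double counting and the residual is exactly $\psi_{d-1}\big(r,k-\binom{d-1}{r-1}\big)$. The ``otherwise'' branch collects the boundary situations ($k=0$, $r=d$, and the dimension base) where the shadow is either empty or handled by direct inspection. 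The main obstacle I foresee is purely book-keeping: verifying that the $\unlhd$-order on $H_d$, restricted to each leading-coordinate slice, coincides with $\unlhd$ on $H_{d-1}$ after deleting the leading coordinate, which is what legitimises the recursive appeal to $\psi_{d-1}$.
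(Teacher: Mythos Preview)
Your proposal is correct and follows essentially the same approach as the paper: invoke Harper's theorem, split the boundary of the initial segment into the $\binom{d}{r}-k$ remaining weight-$r$ vertices and the upper shadow $\psi_d(r,k)$ of the selected weight-$r$ vertices, and derive the recursion by partitioning the weight-$r$ level according to the leading coordinate (using that the reverse-lexicographic order on each slice coincides with the $H_{d-1}$ order). The paper packages this via an explicit recursion for the sets $L(d,r,k)$ of selected weight-$r$ words, but the substance is identical to your direct upper-shadow argument.
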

\noindent 
The proof can be found in Appendix~\ref{apx:isoperimetric}.

%%%%%%%% SECTION 8 %%%%%%%%%%%%%%%

\section{Sophisticated examples: key results}
\label{sec:main-examples:proof}

After having collected in Section~\ref{sec:furtherprep}
the relevant tools, we are now ready to apply our results to the `sophisticated examples' 
in Section~\ref{sec:hard-core:isoperimetric}: torus, doubled torus, tree-like graphs, hypercube. 
In the case of the torus where a complete solution of the isoperimetric problem is known,
we obtain a complete picture of the metastable transition from $u$ to $v$.
In the case of the doubled torus, the complete picture relies on the validity of
Conjecture~\ref{conj:isoperimetric:doubled-torus:progressions:property}.
In other cases we still obtain an incomplete picture.

%%%

\subsection{Hard-core on an even torus}
\label{sec:torus:metastability}
In this section, we combine our results to give a description of the metastable 
transition of the hard-core dynamics on an even torus $\ZZ_m\times\ZZ_n$. We assume 
$0<\alpha<1$, $\nicefrac{2}{\alpha}\notin\ZZ$ and $m,n\gg\nicefrac{1}{\alpha}$.
Putting together the result in the paper, we are able to give a complete picture
of the transition from $u$ to $v$: exponential distribution for the crossover time,
sharp estimate for the expected crossover time, and a detailed description
of the critical droplet.

As discussed in 
Example~\ref{exp:isoperimetric:torus} (and proved in Section~\ref{sec:isoperimetric:torus}),
the isoperimetric function of $\ZZ_m\times\ZZ_n$ is given by $\Delta(s)=\left\lceil 2
\sqrt{s}\right\rceil+1$ as long as $s\ll m,n$. The proof of the following lemma can be 
found in Appendix~\ref{apx:critical-size}.

\begin{lemma}[{\bf Critical size: torus}]
\label{lem:torus:critical-size}
	Suppose $0<\alpha<1$ and $\nicefrac{2}{\alpha}\notin\ZZ$,
	and let $\Delta(s)$ be the isoperimetric function of
	a torus $\ZZ_m\times\ZZ_n$ with even $m,n\gg \nicefrac{1}{\alpha}$.
	Then, the function $g(s)=\Delta(s)-\alpha(s-1)$ has a unique maximum on $\NN\isdef\{0,1,\ldots\}$
	at $s^*\isdef\ell^*(\ell^*-1)+1$, where $\ell^*\isdef\lceil\nicefrac{1}{\alpha}\rceil$.
\end{lemma}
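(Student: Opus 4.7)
The plan is to exploit the piecewise-constant structure of $\Delta$ to reduce the optimization to a finite comparison of values. Since $\Delta$ is non-decreasing and $-\alpha(s-1)$ strictly decreasing, on every maximal interval on which $\Delta$ is constant the function $g$ is strictly decreasing; the local maxima of $g$ on $\{1,2,\ldots\}$ therefore all occur at jump points of $\Delta$. Unpacking $\Delta(s)=\lceil 2\sqrt{s}\rceil+1$ shows that $\Delta$ equals $2\ell+1$ on $\{\ell(\ell-1)+1,\ldots,\ell^2\}$ and $2\ell+2$ on $\{\ell^2+1,\ldots,\ell(\ell+1)\}$ for each $\ell\geq 1$. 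The jump points therefore form two interleaved families
\begin{align*}
s^A_\ell \isdef \ell(\ell-1)+1, \qquad s^B_\ell \isdef \ell^2+1 \qquad (\ell\geq 1),
\end{align*}
ordered as $s^A_1=1<s^B_1=2<s^A_2=3<s^B_2=5<s^A_3=7<\cdots$; and $s=0$ may be discarded since $g(0)=\alpha<3=g(1)$.

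The core step is a direct computation of successive differences along this interleaved sequence. Substituting the $\Delta$-values yields
\begin{align*}
g(s^B_\ell)-g(s^A_\ell) \;=\; 1-\alpha\ell \;=\; g(s^A_{\ell+1})-g(s^B_\ell),
\end{align*}
so each integer $\ell\geq 1$ contributes two \emph{equal} consecutive increments. The hypothesis $2/\alpha\notin\ZZ$ excludes in particular $1/\alpha\in\ZZ$ (else $2/\alpha=2\cdot(1/\alpha)\in\ZZ$), so with $\ell^*\isdef\lceil 1/\alpha\rceil$ we have the strict inequalities $\ell^*-1<1/\alpha<\ell^*$. Hence $1-\alpha\ell>0$ for $\ell\leq\ell^*-1$ and $1-\alpha\ell<0$ for $\ell\geq\ell^*$.

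It follows that the sequence $g(s^A_1),g(s^B_1),g(s^A_2),g(s^B_2),\ldots$ is strictly increasing up to and including the term $g(s^A_{\ell^*})$, and strictly decreasing from that term onward. Therefore $s^* = s^A_{\ell^*} = \ell^*(\ell^*-1)+1$ is the unique global maximum of $g$ on $\NN$, as claimed. There is no genuine obstacle: the only care needed is reading off the jump structure of $\Delta$ from the ceiling formula and noticing the fortunate coincidence that both kinds of consecutive increments collapse to the single expression $1-\alpha\ell$. The assumption $m,n\gg 1/\alpha$ plays no role in the extremal analysis itself; it simply guarantees that $s^*=\Theta(1/\alpha^2)$ is small enough that the isoperimetric formula inherited from the infinite lattice (Example~\ref{exp:isoperimetric:torus}) is valid on the torus $\ZZ_m\times\ZZ_n$.
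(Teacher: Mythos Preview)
Your proof is correct and follows essentially the same overall strategy as the paper's---reduce to the jump points of $\Delta$ and compare values there---but your organization is cleaner. The paper splits the jump points into the two families $\ell^2+1$ and $\ell(\ell+1)+1$, optimizes the resulting quadratics $g_1(\ell)$ and $g_2(\ell)$ separately (using $2/\alpha\notin\ZZ$ and $1/\alpha\notin\ZZ$, respectively, for uniqueness of the integer maximizers), and then compares $g_1(\ell_1)$ against $g_2(\ell_2)$. You instead interleave the two families into a single ordered sequence and observe that \emph{both} kinds of consecutive increments collapse to the common expression $1-\alpha\ell$, which immediately yields unimodality. This is a genuine simplification: it sidesteps the separate quadratic optimizations and the final comparison, and in fact your argument only requires $1/\alpha\notin\ZZ$ (which the stated hypothesis $2/\alpha\notin\ZZ$ implies).
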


Finding the exact value of resettling size $\tilde{s}$ (i.e., the smallest $s$ for which $\Delta(s)\leq\alpha s$) 
is not necessary. It is sufficient to note that $\tilde{s}$ exists (the inequality is achieved for instance for 
$s>\nicefrac{8}{\alpha^2}$) and is independent of $m$ and $n$ (as long as $m,n\gg \nicefrac{1}{\alpha}$).

Hypothesis~\eqref{hypothesis:v-is-stable} is clearly satisfied.
The existence of isoperimetric numberings of length at least~$\tilde{s}$ was demonstrated in 
Example~\ref{exp:isoperimetric:torus} (as long as $\tilde{s}\ll m,n$). Hence 
hypothesis~\eqref{hypothesis:numbering:single} and (by translation symmetry) 
hypothesis~\eqref{hypothesis:numbering:all} are both satisfied. Therefore, 
Theorems~\ref{thm:mean-crossover:magnitude} and~\ref{thm:crossover:exponential}
establish the asymptotic exponentiality of the crossover time and provide the estimate
\begin{align}
	\xExp_u[\hat{T}_v] &\asymp
		\frac{\lambda^{\ell^*(\ell^*+1)+1}}{\bar{\lambda}^{\ell^*(\ell^*-1)}}
		= \lambda^{2\ell^*+1-\alpha\ell^*(\ell^*-1) + \smallo(1)}
		\qquad\text{as $\lambda\to\infty$,}
\end{align}
for its mean, where $\ell^*\isdef\lceil\nicefrac{1}{\alpha}\rceil$.

A more accurate estimate on the mean crossover time as well as a description of 
the critical droplet is provided by Theorem~\ref{thm:crossover:sharp}, which relies 
on hypotheses~\eqref{hypothesis:uniqueness} and~\eqref{hypothesis:critical-set}.
Hypothesis~\eqref{hypothesis:uniqueness} is already verified in Lemma~\ref{lem:torus:critical-size}.
Proposition~\ref{prop:critical-gate:identification} reduces the verification
of~\eqref{hypothesis:critical-set} to the verification of simpler 
conditions~\eqref{hypothesis:critical-set-replacement:values}
and~\eqref{hypothesis:critical-set-replacement:existence}.
Choose $\kappa\isdef\lceil\nicefrac{1}{\alpha}\rceil-1=\ell^*-1$.
Conditions~\eqref{hypothesis:critical-set-replacement:values:A} 
and~\eqref{hypothesis:critical-set-replacement:values:B}
follow from the monotonicity of $\Delta(s)=\big\lceil 2\sqrt{s}\big\rceil+1$,
and~\eqref{hypothesis:critical-set-replacement:values:C} is evident via direct 
calculation $\Delta(s^*)=2\ell^*+1$ and $\Delta(s^*-1)=2\ell^*$.
In order to verify~\eqref{hypothesis:critical-set-replacement:existence},
observe that $s^*-1=\ell^*(\ell^*-1)$ and $s^*+\kappa=(\ell^*)^2$.
From the characterisation of the isoperimetrically optimal sets in
Example~\ref{exp:isoperimetric:torus}, we find that
\begin{itemize}
	\item $\family{A}$ consists precisely of tilted $(\ell^*-1)\times\ell^*$
		rectangles, and
	\item $\family{C}$ consists precisely of tilted $\ell^*\times\ell^*$
		squares
\end{itemize}
of elements of $V$.
Conditions~\eqref{hypothesis:critical-set-replacement:before-A}
and~\eqref{hypothesis:critical-set-replacement:after-C} follow immediately
from the existence of isoperimetric numberings of length at least $\tilde{s}$
and symmetry.

In order to identify the family $\family{B}$, recall from Example~\ref{exp:isoperimetric:torus}
that each isoperimetrically optimal set~$B$ with $\abs{B}=s^*=(\ell^*-1)\ell^*+1$ consists 
of an element of $A\in\family{A}$ (i.e., an $(\ell^*-1)\times\ell^*$ tilted rectangle) and an 
extra site $b$ along one of the four sides of the rectangle
(see~Fig.~\ref{fig:isoperimetric:lattice:quasi-square}).
Observe that if $b$ is along a longer edge of $A$, then
$B$ can be extended via a nested isoperimetric progression to an element of~$\family{C}$
(i.e., an $\ell^*\times\ell^*$ tilted rectangle), whereas if $b$ is along a shorter edge of $A$, 
then every isoperimetric progression from $B$ to $\family{C}$ must pass through $\family{A}$.
Therefore,
\begin{itemize}
	\item $\family{B}$ consists precisely of tilted $(\ell^*-1)\times\ell^*$
		rectangles plus an extra element along one of the two longer sides of
		the rectangle.
\end{itemize}
A typical transition through the critical gate $[Q,Q^*]$ is
depicted in Figure~\ref{fig:gate:torus}.

%%%%%%%%%%%%%%%%%%%%%%%%%%%%%%%%%%%%%%%
\begin{figure}[htbp]
	\centering
	\begin{tabular}{ccc}
		\raisebox{-\height/2+2pt}{%
		{
		\tikzsetfigurename{torus_gate_Q}
		\begin{tikzpicture}[scale=0.175,>=stealth,shorten >=1]
			\DrawTorusGate
			\node[red particle] at (11,-5) {};
		\end{tikzpicture}
		}
		}
		& $\longrightarrow$ &
		\raisebox{-\height/2+2pt}{%
		{
		\tikzsetfigurename{torus_gate_Qstar}
		\begin{tikzpicture}[scale=0.175,>=stealth,shorten >=1]
			\DrawTorusGate
		\end{tikzpicture}
		}
		}		
	\end{tabular}
	\caption{A typical transition through the critical gate for the torus.
		The critical length $\ell^*$ is assumed to be~$6$. The `hole' 
		is along one of the two long edges of the rectangle. Once a 
		two-site `hole' is produced, with probability close to $1$ a (blue) 
		particle appears very quickly in the opened-up space.
	}
	\label{fig:gate:torus}
\end{figure}
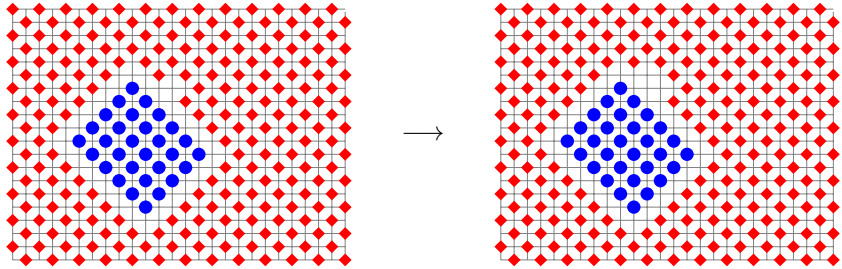
%%%%%%%%%%%%%%%%%%%%%%%%%%%%%%%%%%%%%%%%%%%%

Counting the number of possible transitions in the critical gate
using~\eqref{eq:critical-gate:size}, we get
\begin{align}
	\abs{[Q,Q^*]} &= \abs{V}\times 2\times 2\ell^*\times 2 = 4mn\ell^* \;.
\end{align}
Theorem~\ref{thm:crossover:sharp} thus gives the sharp estimate
\begin{align}
	\Aboxed{
	\xExp_u[\hat{T}_v] 
		&= \frac{1}{4mn\ell^*}
			\frac{\lambda^{\ell^*(\ell^*+1)+1}}{\bar{\lambda}^{\ell^*(\ell^*-1)}}
			[1+\smallo(1)]
			} \qquad\text{as $\lambda\to\infty$,}
\end{align}
for the expected crossover time.

%%%

\subsection{Widom-Rowlinson on a torus}
\label{sec:doubled-torus:metastability}

As observed in Section~\ref{sec:intro:model}, the Widom-Rowlinson dynamics on 
the torus $\ZZ_m\times\ZZ_n$ is equivalent to the hard-core dynamics on the doubled torus.
We assume that $0<\alpha<1$, $\nicefrac{4}{\alpha}\notin\ZZ$ and $m,n\gg\nicefrac{1}{\alpha}$.
The isoperimetric function $\Delta(s)$ on the doubled is provided in
Example~\ref{exp:isoperimetric:doubled-torus}, using the equivalence
of the bipartite isoperimetric problem on a doubled torus and the vertex isoperimetric
problem on the torus and the known result about the vertex isoperimetric problem
on the torus. We shall obtain the exponentiality of the distribution of the crossover time
and the order of magnitude of its expected value. A sharp asymptotic for the expected 
crossover time and a description of the critical droplet are obtained assuming 
Conjecture~\ref{conj:isoperimetric:doubled-torus:progressions:property}
regarding the solutions of the vertex isoperimetric problem on $\ZZ\times\ZZ$
is true.

The proof of the following lemma appears in Appendix~\ref{apx:critical-size}.
\begin{lemma}[{\bf Critical size: doubled torus}]
\label{lem:doubled-torus:critical-size}
	Suppose $0<\alpha<1$ and $\nicefrac{4}{\alpha}\notin\ZZ$, and let $\Delta(s)$ 
	be the isoperimetric function of the doubled version of a torus $\ZZ_m\times\ZZ_n$ 
	with $m,n\gg \nicefrac{1}{\alpha}$. Then the function $g(s)=\Delta(s)-\alpha(s-1)$ 
	has a unique maximum on $\NN$ at
	\begin{align}
		s^*\isdef
			\begin{cases}
				(\ell^*)^2 + (\ell^*-1)^2 + \ell^*		&\text{if $\ell^*>\nicefrac{1}{\alpha}$,} \\
				(\ell^*)^2 + (\ell^*-1)^2 + 3\ell^*		&\text{if $\ell^*<\nicefrac{1}{\alpha}$,}
			\end{cases}
	\end{align}
	where $\ell^*\isdef[\nicefrac{1}{\alpha}]$ is the closest integer to $\nicefrac{1}{\alpha}$.
\end{lemma}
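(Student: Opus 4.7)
The plan is to exploit the piecewise-constant structure of $\Delta$. Since $\Delta$ is non-decreasing and $\alpha > 0$, the function $g(s) = \Delta(s) - \alpha(s-1)$ strictly decreases on any stretch where $\Delta$ is constant, so every global maximum of $g$ must sit at a jump point of $\Delta$. My plan is therefore to enumerate those jump points, write down the increments of $g$ between consecutive ones, and determine where their sign changes.

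Using~\eqref{eq:widom-rowlinson:isoperimetric}, I would write each $s\in\NN$ uniquely as $s = \ell^2 + (\ell-1)^2 + i$ with $\ell\geq 1$ and $0\leq i<4\ell$, and observe that the jumps of $\Delta$ inside shell $\ell\geq 2$ occur exactly at the four positions $p_j(\ell)\isdef\ell^2+(\ell-1)^2+c_j$ with $c_1=1$, $c_2=\ell$, $c_3=2\ell$, $c_4=3\ell$. A direct spacing calculation gives $p_1(\ell)-p_4(\ell-1) = p_3(\ell)-p_2(\ell) = p_4(\ell)-p_3(\ell) = \ell$ and $p_2(\ell)-p_1(\ell) = \ell-1$, so in shell $\ell\geq 2$ the four consecutive increments of $g$ read $1-\alpha\ell,\,1-\alpha(\ell-1),\,1-\alpha\ell,\,1-\alpha\ell$. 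The hypothesis $\nicefrac{4}{\alpha}\notin\ZZ$ makes $\nicefrac{1}{\alpha}$ neither integer nor half-integer, so exactly one of $\ell^*>\nicefrac{1}{\alpha}$ or $\ell^*<\nicefrac{1}{\alpha}$ holds and $\abs{\ell^*-\nicefrac{1}{\alpha}}<\nicefrac{1}{2}$ strictly.

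In Case~1 ($\ell^*>\nicefrac{1}{\alpha}$, so $\alpha(\ell^*-1)<1<\alpha\ell^*$), every increment in shells $\ell\leq\ell^*-1$ is strictly positive, so $g$ rises monotonically to $g(p_4(\ell^*-1))$. The cross-shell step $p_4(\ell^*-1)\to p_1(\ell^*)$ contributes $1-\alpha\ell^*<0$, but the next step $p_1(\ell^*)\to p_2(\ell^*)$ adds $1-\alpha(\ell^*-1)>0$; the combined change $2-\alpha(2\ell^*-1)$ is strictly positive precisely because $\ell^*<\nicefrac{1}{\alpha}+\nicefrac{1}{2}$. From $p_2(\ell^*)$ onward every increment has $\alpha$-coefficient $\geq\ell^*$ and is therefore strictly negative, so $g$ strictly decreases, identifying the unique maximum at $p_2(\ell^*)=(\ell^*)^2+(\ell^*-1)^2+\ell^*$. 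Case~2 ($\ell^*<\nicefrac{1}{\alpha}$) is symmetric: $g$ rises monotonically to $p_4(\ell^*)$, then a two-step rebound $p_4(\ell^*)\to p_1(\ell^*+1)\to p_2(\ell^*+1)$ has cumulative change $2-\alpha(2\ell^*+1)<0$ because $\ell^*>\nicefrac{1}{\alpha}-\nicefrac{1}{2}$, and $g$ strictly decreases thereafter, yielding the unique maximum at $p_4(\ell^*)=(\ell^*)^2+(\ell^*-1)^2+3\ell^*$.

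The main obstacle in this plan is exactly the rebound described above: a negative cross-shell step can in principle be overwhelmed by the next in-shell step. The resolution is the cumulative two-step estimate, whose sign is controlled exactly by the defining property $\abs{\ell^*-\nicefrac{1}{\alpha}}<\nicefrac{1}{2}$ of the closest integer, and the assumption $\nicefrac{4}{\alpha}\notin\ZZ$ is precisely what rules out the borderline cases $\ell^*=\nicefrac{1}{\alpha}\pm\nicefrac{1}{2}$. The only remaining item is the degenerate shell $\ell=1$, where $p_1(1)$ is absent and $\Delta$ jumps by $4$ at $s=1$; however those first-shell increments are all strictly larger than the generic $1-\alpha\ell$ pattern, so the monotonicity on the way up to the relevant $p_2(\ell^*)$ or $p_4(\ell^*)$ is preserved and no spurious maximum appears there.
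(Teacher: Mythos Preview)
Your proof is correct and takes a genuinely different route from the paper's. The paper treats the four jump-point types $p_1,p_2,p_3,p_4$ as four separate quadratic functions of $\ell$, locates the integer maximiser $\ell^*_j=[\nicefrac{1}{\alpha}+\nicefrac{(2-j)}{4}]$ of each, and then runs a four-case analysis on the fractional part of $\nicefrac{1}{\alpha}$ (the intervals $(0,\nicefrac14)$, $(\nicefrac14,\nicefrac12)$, $(\nicefrac12,\nicefrac34)$, $(\nicefrac34,1)$) to compare the four candidate values directly. You instead walk along the jump points in their natural order and track the sign of the successive increments $1-\alpha\ell$ and $1-\alpha(\ell-1)$; the only nontrivial step is the two-step ``rebound'' whose cumulative sign $2-\alpha(2\ell^*\mp 1)$ is governed exactly by $\abs{\ell^*-\nicefrac{1}{\alpha}}<\nicefrac12$. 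Your argument is more economical---two cases instead of four, and it reveals that only the half-integer exclusion (implied by $\nicefrac{4}{\alpha}\notin\ZZ$) is actually needed for strictness, whereas the paper's quadratic maximisers $\ell^*_j$ also require excluding quarter-integers. The paper's approach, on the other hand, is more mechanical and makes all four candidate values explicit, which may be useful if one wants to compare their exact sizes rather than just locate the maximum.
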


As in the previous section, finding the exact value of resettling size $\tilde{s}$ (i.e., the smallest 
$s$ for which $\Delta(s)\leq\alpha s$) is not necessary. It is sufficient to observe that $\tilde{s}$ 
exists (the inequality is achieved for instance for
$s>(\nicefrac{2}{\alpha}+1)^2+(\nicefrac{2}{\alpha})^2-1$)
and is independent of $m$ and $n$ (as long as $m,n\gg \nicefrac{1}{\alpha}$).

Hypothesis~\eqref{hypothesis:v-is-stable} is clearly satisfied.
The existence of isoperimetric numberings of length at least $\tilde{s}$ was demonstrated in 
Example~\ref{exp:isoperimetric:doubled-torus} (as long as $\tilde{s}\ll m,n$). As a result, 
hypothesis~\eqref{hypothesis:numbering:single} and (by translation symmetry) 
hypothesis~\eqref{hypothesis:numbering:all} are both satisfied.
Theorems~\ref{thm:mean-crossover:magnitude} and~\ref{thm:crossover:exponential} thus
establish the asymptotic exponentiality of the crossover time and provide the estimate
\begin{align}
	\xExp_u[\hat{T}_v] &\asymp
		\begin{dcases}
			\frac{%
				\lambda^{(\ell^*+1)^2+(\ell^*)^2+\ell^*+1}
			}{%
				\bar{\lambda}^{(\ell^*)^2+(\ell^*-1)^2+\ell^*-1}
			}
				& \text{if $\ell^*>\nicefrac{1}{\alpha}$,} \\[10pt]
			\frac{%
				\lambda^{(\ell^*+1)^2+(\ell^*)^2+3\ell^*+3}
			}{%
				\bar{\lambda}^{(\ell^*)^2+(\ell^*-1)^2+3\ell^*-1}
			}		& \text{if $\ell^*<\nicefrac{1}{\alpha}$,}
		\end{dcases}
		\qquad\text{as $\lambda\to\infty$,}
\end{align}
for its mean, where $\ell^*\isdef[\nicefrac{1}{\alpha}]$ is the closest integer to 
$\nicefrac{1}{\alpha}$.

A more accurate estimate on the mean crossover time as well as a description
of the critical droplet is provided by Theorem~\ref{thm:crossover:sharp},
which relies on hypotheses~\eqref{hypothesis:uniqueness} and~\eqref{hypothesis:critical-set}.
Hypothesis~\eqref{hypothesis:uniqueness} is already verified
in Lemma~\ref{lem:doubled-torus:critical-size}.
Proposition~\ref{prop:critical-gate:identification} reduces the verification
of~\eqref{hypothesis:critical-set} to the verification of simpler 
conditions~\eqref{hypothesis:critical-set-replacement:values}
and~\eqref{hypothesis:critical-set-replacement:existence}.
Choose $\kappa\isdef\lceil\nicefrac{1}{\alpha}\rceil-1$, which coincides
with either $\ell^*$ or $\ell^*-1$,
depending on whether the fractional part of $\nicefrac{1}{\alpha}$
is smaller or larger than $\nicefrac{1}{2}$.
Conditions~\eqref{hypothesis:critical-set-replacement:values:A} and~\eqref{hypothesis:critical-set-replacement:values:B} follow from the monotonicity of $\Delta(s)$. 
Condition~\eqref{hypothesis:critical-set-replacement:values:C} becomes evident once we 
note that $\Delta(s)=\Delta(s-1)+1$ whenever $s=\ell^2+(\ell-1)^2+\ell$ or $s=\ell^2+(\ell-1)^2+3\ell$.

To proceed, let us consider the two cases $\ell^*>\nicefrac{1}{\alpha}$
and $\ell^*<\nicefrac{1}{\alpha}$ separately.
\begin{case}[1]{$\ell^*>\nicefrac{1}{\alpha}$.}
	So, $s^*=(\ell^*)^2+(\ell^*-1)^2+\ell^*$ and $\kappa=\ell^*-1$.\\
	In order to verify~\eqref{hypothesis:critical-set-replacement:existence},
	observe that $s^*-1=(\ell^*)^2+(\ell^*-1)^2+\ell^*-1$
	and $s^*+\kappa=(\ell^*)^2+(\ell^*-1)^2+2\ell^*-1$
	are critical cardinalities of types~II and~III
	(see Fig.~\ref{fig:isoperimetric:doubled-torus:pareto}).
	From the characterisation of Pareto optimal sets in Section~\ref{sec:isoperimetric:doubled-torus},
	we find that
	\begin{itemize}
		\item $\family{A}$ consists precisely of sets $N^{\ell^*-2}(S)$
			where $S$ is a seed of type~II, and
		\item $\family{C}$ consists precisely of sets $N^{\ell^*-1}(S')$
			where $S'$ is a seed of type~III.
	\end{itemize}
	Conditions~\eqref{hypothesis:critical-set-replacement:before-A}
	and~\eqref{hypothesis:critical-set-replacement:after-C} follow
	from Observation~\ref{obs:isoperimetric:doubled-torus:numberings}.
	
	Assuming Conjecture~\ref{conj:isoperimetric:doubled-torus:progressions:property}
	is true, and using Observation~\ref{obs:isoperimetric:doubled-torus:progressions:existence},
	we obtain a characterisation of~$\family{B}$.
	\begin{itemize}
		\item $\family{B}$ consists precisely the sets $B$ with $\abs{B}=s^*$
			such that $N^{\ell^*-2}(S)\subseteq B\subseteq N^{\ell^*-1}(S')$
			for some seeds $S$ and $S'$ of type $II$ and $III$
			where $S\subseteq N(S')$.
	\end{itemize}
	A typical transition through the critical gate $[Q,Q^*]$ is
	depicted in Figure~\ref{fig:gate:doubled-torus:caseI}.
	
	Counting the number of possible transitions in the critical gate
	using~\eqref{eq:critical-gate:size}, we get
	\begin{align}
		\abs{[Q,Q^*]} &= \abs{V}\times 4\times 3\ell^*\times 2 = 24mn\ell^* \;.
	\end{align}
	Theorem~\ref{thm:crossover:sharp} thus gives the sharp estimate
	\begin{align}
		\Aboxed{
		\xExp_u[\hat{T}_v] 
			&= \frac{1}{24mn\ell^*}
				\frac{%
					\lambda^{(\ell^*+1)^2+(\ell^*)^2+\ell^*+1}
				}{%
					\bar{\lambda}^{(\ell^*)^2+(\ell^*-1)^2+\ell^*-1}
				}
				[1+\smallo(1)]
				} \qquad\text{as $\lambda\to\infty$,}
	\end{align}
	for the expected crossover time.
\end{case}
\begin{case}[2]{$\ell^*<\nicefrac{1}{\alpha}$.}
	So, $s^*=(\ell^*)^2+(\ell^*-1)^2+3\ell^*$ and $\kappa=\ell^*$.\\
	In order to verify~\eqref{hypothesis:critical-set-replacement:existence},
	observe that $s^*-1=(\ell^*)^2+(\ell^*-1)^2+3\ell^*-1$
	and $s^*+\kappa=(\ell^*+1)^2+(\ell^*)^2$
	are critical cardinalities of types~IV and~I
	(see Fig.~\ref{fig:isoperimetric:doubled-torus:pareto}).
	From the characterisation of Pareto optimal sets in Section~\ref{sec:isoperimetric:doubled-torus},
	we find that
	\begin{itemize}
		\item $\family{A}$ consists precisely of sets $N^{\ell^*-1}(S)$
			where $S$ is a seed of type~IV, and
		\item $\family{C}$ consists precisely of sets $N^{\ell^*}(S')$
			where $S'$ is a seed of type~I.
	\end{itemize}
	Conditions~\eqref{hypothesis:critical-set-replacement:before-A}
	and~\eqref{hypothesis:critical-set-replacement:after-C} follow
	from Observation~\ref{obs:isoperimetric:doubled-torus:numberings}.
	
	Assuming Conjecture~\ref{conj:isoperimetric:doubled-torus:progressions:property}
	is true, and using Observation~\ref{obs:isoperimetric:doubled-torus:progressions:existence},
	we obtain a characterisation of~$\family{B}$.
	\begin{itemize}
		\item $\family{B}$ consists precisely the sets $B$ with $\abs{B}=s^*$
			such that $N^{\ell^*-1}(S)\subseteq B\subseteq N^{\ell^*}(S')$
			for some seeds $S$ and $S'$ of type $IV$ and $I$
			where $S\subseteq N(S')$.
	\end{itemize}
	A typical transition through the critical gate $[Q,Q^*]$ is
	depicted in Figure~\ref{fig:gate:doubled-torus:caseII}.
	
	Counting the number of possible transitions in the critical gate
	using~\eqref{eq:critical-gate:size}, we get
	\begin{align}
		\abs{[Q,Q^*]} &= \abs{V}\times 4\times (\ell^*+1)\times 2 = 8mn(\ell^*+1)\;.
	\end{align}
	Theorem~\ref{thm:crossover:sharp} thus gives the sharp estimate
	\begin{align}
		\Aboxed{
		\xExp_u[\hat{T}_v] 
			&= \frac{1}{8mn(\ell^*+1)}
				\frac{%
					\lambda^{(\ell^*+1)^2+(\ell^*)^2+3\ell^*+3}
				}{%
					\bar{\lambda}^{(\ell^*)^2+(\ell^*-1)^2+3\ell^*-1}
				}
				[1+\smallo(1)]
				} \qquad\text{as $\lambda\to\infty$,}
	\end{align}
	for the expected crossover time.
\end{case}

%%%%%%%%%%%%%%%%%%%%%%%%%%%%%%%%%%%%%%%
\begin{figure}[htbp]
	\centering
	\begin{subfigure}[b]{\textwidth}
		\centering
		\begin{tabular}{ccc}
			\raisebox{-\height/2+2pt}{%
			{%
			\tikzsetfigurename{doubled_torus_gate_I_Q}
			\begin{tikzpicture}[scale=0.23,>=stealth,shorten >=1]
				\draw[yellow, line width=5] (9-0.3,-2+0.3) -- (13+0.3,-6-0.3);
				\draw[yellow, line width=5] (4-0.3,-7+0.3) -- (8+0.3,-11-0.3);
				\draw[yellow, line width=5] (4-0.3,-6-0.3) -- (8+0.3,-2+0.3);
				\DrawDoubledTorusGateI
				\node[red particle] at (10,-3) {};
			\end{tikzpicture}
			}%
			}%
			& $\longrightarrow$ &
			\raisebox{-\height/2+2pt}{%
			{%
			\tikzsetfigurename{doubled_torus_gate_I_Qstar}
			\begin{tikzpicture}[scale=0.23,>=stealth,shorten >=1]
				\draw[yellow, line width=5] (9-0.3,-2+0.3) -- (13+0.3,-6-0.3);
				\draw[yellow, line width=5] (4-0.3,-7+0.3) -- (8+0.3,-11-0.3);
				\draw[yellow, line width=5] (4-0.3,-6-0.3) -- (8+0.3,-2+0.3);
				\DrawDoubledTorusGateI
			\end{tikzpicture}
			}%
			}
		\end{tabular}
		\caption{Case 1: $\ell^*=\lceil\nicefrac{1}{\alpha}\rceil$.}
		\label{fig:gate:doubled-torus:caseI}
	\end{subfigure}
	
	\bigskip
	
	\begin{subfigure}[b]{\textwidth}
		\centering
		\begin{tabular}{ccc}
			\raisebox{-\height/2+2pt}{%
			{%
			\tikzsetfigurename{doubled_torus_gate_II_Q}
			\begin{tikzpicture}[scale=0.23,>=stealth,shorten >=1]
				\draw[yellow, line width=5] (9-0.3,-12-0.3) -- (14+0.3,-7+0.3);
				\DrawDoubledTorusGateII
				\node[red particle] at (9,-12) {};
			\end{tikzpicture}
			}%
			}%
			& $\longrightarrow$ &
			\raisebox{-\height/2+2pt}{%
			{%
			\tikzsetfigurename{doubled_torus_gate_II_Qstar}
			\begin{tikzpicture}[scale=0.23,>=stealth,shorten >=1]
				\draw[yellow, line width=5] (9-0.3,-12-0.3) -- (14+0.3,-7+0.3);
				\DrawDoubledTorusGateII
			\end{tikzpicture}
			}%
			}	
		\end{tabular}
		\caption{Case 2: $\ell^*=\lfloor\nicefrac{1}{\alpha}\rfloor$.}
		\label{fig:gate:doubled-torus:caseII}
	\end{subfigure}
	\caption{Typical transitions through the critical gate for the doubled torus.
		The critical length $\ell^*$ in both cases is assumed to be~$4$.
		The `hole' can be anywhere in the highlighted region.
		Once a two-site `hole' is produced,
		with probability close to $1$ a blue particle appears very quickly
		in the opened-up space.
	}
	\label{fig:gate:doubled-torus}
\end{figure}
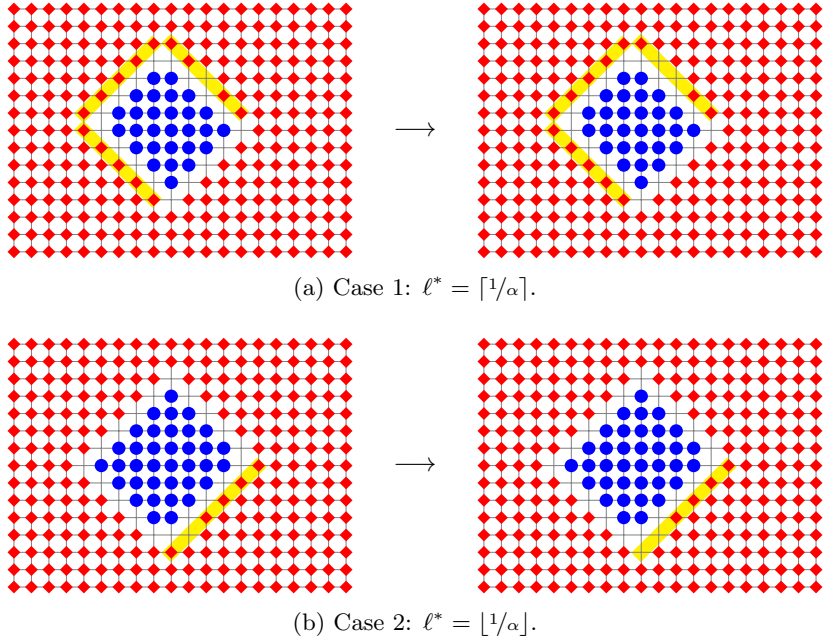
%%%%%%%%%%%%%%%%%%%%%%%%%%%%%%%%%%%%%%%%%%%%

%%%

\subsection{Graph girth and crossover time}
\label{sec:tree-like:metastability}

In Example~\ref{exp:isoperimetric:tree-like}, we noted that the optimal isoperimetric 
cost in a regular bipartite graph with large girth grows linearly for small cardinalities.
Likewise, the optimal isoperimetric cost in a doubled version of a bipartite graph with 
large girth is linear when restricted to small cardinalities. Since $g(s)=\Delta(s)-\alpha(s-1)$ 
has no critical point when $\Delta(s)$ is linear, we obtain lower bounds for the order of 
magnitude of the crossover time of the hard-core dynamics and Widom-Rowlinson dynamics 
on a (bipartite) regular graph in terms of the girth of the graph.

First, let us consider a $d$-regular bipartite graph in which the length of each cycle
is at least $\ell$.  We know from Example~\ref{exp:isoperimetric:tree-like}
that $\Delta(s)=(d-2)s+1$ for $0<s<\nicefrac{\ell}{2}$. Therefore, $g(s)=\Delta(s)-\alpha(s-1)
=(d-2-\alpha)s+1+\alpha$. If $d=2$ (i.e., if the graph is a cycle), the critical size and the 
resettling size are $s^*=\tilde{s}=1$. The 
hypotheses~\eqref{hypothesis:v-is-stable}--\eqref{hypothesis:critical-set}
are trivially satisfied with $\family{A}=\{\varnothing\}$
and $\family{B}=\{\{b\}: b\in V\}$ and $\abs{[Q,Q^*]}=2\abs{V}$.
Therefore, in this case, we recover the result of Example~\ref{exp:hard-core:cycle}.
If, on the other hand, $d>2$, the function $g(s)$ is increasing for $0<s<\nicefrac{\ell}{2}$ 
and can achieve its maximum only at $s\geq\lfloor\nicefrac{\ell}{2}\rfloor$.
While Theorem~\ref{thm:mean-crossover:magnitude} is not applicable
(condition~\eqref{hypothesis:numbering:single} may not be satisfied),
direct application of Proposition~\ref{prop:escape:mean} and
Lemmas~\ref{lem:hard-core:progression:critical-resistance}--\ref{lem:hard-core:standard-path:optimality}
leads to the following lower bound for the expected crossover time.

\begin{proposition}[{\bf Lower bound for expected crossover time: hard-core}]
	Let $G=(U,V,E)$ be a $d$-regular bipartite graph with $d>2$
	in which the length of each cycle is at least $\ell$.
	Then the crossover time from $u$ to $v$ on $G$ satisfies
	\begin{align}
		\xExp_u[\hat{T}_v] &\succeq
			\frac{\lambda^{\Delta(s_\ell)+s_\ell-1}}{\bar{\lambda}^{s_\ell-1}} =
			\frac{\lambda^{(d-2-\alpha)s_\ell+1+\alpha}}{\bar{\lambda}^{s_\ell-1}}
			\qquad\text{as $\lambda\to\infty$,}
	\end{align}
	where $s_\ell\isdef\lfloor\nicefrac{\ell}{2}\rfloor$.
\end{proposition}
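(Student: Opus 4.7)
The plan is to combine Proposition~\ref{prop:escape:mean} with a direct lower bound on the critical resistance $\Psi(u,J^-(u))$. The standing assumption $\abs{U}<(1+\alpha)\abs{V}$ ensures that $v\in J^-(u)$, so
\begin{align*}
	\xExp_u[T_v] \;\geq\; \xExp_u[T_{J^-(u)}] \;\asymp\; \pi(u)\,\Psi\big(u,J^-(u)\big)
	\qquad\text{as }\lambda\to\infty,
\end{align*}
by Proposition~\ref{prop:escape:mean}. Using $\xExp_u[\hat{T}_v]=\xExp_u[T_v]/\gamma$ together with $\pi(u)=Z^{-1}\lambda^{\abs{U}}$, the claim reduces to establishing $\Psi(u,J^-(u))\succeq \frac{\gamma}{\pi(u)}\frac{\lambda^{\Delta(s_\ell)+s_\ell-1}}{\bar\lambda^{s_\ell-1}}$.

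First I would locate the bottleneck on the $V$-side. The condition $\pi(z)\succ\pi(u)$, combined with the hard-core bound $\abs{z_U}\leq\abs{U}-\abs{z_V}-\Delta(z_V)$, forces $\Delta(z_V)<\alpha\abs{z_V}$. However, the girth-based estimate $\Delta(s)\geq(d-2)s+1$ for $0<s<\ell/2$ from Example~\ref{exp:isoperimetric:tree-like}, together with $d-2-\alpha>0$ (since $d>2$ and $\alpha<1$), gives $\Delta(s)>\alpha s$ in this range. Hence no $z\in J^-(u)$ has $\abs{z_V}<s_\ell$, and every path $\omega\colon u\pathto J^-(u)$ possesses a first index $q$ with $\abs{\omega(q)_V}=s_\ell$. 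The very first move out of $u$ must be of type $-U$ (no site of $V$ can receive a particle while every neighbour in $U$ is occupied), so $\abs{\omega(1)_V}=0$ and in fact $q\geq 2$.

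The heart of the argument is the estimate for the edge $(\omega(q-2),\omega(q-1))$. Writing $x=\omega(q)$, hard-core validity gives $\abs{x_U}\leq\abs{U}-\abs{N(x_V)}\leq\abs{U}-s_\ell-\Delta(s_\ell)$, and since $q$ is the first level-$s_\ell$ index the move $\omega(q-1)\to\omega(q)$ is necessarily of type $+V$, so $\pi(\omega(q-1))=\pi(x)/\bar\lambda\leq\pi(u)\bar\lambda^{s_\ell-1}/\lambda^{s_\ell+\Delta(s_\ell)}$. I then split on the type of the preceding transition $\omega(q-2)\to\omega(q-1)$: type $-V$ is ruled out by minimality of $q$; in the $-U$ case, $\pi(\omega(q-2))=\lambda\,\pi(\omega(q-1))$ is the larger $\pi$-value of the pair, and hence
\begin{align*}
	r\big(\omega(q-2),\omega(q-1)\big)
	\;=\; \frac{\gamma}{\pi(\omega(q-2))}
	\;\geq\; \frac{\gamma}{\pi(u)}\,\frac{\lambda^{\Delta(s_\ell)+s_\ell-1}}{\bar\lambda^{s_\ell-1}};
\end{align*}
in the $+U$ and $+V$ cases, $\pi(\omega(q-2))<\pi(\omega(q-1))$ and the edge resistance equals $\gamma/\pi(\omega(q-1))$, which exceeds the above bound by a further factor $\lambda$. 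In every case $\Psi(\omega)\succeq \frac{\gamma}{\pi(u)}\frac{\lambda^{\Delta(s_\ell)+s_\ell-1}}{\bar\lambda^{s_\ell-1}}$, and taking the infimum over $\omega$ yields the required lower bound on $\Psi(u,J^-(u))$.

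The main subtlety is precisely this two-step lookback: looking only at the edge $(\omega(q-1),\omega(q))$ would give the weaker bound $\lambda^{\Delta(s_\ell)+s_\ell}/\bar\lambda^{s_\ell}$, which is off by a factor $\lambda^{\alpha}$. The missing $\lambda$ is supplied by the preceding $-U$ step, in close analogy with the structure of a standard path from Lemmas~\ref{lem:hard-core:progression:critical-resistance} and~\ref{lem:hard-core:standard-path:optimality}, whose peak resistance in the segment that first enters level $s_\ell$ is achieved at the penultimate $U$-removal rather than at the final $V$-addition. Once this edge is identified, the estimate is essentially bookkeeping.
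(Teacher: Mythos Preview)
Your argument is correct and follows the same route the paper sketches: invoke Proposition~\ref{prop:escape:mean} to reduce to a lower bound on $\Psi(u,J^-(u))$, then bound the latter by the two-step lookback at the first time the path reaches level $s_\ell$ on $V$. This lookback is precisely the mechanism in the proof of Lemma~\ref{lem:hard-core:standard-path:optimality}, which the paper cites together with Lemma~\ref{lem:hard-core:progression:critical-resistance}; you have simply inlined that argument and adapted it to paths ending in $J^-(u)$ rather than at a fixed standard endpoint, which is exactly what is needed here.
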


For the Widom-Rowlinson model on a $d$-regular graph we get a similar lower bound
on the expected crossover time from the all-red to all-blue configuration in terms of the 
graph girth. Recall that the Widom-Rowlinson dynamics on a graph $G$ is equivalent
to the hard-core dynamics on the doubled graph $G^{[2]}$. In 
Example~\ref{exp:isoperimetric:tree-like}, we saw that $\Delta(s)=(d-2)s+2$ for $0<s<\ell-1$.
Therefore, $g(s)=\Delta(s)-\alpha(s-1)=(d-2-\alpha)s+2+\alpha$. If $d=2$ (i.e., when the 
graph is a cycle), we again have $s^*=\tilde{s}=1$. The 
hypotheses~\eqref{hypothesis:v-is-stable}--\eqref{hypothesis:critical-set}
are again trivially satisfied with $\family{A}=\{\varnothing\}$ and $\family{B}
=\{\{b\}: b\in V\}$ and $\abs{[Q,Q^*]}=3\abs{V}$. Therefore, in this case, we recover the 
result of Example~\ref{exp:widom-rowlinson:cycle} even when the cycle is not even.
If, on the other hand, $d>2$, the function $g(s)$ is increasing for $0<s<\ell-1$ and can 
achieve its maximum only at $s\geq\ell-1$. Therefore, we get a similar lower bound for 
the expected crossover time using Proposition~\ref{prop:escape:mean}.

\begin{proposition}[{\bf Lower bound for expected crossover time: Widom-Rowlinson}]
	Let $G$ be a $d$-regular graph with $d>2$ in which the length of
	each cycle is at least $\ell$, and $G^{[2]}=(U,V,E)$ its doubled version.
	Then, the crossover time from $u$ to $v$ on $G^{[2]}$ satisfies
	\begin{align}
		\xExp_u[\hat{T}_v] &\succeq
			\frac{\lambda^{\Delta(s_\ell)+s_\ell-1}}{\bar{\lambda}^{s_\ell-1}} =
			\frac{\lambda^{(d-2-\alpha)s_\ell+2+\alpha}}{\bar{\lambda}^{s_\ell-1}}
			\qquad\text{as $\lambda\to\infty$,}
	\end{align}
	where $s_\ell\isdef\ell-1$.
\end{proposition}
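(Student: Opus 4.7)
The plan is to follow the same strategy as the hard-core analog (the preceding Proposition), using the equivalence between Widom--Rowlinson dynamics on $G$ and hard-core dynamics on the doubled graph $G^{[2]}$. The first step is to combine Proposition~\ref{prop:escape:mean} with the observation that $v$ is the unique stable state, hence $v\in J^-(u)$ and $\hat{T}_v\geq \hat{T}_{J^-(u)}$. Translating from discrete to continuous time via $\xExp_u[T_\bullet]=\gamma\,\xExp_u[\hat{T}_\bullet]$ gives
\begin{equation*}
	\xExp_u[\hat{T}_v] \succeq \pi(u)\Psi\big(u,J^-(u)\big)/\gamma
	\qquad\text{as $\lambda\to\infty$.}
\end{equation*}
The task therefore reduces to lower-bounding $\Psi\big(u,J^-(u)\big)$ by $(\gamma/\pi(u))\,\lambda^{g(s_\ell)+o(1)}$, where $g(s)\isdef\Delta(s)-\alpha(s-1)$, since $\lambda^{g(s_\ell)+o(1)}\asymp \lambda^{\Delta(s_\ell)+s_\ell-1}/\bar{\lambda}^{s_\ell-1}$.

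I would next check that the resettling size $\tilde{s}$ satisfies $\tilde{s}\geq s_\ell=\ell-1$: since $\Delta(s)=(d-2)s+2>\alpha s$ for $0<s<\ell-1$ (using $d>2$ and $\alpha<1$), the defining inequality $\Delta(\tilde{s})\leq\alpha\tilde{s}$ cannot be met before $\tilde{s}\geq\ell-1$. For sufficiently large $\lambda$, every $x\in J^-(u)$ has $|x_V|\geq\tilde{s}\geq s_\ell$, so any path $\omega\colon u\pathto J^-(u)$ must visit $|x_V|=s$ for each $s=0,1,\ldots,s_\ell$. The central step is then to exhibit, in any such path, an edge whose resistance is at least $(\gamma/\pi(u))\,\lambda^{g(s_\ell)+o(1)}$. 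Let $T^{(s_\ell)}$ be the first time $|x_V|=s_\ell$ along $\omega$; the corresponding transition places a particle at some $a\in V$ on the configuration $y'=\omega(T^{(s_\ell)}-1)$, which necessarily satisfies $y'_U\cap N(a)=\varnothing$. Let $t_2<T^{(s_\ell)}$ be the last time $|x_V|=s_\ell-2$; all transitions in the sub-segment $[t_2+1,T^{(s_\ell)}-1]$ are then $U$-modifications with $|x_V|=s_\ell-1$ throughout. Provided some configuration in this sub-segment has a particle in $N(a)$, the last $U$-removal $w'\to w$ within the sub-segment that clears $N(a)$ satisfies $|w_V|=s_\ell-1$ and $w_U\subseteq U\setminus N(w_V\cup\{a\})$; the isoperimetric inequality $|N(w_V\cup\{a\})|\geq s_\ell+\Delta(s_\ell)$ gives $|w_U|\leq |U|-s_\ell-\Delta(s_\ell)$, and computing $r(w',w)=\gamma/\pi(w')=\gamma/(\pi(w)\lambda)$ together with the substitution $\bar{\lambda}=\lambda^{1+\alpha+o(1)}$ yields $r(w',w)\geq (\gamma/\pi(u))\,\lambda^{g(s_\ell)+o(1)}$.

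The main obstacle is the edge case where $N(a)$ happens to be already empty in $U$ at the start of the sub-segment, so no clearing $U$-removal lies within it. In that case I would instead use the transition $\omega(t_2)\to\omega(t_2+1)$, which is a $V$-addition of some $b\in V\setminus(\omega(t_2)_V\cup\{a\})$; at $\omega(t_2)$ both $N(b)$ and $N(a)$ are empty in $U$, so the combined isoperimetric bound $|N(\omega(t_2)_V\cup\{a,b\})|\geq s_\ell+\Delta(s_\ell)$ forces $|\omega(t_2)_U|\leq |U|-s_\ell-\Delta(s_\ell)$, and the resulting resistance estimate is in fact one power of $\lambda$ stronger than in the main case. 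Combining both cases yields $\Psi(\omega)\succeq (\gamma/\pi(u))\,\lambda^{g(s_\ell)+o(1)}$ uniformly over all paths $\omega\colon u\pathto J^-(u)$, which together with the continuous-time reduction above gives the claimed lower bound $\xExp_u[\hat{T}_v]\succeq \lambda^{\Delta(s_\ell)+s_\ell-1}/\bar{\lambda}^{s_\ell-1}$.
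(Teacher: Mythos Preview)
Your proof is correct and follows essentially the same route as the paper. The paper simply cites Proposition~\ref{prop:escape:mean} together with Lemmas~\ref{lem:hard-core:progression:critical-resistance}--\ref{lem:hard-core:standard-path:optimality}; what you do in your ``central step'' is re-derive (in a slightly different form) the path-level argument that underlies the proof of Lemma~\ref{lem:hard-core:standard-path:optimality}. Where that proof looks only at the two transitions $\sigma(\ell-2)\to\sigma(\ell-1)\to\sigma(\ell)$ immediately preceding the first crossing of level $s$, you instead track the entire sub-segment between the last visit to level $s_\ell-2$ and the first visit to level $s_\ell$; your Case~1 corresponds to their Case~2 (a $\symb{-U}$ step just before the $\symb{+V}$ crossing) and your edge case to their Case~1. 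Both yield the same bound $\Psi(\omega)\succeq(\gamma/\pi(u))\lambda^{\Delta(s_\ell)+s_\ell-1}/\bar\lambda^{s_\ell-1}$.

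One small point: your claim that every $x\in J^-(u)$ satisfies $\abs{x_V}\geq s_\ell$ is correct but the justification should not invoke $\tilde{s}$ (whose definition requires $s^*$, which you have not identified). It follows directly: $x\in J^-(u)$ forces $\alpha\abs{x_V}>\Delta(x)\geq\Delta(\abs{x_V})$, and since $\Delta(s)=(d-2)s+2>\alpha s$ for $0<s<\ell-1$, this is impossible unless $\abs{x_V}\geq\ell-1=s_\ell$.
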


%%%

\subsection{Hard-core and Widom-Rowlinson on a hypercube}
\label{sec:hypercube:metastability}

As we saw in Example~\ref{exp:isoperimetric:hypercube}, the doubled version of a $d$-dimensional 
hypercube $H_d$ is isomorphic to a $(d+1)$-dimensional hypercube $H_{d+1}$, hence the 
Widom-Rowlinson dynamics on $H_d$ is essentially the same as the hard-core dynamics
on $H_{d+1}$. As before, we assume that $0<\alpha<1$. 

Condition~\eqref{hypothesis:v-is-stable} is satisfied for every doubled graph.
From Example~\ref{exp:isoperimetric:hypercube}, we know that the sites of $H_d$ admit a 
complete (vertex) isoperimetric numbering. By symmetry, every site in $H_d$ is the starting 
point of an isoperimetric numbering. Therefore, conditions~\eqref{hypothesis:numbering:single} 
and~\eqref{hypothesis:numbering:all} are satisfied. The asymptotic exponentiality of the 
crossover time follows immediately from Theorem~\ref{thm:crossover:exponential}.
The conclusion of Theorem~\ref{thm:mean-crossover:magnitude} is also true, but in this
case, finding the critical size $s^*$ is more challenging because the function 
$g(s)\isdef\Delta(s)-\alpha(s-1)$ is known only implicitly (see 
Proposition~\ref{prop:isoperimetric:hypercube:recursion}). We state this is an open question.

\begin{question}[{\bf Critical size: hypercube}]
\label{q:hypercube:critical-size}
	Let $\Delta(s)$ denote the (vertex) isoperimetric function
	for the hypercube $H_d$.
	Which value $s$ maximizes the function $g(s)\isdef\Delta(s)-\alpha(s-1)$?
	What is the value of the maximum?
\end{question}

If we further assume that $\alpha$ is irrational,
then hypothesis~\eqref{hypothesis:uniqueness} will also be satisfied.
We do not know the status of conditions~\eqref{hypothesis:critical-set}
or~\eqref{hypothesis:critical-set-replacement:values}--\eqref{hypothesis:critical-set-replacement:existence}.

\begin{question}[{\bf Critical gate: hypercube}]
\label{q:hypercube:hypotheses}
	Are conditions~\eqref{hypothesis:critical-set-replacement:values}
	and~\eqref{hypothesis:critical-set-replacement:existence} satisfied
	for the hypercube?
	If not, how about condition~\eqref{hypothesis:critical-set}?
\end{question}

%%%%%%%%%%%% APPENDIX %%%%%%%%%%%%%%%%%%%

\appendix

\section{Proofs}	
\label{sec:proofspottheo}

%%%

\subsection{Nash-Williams inequality}
\label{apx:nash-williams}

\begin{proof}[Proof of Proposition~\ref{prop:nash-williams:dual:extended}]
The proof is similar to the proof of the (dual) Nash-Williams inequality. Let $W\isdef W_{A,B}$ 
be the voltage function when $B$ is connected to the ground and $A$ is connected to a unit 
voltage source (i.e., $W$ is harmonic on $\spX\setminus (A\cup B)$ with boundary condition 
$W|_A\equiv 1$ and $W|_B\equiv 0$). Let $I$ be the corresponding current flow. By definition,
\begin{align}
\effC{A}{B} &= (\divergence I)(A)\isdef \sum_{x\in A}(\divergence I)(x).
\end{align}
Write $\dd W(x,y)\isdef W(y)-W(x)$ for the relative voltage of two nodes. By the conservation 
of energy (also known as the adjointness of $\theta\mapsto\divergence\theta$ and $f\mapsto
\dd f$; see e.g.\ Lyons and Peres~\cite{LyoPer14}, Section~2.4), we have
\begin{align}
\effC{A}{B}\,^2 
&= \frac{1}{2}\sum_{x,y} c(x,y) \left(\dd W(x,y)\right)^2
\geq \sum_k \sum_{e\in \omega_k} \frac{1}{n(e)} c(e)\left(\dd W(e)\right)^2.
\end{align}
By the Cauchy-Schwartz inequality, for each $k$ we can write 
\begin{align}
\left(\sum_{e\in\omega_k} \frac{1}{n(e)} c(e)\left(\dd W(e)\right)^2\right)
\left(\sum_{e\in\omega_k} n(e)r(e)\right) 
&\geq \left(\sum_{e\in\omega_k} \sqrt{\frac{c(e)}{n(e)}}\dd W(e) \sqrt{n(e)r(e)}\right)^2\nonumber\\
&= \left(\sum_{e\in\omega_k} \dd W(e)\right)^2 = 1.
\end{align}
The claim follows.
\end{proof}

%%%

\subsection{Effective resistance versus critical resistance}
\label{apx:critical-resistance}

\begin{proof}[Proof of Proposition~\ref{prop:bounds:communication-height:abstract}]
The right-hand inequality is immediate from the dual Nash-Williams inequality
\eqref{eq:nash-williams:dual:simplified} by choosing $k$ to be the length of the longest 
path on the graph. The left-hand inequality follows from the simplified Nash-Williams 
inequality \eqref{eq:nash-williams:simplified}. Namely, let
\begin{align}
C &\isdef \left\{x\colon\, \Psi(x,A) < \Psi(A,B)\right\}.
\end{align}
Then, by the strong triangle inequality, $r(x,y)\geq\Psi(A,B)$ for every $x\in C$ and 
$y\notin C$. Therefore,
\begin{align}
\effC{A}{B} \leq \effC{C}{C^\complement} 
&\leq \abs{\partial C} \,\sup_{\mathclap{x\in C, y\notin C}}\; c(x,y)
\leq \abs{\partial C}\,\frac{1}{\Psi(A,B)}.
\end{align}
Therefore, the left inequality in~\eqref{eq:eff-resist:equiv-metric}
holds when $k$ is at least the size of the largest cut on 
the graph.
\end{proof}

%%%

\subsection{Estimates on voltage}
\label{apx:voltage:estimate}

\begin{proof}[Proof of Proposition~\ref{prop:voltage:bound:effective-resistance}]
By the short-circuit principle, we may assume that $A$ and $B$ are singletons, i.e., 
$A=\{a\}$ and $B=\{b\}$ for some nodes $a$ and $b$. We have
\begin{align}
W_{A,B}(x) = \xPr_x(T_a<T_b) 
&\leq \frac{\xPr_x(T_a<T_b)}{\xPr_a(T_x<T_b)} 
= \frac{\effR{x}{b}}{\effR{a}{b}},
\end{align}
where the last equality uses the reciprocity equality in \eqref{eq:hitting-order:reciprocity}.
The other inequality follows symmetrically, by noting that $W_{A,B}(x)=1-\xPr_x(T_b<T_a)$.
\end{proof}

\begin{proof}[Proof of Proposition~\ref{prop:voltage:bounds:valleys:abstract}]
For brevity, we write $W(x)$ instead of $W_{A,B}(x)$. If $x,y\in A\cup B$, then the claim 
is trivial. If $x$ or $y$ is in $A\cup B$ and the other is not, then the conclusion follows 
directly from Proposition~\ref{prop:voltage:bounds:communication-height:abstract}.
So, assume that $x,y\in\spX\setminus (A\cup B)$. We verify that
\begin{align}
W(x) &\leq W(y) + \bar{k} \frac{\Psi(x,y)}{\Psi(A,B)}.
\end{align}
The opposite inequality follows by symmetry.
	
By the ultra-metric inequality, we have $\Psi(A,B)\leq \max\{\Psi(y,A),\Psi(x,y),\Psi(x,B)\}$.
If $\Psi(x,y)\geq \Psi(A,B)$, then the claim is trivial. There remain two cases.

\begin{case}[1]{$\Psi(y,A)\geq\Psi(A,B)$.}
By conditioning on the order of the occurrence of $T_A$, $T_B$ 
and $T_y$, we can write
\begin{align}
W(x) &= \xPr_x(T_A<T_B) \leq \xPr_x(T_y<T_A<T_B) + \xPr_x(T_A<T_y).
\end{align}
The first term can be estimated as
\begin{align}
\xPr_x(T_y<T_A<T_B) 
&= \xPr_x(T_y<T_{A\cup B}) \xPr_y(T_A<T_B) \leq \xPr_y(T_A<T_B) = W(y).
\end{align}
For the second term, by Proposition~\ref{prop:voltage:bounds:communication-height:abstract},
\begin{align}
\xPr_x(T_A<T_y) &\leq \bar{k} \frac{\Psi(x,y)}{\Psi(A,y)} \leq
\bar{k} \frac{\Psi(x,y)}{\Psi(A,B)}.
\end{align}
It follows that
\begin{align}
W(x) &\leq W(y) + \bar{k} \frac{\Psi(x,y)}{\Psi(A,B)}.
\end{align}
\end{case}
	
\begin{case}[2]{$\Psi(x,B)\geq\Psi(A,B)$.}
By a symmetric reasoning as in the first case,
\begin{align}
1-W(y) &= \xPr_y(T_B<T_A) \leq
\xPr_y(T_x<T_B<T_A) + \xPr_y(T_B<T_x).
\end{align}
Again, the first term reduces to
\begin{align}
\xPr_y(T_x<T_B<T_A) &= \xPr_y(T_x<T_{A\cup B}) \xPr_x(T_B<T_A)
\leq \xPr_x(T_B<T_A) = 1-W(x).
\end{align}
Similarly, for the second term, we get
\begin{align}
\xPr_y(T_B<T_x) &\leq \bar{k} \frac{\Psi(x,y)}{\Psi(B,x)}
\leq \bar{k} \frac{\Psi(x,y)}{\Psi(A,B)}.
\end{align}
Therefore
\begin{align}
1-W(y) &\leq 1-W(x) + \bar{k} \frac{\Psi(x,y)}{\Psi(A,B)},
\end{align}
which again gives
\begin{align}
	W(x) &\leq W(y) + \bar{k} \frac{\Psi(x,y)}{\Psi(A,B)}.\\
	& \qedhere
\end{align}
\end{case}
\end{proof}

%%%

\subsection{Characterisation of transience}
\label{apx:transience:characterization}

\begin{proof}[Proof of Proposition~\ref{prop:transience:characterization}]
First, suppose that $\xExp_a[T_{J^-(a)}]\prec\tau$. By conditioning, we have
\begin{align}
G_\tau(a,a) &= \sum_{s,b}\xPr_a\big(T_{J^-(a)}=s, X(s)=b\big)
\xExp_a\left[\text{$\#$ of visits to $a$ before $\tau$}\,\middle|\,T_{J^-(a)}=s, X(s)=b\right]\nonumber\\
&\leq \sum_{s=0}^\infty\sum_{b\in J^-(a)}\xPr_a\big(T_{J^-(a)}=s, X(s)=b\big)(s+G_{\tau-s}(b,a))\nonumber\\
&\leq \sum_{s=0}^\infty\xPr_a(T_{J^-(a)}=s)\cdot s + \sum_{b\in J^-(a)} G_\tau(b,a)
\end{align}
as $\lambda\to\infty$. The first sum on the righthand side is simply $\xExp_a[T_{J^-(a)}]$, which 
is $\prec\tau$. For the second sum, we recall that $\pi(a)G_\tau(a,b)=\pi(b)G_\tau(b,a)$, by 
reversibility and since $\pi(a)\prec\pi(b)\preceq 1$, we again obtain $G_\tau(b,a)\prec G_\tau(a,b)
\preceq\tau$. Put together, we find that $G_\tau(a,a)\prec\tau$ as $\lambda\to\infty$.
	
Conversely, assume that $G_\tau(a,a)\prec\tau$. Let $A\subseteq\spX$ be the set of states that 
can be reached from $a$ without passing through $J^-(a)$. Note that $\pi(b)\preceq\pi(a)$ for 
each $b\in A$. Therefore, by the reciprocity identify in \eqref{eq:green:reciprocity}, we have that
\begin{align}
G_{T_{J^-(a)}}(a,b) &\preceq G_{T_{J^-(a)}}(b,a) \leq G_{T_{J^-(a)}}(a,a).
\end{align}
Now we can write
\begin{align}
G_{T_{J^-(a)}}(a,a) &= \sum_{t=0}^\infty \xPr_a\big(X(t)=a, T_{J^-(a)}>t\big)\nonumber\\
&= \sum_{t=0}^{\tau-1} \xPr_a\big(X(t)=a, T_{J^-(a)}>t\big) 
+ \sum_{t=\tau}^\infty \sum_{b\in A} \xPr_a\big(X(\tau)= b, X(t)=a, T_{J^-(a)}>t\big)\nonumber\\
&\leq G_\tau(a,a) + \sum_{t=\tau}^\infty \sum_{b\in A}
\xPr_a\big(X(\tau)=b, T_{J^-(a)}>\tau\big) \xPr_b\big(X(t-\tau)=a, T_{J^-(a)}>t-\tau\big)\nonumber\\
&\leq G_\tau(a,a) + \xPr_a(T_{J^-(a)}>\tau) \sum_{b\in A} G_{T_{J^-(a)}}(b,a)\nonumber\\
&\leq G_\tau(a,a) + \abs{A} \xPr_a(T_{J^-(a)}>\tau) G_{T_{J^-(a)}}(a,a),
\end{align}
which implies
\begin{align}
G_{T_{J^-(a)}}(a,a) &\leq \frac{1}{1-\abs{A}\xPr_a(T_{J^-(a)}>\tau)} G_\tau(a,a)
\end{align}
whenever $\abs{A}\xPr_a(T_{J^-(a)}>\tau)<1$. On the other hand, we note that
\begin{align}
	G_\tau(a,a) &\succeq \sum_{b\in A}G_\tau(a,b)
		= \sum_{k=0}^{\tau-1}\xPr_a(T_{J^-(a)}>k)
		\geq \tau\cdot\xPr_a(T_{J^-(a)}>\tau) \;,
\end{align}
hence,
\begin{align}
\xPr_a(T_{J^-(a)}>\tau) &\preceq \frac{1}{\tau} G_\tau(a,a) \prec 1.
\end{align}
Therefore, as $\lambda\to\infty$, we have $G_{T_{J^-(a)}}(a,a)\preceq G_\tau(a,a)\prec\tau$.
Finally,
\begin{align}
	\xExp_a[T_{J^-(a)}] &= \sum_{b\in A} G_{T_{J^-(a)}}(a,b) \preceq G_\tau(a,a) \prec \tau. \\
	& \qedhere
\end{align}
\end{proof}

%%%

\subsection{Mean escape time}
\label{apx:escape:mean}

\begin{proof}[Proof of Proposition~\ref{prop:escape:mean}]
We know from \eqref{eq:green:expression} that
\begin{align}
G_{T_{J^-(a)}}(a,x) &= \effR{a}{J^-(a)}\pi(x)\xPr_x(T_a<T_{J^-(a)})\nonumber\\
&= \pi(a)\effR{a}{J^-(a)}\frac{\pi(x)}{\pi(a)}\xPr_x(T_a<T_{J^-(a)}),
\end{align}
for every $x\in\spX$. For $x\notin J(a)\cup\{a\}$, we have, by definition, that $\pi(x)\prec\pi(a)$ 
as $\lambda\to\infty$, whereas for $x\in J(a)\setminus J^-(a)$, we have $\pi(x)\asymp\pi(a)$.
Therefore
\begin{align}
G_{T_{J^-(a)}}(a,x) &= 
\begin{cases}
\pi(a)\effR{a}{J^-(a)},						
\quad & \text{if $x=a$,} \\
\pi(a)\effR{a}{J^-(a)} \,\smallo(1),
\quad & \text{if $x\notin J(a)\cup\{a\}$,} \\
\pi(a)\effR{a}{J^-(a)} \,\bigo(1),
\quad & \text{if $x\in J(a)\setminus J^-(a)$,} \\
0,											
\quad & \text{if $x\in J^-(a)$,}
\end{cases}
\end{align}
which gives
\begin{align}
\xExp_a[T_{J^-(a)}] &= \sum_x G_{T_{J^-(a)}}(a,x)
\asymp \pi(a)\Psi(a,J^-(a))
\end{align}
as $\lambda\to\infty$.
\end{proof}

\begin{proof}[Proof of Proposition~\ref{prop:almost-escape:mean}]
This is similar to the proof of Proposition~\ref{prop:escape:mean}. Starting from
\begin{align}
G_{T_{J(a)}}(a,x) &= \pi(a)\effR{a}{J(a)}\frac{\pi(x)}{\pi(a)}\xPr_x(T_a<T_{J(a)}),
\end{align}
this time we can write
\begin{align}
G_{T_{J(a)}}(a,x) &=
\begin{cases}
\pi(a)\effR{a}{J(a)},					
&\quad \text{if $x=a$,} \\
\pi(a)\effR{a}{J(a)} \,\smallo(1),
\quad & \text{if $x\notin J(a)\cup\{a\}$,} \\
0,										
\quad & \text{if $x\in J(a)$,}
\end{cases}
\end{align}
as $\lambda\to\infty$. It follows
\begin{align}
\xExp_a[T_{J(a)}] &= \sum_x G_{T_{J(a)}}(a,x) = \pi(a)\effR{a}{J(a)} [1+\smallo(1)]
\end{align}
as $\lambda\to\infty$.
\end{proof}

\begin{proof}[Proof of Corollary~\ref{cor:escape:mean:cascade}]
Let $Z_0=J(a)$, and recursively define
\begin{align}
Z_{k+1} &\isdef J^-(Z_k)\isdef\{y: \text{$\pi(y)\succ\pi(x)$ for some $x\in Z_k$}\}.
\end{align}
Since the chain is finite, we must have $\varnothing\neq Z_n\subseteq Z$ for some $n$.
By conditioning, we have
\begin{align}
\xExp_a[T_{Z_{k+1}}] &= \xExp\big[\xExp_a[T_{Z_{k+1}} \,|\, T_{Z_k}, X(T_{Z_k})]\,\big],
\end{align}
where
\begin{align}
&\xExp_a\big[T_{Z_{k+1}} \,|\, T_{Z_k}, X(T_{Z_k})\big] 
= T_{Z_k} + \xExp_{X(T_{Z_k})}[T_{Z_{k+1}}]\nonumber\\
&\leq T_{Z_k} + \xExp_{X(T_{Z_k})}[T_{J^-(X(T_{Z_k}))}]
= T_{Z_k} + \pi(a)\Psi(a,J(a))\smallo(1).
\end{align}
Therefore
\begin{align}
\xExp_a[T_{Z_{k+1}}] &= \xExp_a[T_{Z_k}] + \pi(a)\Psi(a,J(a))\smallo(1)
\end{align}
and the claim follows by induction.
\end{proof}

%%%

\subsection{Rapid transition}
\label{apx:avalanche}

\begin{proof}[Proof of Corollary~\ref{cor:transition:avalanche}]
Recall from \eqref{eq:green:expression} that $G_{T_{Z}}(a,a)=\pi(a)\effR{a}{Z}\geq\pi(a)
\effR{a}{J(a)}$. Combined with Corollary~\ref{cor:escape:mean:cascade}, we get
\begin{align}
\xExp_a[T_Z-T^{(N_Z)}_a] 
&\leq \xExp_a[T_Z] - G_{T_Z}(a,a) = \pi(a)\effR{a}{Z}\smallo(1)
\end{align}
as $\lambda\to\infty$.
\end{proof}

%%%

\subsection{Renewal arguments}
\label{apx:exponential-law}

\begin{proof}[Proof of Proposition~\ref{prop:exponential:renewal}]
Replacing $\delta T_k$ with $\delta T_k/\mu$, we may assume that $\mu=1$. Note that
$M=(\nicefrac{1}{\varepsilon}-1)\mu+\eta$, which gives
\begin{align}
1 &= (\frac{1}{\varepsilon}-1)\frac{\mu}{M}+\frac{\eta}{M}
= \frac{1}{\varepsilon}(1 - \varepsilon + \varepsilon\frac{\eta}{\mu})\frac{\mu}{M}.
\end{align}
Letting $\lambda\to\infty$, we get $\varepsilon M=1+\smallo(1)$ and $\nicefrac{\eta}{M}=\smallo(1)$.
	
Let $G(\theta)\isdef\xExp[\ee^{\ii\theta\delta T}\,|\,B=\symb{0}]$ and $H(\theta)\isdef
\xExp[\ee^{\ii\theta\delta T}\,|\,B=\symb{1}]$ be the characteristic functions of $\delta T$ conditional 
on $B=\symb{0}$ and $B=\symb{1}$. The characteristic function of $T_N$ can be written as
\begin{align}
F(\theta) \isdef \xExp[\ee^{\ii\theta T_N}] 
= \sum_{n\geq 1} (1-\varepsilon)^{n-1}\varepsilon G(\theta)^{n-1}H(\theta)
= \frac{\varepsilon H(\theta)}{1 - (1-\varepsilon)G(\theta)}.
\end{align}
Therefore the characteristic function of $T_N/M$ is
\begin{align}
F(\theta/M) = \xExp[\ee^{\ii\theta\frac{T_N}{M}}] 
&= \frac{\varepsilon H(\theta/M)}{1 - (1-\varepsilon)G(\theta/M)}.
\end{align}
It remains to show that $F(\theta/M)\to\tilde{F}(\theta)$ as $\lambda\to\infty$, where 
$\tilde{F}(\theta)\isdef\frac{1}{1-\ii\theta}$ is the characteristic function of an exponential 
random variable with rate $1$.
	
To estimate $H(\theta/M)$, we note that, conditional on $B=\symb{1}$, $\delta T/M$ is a positive 
random variable whose expected value $\frac{\eta}{M}$ tends to $0$. Therefore $\delta T/M$ 
converges in distribution to a unit mass at $0$, and hence $H(\theta/M)=1+\smallo(1)$.
	
For $G(\theta/M)$, we need a more accurate estimate. We note that, conditional on $B=\symb{0}$, 
$\delta T$ is a positive random variable with mean $1$.  Therefore $G(\theta)$ is continuously 
differentiable with $G'(0)=\ii$, and a Taylor approximation gives $G(\theta)=1 + \ii\theta 
+ \smallo(\theta)$ as $\theta\to 0$. It follows that, for each $\theta\in\RR$, $G(\theta/M)
= 1 + \ii\frac{1}{M}\theta + \smallo(\frac{1}{M}) = 1 + \ii\varepsilon\theta + \smallo(\varepsilon)$ 
as $\lambda\to\infty$.
	
Altogether, for each $\theta\in\RR$, we get
\begin{align}
F(\theta/M) &= \frac{\varepsilon[1+\smallo(1)]}
{1-(1-\varepsilon)\left(1 + \ii\varepsilon\theta + \smallo(\varepsilon)\right)}
\to \frac{1}{1-\ii\theta}
\end{align}
as $\lambda\to\infty$. Therefore $T_N/M$ converges in distribution to an exponential random 
variable with rate~$1$. Finally, since the exponential distribution $t\to\ee^{-t}$ is continuous,
the convergence in \eqref{eq:thm:exponential} is uniform.
\end{proof}

\begin{proof}[Proof of Corollary~\ref{cor:escape:exponential}]
Let $\delta T\isdef T^+_{\{a\}\cup Z}$ be the first hitting time of $\{a\}\cup Z$, and choose 
$B\isdef\symb{1}$ if $T_Z<T^+_a$ and $B\isdef\symb{0}$ otherwise. From Corollary~\ref{cor:escape:mean:cascade}, 
it follows $M=\xExp_a[T_Z]\asymp\pi(a)\Psi(a,Z)=\pi(a)\Psi(a,J(a))\to\infty$ as $\lambda\to\infty$.
It follows that  $\varepsilon=\xPr_a(T_Z<T^+_a)=\frac{1}{\pi(a)\effR{a}{Z}} \asymp\frac{1}{M}
=\smallo(1)$. Furthermore, $\frac{\eta}{M}=\smallo(1)$ by Corollary~\ref{cor:transition:avalanche},
where $\eta=\xExp_a[T_Z\,|\,T_Z<T^+_a]$. The claim follows from 
Proposition~\ref{prop:exponential:renewal}.
	
To verify the continuous-time statement, we note that
\begin{align}
\varepsilon' \isdef \xPr_a(\hat{T}_Z<\hat{T}^+_a)
&= \xPr_a(T_Z<T^+_a),\nonumber\\
M' \isdef \xExp_a[\hat{T}_Z] 
&= \frac{1}{\gamma}\xExp_a[T_Z],\nonumber\\
\eta' \isdef \xExp_a[\hat{T}_Z\,|\, \hat{T}_Z<\hat{T}^+_a] 
&= \frac{1}{\gamma}\xExp_a[T_Z\,|\, T_Z<T^+_a],
\end{align}
the last two equalities following by simple calculations, and apply again 
Proposition~\ref{prop:exponential:renewal}.
\end{proof}

\begin{proof}[Proof of Proposition~\ref{prop:SES:exit}]
	Let $\beta>0$ be a constant such that
	$\sup_{x\in A}\xExp_x\big[T_{\partial A}\big]\leq\beta\Gamma(A)$
	for all sufficiently large~$\lambda$.
	By the Markov inequality, we have
	\begin{align}
		\xPr_x\big(T_{\partial A}>m\Gamma(A)\big)
			&\leq \frac{\beta}{m}
	\end{align}
	for every $x\in A$ and every constant $m\geq 1$.
	Chopping time into intervals of length $m\Gamma(A)$
	and applying this inequality iteratively we get,
	via the Markov property and time homogeneity, that
	\begin{align}
		\xPr_x\big(T_{\partial A}>nm\Gamma(A)\big)
			&\leq \Big(\frac{\beta}{m}\Big)^n
	\end{align}
	for $n=1,2,\ldots$.
	Setting $n=\big\lfloor\frac{1}{m}\rho\big\rfloor$ we obtain
	\begin{align}
	\label{eq:escape:tail:SES:proof}
		\xPr_x\big(T_{\partial A}>\rho\Gamma(A)\big)
			&\leq \Big(\frac{\beta}{m}\Big)^{\big\lfloor\frac{1}{m}\rho\big\rfloor}
			\asymp \Big(\frac{\beta}{m}\Big)^{\frac{1}{m}\rho} \;.
	\end{align}
	Setting $m\isdef\beta\ee$ gives the smallest value for
	$\big(\frac{\beta}{m}\big)^{\frac{1}{m}}$,
	hence the sharpest inequality~(\ref{eq:escape:tail:SES:proof}).
	The claim follows for $\alpha\isdef\ee^{-\frac{1}{\beta\ee}}$.
\end{proof}

\begin{proof}[Proof of Proposition~\ref{prop:SES:exit:tail:conditional}]
	For $x\in A$, we can write
	\begin{align}
		\xPr_x\big(T_{B_1}>\rho\Gamma(A)\,\big|\, T_{B_1}<T_{B_2}\big) &=
			\frac{%
				\xPr_x\big(T_{B_2}>T_{B_1}>\rho\Gamma(A)\big)
			}{%
				\xPr_x(T_{B_1}<T_{B_2})
			}
		\leq
			\frac{%
				\xPr_x\big(T_{\partial A}>\rho\Gamma(A)\big)
			}{%
				\xPr_x(T_{B_1}<T_{B_2})
			}
	\end{align}
	By Proposition~\ref{prop:SES:exit},
	we have $\xPr_x\big(T_{\partial A}>\rho\Gamma(A)\big)
		\preceq\alpha^{\rho}$ for some constant $\alpha>0$ independent of~$\rho$.
	Let $w$ be a simple path from $x$ to $B_1$ that does not pass through $\partial A$.
	The length of $w$ is at most $\abs{A}$
	and so $\xPr_x(T_{B_1}<T_{B_2})\geq\xPr_x(\text{$X$ follows $w$})\geq\kappa^{\abs{A}}$.
	The claim follows.
\end{proof}

\begin{proof}[Proof of Proposition~\ref{prop:SES:exit:mean:conditional}]
	We have
	\begin{align}
		\xExp_x\big[T_{B_1}\,\big|\, T_{B_1}<T_{B_2}\big] &=
			\sum_{t\geq 0} \xPr_x\big(T_{B_1}>t\,|\,T_{B_1}<T_{B_2}\big) \\
		&=
			\sum_{i=0}^\infty\sum_{j=0}^{\lceil \rho\Gamma(A)\rceil - 1}
			\xPr_x\big(T_{B_1}>i\lceil\rho\Gamma(A)\rceil+j\,|\,T_{B_1}<T_{B_2}\big) \;.
	\end{align}
	Using the bound in Proposition~\ref{prop:SES:exit:tail:conditional} iteratively,
	we get, via the Markov property and time homogeneity, that
	\begin{align}
		\xPr_x\big(T_{B_1}>i\lceil\rho\Gamma(A)\rceil+j\,|\,T_{B_1}<T_{B_2}\big)
		&\preceq
			\big(\alpha^\rho\,\kappa^{-\abs{A}}\big)^i \;,
	\end{align}
	which gives
	\begin{align}
		\xExp_x\big[T_{B_1}\,\big|\, T_{B_1}<T_{B_2}\big] &\preceq
			\rho\Gamma(A)\sum_{i=0}^\infty
			\big(\alpha^\rho\,\kappa^{-\abs{A}}\big)^i
		=
			\rho\Gamma(A)
			\frac{1}{1-\alpha^\rho\,\kappa^{-\abs{A}}}
		\asymp
			\rho\Gamma(A)
	\end{align}
	as $\lambda\to\infty$.
\end{proof}

%%%

\subsection{Critical gate}
\label{apx:critical-gate}

\begin{proof}[Proof of Proposition~\ref{prop:critical-gate:behind}]
Suppose that $r(x,y)\preceq\Psi(A,B)$. Since $x\in S$, there is a path $A\pathto x$ 
with $\Psi(A\pathto x)\preceq\Psi(A,B)$ that does not pass $Q^*$. Continuing this path 
with the transition $x\to y$, we obtain another path $A\pathto x\to y$ with $\Psi(A\pathto x\to y)
\preceq\Psi(A,B)$ that does not hit $Q^*$, except possibly at $y$. But, since $y$ is 
assumed to be outside $S$, it must be in $Q^*$. By assumption, $\Psi(y,B)\prec\Psi(A,B)$, 
which means that there is a path $y\pathto B$ with $\Psi(y\pathto B)\prec\Psi(A,B)$.
Gluing this path with $A\pathto x\to y$, we get an optimal path $A\pathto x\to y\pathto B$,
which, by definition, must pass through the critical gate. It follows that $x$ must be in $Q$, 
because $A\pathto x$ does not pass $Q^*$ and $y\pathto B$ does not pass $Q$.
\end{proof}

\begin{proof}[Proof of Proposition~\ref{prop:effective-conductance:critical-gate}]
The upper bound follows from the simplified Nash-Williams inequality in 
\eqref{eq:nash-williams:simplified}. For the lower bound, we use the extended dual 
Nash-Williams inequality (Proposituion~\ref{prop:nash-williams:dual:extended}).

To get the upper bound, let $S\isdef S(A,Q,Q^*,B)$ be the set of states behind the 
critical gate. By the simplified Nash-Williams inequality in \eqref{eq:nash-williams:simplified}
and Proposition~\ref{prop:critical-gate:behind}, we have
\begin{align}
\effC{A}{B} &\leq \effC{S}{S^\complement} = c(S,S^\complement)
= c(Q,Q^*) + \smallo(\frac{1}{\Psi(A,B)})
= c(Q,Q^*)\, [1+\smallo(1)]
\end{align}
as $\lambda\to\infty$.

Next we verify the lower bound. For each $x\in Q$ and $y\in Q^*$ with $x\link y$, let 
$\omega_{x,y}$ be an optimal path $A\pathto x\to y\pathto B$ whose parts $A\pathto x$ 
and $y\pathto B$ are also optimal. Thus, the transition $x\to y$ is the unique transition 
on $\omega_{x,y}$ whose resistance has the highest order of magnitude as $\lambda\to\infty$.
For each pair $(a,b)$ with $a\link b$, let $n(a,b)$ denote the number of pairs $(x,y)$ such 
that $\omega_{x,y}$ passes through $a\to b$. By the extended dual Nash-Williams inequality 
(Proposition~\ref{prop:nash-williams:dual:extended}), we have
\begin{align}
\effC{A}{B} &\geq \mathop{\sum_{x\in Q}\sum_{y\in Q^*}}_{x\link y}
\frac{1}{\sum_{(a,b)\in\omega_{x,y}} n(a,b)r(a,b)}.
\end{align}
But
\begin{align}
\sum_{(a,b)\in\omega_{x,y}} n(a,b)r(a,b) &= r(x,y) + \smallo(\Psi(A,B))
\end{align}
as $\lambda\to\infty$. Hence
\begin{align}	
\effC{A}{B} &\geq\mathop{\sum_{x\in Q}\sum_{y\in Q^*}}_{x\link y}
c(x,y) \, [1+\smallo(1)]
\end{align}
as $\lambda\to\infty$.
\end{proof}

\begin{proof}[Proof of Proposition~\ref{prop:critical-gate:abstract:choice}]
Let $N_{T_B}(x\to y)$ denote the number of times the chain moves through transition 
$x\to y$ until $T_B$. Using \eqref{eq:green:expression}, we have
\begin{align}
\xExp_a[N_{T_B}(x\to y)] = G_{T_B}(a,x)K(x,y) &= \effR{a}{B}\xPr_x(T_a<T_B)c(x,y).
\end{align}
According to Proposition~\ref{prop:effective-conductance:critical-gate}, we have the 
estimate $\effR{a}{B}=\frac{1+\smallo(1)}{c(Q,Q^*)}$ as $\lambda\to\infty$. Therefore
\begin{align}
\label{eq:edge-crossing:expected:asymptotic}
\xExp_a[N_{T_B}(x\to y)] 
&= \xPr_x(T_a<T_B)\frac{c(x,y)}{c(Q,Q^*)}[1+\smallo(1)]
\end{align}
as $\lambda\to\infty$.

\begin{enumerate}[label=(\roman*)]
\item 
Suppose that $(x,y)\in (S\times S^\complement) \setminus (Q\times Q^*)$ with $x\link y$.
Then, according to the Proposition~\ref{prop:critical-gate:behind}, we have $c(x,y)\prec 
\frac{1}{\Psi(a,B)} \asymp c(Q,Q^*)$. Therefore $\xExp_a[N_{T_B}(x\to y)]=\smallo(1)$ 
as $\lambda\to\infty$. That $\xPr_a(T_{xy}<T_B)=\smallo(1)$ follows from the Markov inequality.
\item 
Suppose that $(x,y)\in S\times S^\complement$ with $x\link y$. Exchanging $x$ and $y$ 
in \eqref{eq:edge-crossing:expected:asymptotic}, we have
\begin{align}
\xExp_a[N_{T_B}(y\to x)] 
&= \xPr_y(T_a<T_B)\frac{c(x,y)}{c(Q,Q^*)}[1+\smallo(1)].
\end{align}
We consider two cases. If $y\notin Q^*$, then we have (by Proposition~\ref{prop:critical-gate:behind})
that $c(x,y)\prec \frac{1}{\Psi(a,B)} \asymp c(Q,Q^*)$. Therefore $\xPr_a(T_{yx}<T_B)\leq\xExp_a
[N_{T_B}(y\to x)]=\smallo(1)$. If, on the other hand, $y\in Q^*$, then, by definition, $\Psi(y,B)\prec
\Psi(a,B)$. It follows from Proposition~\ref{prop:voltage:bounds:communication-height:abstract}
that
\begin{align}
\xPr_y(T_a<T_B) &\leq \bigo(1)\frac{\Psi(y,B)}{\Psi(a,B)} = \smallo(1)
\end{align}
as $\lambda\to\infty$. Therefore again $\xPr_a(T_{yx}<T_B)\leq\xExp_a[N_{T_B}(y\to x)]=\smallo(1)$.
\item
By assumption, $\Psi(a,x)\prec\Psi(a,B)$. Using 
Proposition~\ref{prop:voltage:bounds:communication-height:abstract}, we get
\begin{align}
\xPr_x(T_a<T_B) &\geq 1-\bigo(1)\frac{\Psi(a,x)}{\Psi(a,B)} = 1 - \smallo(1).
\end{align}
Therefore
\begin{align}
\xExp_a[N_{T_B}(x\to y)] &= \frac{c(x,y)}{c(Q,Q^*)}[1+\smallo(1)]
\end{align}
as $\lambda\to\infty$. By the Markov inequality,
\begin{align}
\xPr_a(T_{xy}<T_B) &\leq \frac{c(x,y)}{c(Q,Q^*)}[1+\smallo(1)]
\end{align}
as $\lambda\to\infty$. To see that the equality must hold, we combine the latter inequality
with the result of the first part to write
\begin{align}
1\leq\sum_{\bar{x}\in S}\sum_{\bar{y}\in S^\complement} 
\xPr_a(T_{\bar{x}\bar{y}}<T_B) 
&\leq \smallo(1) +  \sum_{\bar{x}\in Q}\sum_{\bar{y}\in Q^*}
\frac{c(\bar{x},\bar{y})}{c(Q,Q^*)}[1+\smallo(1)] = 1 + \smallo(1).
\end{align}
We conclude that
\begin{align}
\xPr_a(T_{xy}<T_B) &= \frac{c(x,y)}{c(Q,Q^*)}[1+\smallo(1)]
\end{align}
as $\lambda\to\infty$.
\qedhere 
\end{enumerate}
\end{proof}

%%%

\subsection{Critical resistance of standard paths}
\label{apx:standard-path}

\begin{proof}[Proof of Lemma~\ref{lem:hard-core:progression:critical-resistance}]
	Let $\omega(k_0),\omega(k_1),\ldots,\omega(k_m)$ be the backbone of $\omega$.
	Let us identify the critical resistance of the segment $\omega^{(i)}\isdef\omega(k_{i-1})
	\to\omega(k_{i-1}+1)\to\cdots\to\omega(k_i)$ corresponding to $A_{i-1}\to A_i$.
	Set $s\isdef\max\{\abs{A_{i-1}},\abs{A_i}\}$. By symmetry, we can assume that 
	$A_i\supseteq A_{i-1}$, in which case $s=\abs{A_i}$.
	
	Based on whether $N(A_i)\subseteq N(A_{i-1})$ or not, we have two possibilities:
	\begin{align}
	\label{eq:progression:path:proof:cases}
		\begin{array}{ccc}
			{		
			\tikzsetfigurename{progression_path_segment_A}
			\begin{tikzpicture}[baseline=(current bounding box.center),scale=0.5,>=stealth,shorten >=1]
				\PathSegmentIsoperimetricA
			\end{tikzpicture}
			}
			& \qquad\qquad &
			{
			\tikzsetfigurename{progression_path_segment_B}
			\begin{tikzpicture}[baseline=(current bounding box.center),scale=0.5,>=stealth,shorten >=1]
				\PathSegmentIsoperimetricB
			\end{tikzpicture}
			}
			\\
			\text{Case 1} & & \text{Case 2}
		\end{array}
	\end{align}
	When $N(A_i)\not\subseteq N(A_{i-1})$ (Case~2), we have
	\begin{align}
		\Psi(\omega^{(i)})= r(x,y) = \frac{\gamma}{\pi(x)} &=
			\frac{\gamma}{\pi(u)}\frac{\lambda^{\Delta(x)+\abs{x_V}}}{\bar{\lambda}^{\abs{x_V}}}
	\end{align}
	Observing that $\abs{x_V}=s-1$ and $\Delta(x)=\Delta\big(\omega(k_i)\big)=\Delta(s)$,
	we get
	\begin{align}
	\label{eq:progression:path:proof:case:2:resistance}
		\Psi(\omega^{(i)}) =
		r(x,y) &=
			\frac{\gamma}{\pi(u)}\frac{\lambda^{\Delta(s)+s-1}}{\bar{\lambda}^{s-1}}
		= \frac{\gamma}{\pi(u)} \lambda^{\Delta(s) - \alpha(s-1) + \smallo(1)} \;.
	\end{align}
	When $N(A_i)\subseteq N(A_{i-1})$ (Case~1),
	we similarly get
	\begin{align}
		\Psi(\omega^{(i)})= r\big(\omega(k_{i-1}),\omega(k_i)\big)
		&=
			\frac{\gamma}{\pi(u)}\frac{\lambda^{\Delta(s)+s}}{\bar{\lambda}^s} \nonumber \\
		&\prec
			\frac{\gamma}{\pi(u)}\frac{\lambda^{\Delta(s)+s-1}}{\bar{\lambda}^{s-1}}
		= \frac{\gamma}{\pi(u)} \lambda^{\Delta(s) - \alpha(s-1) + \smallo(1)} \;.
	\end{align}
	Letting $i$ run over $\{1,2,\ldots,n\}$ and maximizing $g(s)=\Delta(s)-\alpha(s-1)$,
	we find that
	\begin{align}
	\label{eq:progression:path:proof:sup}
		\Psi(\omega) = \sup_i\Psi(\omega^{(i)})&\leq
			\frac{\gamma}{\pi(u)}\frac{\lambda^{\Delta(s^\dagger)+s^\dagger-1}}{\bar{\lambda}^{s^\dagger-1}}
			= \frac{\gamma}{\pi(u)} \lambda^{\Delta(s^\dagger) - \alpha(s^\dagger-1) + \smallo(1)} \;.			
	\end{align}

	Next, suppose that the progression is nested,
	that is, $A_0\subsetneq A_1\subsetneq\cdots\subsetneq A_m$.
	Observe that the segment $\omega^{(i)}$ that achieves the
	maximum of $\Psi(\omega^{(i)})$ cannot be
	of the first type in~\eqref{eq:progression:path:proof:cases},
	unless $i=1$ and $N(A_1)\subseteq N(A_0)$.
	So, assume that $N(A_1)\subseteq N(A_0)$,
	and let $s^\dagger$ be a maximiser of $g(s)$ over
	$\{\abs{A_i}: 0<i\leq m\}=\{s_{\min}+1,\ldots,s_{\max}\}$.
	Let $0<i\leq m$ be such that $s^\dagger=\abs{A_i}$.
	Since $\abs{A_i}>\abs{A_{i-1}}$,
	we find from~\eqref{eq:progression:path:proof:case:2:resistance} that
	\begin{align}
		\Psi(\omega^{(i)}) &=
			\frac{\gamma}{\pi(u)}\frac{\lambda^{\Delta(s^\dagger)+s^\dagger-1}}{\bar{\lambda}^{s^\dagger-1}}
		= \frac{\gamma}{\pi(u)} \lambda^{\Delta(s^\dagger) - \alpha(s^\dagger-1) + \smallo(1)} \;.
	\end{align}
	This means that the equality in~\eqref{eq:progression:path:proof:sup} is achieved.
\end{proof}

\begin{proof}[Proof of Lemma~\ref{lem:hard-core:standard-path:optimality}]
	Let $\omega\isdef\omega(0)\to\omega(1)\to\cdots\to\omega(n)$ be a standard path
	and $A_0\subsetneq A_1\subsetneq\cdots\subsetneq A_m$
	the associated nested isoperimetric progression.
	Let $\sigma:\omega(0)\pathto\omega(n)$ be an arbitrary path
	from $\omega(0)$ to $\omega(n)$.
	We show that $\Psi(\sigma)\succeq\Psi(\omega)$ as $\lambda\to\infty$.
	It would then follow that $\Psi\big(\omega(0),\omega(n)\big)\asymp\Psi(\omega)$,
	that is, $\omega$ is optimal.
	
	For $0<i\leq m$,
	let $\omega^{(i)}\isdef\omega(k_{i-1})\to\omega(k_{i-1}+1)\to\cdots\to\omega(k_i)$
	be the segment of $\omega$ corresponding to $A_{i-1}\to A_i$.
	As observed in the proof of Lemma~\ref{lem:hard-core:progression:critical-resistance},
	the segment $\omega^{(i)}$ has one of the two forms in~\eqref{eq:progression:path:proof:cases}.
	Let $s\isdef\abs{A_i}$.
	We show that $\Psi(\sigma)\succeq\Psi(\omega^{(i)})$.
	
	When following $\sigma$, the number of particles on $V$ goes from
	$\abs{A_0}$ to $\abs{A_m}$, each step having at most one more particle on $V$
	than the previous step.  Therefore, there are configurations on $\sigma$
	that have exactly $s$ particles on $V$.
	Let $\sigma(\ell)$ be the first configuration on $\sigma$
	with $s$ particles on $V$.  Since $s>\abs{A_0}$, we have $\ell\geq 1$
	and the transition $\sigma(\ell-1)\to\sigma(\ell)$
	is of the type $\symb{+V}$ (i.e., adding a particle on $V$).
	
	When segment $\omega^{(i)}$ satisfies
	Case~1 of~\eqref{eq:progression:path:proof:cases},
	the resistance of the transition $\sigma(\ell-1)\to\sigma(\ell)$
	is at least as large as the critical resistance of $\omega^{(i)}$,
	because
	\begin{align}
		r\big(\sigma(\ell-1),\sigma(\ell)\big) =
			\frac{\gamma}{\pi(\sigma(\ell))}
		&\succeq
			\frac{\gamma}{\pi(\omega(k_i))} =
			r\big(\omega(k_{i-1}),\omega(k_i)\big) = \Psi(\omega^{(i)}) \;,
	\end{align}
	where the inequality follows from the isoperimetric optimality of $\omega(k_i)$.
	
	On the other hand,
	when $\omega^{(i)}$ satisfies Case~2 of~\eqref{eq:progression:path:proof:cases},
	we have $N(A_i)\not\subseteq N(A_{i-1})$, hence $\ell\geq 2$.
	There are two 
	possibilities for the transition $\sigma(\ell-2)\to\sigma(\ell-1)$:
	\begin{align}
		\begin{array}{ccc}
			{	
			\tikzsetfigurename{standard_path_optimal_A}
			\begin{tikzpicture}[scale=0.5,>=stealth,shorten >=1]
				\coordinate[label=left:{\small $\sigma(\ell-2)$}] (l-2) at (0,2);
				\coordinate[label=left:{\small $\sigma(\ell-1)$}] (l-1) at (1,1);
				\coordinate[label=left:{\small $\sigma(\ell)$}] (l) at (2,-0.3);
				\draw
				(l-2) -- node[above right=-4pt] {\scriptsize $\symb{+}$}
				(l-1) -- node[above right=-4pt] {\scriptsize $\symb{+V}$}
				(l); \foreach \point in {l-2,l-1,l} \fill (\point) circle (0.1);
			\end{tikzpicture}
			}
			& \qquad\qquad &
			{	
			\tikzsetfigurename{standard_path_optimal_B}
			\begin{tikzpicture}[scale=0.5,>=stealth,shorten >=1]
				\coordinate[label=left:{\small $\sigma(\ell-2)$}] (l-2) at (0,0);
				\coordinate[label=above:{\small $\sigma(\ell-1)$}] (l-1) at (1,1);
				\coordinate[label=right:{\small $\sigma(\ell)$}] (l) at (2,-0.3);
				\draw
				(l-2) -- node[above left=-4pt] {\scriptsize $\symb{-}$}
				(l-1) -- node[above right=-4pt] {\scriptsize $\symb{+V}$}
				(l); \foreach \point in {l-2,l-1,l} \fill (\point) circle (0.1);
			\end{tikzpicture}
			}
			\\
			\text{Case 1} & & \text{Case 2}
		\end{array}
	\end{align}
		
	The first case is when $\sigma(\ell-1)$ is obtained from $\sigma(\ell-2)$ by adding 
	a particle. Then the resistance of the transition $\sigma(\ell-2)\to\sigma(\ell-1)$ is 
	strictly larger than $\Psi(\omega^{(i)})=r(x,y)$. Namely,
	\begin{align}
		r(\sigma(\ell-2),\sigma(\ell-1)) = \frac{\gamma}{\pi(\sigma(\ell-1))}
			&= \frac{\gamma}{\pi(\sigma(\ell))}\bar{\lambda}
			\succ \frac{\gamma}{\pi(\omega(k_i))}\frac{\bar{\lambda}}{\lambda}
			= \frac{\gamma}{\pi(x)} = r(x,y) \;.
	\end{align}
	The second case is when $\sigma(\ell-1)$ is obtained from $\sigma(\ell-2)$ by 
	removing a particle. This particle must be removed from $U$, for otherwise $\sigma(\ell-2)$ 
	would already have $s$ particles on $V$, which contradicts the choice of $\sigma(\ell)$.
	In this case the resistance of the transition $\sigma(\ell-2)\to\sigma(\ell-1)$ is still
	no smaller than $\Psi(\omega^{(i)})=r(x,y)$, because
	\begin{align}
		r(\sigma(\ell-2),\sigma(\ell-1)) = \frac{\gamma}{\pi(\sigma(\ell-2))}
			&= \frac{\gamma}{\pi(\sigma(\ell))}\frac{\bar{\lambda}}{\lambda} 
			\succeq \frac{\gamma}{\pi(\omega(k_i))}\frac{\bar{\lambda}}{\lambda}
			= \frac{\gamma}{\pi(x)} = r(x,y) \;.
	\end{align}
	Thus, in both cases we get that the critical resistance of $\sigma$ is at least
	$\Psi(\omega^{(i)})=r(x,y)$.
	
	In conclusion, $\Psi(\sigma)\succeq\Psi(\omega^{(i)})$.
	Running $i$ over $\{1,2,\ldots,m\}$, we find that
	$\Psi(\sigma)\succeq\Psi(\omega)$.  Since $\sigma$ was arbitrary, $\omega$ is optimal.
\end{proof}

\begin{proof}[Proof of Proposition~\ref{prop:hard-core:critical-resistance:identification}]
	Since the graph is connected and $V\neq\varnothing$,
	the neighbourhood $N(a)$ of every site $a\in V$ is non-empty.
	The claim thus follows immediately from Lemmas~\ref{lem:hard-core:progression:critical-resistance}
	and~\ref{lem:hard-core:standard-path:optimality}. 
\end{proof}

%%%

\subsection{No-trap condition via ordering}
\label{apx:no-trap}

\begin{proof}[Proof of Proposition~\ref{prop:hard-core:no-trap}]
Consider $x\notin\{u,v\}$. Let $i\in U$ and $j\in V$ be two adjacent sites that are not occupied 
in $x$. Such sites exist. Indeed, $N(U\setminus x_U)\not\subseteq x_V$, otherwise the graph 
would not be connected. By assumption, there is a standard path $u=\omega(0)\to\omega(1)
\to\cdots\to\omega(m)\in J(u)$ whose first particle on $V$ is on site $j$. Note that this path starts 
by removing particles from neighbours of $j$ until it is possible to place a particle on site $j$. 
Since re-ordering the removal of these particles from $U$ does not affect the condition of being a 
standard path, we may assume that the first particle to be removed is from site $i$.

We construct a path $\sigma\colon\,x\pathto y$ from $x$ to a configuration $y\in J^-(x)$ that 
verifies the claim $\pi(x)\Psi\big(x,J^-(x)\big)\prec\pi(u)\Psi\big(u,J(u)\big)$. The idea is to follow 
the moves of the path $\omega$. Specifically, for $k=0,1,\ldots,m$, define $\sigma'(k)\isdef 
x\lor\omega(k)$. The sequence $x=\sigma'(0),\sigma'(1),\ldots,\sigma'(m)$ potentially has 
repeated elements. For instance, $\sigma'(1)=\sigma'(0)$ because $x$ has no particle on $i$.
Removing the repeated elements from this sequence, we obtain a path $\sigma(0)\to\sigma(1)
\to\cdots\to\sigma(\bar{m})$, which we claim has the right property. Observe that this indeed 
makes a path: $\sigma'(k)$ and $\sigma'(k+1)$ differ in at most one position.
	
We will verify that 
\begin{enumerate}[label=(\roman*)]
\item 
$\pi(\sigma'(m))\succ \pi(x)$, 
\item 
$\pi(x)r(\sigma'(k),\sigma'(k+1))\prec\pi(u)r\big(\omega(k),\omega(k+1)\big)$
for each $k=0,1,\ldots,m-1$ such that $\sigma'(k)$ and $\sigma'(k+1)$ are not the same.
\end{enumerate}
Claim~(i) means that $y\isdef\sigma'(m)$ is in $J^-(x)$.  Claim~(ii) implies that
\begin{align}
\pi(x)\Psi\big(x,J^-(x)\big) 
&\preceq \sup_{0\leq\ell<\bar{m}} \pi(x)r\big(\sigma(\ell),\sigma(\ell+1)\big)\nonumber\\
&\prec \sup_{0\leq k<m} \pi(u)r\big(\omega(k),\omega(k+1)\big) = \pi(u)\Psi\big(u,J(u)\big)
\end{align}
as $\lambda\to\infty$, which proves the proposition.
	
To verify the above claims, we note that
\begin{align}
r\big(\omega(k),\omega(k+1)\big) 
= \frac{\gamma}{\max\big\{\pi(\omega(k)),\pi(\omega(k+1))\big\}} 
&= \begin{cases}
\frac{\gamma}{\pi(\omega(k))},
&\text{if $\omega(k)\xremove[U]\omega(k+1)$,} \\
\frac{\gamma}{\pi(\omega(k+1))},
&\text{if $\omega(k)\xadd[V]\omega(k+1)$,}
\end{cases} \\
r\big(\sigma'(k),\sigma'(k+1)\big) 
= \frac{\gamma}{\max\big\{\pi(\sigma'(k)),\pi(\sigma'(k+1))\big\}} 
&= \begin{cases}
\frac{\gamma}{\pi(\sigma'(k))}
&\text{if $\omega(k)\xremove[U]\omega(k+1)$,} \\
\frac{\gamma}{\pi(\sigma'(k+1))}
&\text{if $\omega(k)\xadd[V]\omega(k+1)$}
\end{cases}
\end{align}
provided $\sigma'(k)$ and $\sigma'(k+1)$ are not the same.
Claim~(ii) boils down to verifying that
\begin{align}
\label{eq:no-trap:proof:inequality}
\frac{\pi(u)}{\pi(\omega(k))} 
&\succ \frac{\pi(x)}{\pi(\sigma'(k))} = \frac{\pi(x)}{\pi(x\lor \omega(k))}
\end{align}
for $k=1,2,\ldots,m$ (recall: $\sigma'(0)=\sigma'(1)$).
The same inequality for $k=m$ also proves Claim~(i), because 
$\frac{\pi(u)}{\pi(\omega(m))}\preceq 1$. Finally, using the identity
\begin{align}
\pi(x\lor \omega(k))\pi(x\land \omega(k)) &= \pi(x)\pi(\omega(k))
\end{align}
(see Section~\ref{sec:hard-core:ordering}), the proof of the inequality in 
\eqref{eq:no-trap:proof:inequality} reduces to the proof of the following claim.
The configuration $x\land\omega(k)$ roughly keeps track of the moves that we are 
``saving'' by starting the path from $x$ rather than $u$.
	
\begin{claim}
For $k=1,2,\ldots,m$, $\pi(x\land \omega(k))\prec \pi(u)$ as $\lambda\to\infty$.
\end{claim}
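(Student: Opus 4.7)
\begin{argument}
The plan is to pass to asymptotics and reduce the claim to a combinatorial inequality involving the isoperimetric function. Writing $a_k \isdef |U \setminus (x_U \cup \omega(k)_U)|$ and $b_k \isdef |x_V \cap \omega(k)_V|$, a direct computation gives $\pi(x \land \omega(k))/\pi(u) = \lambda^{-a_k} \bar{\lambda}^{b_k} = \lambda^{-a_k + (1 + \alpha + \smallo(1)) b_k}$. The claim reduces to showing that $a_k - (1+\alpha) b_k$ is bounded below by a positive constant independent of $\lambda$, while $b_k$ remains bounded.

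The key ingredient is the hard-core constraint. Let $B_k \isdef x_V \cap \omega(k)_V$. Since both $x$ and $\omega(k)$ are valid configurations, no vertex of $N(B_k)$ is occupied in either, whence $N(B_k) \subseteq U \setminus (x_U \cup \omega(k)_U)$ and $a_k \geq |N(B_k)| \geq b_k + \Delta(b_k)$ by the definition of the isoperimetric function. Therefore $a_k - (1+\alpha) b_k \geq \Delta(b_k) - \alpha b_k$, which by the definition of the resettling size is strictly positive whenever $0 < b_k < \tilde{s}$. The edge case $b_k = 0$ with $k \geq 1$ is handled separately: since $\omega$ is chosen so that its first transition removes the particle at site $i$, monotonicity of the standard path yields $i \notin \omega(k)_U$ for all $k \geq 1$, and since also $i \notin x_U$ we obtain $a_k \geq 1$.

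The main obstacle is excluding $b_k = \tilde{s}$, at which point $\Delta(\tilde{s}) \leq \alpha \tilde{s}$ and the bound above is no longer strictly positive. Here the specific choice of $\omega$ is decisive. Take $\omega$ to be the standard path built from an isoperimetric numbering of length exactly $\tilde{s}$ starting at $j$ (whose existence is guaranteed by hypothesis~\eqref{hypothesis:numbering:all}); then $|\omega(k)_V| \leq \tilde{s}$ for every $k$, and as soon as $\omega(k)_V$ is non-empty it must contain the first-placed vertex $j$. Since $j \notin x_V$ by the choice of the empty adjacent pair $(i, j)$, we conclude $\omega(k)_V \not\subseteq x_V$ and hence $b_k \leq |\omega(k)_V| - 1 \leq \tilde{s} - 1 < \tilde{s}$. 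Combined with the previous paragraph, the minimum of $\Delta(s) - \alpha s$ over the finite set $\{1, \ldots, \tilde{s}-1\}$ together with the direct bound for $b_k = 0$ yields a uniform positive constant controlling the exponent, and the claim follows.
\end{argument}
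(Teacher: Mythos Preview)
Your approach matches the paper's: both decompose according to $s=b_k$, the number of $V$-particles in $x\land\omega(k)$, handle $s=0$ via the removed site $i\in U$, treat $s>0$ by bounding against the isoperimetric optimum at level $s$, and rule out $s$ being maximal via the site $j\in V$ that lies in every nonempty $\omega(k)_V$ but not in $x_V$.

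There is, however, a gap in the justification of the central inequality. You assert that $\Delta(b_k)-\alpha b_k>0$ for all $0<b_k<\tilde{s}$ ``by the definition of the resettling size'', but the definition of $\tilde{s}$ only says it is the smallest integer \emph{larger than $s^*$} with $\Delta(\tilde{s})\leq\alpha\tilde{s}$. This guarantees $\Delta(s)>\alpha s$ for $s^*<s<\tilde{s}$ but says nothing about $1\leq s\leq s^*$, and in general graphs there is no reason this range should be excluded. The paper sidesteps the issue by working with $\omega:u\pathto J(u)$ truncated at the first entry into $J(u)$: then for $0<s<\abs{\omega_V(m)}$ the backbone configuration $\omega(k_1)$ at level $s$ is by construction not in $J(u)$, so $\pi(\omega(k_1))\prec\pi(u)$, which is precisely the inequality $\Delta(s)>\alpha s$ you need. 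Your argument is repaired by the same move: drop the insistence on a numbering of length exactly $\tilde{s}$, let $\omega$ be the truncated standard path, and replace $\tilde{s}$ by $\abs{\omega_V(m)}$ throughout. The bound $b_k\leq\abs{\omega_V(m)}-1$ still follows from $j\in\omega_V(m)\setminus x_V$ and monotonicity of $\omega$.
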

\begin{argument}
Let $s$ denote the number of particles that $x\land \omega(k)$ has on $V$. We consider 
three separate cases.
		
\begin{case}[1]{$s=0$.}\\
The configuration $x\land \omega(k)$ has no particle on $V$. Moreover, the choice of 
$\omega$ ensures that $x\land \omega(k)$ has no particle on site $i\in U$. It immediately 
follows that $\pi(x\land \omega(k))\prec \pi(u)$.
\end{case}
		
\begin{case}[2]{$0<s<\abs{\omega_V(m)}$.}\\
Let $k_1$ be the first integer for which $\omega(k_1)$ has $s$ particles on $V$. Since 
$\omega$ is a standard path, $\omega(k_1)$ is isoperimetrically optimal. Therefore 
\begin{align}
\pi(x\land \omega(k)) &= \pi(u)\bar{\lambda}^s\lambda^{-\Delta(x\land \omega(k))-s} 
\preceq \pi(u)\bar{\lambda}^s\lambda^{-\Delta(s)-s} 
= \pi(\omega(k_1)) \prec \pi(u).
\end{align}
(For the latter inequality, recall that $\omega(k_1)\notin J(u)$.)
\end{case}
		
\begin{case}[3]{$s=\abs{\omega_V(m)}$.}\\
This is impossible. Indeed, every particle that $x\land \omega(k)$ has on $V$ is also 
present in $\omega(m)$. But, by the choice $\omega$, $\omega(m)$ has a particle on 
site $j\in V$ on which $x$ has no particle. Therefore $x\land \omega(k)$ has strictly less 
particles on $V$ than $\omega(m)$.
\end{case}
\end{argument}
This concludes the proof.
\end{proof}

%%%

\subsection{Passing the bottleneck}
\label{apx:bottleneck}

\begin{proof}[Proof of Lemma~\ref{lem:hard-core:bipartite:optimal-path:isoperimetric:1}]
Suppose that $x$ is a basic configuration in $\omega$ with $\abs{x_V}=s$ particles on $V$.
By the remark before the lemma, we have
\begin{align}
\Psi(\omega) \geq \frac{\gamma}{\pi(x)}
&= \frac{\gamma}{\pi(u)}\frac{\lambda^{\abs{U\setminus x_U}}}{\bar{\lambda}^{\abs{x_V}}}
= \frac{\gamma}{\pi(u)} \lambda^{\Delta(x)-\alpha\abs{x_V}+\smallo(1)}.
\end{align}
Writing $\Delta(x)=\Delta(s)+\Delta(x)-\Delta(s)$, $\Delta(s)=\Delta(s^*)+\dd\Delta(s)$ and 
$s=s^*+\dd s$, and using the assumption we obtain
\begin{align}
\Psi(\omega) &\geq \frac{\gamma}{\pi(u)}\lambda^{\Delta(s^*)-\alpha(s^*-1)+\smallo(1)}
\lambda^{\Delta(x)-\Delta(s)+\dd\Delta(s)-\alpha(\dd s + 1)}\nonumber\\
&\geq \frac{\gamma}{\pi(u)} \lambda^{\Delta(s^*)-\alpha(s^*-1)+\smallo(1)}
\lambda^{\Delta(x)-\Delta(s)-\varepsilon}.
\end{align}
On the other hand, since $\omega$ is optimal, we know by 
Lemma~\ref{prop:hard-core:critical-resistance:identification} that
\begin{align}
\Psi(\omega)=\Psi\big(u,J(u)\big) 
&= \frac{\gamma}{\pi(u)} \lambda^{\Delta(s^*)-\alpha(s^*-1)+\smallo(1)}.
\end{align}
It follows that $\Delta(x)-\Delta(s)-\varepsilon\leq 0$, i.e., $x$ is $\varepsilon$-optimal.
To see the latter claim, note that any configuration that is $\varepsilon$-optimal for some 
$\varepsilon<1$ is, in fact, optimal.
\end{proof}

\begin{proof}[Proof of Lemma~\ref{lem:hard-core:bipartite:optimal-path:isoperimetric:2}]
Suppose that $x=\omega(k)$ is the first configuration in $\omega$ with $\abs{x_V}=s+1$ 
particles on $V$.  This means that $\omega(k)$ has one more particle on $V$ compared 
to $\omega(k-1)$.
Observe that $k$ must be at least $2$ for otherwise we get a contradiction with
the connectedness of the graph.
There are two possibilities for $\omega(k-2)$:
\begin{align}
\begin{array}{ccc}
{			
\tikzsetfigurename{optimal_path_top_A}
\begin{tikzpicture}[scale=0.5,>=stealth,shorten >=1]
\coordinate[label=left:{\small $\omega(k-2)$}] (k-2) at (0,2);
\coordinate[label=left:{\small $\omega(k-1)$}] (k-1) at (1,1);
\coordinate[label=left:{\small $\omega(k)$}] (k) at (2,-0.3);
\draw
(k-2) -- node[above right=-4pt] {\scriptsize $\symb{+}$}
(k-1) -- node[above right=-5pt] {\scriptsize $\symb{+V}$}
(k); \foreach \point in {k-2,k-1,k} \fill (\point) circle (0.1);
\end{tikzpicture}
}
& \qquad\qquad &
{
\tikzsetfigurename{optimal_path_top_B}
\begin{tikzpicture}[scale=0.5,>=stealth,shorten >=1]
\coordinate[label=left:{\small $\omega(k-2)$}] (k-2) at (0,0);
\coordinate[label=above:{\small $\omega(k-1)$}] (k-1) at (1,1);
\coordinate[label=right:{\small $\omega(k)$}] (k) at (2,-0.3);
\draw
(k-2) -- node[above left=-4pt] {\scriptsize $\symb{-}$}
(k-1) -- node[above right=-5pt] {\scriptsize $\symb{+V}$}
(k); \foreach \point in {k-2,k-1,k} \fill (\point) circle (0.1);
\end{tikzpicture}
}
\\
\text{Case 1} & & \text{Case 2}
\end{array}
\end{align}

In the first case, $\omega(k-2)$ has one less particle than $\omega(k-1)$. This means 
that $\omega(k-1)$ is a basic step on $\omega$. Since $\omega(k-1)$ has $s$ particles 
on $V$ and $\dd \Delta(s)+\varepsilon\geq\alpha(\dd s+1)$,
Lemma~\ref{lem:hard-core:bipartite:optimal-path:isoperimetric:1} implies that $\omega(k-1)$ 
is isoperimetrically $\varepsilon$-optimal. Therefore, $\Delta(\omega(k))=\Delta(\omega(k-1))-1
\leq\Delta(s)+\varepsilon-1$. Using the assumption $\Delta(s+1)\geq\Delta(s)$, we obtain 
that $x=\omega(k)$ is $(\varepsilon-1)$-optimal.
	
In the second case, $\omega(k-2)$ has one more particle than $\omega(k-1)$. By the choice 
of $x=\omega(k)$, this extra particle is on $U$. Otherwise $\omega(k-2)$ would already 
have $s+1$ particles on $V$. Now, $\omega(k-2)$ is a basic configuration on $\omega$
with $s$ particles on $V$. Therefore the assumption $\dd \Delta(s)+\varepsilon\geq\alpha
(\dd s+1)$ and Lemma~\ref{lem:hard-core:bipartite:optimal-path:isoperimetric:1} imply that
$\omega(k-2)$ is isoperimetrically $\varepsilon$-optimal. Therefore $\Delta(\omega(k))
=\Delta(\omega(k-2))\leq\Delta(s)+\varepsilon$. By assumption, we also have $\Delta(s+1)
\geq\Delta(s)$. Hence $\Delta(\omega(k))\leq\Delta(s+1)+\varepsilon$, which means that
$\omega(k)$ is $\varepsilon$-optimal.
\end{proof}

\begin{proof}[Proof of Lemma~\ref{lem:hard-core:bipartite:optimal-path:isoperimetric:3}]
Let $\omega(i)$ be the next basic configuration after $\omega(p)$. Since $\omega(p)$ is 
the last basic configuration before $\omega(q)$ having less than $s^*-1$ particles on~$V$, 
$\omega(i)$ must have $s^*-1$ particles on~$V$. We have either of the following two 
possibilities when going from $\omega(p)$ to $\omega(i)$ on $\omega$:
\begin{equation*}
\begin{array}{ccc}
{
\tikzsetfigurename{optimal_path_top_C}
\begin{tikzpicture}[scale=0.5,>=stealth,shorten >=1]
\coordinate[label=left:{\small $\omega(p)$}] (p) at (0,1);
\coordinate[label=right:{\small $\omega(i)$}] (i) at (1,-0.3);
\draw (p) -- node[above right=-5pt] {\scriptsize $\symb{+V}$} (i);
\foreach \point in {p,i} \fill (\point) circle (0.1);
\end{tikzpicture}
}
& \qquad\qquad &
{			
\tikzsetfigurename{optimal_path_top_D}
\begin{tikzpicture}[scale=0.5,>=stealth,shorten >=1]
\coordinate[label=left:{\small $\omega(p)$}] (p) at (0,0);
\coordinate (pp) at (1,1);
\coordinate[label=right:{\small $\omega(i)$}] (i) at (2,-0.3);
\draw
(p) -- node[above left=-4pt] {\scriptsize $\symb{-}$}
(pp) -- node[above right=-5pt] {\scriptsize $\symb{+V}$}(i);
\foreach \point in {p,pp,i}
\fill (\point) circle (0.1);
\end{tikzpicture}
}
\\
\text{Case 1} & & \text{Case 2}
\end{array}
\end{equation*}
Suppose that $\omega(i)$ has $t=t^*+\dd t$ particles on $U$. Then $\omega(p)$ has at most 
$t+1$ particles on $U$.
	
By the remark before Lemma~\ref{lem:hard-core:bipartite:optimal-path:isoperimetric:1},
the critical resistance of $\omega$ satisfies
\begin{align}
\Psi(\omega) \geq \frac{\gamma}{\pi(\omega(p))}
&= \frac{\gamma}{\pi(u)}
\frac{\lambda^{\abs{U\setminus \omega_U(p)}}}{\bar{\lambda}^{\abs{\omega_V(p)}}}
= \frac{\gamma}{\pi(u)}
\lambda^{\Delta(\omega(p))-\alpha\abs{\omega_V(p)}+\smallo(1)}
\end{align}
as $\lambda\to\infty$. Substituting $\Delta(\omega(p))\geq\abs{U}-(t^*+\dd t+1)-(s^*-2)
=\Delta(s^*)-\dd t+1$ and $\abs{\omega_V(p)}=s^*-2$, we get
\begin{align}
\Psi(\omega) &\geq \frac{\gamma}{\pi(u)}
\lambda^{\Delta(s^*)-\alpha(s^*-1)+1+\alpha-\dd t + \smallo(1)}
\end{align}
as $\lambda\to\infty$. But, since $\omega$ is optimal, we know from 
Lemma~\ref{prop:hard-core:critical-resistance:identification} that
\begin{align}
\Psi(\omega)=\Psi\big(u,J(u)\big) 
&= \frac{\gamma}{\pi(u)} \lambda^{\Delta(s^*)-\alpha(s^*-1)+\smallo(1)}.
\end{align}
It follows that $\dd t\geq 1+\alpha$. Since $\dd t$ is integer, $\dd t\geq 2$, 
which proves the first claim. In particular,
\begin{align}
\Delta(\omega(i)) &= \abs{U}-(t^*+\dd t) - (s^*-1) = \Delta(s^*)+1-\dd t
\leq \Delta(s^*)-1 = \Delta(s^*-1) + \delta - 1,
\end{align}
which means $\omega(i)$ is $(\delta-1)$-optimal.
\end{proof}

\begin{proof}[Proof of Proposition~\ref{prop:hard-core:bipartite:optimal-path:top-of-the-hill}]
Let $\omega$ be an optimal path from $u$ to $J(u)$.
By definition, $s^*\geq 1$.  Let us first assume that $s^*\geq 2$.
Let $\omega(q)$ be the first basic 
configuration in $\omega$ that has $s^*+\kappa$ particles on $V$. Let $\omega(p)$ (with $p<q$) 
be the last basic configuration before $\omega(q)$ with $s^*-2$ particles on $V$. Finally, 
let $\omega(r)$ (with $p<r<q$) be the last (not necessarily basic) configuration before 
$\omega(q)$ having $s^*-1$ particles on $V$. Set $y\isdef\omega(r)$, $x\isdef\omega(r-1)$ 
and $z\isdef\omega(r+1)$.
	
Clearly, $z$ is a basic configuration with $s^*$ particles on $V$ and $y\xadd[V]z$. By 
Lemmas~\ref{lem:hard-core:bipartite:optimal-path:isoperimetric:1} and
\ref{lem:hard-core:bipartite:optimal-path:isoperimetric:2}, every basic configuration in the 
segment $\omega(r+1)\to\omega(r+2)\to\cdots\to\omega(q)$ is isoperimetrically optimal.
Let $\omega(k_0),\omega(k_1),\ldots,\omega(k_\ell)$ be the subsequence of these
basic configurations obtained after removing the repetitions, and set $B_i\isdef\omega_V(k_i)$.
The sequence $B_0,B_1,\ldots,B_\ell$ satisfies the required properties in part (c).
	
Let $\omega(t)$ (with $p<t\leq r<q$) be the first basic configuration after $\omega(p)$.
Then $\omega(t)$ has $s^*-1$ particles on $V$ and, according to 
Lemma~\ref{lem:hard-core:bipartite:optimal-path:isoperimetric:3},
is isoperimetrically $(\delta-1)$-optimal.
Note that every configuration $\omega(i)$ with $t\leq i\leq r$ has exactly $s^*-1$ particles 
on $V$. Therefore $\omega_V(i)=\omega_V(t)$ is an isoperimetrically $(\delta-1)$-optimal set.
In particular, $x_V$ is isoperimetrically $(\delta-1)$-optimal.  We argue 
that $x\xremove[U]y$. Indeed, otherwise, we would have $x\xadd y$, which means that $y$ is 
a basic configuration with $\Delta(y)=\Delta(s^*)+1$. It would then follow that
\begin{align}
\Psi(\omega) \geq r(x,y) = \frac{\gamma}{\pi(y)}
&= \frac{\gamma}{\pi(u)}\lambda^{\Delta(y) - \alpha\abs{y_V} + \smallo(1)}\nonumber\\
&= \frac{\gamma}{\pi(u)}\lambda^{\Delta(s^*) + 1 - \alpha(s^*-1) + \smallo(1)} 
\succ \Psi\big(u,J(u)\big),
\end{align}
where the latter inequality follows from Proposition~\ref{prop:hard-core:critical-resistance:identification}.
This contradicts the optimality of $\omega$. Since $x\xremove[U]y\xadd[V]z$ and $z$ is 
isoperimetrically optimal, we also get $\Delta(x)=\Delta(z)=\Delta(s^*)=\Delta(s^*-1)+\delta$,
which means $x$ is isoperimetrically $\delta$-optimal.

If $s^*=1$, we set $t\isdef 0$ and choose $r$ ($t\leq r<q$) to be the last configuration before
$\omega(q)$ having no particle on $V$.  Note that $r>t$ for otherwise the graph will not be connected.
In this case, $\omega(t)=u$ is optimal and the rest of the argument goes without change.
\end{proof}

%%%

\subsection{Identification of critical gate}
\label{apx:critical-gate:identification}

\begin{proof}[Proof of Proposition~\ref{prop:critical-gate:progressions}]
	Using Hypothesis~\eqref{hypothesis:numbering:single} and
	Proposition~\ref{prop:hard-core:critical-resistance:identification},
	we have
	\begin{align}
		\Psi\big(u,J(u)\big) 
		&= \frac{\gamma}{\pi(u)}
		\frac{\lambda^{\Delta(s^*)+s^*-1}}{\bar{\lambda}^{s^*-1}} \;.
		= \frac{\gamma}{\pi(u)} \lambda^{\Delta(s^*) - \alpha(s^*-1) + \smallo(1)}
		\qquad\text{as $\lambda\to\infty$.}
	\end{align}
	We verify that the pair $(Q,Q^*)$ satisfies the four conditions
	for being a critical pair (Sec.~\ref{sec:sharper})
	between $A\isdef\{u\}$ and $B\isdef J(u)$.

	First, observe that for every $x\in Q$ and $y\in Q^*$ with $x\link y$,
	we have
	\begin{align}
		r(x,y) &= \frac{\gamma}{\pi(x)}
			= \frac{\gamma}{\pi(u)}
				\frac{\lambda^{\Delta(s^*)+s^*-1}}{\bar{\lambda}^{s^*-1}}
			= \Psi\big(u,J(u)\big) \;,
	\end{align}
	hence the first condition is satisfied.

	Let $x\in Q$ and $y\in Q^*$ be such that $x\xremove[U] y$.
	By definition, there exist $A\in\family{A}$ and $B\in\family{B}$
	such that $y_V=A$ and $y_U=U\setminus N(B)$.
	Let $i$ be the unique element of $B\setminus A$.
	Then, $x_V=y_V$ and $x_U=y_U\cup\{j_0\}$ for some $j_0\in N(i)\setminus N(A)$.
	(Note that $N(i)\setminus N(A)$ is non-empty. Otherwise,
	$\Delta(B)=\Delta(A)-1$, which gives $g(s^*-1)>g(s^*)$.
	The latter inequality clearly cannot happen if $s^*>1$.
	On the other hand, when $s^*=1$, the set $N(i)\setminus N(A)=N(i)$
	cannot be empty, because the graph is assumed to be connected.)
	Let $j_1,j_2,\ldots,j_d$ be an enumeration of $N(i)\setminus\{j_0\}$.
	
	According to~\eqref{hypothesis:critical-set:before-A},
	there is an isoperimetric progression from $\varnothing$ to $A$,
	consisting only of sets of size at most $s^*-1$.
	Let $\omega$ be the path associated to such a progression.
	Then, by Lemma~\ref{lem:hard-core:progression:critical-resistance},
	\begin{align}
		\Psi(\omega) &\leq
			\frac{\gamma}{\pi(u)}\frac{\lambda^{\Delta(s^\dagger)+s^\dagger-1}}{\bar{\lambda}^{s^\dagger-1}}
			= \frac{\gamma}{\pi(u)} \lambda^{\Delta(s^\dagger) - \alpha(s^\dagger-1) + \smallo(1)}
			\qquad\text{as $\lambda\to\infty$,}
	\end{align}
	where $s^\dagger$ is the maximiser of $g(s)$ over $\{1,2,\ldots,s^*-1\}$.
	Since $g(s^\dagger)<g(s^*)$, we find that $\Psi(\omega)\prec\Psi\big(u,J(u)\big)$.
	Let $\omega'$ be the path from $u$ to $x$, obtained by first following $\omega$
	and then removing particles from $j_1,j_2,\ldots,j_d$ one after another.
	The resistance of the new transitions are all smaller than $r(x,y)$.
	Therefore, $\Psi(u,x)\preceq\Psi(\omega')\prec\Psi\big(u,J(u)\big)$.
	
	Showing that $\Psi\big(y,J(u)\big)\prec\Psi\big(u,J(u)\big)$
	is similar.  According to~\eqref{hypothesis:critical-set:after-B},
	there is an isoperimetric progression from $B$ to a set of size $\tilde{s}$,
	consisting only of sets of size at least $s^*$.
	Let $\omega$ be the path associated to such a progression.
	Then, by Lemma~\ref{lem:hard-core:progression:critical-resistance},
	\begin{align}
		\Psi(\omega) &\leq
			\frac{\gamma}{\pi(u)}\frac{\lambda^{\Delta(s^\dagger)+s^\dagger-1}}{\bar{\lambda}^{s^\dagger-1}}
			= \frac{\gamma}{\pi(u)} \lambda^{\Delta(s^\dagger) - \alpha(s^\dagger-1) + \smallo(1)}
			\qquad\text{as $\lambda\to\infty$,}
	\end{align}
	where $s^\dagger$ is the maximiser of $g(s)$ over $\{s^*+1,s^*+2,\ldots,\tilde{s}\}$.
	Using~\eqref{hypothesis:uniqueness} we know that $g(s^\dagger)<g(s^*)$,
	from which it follows that $\Psi(\omega)\prec\Psi\big(u,J(u)\big)$.
	Let $\omega'$ be the path from $y$ to $J(u)$, obtained by first
	placing a particle on $i$ and then following $\omega$.
	Note that
	\begin{align}
		r\big(\omega'(0),\omega'(1)\big) &=
			\frac{\gamma}{\pi\big(\omega(0)\big)}
			= \frac{\gamma}{\pi(u)}\frac{\lambda^{\Delta(s^*)+s^*}}{\bar{\lambda}^{s^*}}
			= \frac{\gamma}{\pi(u)} \lambda^{\Delta(s^*) - \alpha s^* + \smallo(1)}
			\prec r(x,y) \;.
	\end{align}
	Therefore, $\Psi(u,x)\preceq\Psi(\omega')\prec\Psi\big(u,J(u)\big)$.
	
	Lastly, let $\omega$ be an optimal path from $u$ to $J(u)$.
	The trace of $\omega$ on $V$ (see Section~\ref{sec:hard-core:progressions})
	is a progression $A_0,A_1,\ldots,A_m$
	with $A_0=\varnothing$ and $\Delta(A_m)\leq\alpha\abs{A_m}$. %$\abs{A_m}=\tilde{s}$.
	Let us verify that this progression is $\alpha$-bounded in the sense that
	$\Delta(A_i)-\alpha\abs{A_i}\leq\Delta(s^*)-\alpha s^*$ for each $0\leq i\leq m$.
	Indeed, suppose that $\Delta(A_i)-\alpha(\abs{A_i}-1)>\Delta(s^*)-\alpha(s^*-1)=g(s^*)$
	for some $0\leq i\leq m$.
	%Since $A_0=\varnothing$, we must have $i\geq 1$.
	From~\eqref{hypothesis:uniqueness}, we know that $i\geq 1$.
	Let $\omega(k)$ be the first configuration in $\omega$ such that $\omega_V(k)=A_i$.
	Since $i\geq 1$ and the graph is connected, we have $k\geq 2$.
	There are two possibilities for the two transitions leading to $\omega(k)$:
	\begin{align}
	\label{eq:critical-gate:progressions:cases}
		\begin{array}{ccc}
			{
			%\tikzset{external/force remake}		% uncomment if the figure is changed		
			\tikzsetfigurename{optimal_path_progression_A}
			\begin{tikzpicture}[scale=0.5,>=stealth,shorten >=1]
				\coordinate[label=left:{\small $\omega(k-2)$}] (k-2) at (0,2);
				\coordinate[label=left:{\small $\omega(k-1)$}] (k-1) at (1,1);
				\coordinate[label=left:{\small $\omega(k)$}] (k) at (2,-0.3);
				\draw
				(k-2) -- node[above right=-4pt] {\scriptsize $\symb{+}$}
				(k-1) -- node[above right=-4pt] {\scriptsize $\symb{+V}$}
				(k); \foreach \point in {k-2,k-1,k} \fill (\point) circle (0.1);
			\end{tikzpicture}
			}
			& \qquad\qquad &
			{	
			\tikzsetfigurename{optimal_path_progression_B}
			\begin{tikzpicture}[scale=0.5,>=stealth,shorten >=1]
				\coordinate[label=left:{\small $\omega(k-2)$}] (k-2) at (0,0);
				\coordinate[label=above:{\small $\omega(k-1)$}] (k-1) at (1,1);
				\coordinate[label=right:{\small $\omega(k)$}] (k) at (2,-0.3);
				\draw
				(k-2) -- node[above left=-4pt] {\scriptsize $\symb{-}$}
				(k-1) -- node[above right=-4pt] {\scriptsize $\symb{+V}$}
				(k); \foreach \point in {k-2,k-1,k} \fill (\point) circle (0.1);
			\end{tikzpicture}
			}
			\\
			\text{Case 1} & & \text{Case 2}
		\end{array}
	\end{align}
	As in the proof of Lemma~\ref{lem:hard-core:standard-path:optimality},
	we can verify that in either case,
	\begin{align}
		r(\omega(k-2),\omega(k-1)) &\succeq
			\frac{\gamma}{\pi(u)} \lambda^{\Delta(A_i) - \alpha(\abs{A_i}-1) + \smallo(1)} \nonumber \\
		&\succ \frac{\gamma}{\pi(u)} \lambda^{\Delta(s^*) - \alpha(s^*-1) + \smallo(1)}
			\asymp \Psi\big(u,J(u)\big) \;,
	\end{align}
	which is a contradiction. Since $A_0,A_1,\ldots,A_m$ is $\alpha$-bounded,
	according to~\eqref{hypothesis:critical-set:mandatory},
	there exists an index $0<i\leq m$ such that $A_{i-1}\in\family{A}$ and $A_i\in\family{B}$.
	Let $\omega(k)$ be the first configuration in $\omega$ such that $\omega_V=A_i$.
	Then, the two transitions leading to $\omega(k)$
	are of the second type in~\eqref{eq:critical-gate:progressions:cases},
	where $x\isdef\omega(k-2)\in Q$ and $y\isdef\omega(k-1)\in Q^*$.
	
	We conclude that $(Q,Q^*)$ is a critical pair between $u$ and $J(u)$.
\end{proof}

\begin{proof}[Proof of Proposition~\ref{prop:critical-gate:identification}]
	Condition~\eqref{hypothesis:critical-set:size} is clearly satisfied.
	Condition~\eqref{hypothesis:critical-set:before-A} is the same
	as~\eqref{hypothesis:critical-set-replacement:before-A}.
	Condition~\eqref{hypothesis:critical-set:after-B} follows
	from~\eqref{hypothesis:critical-set-replacement:after-C} and the definition of~$\family{B}$.

	Condition~\eqref{hypothesis:critical-set:mandatory} follows from
	Proposition~\ref{prop:hard-core:bipartite:optimal-path:top-of-the-hill}.
	Namely, let $A_0,A_1,\ldots,A_n$ be an $\alpha$-bounded progression
	(i.e., a progression satisfying $\Delta(A_i)-\alpha\abs{A_i}\leq\Delta(s^*)-\alpha s^*$)
	with $A_0=\varnothing$ and $\Delta(A_n)\leq\alpha\abs{A_n}$. %$\abs{A_n}=\tilde{s}$.
	Let $\omega$ be the path associated to this progression
	(see Section~\ref{sec:hard-core:progressions}).
	
	%Since $\abs{A_n}=\tilde{s}$ and $A_n$ is isoperimetrically optimal,
	Since $\Delta(A_n)\leq\alpha\abs{A_n}$,
	the path ends at a configuration $\omega(N)\in J(u)$.
	Furthermore, as in the proof of Lemma~\ref{lem:hard-core:progression:critical-resistance}
	(and using Proposition~\ref{prop:hard-core:critical-resistance:identification}),
	we can verify that $\omega$ is optimal (in the sense of Section~\ref{sec:sharper}).
	Proposition~\ref{prop:hard-core:bipartite:optimal-path:top-of-the-hill}
	and~\eqref{hypothesis:critical-set-replacement:values}
	now ensure that there is a $0\leq k<n$ such that $A_k\in\family{A}$
	and $A_{k+1}\in\family{B}$.
\end{proof}

%%%

\subsection{Isoperimetric problems}
\label{apx:isoperimetric}

\begin{proof}[Proof of Lemma~\ref{lem:isoperimetric:lattice}]
The proof follows Cirillo and Nardi~\cite[Lemma 6.16]{CirNar13}. Let us refer to the 
two principal directions of the lattice $L$ as \emph{horizontal} and \emph{vertical}. We 
say that $A$ is \emph{convex} when its intersection with every horizontal or vertical line
induces a (connected) path in $L$. We first show that $A$ is convex and connected in $L$.
We afterwards verify that every finite convex and connected set satisfies $N_{\symb{1010}}(A)
=\varnothing$ and $\abs{N_1(A)} - \abs{N_3(A)} = 4$.
	
First, let us verify that $A\cup N(A)$ is connected in the original lattice. If not, then $A$ 
can be partitioned into two sets $A_1$ and $A_2$ such that $A_1\cup N(A_1)$ and 
$A_2\cup N(A_2)$ are disjoint. We can then shift $A_2$ to obtain a set $A'_2$ that is 
still disjoint from $A_1$, but satisfies  $N(A'_2)\cap N(A_1)\neq\varnothing$. It follows
that $\Delta(A_1\cup A'_2)<\Delta(A)$, which contradicts the optimality of $A$.
	
Next, let $\overline{A}\subseteq V$ be the smallest rectangular region in $L$ having 
horizontal and vertical sides that contains $A$.
Consider the following construction that enlarges $A$
(Fig.~\ref{fig:isoperimetric:lattice:lemma:extension}). Set $B_0\isdef A$. To construct 
$B_t$ from $B_{t-1}$, find a vertex $k_t\in V\setminus B_{t-1}$ that is adjacent in $L$
to at least two elements of $B_{t-1}$ and set $B_t\isdef B_{t-1}\cup\{k_t\}$.
It is easy to see that this construction stops precisely 
when $B_t=\overline{A}$. Furthermore, $\Delta(B_{t-1})\geq\Delta(B_t)$ with equality
if and only if $N_L(k_t)\cap B_{t-1}=\{i,j\}$ where $N(i)\cap N(j)\neq\varnothing$ (i.e., 
$B_{t-1}$ has exactly two elements adjacent in $L$ to $k_t$, and those two elements 
form a right triangle with $k_t$). The latter happens for every $t$ precisely when $A$ 
is convex. It follows that $\Delta(A)\geq\Delta(\overline{A})$ with equality if and only if 
$A$ is convex.
		
%%%%%%%%%%%%%%%%%%%%%%%%%%%%%%%%%%
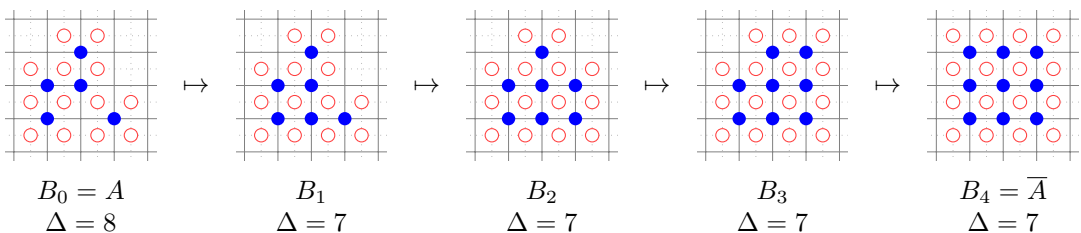
\begin{figure}[htb]
\centering
$\begin{array}{ccccccccc}
{
\tikzsetfigurename{lattice_isoperimetric_enlargement_A}
\begin{tikzpicture}[scale=0.22,baseline={([yshift=-.6ex]current bounding box.center)}]	\DrawLatticeIsoperimetricEnlargement{\LatticeIsoperimetricEnlargementA}
\end{tikzpicture}
}
& \mapsto &
{
\tikzsetfigurename{lattice_isoperimetric_enlargement_B}
\begin{tikzpicture}[scale=0.22,baseline={([yshift=-.6ex]current bounding box.center)}]	\DrawLatticeIsoperimetricEnlargement{\LatticeIsoperimetricEnlargementB}
\end{tikzpicture}
}
& \mapsto &
{
\tikzsetfigurename{lattice_isoperimetric_enlargement_C}
\begin{tikzpicture}[scale=0.22,baseline={([yshift=-.6ex]current bounding box.center)}]	\DrawLatticeIsoperimetricEnlargement{\LatticeIsoperimetricEnlargementC}
\end{tikzpicture}
}
& \mapsto &
{
\tikzsetfigurename{lattice_isoperimetric_enlargement_D}
\begin{tikzpicture}[scale=0.22,baseline={([yshift=-.6ex]current bounding box.center)}]	\DrawLatticeIsoperimetricEnlargement{\LatticeIsoperimetricEnlargementD}
\end{tikzpicture}
}
& \mapsto &
{
\tikzsetfigurename{lattice_isoperimetric_enlargement_E}
\begin{tikzpicture}[scale=0.22,baseline={([yshift=-.6ex]current bounding box.center)}]	\DrawLatticeIsoperimetricEnlargement{\LatticeIsoperimetricEnlargementE}
\end{tikzpicture}
}
\medskip \\
B_0=A & & B_1 & & B_2 & & B_3 & & B_4=\overline{A} \\
\Delta=8 & & \Delta=7 & & \Delta=7 & & \Delta=7 & & \Delta=7
\end{array}$
\caption{Enlarging a set $A\subseteq V$ into the encompassing rectangle.}
\label{fig:isoperimetric:lattice:lemma:extension}
\end{figure}
%%%%%%%%%%%%%%%%%%%%%%%%%%%%%%%%%%%%%%%%%%%
	
We next argue that $A$ is in fact connected in $L$. Indeed, suppose that $A$ is not connected.
Let $A_1,A_2,\ldots,A_k$ be the connected components of $A$. Since $A$ is convex and 
$A\cup N(A)$ is connected in the original lattice, we can re-order the sets $A_1,A_2,\ldots,A_k$ 
in such a way that the two sets $N(A_1\cup\cdots\cup A_{k-1})$ and $N(k)$ share exactly one 
element (Fig.~\ref{fig:isoperimetric:lattice:connectivity}). However, since $A$ is convex, we can 
shift $A_k$ to obtain a set $A'_k$ disjoint from $A_1\cup\cdots\cup A_{k-1}$ such that $N(A'_k)$ 
and $N(A_1\cup\cdots\cup A_{k-1})$ share at least two elements. It follows that $\Delta(A_1\cup
\cdots\cup A_{k-1}\cup A'_2)<\Delta(A)$, which is a contradiction.

%%%%%%%%%%%%%%%%%%%%%%%%%%%%%%%%%%%%
\begin{figure}[htbp]
\centering
\begin{subfigure}[b]{0.4\textwidth}
\centering
{
\tikzsetfigurename{lattice_isoperimetric_not_connected}
\begin{tikzpicture}[baseline,scale=0.3,>=stealth,shorten >=1]
\DrawLatticeIsoperimetricNotConnected
\end{tikzpicture}
}
\caption{$A=A_1\cup A_2$, $\Delta=11$.}
\label{fig:isoperimetric:lattice:not-connected}
\end{subfigure}
\begin{subfigure}[b]{0.4\textwidth}
\centering
{
\tikzsetfigurename{lattice_isoperimetric_connected}
\begin{tikzpicture}[baseline,scale=0.3,>=stealth,shorten >=1]
\DrawLatticeIsoperimetricConnected
\end{tikzpicture}
}
\caption{$A_1\cup A'_2$, $\Delta=10$.}
\label{fig:isoperimetric:lattice:connected}
\end{subfigure}
\caption{Optimal sets are connected in $L$. The isoperimetric cost of a convex 
disconnected set can be decreased by shifting one of the components.}
\label{fig:isoperimetric:lattice:connectivity}
\end{figure}
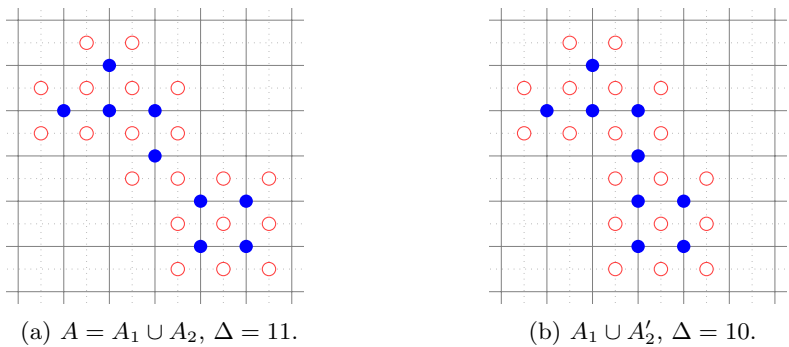
%%%%%%%%%%%%%%%%%%%%%%%%%%%%%%%%%%%%%%%%%%%

%%%%%%%%%%%%%%%%%%%%%%%%%%%%%%%%%%%%%%%%%%%%	
\begin{figure}[htbp]
\centering
{
\tikzsetfigurename{lattice_isoperimetric_labeling}
\begin{tikzpicture}[baseline,scale=0.4,>=stealth,shorten >=1]
\DrawLatticeIsoperimetricLabeling
\end{tikzpicture}
}
\caption{The labeling of the vertices of $c(\overline{A})$.}
\label{fig:isoperimetric:lattice:labeling}
\end{figure}
%%%%%%%%%%%%%%%%%%%%%%%%%%%%%%%%%%%%%%%
	
A convex and connected set in $L$ is easily seen to satisfy $N_{\symb{1010}}(A)=\varnothing$.
Let $L'$ be the graph with vertex set $U$ and with an edge between $(a,b)$ and $(a',b')$ if 
and only if $\abs{a'-a}=\abs{b'-b}=1$. This is the lattice dual to $L$.
Since $A$ is connected and convex,
the elements of $N_1(A)\cup N_2(A)\cup N_3(A)$ induce a simple cycle in $L'$,  which is 
the contour encompassing $A$. We denote this cycle by $c(A)$. Since $A$ is convex and 
connected, there is a natural one-to-one correspondence between the edges of $c(A)$ 
and the edges of the contour $c(\overline{A})$ encompassing the rectangle $\overline{A}$. 
Let us label the vertices of $c(\overline{A})$ with pairs in $\{1,2,3\}^2$ as follows (see 
Fig.~\ref{fig:isoperimetric:lattice:labeling}). Let $x$ be a vertex of $c(\overline{A})$,
and let $(y,x)$ and $(x,z)$ be two two edges incident to $x$. Let $(y',x')$ and $(x'',z'')$ be the 
edges of $c(A)$ corresponding to $(y,x)$ and $(x,z)$, respectively. If $x'\in N_i(A)$ and 
$x''\in N_j(A)$, then we label $x$ with $(i,j)$. Note that the only possible labels are $(1,1)$, 
$(2,2)$, $(3,1)$ and $(1,3)$, and that the four corners of $c(\overline{A})$ are precisely the 
vertices with label $(1,1)$. Counting reveals that $\abs{N_1(A)} - \abs{N_3(A)} = 4$.	
\end{proof}

\begin{proof}[Proof of Proposition~\ref{prop:isoperimetric:hypercube:recursion}]
	Let $S(d,m)\subseteq\{\symb{0},\symb{1}\}^d$ denote the set consisting
	of the $m$ first elements of Harper's isoperimetric ordering of the vertices of $H_d$.
	Observe that $S\big(d,\sum_{i=0}^{r-1}\binom {d}{i}\big)$ consists precisely of
	the words $w\in\{\symb{0},\symb{1}\}^d$ with $\norm{w}<r$.
	For $0\leq k\leq\binom{d}{r}$, we have
	\begin{align}
		S\left(d,\sum_{i=0}^{r-1}\binom {d}{i} + k\right) &=
			S\left(d,\sum_{i=0}^{r-1}\binom {d}{i}\right) \cup L(d,r,k)
	\end{align}
	where $L(d,r,k)$ consists of the first $k$ elements of
	the set $\{w\in\{\symb{0},\symb{1}\}^d: \norm{w}=r\}$
	according to the reverse lexicographic ordering.
	
	The sets $L(d,r,k)$ satisfy the recursion
	\begin{align}
		\label{eq:isoperimetric:hypercube:recursion:proof}
		L(d,r,k) &=
			\begin{cases}
				\symb{1}L(d-1,r-1,k)	& \text{if $0< k\leq\binom{d-1}{r-1}$,} \\[1ex]
				\symb{1}L\big(d-1,r-1,\binom{d-1}{r-1}\big) \cup \symb{0}L\big(d-1,r,k-\binom{d-1}{r-1}\big)
					& \text{if $\binom{d-1}{r-1}< k\leq\binom{d}{r}$,}
			\end{cases}
	\end{align}
	whenever $0< r\leq d$.
	
	Observe that for $0<r\leq d$, the vertex boundary of $S\big(d,\sum_{i=0}^{r-1}\binom {d}{i}\big)$
	is simply the set $L\big(d,r,\binom{d}{r}\big)=\{w\in\{\symb{0},\symb{1}\}^d: \norm{w}=r\}$
	which has cardinality $\binom{d}{r}$.
	Hence, $\Delta_{d+1}\big(\sum_{i=0}^{r-1}\binom {d}{i}\big)=\binom{d}{r}$.
	For $0\leq k\leq\binom{d}{r}$,
	the boundary of $S\big(d,\sum_{i=0}^{r-1}\binom {d}{i}\big)$
	can be divided into those elements $w$ with $\norm{w}=r$ and those with $\norm{w}=r+1$.
	The first part is simply the set
	$B_1(d,r,k)\isdef\{w\in\{\symb{0},\symb{1}\}^d: \norm{w}=r\}\setminus L(d,r,k)$
	and has cardinality $\binom{d}{r}-k$.
	The second part is $B_2(d,r,k)\isdef N(L(d,r,k))\cap\{w\in\{\symb{0},\symb{1}\}^d: \norm{w}=r+1\}$.
	The elements of $B_2(d,r,k)$ are the words obtained from the elements of $L(d,r,k)$
	by turning a~$\symb{0}$ into a~$\symb{1}$.
	Denoting the cardinality of $B_2(d,r,k)$ by $\psi_d(r,k)$,
	the recursion~\eqref{eq:isoperimetric:hypercube:recursion}
	follows easily from~\eqref{eq:isoperimetric:hypercube:recursion:proof}.
\end{proof}

%%%

\subsection{Calculation of the critical size}
\label{apx:critical-size}

\begin{proof}[Proof of Lemma~\ref{lem:torus:critical-size}]
	Using the explicit expressions~\eqref{eq:hard-core:isoperimetric:case-1}
	and~\eqref{eq:hard-core:isoperimetric:case-2} for $\Delta(s)$,
	for $s>1$ we have
	\begin{align}
		g(s)-g(s-1) &=
			\begin{cases}
				1-\alpha	& \text{if $s=\ell^2+1$ or $s=\ell(\ell+1)+1$ for some $\ell>0$,} \\
				-\alpha		& \text{otherwise.}			
			\end{cases}
	\end{align}
	Since $-\alpha<0<1-\alpha$, it follows that every maximiser of $g(s)$ must be of the form 
	$s=\ell^2+1$ or $s=\ell(\ell+1)+1$ for some $\ell>0$. Let $g_1(\ell)\isdef g(\ell^2+1)=2(\ell+1)
	-\alpha\ell^2$ and $g_2(\ell)\isdef g(\ell(\ell+1)+1)=2(\ell+1)+1-\alpha\ell(\ell+1)$. These are 
	quadratic functions.  Since $\nicefrac{2}{\alpha}\notin\ZZ$, the function $g_1$ has a unique 
	maximiser at $\ell_1\isdef[\nicefrac{1}{\alpha}]$, i.e., the closest integer to $\nicefrac{1}{\alpha}$.
	Similarly, since $\nicefrac{1}{\alpha}\notin\ZZ$, the function $g_2$ has a unique maximiser at 
	$\ell_2\isdef\lfloor\nicefrac{1}{\alpha}\rfloor$, which is the closest integer to $\nicefrac{1}{\alpha}
	-\nicefrac{1}{2}$. Note that either $\ell_1=\ell_2$ or $\ell_1=\ell_2+1$. In either case, it is 
	straightforward to verify that $g_1(\ell_1)<g_2(\ell_2)$. We find that $s^*\isdef \ell_2(\ell_2+1)+1$ 
	is the unique maximiser of $g(s)$. Finally, observe that $\ell^*=\lceil\nicefrac{1}{\alpha}\rceil
	=\lfloor\nicefrac{1}{\alpha}\rfloor + 1=\ell_2+1$.
\end{proof}

\begin{proof}[Proof of Lemma~\ref{lem:doubled-torus:critical-size}]
	Using the expression~\eqref{eq:widom-rowlinson:isoperimetric} for $\Delta(s)$,
	for $s>1$ we have
	\begin{align}
		g(s) - g(s-1) &=
			\begin{cases}
				1-\alpha	& \text{if $s=\ell^2+(\ell-1)^2+r$ with $r\in\{1,\ell,2\ell,3\ell\}$,} \\
				-\alpha		& \text{otherwise.}
			\end{cases}
	\end{align}
	since $-\alpha<0<1-\alpha$, it follows that every maximiser of $g(s)$ must be of the form
	$s=\ell^2+(\ell-1)^2+r$ for some $\ell>0$ and $r\in\{1,\ell,2\ell,3\ell\}$.
	Let us thus consider the functions
	\begin{align}
		g_1(\ell) &\isdef
			g(\ell^2+(\ell-1)^2+1) = 4\ell+1-\alpha(\ell^2+(\ell-1)^2) \;, \\
		g_{1+k}(\ell) &\isdef
			g(\ell^2+(\ell-1)^2+k\ell) = 4\ell+1+k-\alpha(\ell^2+(\ell-1)^2+k\ell-1) \;,
	\end{align}
	for $k=1,2,3$.
	The maximum of a concave quadratic function over integers is achieved at the closest 
	integer to its critical point. Since $\nicefrac{4}{\alpha}\notin\ZZ$, the maximisers of
	$g_1$, $g_2$, $g_3$ and $g_4$ are unique:
	the maximums are respectively achieved at
	\begin{align}
		\ell^*_1 &\isdef \left[\frac{1}{\alpha}+\frac{1}{2}\right] \;, &
			\ell^*_2 &\isdef \left[\frac{1}{\alpha}+\frac{1}{4}\right] \;, &
			\ell^*_3 &\isdef \left[\frac{1}{\alpha}\right] \;, &
			\ell^*_4 &\isdef \left[\frac{1}{\alpha}-\frac{1}{4}\right] \;,
	\end{align}
	where $[a]$ denotes the closest integer to $a$. Let $\{\nicefrac{1}{\alpha}\}$ denote 
	the fractional part of $\nicefrac{1}{\alpha}$. We consider four cases:
	\begin{case}[1]{$0<\{\nicefrac{1}{\alpha}\}<\nicefrac{1}{4}$.}
		In this case, $\ell^*_2=\ell^*_3=\ell^*_4=\lfloor\nicefrac{1}{\alpha}\rfloor
			< \lceil\nicefrac{1}{\alpha}\rceil = \ell^*_1$. \\
		Observe that $\ell^*=\lfloor\nicefrac{1}{\alpha}\rfloor$.
		We have
		\begin{align}
			g_1(\ell^*_1) &= g\big((\ell^*+1)^2+(\ell^*)^2+1\big) 
				= 4(\ell^*+1)+1-\alpha\big((\ell^*+1)^2+(\ell^*)^2\big) \;, \\
			g_2(\ell^*_2) &= g\big((\ell^*)^2+(\ell^*-1)^2+\ell^*\big)
				= 4\ell^*+2-\alpha\big((\ell^*)^2+(\ell^*-1)^2+\ell^*-1\big) \;, \\
			g_3(\ell^*_3) &= g\big((\ell^*)^2+(\ell^*-1)^2+2\ell^*\big)
				= 4\ell^*+3-\alpha\big((\ell^*)^2+(\ell^*-1)^2+2\ell^*-1\big) \;, \\
			g_4(\ell^*_4) &= g\big((\ell^*)^2+(\ell^*-1)^2+3\ell^*\big)
				= 4\ell^*+4-\alpha\big((\ell^*)^2+(\ell^*-1)^2+3\ell^*-1\big) \;.
		\end{align}
		A straightforward calculation shows that
		\begin{align}
			g_1(\ell^*_1)<g_2(\ell^*_2)<g_3(\ell^*_3)<g_4(\ell^*_4) \;,
		\end{align}
		where for the first inequality, we have used $3-\alpha(3\ell^*+1)<0$,
		and for the others, we have used $1-\alpha\ell^*>0$.
		Hence, in this case $g(s)$ has a unique maximiser at $s^*=(\ell^*)^2+(\ell^*-1)^2+3\ell^*$.
	\end{case}
	\begin{case}[2]{$\nicefrac{1}{4}<\{\nicefrac{1}{\alpha}\}<\nicefrac{1}{2}$.}
		In this case, $\ell^*_3=\ell^*_4=\lfloor\nicefrac{1}{\alpha}\rfloor
			< \lceil\nicefrac{1}{\alpha}\rceil = \ell^*_1=\ell^*_2$. \\
		Observe that again $\ell^*=\lfloor\nicefrac{1}{\alpha}\rfloor$.
		We have
		\begin{align}
			g_1(\ell^*_1) &= g\big((\ell^*+1)^2+(\ell^*)^2+1\big) 
				= 4(\ell^*+1)+1-\alpha\big((\ell^*+1)^2+(\ell^*)^2\big) \;, \\
			g_2(\ell^*_2) &= g\big((\ell^*+1)^2+(\ell^*)^2+\ell^*+1\big)
				= 4(\ell^*+1)+2-\alpha\big((\ell^*+1)^2+(\ell^*)^2+\ell^*\big) \;, \\
			g_3(\ell^*_3) &= g\big((\ell^*)^2+(\ell^*-1)^2+2\ell^*\big)
				= 4\ell^*+3-\alpha\big((\ell^*)^2+(\ell^*-1)^2+2\ell^*-1\big) \;, \\
			g_4(\ell^*_4) &= g\big((\ell^*)^2+(\ell^*-1)^2+3\ell^*\big)
				= 4\ell^*+4-\alpha\big((\ell^*)^2+(\ell^*-1)^2+3\ell^*-1\big) \;.
		\end{align}
		In this case, we have
		\begin{align}
			g_1(\ell^*_1)&<g_2(\ell^*_2) \;,
				& g_3(\ell^*_3)&<g_4(\ell^*_4) \;, 
				& g_1(\ell^*_1)&<g_3(\ell^*_3) \;,
				& g_2(\ell^*_2)&<g_4(\ell^*_4) \;,
		\end{align}
		where the first two inequalities follow from $1-\alpha\ell^*>0$
		and the last two inequalities from $2-\alpha(2\ell^*+1)<0$.
		Hence, $g(s)$ again has a unique maximiser at $s^*=(\ell^*)^2+(\ell^*-1)^2+3\ell^*$.
	\end{case}
	\begin{case}[3]{$\nicefrac{1}{2}<\{\nicefrac{1}{\alpha}\}<\nicefrac{3}{4}$.}
		In this case, $\ell^*_4=\lfloor\nicefrac{1}{\alpha}\rfloor
			< \lceil\nicefrac{1}{\alpha}\rceil = \ell^*_1=\ell^*_2=\ell^*_3$. \\
		In this case, $\ell^*=\lceil\nicefrac{1}{\alpha}\rceil$.
		Therefore, we have
		\begin{align}
			g_1(\ell^*_1) &= g\big((\ell^*)^2+(\ell^*-1)^2+1\big) 
				= 4\ell^*+1-\alpha\big((\ell^*)^2+(\ell^*-1)^2\big) \;, \\
			g_2(\ell^*_2) &= g\big((\ell^*)^2+(\ell^*-1)^2+\ell^*\big)
				= 4\ell^*+2-\alpha\big((\ell^*)^2+(\ell^*-1)^2+\ell^*-1\big) \;, \\
			g_3(\ell^*_3) &= g\big((\ell^*)^2+(\ell^*-1)^2+2\ell^*\big)
				= 4\ell^*+3-\alpha\big((\ell^*)^2+(\ell^*-1)^2+2\ell^*-1\big) \;, \\
			g_4(\ell^*_4) &= g\big((\ell^*-1)^2+(\ell^*-2)^2+3(\ell^*-1)\big)
				= 4\ell^*-\alpha\big((\ell^*-1)^2+(\ell^*-2)^2+3\ell^*-4\big) \;.
		\end{align}
		With straightforward calculation we obtain
		\begin{align}
			g_1(\ell^*_1)&<g_2(\ell^*_2) \;,
				& g_3(\ell^*_3)&<g_2(\ell^*_2) \;, 
				& g_4(\ell^*_4)&<g_2(\ell^*_2) \;,
		\end{align}
		where the first inequality follows from $1-\alpha(\ell^*-1)>0$,
		the second from $1-\alpha\ell^*<0$, and the third from $2-\alpha(2\ell^*-1)>0$.
		Hence, in this case,
		the unique maximiser of $g(s)$ is $s^*=(\ell^*)^2+(\ell^*-1)^2+\ell^*$.
	\end{case}
	\begin{case}[4]{$\nicefrac{3}{4}<\{\nicefrac{1}{\alpha}\}<1$.}
		In this case, $\lfloor\nicefrac{1}{\alpha}\rfloor
			< \lceil\nicefrac{1}{\alpha}\rceil = \ell^*_1=\ell^*_2=\ell^*_3=\ell^*_4$. \\
		In this case, we again have $\ell^*=\lceil\nicefrac{1}{\alpha}\rceil$.
		Therefore, 
		\begin{align}
			g_1(\ell^*_1) &= g\big((\ell^*)^2+(\ell^*-1)^2+1\big) 
				= 4\ell^*+1-\alpha\big((\ell^*)^2+(\ell^*-1)^2\big) \;, \\
			g_2(\ell^*_2) &= g\big((\ell^*)^2+(\ell^*-1)^2+\ell^*\big)
				= 4\ell^*+2-\alpha\big((\ell^*)^2+(\ell^*-1)^2+\ell^*-1\big) \;, \\
			g_3(\ell^*_3) &= g\big((\ell^*)^2+(\ell^*-1)^2+2\ell^*\big)
				= 4\ell^*+3-\alpha\big((\ell^*)^2+(\ell^*-1)^2+2\ell^*-1\big) \;, \\
			g_4(\ell^*_4) &= g\big((\ell^*)^2+(\ell^*-1)^2+3\ell^*\big)
				= 4\ell^*+4-\alpha\big((\ell^*)^2+(\ell^*-1)^2+3\ell^*-1\big) \;.
		\end{align}
		Similar calculation leads to
		\begin{align}
			g_1(\ell^*_1)&<g_2(\ell^*_2) \;,
				& g_4(\ell^*_4)<g_3(\ell^*_3)&<g_2(\ell^*_2) \;, 
		\end{align}
		where the first inequality follows from $1-\alpha(\ell^*-1)>0$,
		the other two from $1-\alpha\ell^*<0$.
		Hence, the unique maximiser of $g(s)$ in this case
		is $s^*=(\ell^*)^2+(\ell^*-1)^2+\ell^*$.
		\qedhere
	\end{case}
\end{proof}

%%%%%%%%%%%%%%% REFERENCES %%%%%%%%%%%%%%%%%%%%

\bibliographystyle{plain}
\bibliography{files/bibliography}

\newcommand{\noopsort}[1]{}
\begin{thebibliography}{10}

\bibitem{AldFil14}
D.~Aldous and J.~A. Fill.
\newblock {\em Reversible {M}arkov Chains and Random Walks on Graphs}.
\newblock Unfinished monograph, 2002 (recompiled 2014).
\newblock {\tt [\url{http://www.stat.berkeley.edu/~aldous/RWG/book.html}]}.

\bibitem{AloCer96}
L.~Alonso and R.~Cerf.
\newblock The three dimensional polyominoes of minimal area.
\newblock {\em The Electronic Journal of Combinatorics}, 3(1), 1996.

\bibitem{AroCer96}
G.~Ben Arous and R.~Cerf.
\newblock Metastability of the three-dimensional {I}sing model on a torus at
  very low temperatures.
\newblock {\em Electronic Journal of Probability}, 1(10), 1996.

\bibitem{BelLan10}
J.~Beltr\'an and C.~Landim.
\newblock Tunneling and metastability of continuous time {M}arkov chains.
\newblock {\em Journal of Statistical Physics}, 140(6):1065--1114, 2010.

\bibitem{BerSte94}
J.~{\noopsort{Berg}}{van~den~Berg} and J.~E. Steif.
\newblock Percolation and the hard-core lattice gas model.
\newblock {\em Stochastic Processes and their Applications}, 49:179--197, 1994.

\bibitem{BerKon90}
K.~A. Berman and M.~H. Konsowa.
\newblock Random paths and cuts, electrical networks, and reversible {M}arkov
  chains.
\newblock {\em SIAM Journal of Discrete Mathematics}, 3(3):311--319, 1990.

\bibitem{Bez89}
S.~L. Bezrukov.
\newblock On the construction of solutions of a discrete isoperimetric problem
  in {H}amming space.
\newblock {\em Mathematics of the USSR-Sbornik}, 63(1), 1989.

\bibitem{Bez94}
S.~L. Bezrukov.
\newblock Isoperimetric problems in discrete spaces.
\newblock In {\em Extremal Problems in Finite Sets}, volume~3 of {\em Bolyai
  Society Mathematical Studies}, pages 59--91, 1994.

\bibitem{BovEckGayKle00}
A.~Bovier, M.~Eckhoff, V.~Gayrard, and M.~Klein.
\newblock Metastability and small eigenvalues in {M}arkov chains.
\newblock {\em Journal of Physics A: Mathematical and General},
  33(46):L447--L451, 2000.

\bibitem{BovEckGayKle01}
A.~Bovier, M.~Eckhoff, V.~Gayrard, and M.~Klein.
\newblock Metastability in stochastic dynamics of disordered mean-field models.
\newblock {\em Probability Theory and Related Fields}, 119(1):99--161, 2001.

\bibitem{BovEckGayKle02}
A.~Bovier, M.~Eckhoff, V.~Gayrard, and M.~Klein.
\newblock Metastability and low lying spectra in reversible {M}arkov chains.
\newblock {\em Communications in Mathematical Physics}, 228(2):219--255, 2002.

\bibitem{BovHol15}
A.~Bovier and F.~{\noopsort{Hollander}}{den~Hollander}.
\newblock {\em Metastability: A Potential-Theoretic Approach}.
\newblock Springer, 2015.

\bibitem{BovHolNar06}
A.~Bovier, F.~{\noopsort{Hollander}}{den~Hollander}, and F.~R. Nardi.
\newblock Sharp asymptotics for {K}awasaki dynamics on a finite box with open
  boundary.
\newblock {\em Probability Theory and Related Fields}, 135(2):265--310, 2006.

\bibitem{BovMan02}
A.~Bovier and F.~Manzo.
\newblock Metastability in {G}lauber dynamics in the low temperature limit:
  beyond exponential asymptotics.
\newblock {\em Journal of Statistical Physics}, 107(3--4):757--779, 2002.

\bibitem{CasGalOliVar84}
M.~Cassandro, A.~Galves, E.~Olivieri, and M.~E. Vares.
\newblock Metastable behavior of stochastic dynamics: A pathwise approach.
\newblock {\em Journal of Statistical Physics}, 35(5--6):603--634, 1984.

\bibitem{CirNar03}
E.~N.~M. Cirillo and F.~R. Nardi.
\newblock Metastability for a stochastic dynamics with a parallel heat bath
  updating rule.
\newblock {\em Journal of Statistical Physics}, 110(1--2):183--–217, 2003.

\bibitem{CirNar13}
E.~N.~M. Cirillo and F.~R. Nardi.
\newblock Relaxation height in energy landscapes: An application to multiple
  metastable states.
\newblock {\em Journal of Statistical Physics}, 150:1080--1114, 2013.

\bibitem{CirNarSoh15}
E.~N.~M. Cirillo, F.~R. Nardi, and J.~Sohier.
\newblock Metastability for general dynamics with rare transitions: Escape time
  and critical configurations.
\newblock {\em Journal of Statistical Physics}, 161:365--403, 2015.

\bibitem{CirNarSpi08}
E.~N.~M. Cirillo, F.~R. Nardi, and C.~Spitoni.
\newblock Metastability for reversible probabilistic cellular automata with
  self-interaction.
\newblock {\em Journal of Statistical Physics}, 132(3):431--471, 2008.

\bibitem{Dpr}
S.~Dommers.
\newblock Metastability of the {I}sing model on random regular graphs at zero
  temperature.
\newblock {\em Probability Theory and Related Fields}, 167(1):305--324, 2017.

\bibitem{DdHJNpr}
S.~Dommers, F.~{\noopsort{Hollander}}den Hollander, O.~Jovanovski, and F.~R.
  Nardi.
\newblock Metastability for {G}lauber dynamics on random graphs.
\newblock {\em Annals of Applied Probability}, To appear, 2016.

\bibitem{DoySne84}
P.~G. Doyle and J.~L. Snell.
\newblock {\em Random Walks and Electric Networks}.
\newblock The Mathematical Association of America, 1984.

\bibitem{FerManNarSco15}
R.~Fernandez, F.~Manzo, F.~R. Nardi, and E.~Scoppola.
\newblock Asymptotically exponential hitting times and metastability: a
  pathwise approach without reversibility.
\newblock {\em Eletronic Journal of Probability}, 20(122), 2015.

\bibitem{FerManNarScoSoh16}
R.~Fernandez, F.~Manzo, F.~R. Nardi, E.~Scoppola, and J.~Sohier.
\newblock Conditioned, quasi-stationary, restricted measures and escape from
  metastable states.
\newblock {\em The Annals of Applied Probability}, 26(2):760--793, 2016.

\bibitem{GauOliSco05}
A.~Gaudilli\`ere, E.~Olivieri, and E.~Scoppola.
\newblock Nucleation pattern at low temperature for local {K}awasaki dynamics
  in two dimensions.
\newblock {\em Markov Processes and Related Fields}, 11(4):553--628, 2005.

\bibitem{Gri10}
G.~Grimmett.
\newblock {\em Probability on Graphs}.
\newblock Cambridge University Press, 2010.

\bibitem{HarHar76}
F.~Harary and F.~Harborth.
\newblock Extremal animals.
\newblock {\em Journal of Combinatorics, Information and System}, 1:1--8, 1976.

\bibitem{Har66}
L.~H. Harper.
\newblock Optimal numberings and isoperimetric problems on graphs.
\newblock {\em Journal of Combinatorial Theory}, 1:385--393, 1966.

\bibitem{Har04}
L.~H. Harper.
\newblock {\em Global Methods for Combinatorial Isoperimetric Problems}.
\newblock Cambridge University Press, 2004.

\bibitem{dHJpr}
F.~{\noopsort{Hollander}}den Hollander and O.~Jovanovski.
\newblock Metastability on the hierarchical lattice.
\newblock {\em Journal of Physics A: Theoretical and Mathematical}, 50:305001,
  2017.

\bibitem{HolNarOliSco03}
F.~{\noopsort{Hollander}}den Hollander, F.~R. Nardi, E.~Olivieri, and
  E.~Scoppola.
\newblock Droplet growth for three-dimensional {K}awasaki dynamics.
\newblock {\em Probability Theory and Related Fields}, 125(2):153–--194,
  2003.

\bibitem{HolNarTro11}
F.~{\noopsort{Hollander}}den Hollander, F.~R. Nardi, and A.~Troiani.
\newblock {K}awasaki dynamics with two types of particles: Stable/metastable
  configurations and communication heights.
\newblock {\em Journal of Statistical Physics}, 145:1423--1457, 2011.

\bibitem{HolOliSco00}
F.~{\noopsort{Hollander}}den Hollander, E.~Olivieri, and E.~Scoppola.
\newblock Metastability and nucleation for conservative dynamics.
\newblock {\em Journal of Mathematical Physics}, 41(3):1424--1498, 2000.

\bibitem{Jpr}
O.~Jovanovski.
\newblock Metastability for the {I}sing model on the hypercube.
\newblock {\em Journal of Statistical Physics}, 167(1):135--159, 2017.

\bibitem{Kei79}
J.~Keilson.
\newblock {\em Markov Chain Models --- Rarity and Exponentiality}.
\newblock Springer-Verlag, 1979.

\bibitem{KotOli94}
R.~Koteck\'y and E.~Olivieri.
\newblock Shapes of growing droplets---a model of escape from a metastable
  phase.
\newblock {\em Journal of Statistical Physics}, 75(3--4):409--506, 1994.

\bibitem{LebGal71}
J.~L. Lebowitz and G.~Gallavotti.
\newblock Phase transitions in binary lattice gases.
\newblock {\em Journal of Mathematical Physics}, 12(7):1129--1133, 1971.

\bibitem{LevPerWil08}
D.~A. Levin, Y.~Peres, and E.~L. Wilmer.
\newblock {\em Markov Chains and Mixing Times}.
\newblock American Mathematical Society, 2008.

\bibitem{LyoPer14}
R.~Lyons and Y.~Peres.
\newblock {\em Probability on Trees and Networks}.
\newblock Cambridge University Press, 2016.

\bibitem{ManNarOliSco04}
F.~Manzo, F.~R. Nardi, E.~Olivieri, and E.~Scoppola.
\newblock On the essential features of metastability: Tunnelling time and
  critical configurations.
\newblock {\em Journal of Statistical Physics}, 115(1/2):591--642, 2004.

\bibitem{NarZocBor15}
F.~R. Nardi, A.~Zocca, and S.~C. Borst.
\newblock Hitting times asymptotics for hard-core interactions on grids.
\newblock {\em Journal of Statistical Physics}, 162(2):522--576, 2015.

\bibitem{NevSch91}
E.~J. Neves and R.~H. Schonmann.
\newblock Critical droplets and metastability for a {G}lauber dynamics at very
  low temperatures.
\newblock {\em Communications in Mathematical Physics}, 137(2):209--230, 1991.

\bibitem{OliVar04}
E.~Olivieri and M.~E. Vares.
\newblock {\em Large Deviations and Metastability}.
\newblock Cambridge University Press, 2004.

\bibitem{VaiBru08}
D.~Vainsencher and A.~M. Bruckstein.
\newblock On isoperimetrically optimal polyforms.
\newblock {\em Theoretical Computer Science}, 406:146--159, 2008.

\bibitem{WanWan77}
D.{-L.} Wang and P.~Wang.
\newblock Discrete isoperimetric problems.
\newblock {\em {SIAM} Journal on Applied Mathematics}, 32(4):860--870, 1977.

\bibitem{Zoc17b}
A.~Zocca.
\newblock Low-temperature behavior of the multicomponent {W}idom--{R}owlison
  model on finite square lattices.
\newblock {\em Preprint}, 2017.
\newblock \href{https://arxiv.org/abs/1701.09185}{\tt [arXiv:1701.09185]}.

\bibitem{Zoc17a}
A.~Zocca.
\newblock Tunneling of the hard-core model on finite triangular lattices.
\newblock {\em Preprint}, 2017.
\newblock \href{https://arxiv.org/abs/1701.07004}{\tt [arXiv:1701.07004]}.

\bibitem{ZocBorLeeNar13}
A.~Zocca, S.~C. Borst, J.~S.~H.~{\noopsort{Leeuwaarden}}van Leeuwaarden, and
  F.~R. Nardi.
\newblock Delay performance in random-access grid networks.
\newblock {\em Performance Evaluation}, 70(10):900--915, 2013.

\end{thebibliography}

%%%%%%%%%%%%%%%%%%%%%%%%%%%%%%%%%%%%%%%%%%%%

\end{document}